\documentclass[11pt]{article}
\usepackage[utf8]{inputenc}
\usepackage{amsthm}
\usepackage{amsmath, graphicx, calc}
\usepackage{bbm}
\usepackage{amsfonts, enumitem}
\usepackage{amssymb}
\usepackage[left=2.5cm,right=2.5cm,top=2cm,bottom=2cm]{geometry}

\usepackage{tabto}
\TabPositions{2 cm, 4 cm, 6 cm, 8 cm}

\hoffset 0cm \voffset 0cm \textwidth=6.5in \textheight=8.5in
\tolerance=9000 \emergencystretch=5pt \vfuzz=2pt
\parskip=1.5mm

\usepackage{tikz-cd}

\usepackage{tikz}
\usetikzlibrary{arrows, automata, positioning}

\usepackage{tocloft}
\setlength\cftbeforetoctitleskip{5cm}
\setlength\cftaftertoctitleskip{2cm}

\usepackage{appendix}

\usepackage{hyperref}
\hypersetup{
  colorlinks   = true, 
  urlcolor     = blue, 
  linkcolor    = blue, 
  citecolor   = red 
}
 
\newtheorem{theorem}{Theorem}[section]
\newtheorem{corollary}[theorem]{Corollary}
\newtheorem{lemma}[theorem]{Lemma}

\newtheorem{proposition}[theorem]{Proposition}

\theoremstyle{definition}
\newtheorem{definition}[theorem]{Definition}
\newtheorem{example}[theorem]{Example}
\newtheorem*{definition*}{Definition}
\newtheorem{remark}[theorem]{Remark}

\newtheorem*{lemma*}{Lemma}
\newtheorem*{proposition*}{Proposition}
\newtheorem*{theorem*}{Theorem}
\newtheorem*{corollary*}{Corollary}
\newtheorem*{claim*}{Claim}

\theoremstyle{definition}





\newcommand{\Ga}{\Gamma}
\newcommand{\HG}{\mathcal{H}(G)}
\newcommand{\Hl}{\mathcal{H}}
\newcommand{\Z}{\mathbb{Z}}
\newcommand{\N}{\mathbb{N}}
\newcommand{\R}{\mathbb{R}}
\newcommand{\op}{\operatorname} 
\newcommand{\acts}{\curvearrowright}
\newcommand{\nl}{\mathrm{(NL)}}
\newcommand{\ngt}{\mathrm{(NGT)}}
\newcommand{\nne}{\mathrm{(NNE)}}


\setlength\parindent{0pt}

\title{\textsc{Property $\nl$ for group actions \\ on hyperbolic spaces}}
\author{Sahana H. Balasubramanya \and Francesco Fournier-Facio \and Anthony Genevois \and with Appendix by Alessandro Sisto}
\date{\today}

\begin{document}

\maketitle

\begin{abstract}
We introduce Property $\nl$, which indicates that a group does not admit any (isometric) action on a hyperbolic space with loxodromic elements. In other words, such a group $G$ can only admit elliptic or horocyclic hyperbolic actions, and consequently its poset of hyperbolic structures $\Hl(G)$ is trivial.
It turns out that many groups satisfy this property; and we initiate the formal study of this phenomenon. Of particular importance is the proof of a dynamical criterion in this paper that ensures that groups with ``rich" actions on compact Hausdorff spaces have Property $\nl$. These include many Thompson-like groups, such as $V, T$ and even twisted Brin--Thompson groups, which implies that every finitely generated group quasi-isometrically embeds into a finitely generated simple group with Property $\nl$.
We also study the stability of the property under group operations and explore connections to other fixed point properties.
In the appendix (by Alessandro Sisto) we describe a construction of cobounded actions on hyperbolic spaces starting from non-cobounded ones that preserves various properties of the initial action.
\end{abstract}

\vspace*{-40mm}
\tableofcontents
\addtocontents{toc}{\vspace{-2cm}}

\section{Introduction}

Among the techniques provided by geometric group theory in the study of groups, constructing and exploiting (isometric) actions on hyperbolic spaces is one of the most fruitful and has received a lot of attention in the last decades (see \cite{bestvinasurvey} for a recent survey).
In this article, we focus on groups that are not reachable by such a strategy. The approach can be compared with the study of Kazhdan's Property (T), which forbids isometric actions on Hilbert spaces without global fixpoints, and exhibits interesting behaviours of rigidity. Several families of groups are known to have few or no interesting actions on hyperbolic spaces, including higher rank lattices \cite{haettel, lattices}, Thompson's group $V$ \cite{anthony}, and Gromov's random monsters \cite{GromovMonster}, but so far a systematic study of such behaviours has not been conducted. In this spirit, we introduce the following property and dedicate most of this article to its study.

\begin{definition}
A group $G$ satisfies \emph{Property $\nl$} -- standing for \emph{No Loxodromics} -- if no action by isometries of $G$ on a hyperbolic space admits a loxodromic element. A group has Property \emph{Hereditary $\nl$} if all of its finite-index subgroups have Property $\nl$. We will often simply say that $G$ is $\nl$, or hereditary $\nl$.
\end{definition}

In other words, a group is $\nl$ if its only possible actions on hyperbolic spaces are elliptic or horocyclic (see Theorem \ref{thm:ClassHypAct} below). These are the only types of actions we cannot forbid since every group admits elliptic actions (e.g.\ the trivial action on a single point) and every countably infinite group admits horocyclic actions (e.g.\ combinatorial horoballs on Cayley graphs \cite{GrovesManning} or trees in case the group is not finitely generated \cite{Serre}). Thus, a group is $\nl$ if it admits as few types of actions on hyperbolic spaces as possible. 

While Property $\nl$ is preserved by certain group operations like directed unions, direct sums, extensions, etc (see Theorem \ref{intro:thm:stability}) it is sensitive to commensurability, i.e.\ a group may be $\nl$ and contain finite-index subgroups with rich actions on hyperbolic spaces (see Example \ref{ex:nlnothere}). This is the first motivation in the introduction of property hereditary $\nl$ -- it turns out that the hereditary version of the property is also preserved under many group operations, including commensurability.  A second one is that this is the re-enforcement needed to imply fixed-point theorems in spaces ``built from hyperbolic spaces'' (see Section \ref{sec:otherprops}). 

An additional motivator is the relation to the structure of the poset $\HG$ - the poset of hyperbolic structures first introdoced in \cite{ABO} (see Section \ref{sec:otherprops}). It is easy to see that a group with Property $\nl$ has trivial $\HG$ poset.  In general, the structure of this poset is not stable under taking finite-index subgroups (see Example \ref{ex:finindexposet}), but this issue does not occur with the property of being hereditary $\nl$: such a group and all its finite-index subgroups have trivial $\HG$.

\subsection{First examples}

We start by mentioning a few examples that are already available in the literature. First of all, since loxodromic elements must have infinite order, every torsion group is $\nl$, and even hereditary $\nl$. This includes finite groups, Burnside groups, several branch groups such as Grigorchuk's group, and Tarski monsters.

A more interesting class of examples can be found in higher-rank lattices:

\begin{theorem}[\cite{haettel}]
Let $G$ be a product of higher rank algebraic groups over local fields, each of which is almost simple and has finite center. Then every lattice in $G$ is hereditary $\nl$.
\end{theorem}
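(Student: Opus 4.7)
The plan is to take an arbitrary isometric action of a lattice $\Gamma$ on a hyperbolic space $X$ and show that it contains no loxodromic element. By the classification recalled in Theorem \ref{thm:ClassHypAct}, the isometric actions with loxodromic elements are of three kinds: \emph{lineal} (with an invariant pair of boundary points), \emph{focal} (with a unique boundary fixed point, attained by a loxodromic), or \emph{general type} (non-elementary, with pairs of independent loxodromics). So I would rule out each of these three cases for such a $\Gamma$.

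The lineal case yields a homomorphism $\Gamma \to \R$ via the Busemann cocycle along the coarse axis, possibly after passing to the index-two subgroup preserving the two boundary points pointwise. Since $\Gamma$ has Property (T), every such homomorphism is trivial, so no loxodromic can arise in a lineal action. The focal case yields a non-trivial homogeneous quasimorphism on $\Gamma$ via the Busemann cocycle at the unique fixed boundary point. By the Burger--Monod results on bounded cohomology of higher-rank lattices, the space of homogeneous quasimorphisms on $\Gamma$ is trivial, so this case is also excluded.

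The general-type case is the central difficulty and is the heart of Haettel's theorem. The strategy is boundary-theoretic, in the tradition of Margulis and Bader--Furman: one uses the ambient algebraic group $G$ and ergodic theory (via Howe--Moore mixing) to construct an equivariant measurable map from a Furstenberg--Poisson boundary of $G$ to the Gromov boundary $\partial X$, extracted from the north-south dynamics of a hypothetical loxodromic. The higher-rank hypothesis on $G$ forces the existence of abelian subgroups of rank at least two together with rich contracting dynamics on the Furstenberg boundary, and such dynamics cannot be compatible with any equivariant map into the boundary of a Gromov hyperbolic space, where all contracting dynamics are essentially rank one. This contradiction rules out loxodromics in any action. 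This step is by far the main obstacle, requiring superrigidity, the structure theory of algebraic groups, and a careful analysis of dynamics on Gromov boundaries.

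Finally, the hereditary version is automatic: any finite-index subgroup $\Gamma_0 \leq \Gamma$ is again a lattice in the same ambient $G$, and the hypotheses on $G$ are unchanged. Therefore the same three-step argument applies verbatim to $\Gamma_0$, yielding Property $\nl$ for every finite-index subgroup of $\Gamma$, i.e.\ Property hereditary $\nl$.
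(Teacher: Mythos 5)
The paper does not actually reprove Haettel's theorem: it cites \cite{haettel} for the fact that such lattices satisfy Property $\nl$ (there stated as the nonexistence of lineal, focal, or general-type actions), and the only thing it argues itself is the passage to the hereditary version, via the observation that a finite-index subgroup of a lattice in $G$ is again a lattice in $G$. Your final paragraph reproduces exactly this one-line observation, and that matches the paper.

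Everything before that is your reconstruction of Haettel's argument rather than anything the paper proves. Your three-way case split into lineal, focal, and general type, and the strategy of killing the first two via (quasi)morphism rigidity and the third via boundary theory, is a fair high-level description of the route Haettel and Burger--Monod-style results take, so this part goes considerably beyond what the theorem's stated proof does. One imprecision worth flagging: in the lineal case the Busemann object is a priori only a (homogeneous) \emph{quasimorphism} on the orientation-preserving index-$2$ subgroup, not a homomorphism, so Property (T) alone does not kill it; you still need the Burger--Monod injectivity of $H^2_b \to H^2$ (or uniform perfection of the commutator subgroup, etc.) to upgrade the quasimorphism to a homomorphism, exactly as you invoke in the focal case. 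In other words, both ingredients --- vanishing of quasimorphisms and vanishing of $\R$-valued homomorphisms --- are needed in \emph{both} the lineal and the focal case; your current split attributes one tool to each case. Finally, your description of the general-type case ("construct an equivariant map to $\partial X$, derive a contradiction from the higher-rank dynamics") is a reasonable summary of the boundary-theoretic strategy but is only a plan, not a proof; it is precisely the substantive content that the paper outsources to \cite{haettel}, and it would need substantial work (superrigidity, Howe--Moore, analysis of north--south dynamics in $\partial X$) to make rigorous.
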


In \cite{haettel} the theorem is stated in a manner equivalent to Property $\nl$, but since a finite-index subgroup of a lattice is a lattice, the hereditary property follows directly. This result has been strengthened in \cite{lattices}, where the authors show that, roughly, in the presence of rank-$1$ factors, the only possible actions on a hyperbolic spaces are those that factor through them.

In another direction is the following result:

\begin{theorem}[\cite{anthony}]
Thompson's group $V$ is hereditary $\nl$.
\end{theorem}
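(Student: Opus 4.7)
The plan is to reduce Hereditary $\nl$ to $\nl$ and then analyse the classification of hyperbolic actions case by case. Since $V$ is finitely generated and simple, every finite-index subgroup of $V$ coincides with $V$, so Hereditary $\nl$ and $\nl$ are equivalent for $V$, and it suffices to prove that $V$ admits no isometric action on a hyperbolic space with a loxodromic element.

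Assuming such an action on $X$ exists with loxodromic element $g$, I would invoke the classification (Theorem~\ref{thm:ClassHypAct}) to split the analysis into the lineal, focal, and general-type cases. The two elementary cases fall to the perfectness of $V$: a lineal action yields a homomorphism $V \to \Z/2\Z$ describing the action on the preserved boundary pair, whose kernel carries a translation-length homomorphism to $\R$ whose value on a loxodromic is nonzero; a focal action similarly yields a Busemann-type homomorphism $V \to \R$ detecting loxodromics. Simplicity kills the map to $\Z/2\Z$ and perfectness kills the maps to $\R$, contradicting the existence of $g$.

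For the general-type case, the plan is to exploit the ``commuting fragmentation'' structure of $V$ arising from its action on the Cantor set. Given $g \in V$ supported on a proper clopen subset $A$, one finds $h \in V$ sending $A$ to a disjoint clopen set $A'$, so that $g' := h g h^{-1}$ is supported on $A'$ and commutes with $g$; iterating produces infinitely many pairwise commuting conjugates of $g$, all loxodromic, and since commuting loxodromics in a hyperbolic space share their pair of fixed points at infinity, all these conjugates share a common pair $\{\xi^-,\xi^+\} \subset \partial X$. By simplicity, $V$ is the normal closure of $g$; combined with a propagation argument, the goal is to conclude that all of $V$ preserves $\{\xi^-,\xi^+\}$, reducing to the already-excluded lineal case.

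The main obstacle lies in this final propagation step: a generic conjugate $k g k^{-1}$ shares its endpoints with $g$ only when the two commute, which is not automatic, so one cannot directly promote the commuting family to the whole conjugacy class. Closing this gap most likely requires a genuinely dynamical input drawn from the $V$-action on the Cantor set rather than a purely algebraic manipulation -- precisely the sort of statement for which the dynamical criterion announced in the introduction would be designed. A further subtlety within the commuting-fragmentation argument itself is the case in which $g$ has full support, which must be handled via the self-similarity $V_B \cong V$ of rigid stabilisers at proper clopen subsets $B$ in order to produce a commuting conjugate of $g$ at all.
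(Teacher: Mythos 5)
Your reduction of hereditary $\nl$ to $\nl$ via simplicity and finite generation is correct, and matches the paper. However, both halves of the remaining argument have problems.

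For the lineal and focal cases, you invoke a ``translation-length homomorphism to $\R$'' and a ``Busemann-type homomorphism to $\R$''. These are not homomorphisms in general: the Busemann function on a group with a lineal or focal action is only a \emph{quasimorphism} (Proposition~\ref{prop:busemann}), and it is a homomorphism only in special situations (e.g.\ amenable groups, or ``regular'' actions). Consequently, plain perfection of $V$ does not rule these cases out -- there are many perfect groups with unbounded homogeneous quasimorphisms (any acylindrically hyperbolic perfect group, for instance). What one actually needs is that $V$ has no unbounded quasimorphisms, which follows from \emph{uniform} perfection of $V$, not mere perfection. The paper encodes exactly this in Proposition~\ref{prop:ngt:to:nl} and Corollary~\ref{cor:nl:uperfect}: $\ngt$ plus absence of unbounded quasimorphisms (on the group and on index-$2$ subgroups) gives $\nl$.

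The general-type case is the substantive content of the cited result from \cite{anthony}, and you do not close it. You correctly identify the obstacle: pairwise commuting conjugates of a loxodromic all share a common axis, but a generic conjugate need not commute with $g$, so simplicity alone cannot propagate a fixed pair of boundary points to the whole group. Your candidate fixes (passing through the self-similarity of rigid stabilizers, or a ``propagation argument'') are exactly the kind of thing that does \emph{not} suffice without a genuine dynamical/combinatorial input; the actual argument in \cite{anthony} (and its generalization via Theorem~\ref{thm:cH:criterion} in Section~\ref{thompson}) is a considerably more delicate commutator/transitivity scheme than the commuting-fragmentation sketch you give. The paper's proof of the stated theorem is therefore a two-line reduction: take $\ngt$ for $V$ as a black box from \cite{anthony}, combine with uniform perfection (to get $\nl$) and simplicity (to get hereditary $\nl$). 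Your proposal attempts to re-derive $\ngt$ from scratch and stalls at precisely the step that makes the result nontrivial.
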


In \cite{anthony}, the result is stated in a manner that excludes general type actions for $V$ (see Theorem \ref{thm:ClassHypAct} for classification of hyperbolic actions). However Property $\nl$ easily follows from the fact that $V$ is uniformly perfect (see Proposition \ref{prop:ngt:to:nl}), and hereditary $\nl$ then follows from the fact that $V$ is simple. The strategy used to prove this result is part of a much wider picture, that we will expand and present in this paper (see Section \ref{thompson}).

For amenable groups, the absence of free subgroups easily implies that they cannot admit general type actions. From here, we exploit the rigidity properties of their quasimorphisms in order to obtain certain conditions that describe when amenable groups admit hyperbolic actions with loxodromics. For focal or oriented lineal actions, this can be classicaly achieved via the \emph{Busemann quasimorphism} \cite{Man}. In this paper we generalize this to encompass non-oriented lineal actions: the resulting object is called a \emph{Busemann quasicocycle} (see Proposition \ref{prop:quasicocycle}). Some of the results we deduce from this are as follows.

\begin{theorem}[Proposition \ref{prop:Amenable}]
\label{intro:thm:amenable}

Let $G$ be an amenable group. Then $G$ is $\nl$ if and only if every homomorphism $G \to \mathbb{R} \rtimes \mathbb{Z}/2\mathbb{Z}$ has image of order at most $2$. In particular:
\begin{enumerate}[noitemsep]
    \item An abelian group is $\nl$ if and only if it is torsion.
    \item A nilpotent group is $\nl$ if and only if every homomorphism to $\mathbb{R}$ is trivial.
\end{enumerate}
\end{theorem}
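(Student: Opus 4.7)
The approach is to classify the possible hyperbolic actions via Theorem~\ref{thm:ClassHypAct}, rule out the general type case using amenability, and handle the remaining lineal and focal cases via the Busemann (quasi)cocycle. A general type action contains two independent loxodromics, so a ping-pong argument yields a nonabelian free subgroup, which is incompatible with amenability; hence if an amenable $G$ is not $\nl$, it admits either a lineal or a focal action with some loxodromic $g$. In the focal case, the Busemann quasimorphism $\beta \colon G \to \R$ based at the unique fixed boundary point satisfies $\beta(g) \neq 0$. Since $\EH^2_b(G; \R) = 0$ for amenable $G$, every homogeneous quasimorphism on $G$ is a homomorphism, so the homogenisation of $\beta$ is an honest homomorphism $G \to \R$ that remains nontrivial on~$g$; composing with the inclusion $\R \hookrightarrow \R \rtimes \Z/2\Z$ gives a homomorphism whose image is an infinite subgroup of $\R$. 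In the non-oriented lineal case, the Busemann quasicocycle from Proposition~\ref{prop:quasicocycle} assembles the Busemann quasimorphism on the orientation-preserving subgroup with the orientation character into a quasicocycle $G \to \R \rtimes \Z/2\Z$; the same amenability principle promotes it to a genuine homomorphism whose image contains a nonzero translation, again witnessed by~$g$.

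For the converse direction, let $\phi \colon G \to \R \rtimes \Z/2\Z$ have image of order strictly greater than~$2$. A subgroup of $\R \rtimes \Z/2\Z$ meeting $\R$ only at the identity embeds into $\Z/2\Z$, so $\phi(G) \cap \R$ must be nontrivial. Pulling back the standard action of $\R \rtimes \Z/2\Z$ on the $0$-hyperbolic line $\R$ through $\phi$ yields an action of $G$ on a hyperbolic space in which any preimage of a nonzero translation is loxodromic, so $G$ is not $\nl$.

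For the consequences, a direct computation in $\R \rtimes \Z/2\Z$ shows that two distinct reflections do not commute, so every abelian subgroup either lies in $\R$ or has order at most~$2$. The centraliser of a reflection has order~$2$, so any subgroup containing both a reflection and a nonzero translation has trivial centre and is therefore not nilpotent; hence nilpotent subgroups obey the same dichotomy. For abelian or nilpotent $G$, the main equivalence therefore reduces to ``every homomorphism $G \to \R$ is trivial'', which is exactly statement~(2). For~(1), an abelian group admits a nontrivial homomorphism to $\R$ if and only if it has an element of infinite order, giving the torsion characterisation. The main technical obstacle is the construction of the Busemann quasicocycle for non-oriented lineal actions and the verification that amenability upgrades it to a homomorphism valued in $\R \rtimes \Z/2\Z$; the rest of the argument is a direct application of the classification of hyperbolic actions together with the vanishing of $\EH^2_b$ for amenable groups.
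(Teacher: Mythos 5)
Your proposal is essentially the paper's own argument: rule out general type via ping-pong and amenability, use the Busemann quasimorphism in the focal and oriented lineal cases and the Busemann quasicocycle (Proposition~\ref{prop:quasicocycle}, packaged as Corollary~\ref{cor:LinealAmenable}) in the non-oriented lineal case, upgrade to genuine homomorphisms to $\mathbb{R}\rtimes\mathbb{Z}/2\mathbb{Z}$ via the fact that homogeneous quasimorphisms on amenable groups are homomorphisms, and for the converse pull back the standard isometric action of $\mathbb{R}\rtimes\mathbb{Z}/2\mathbb{Z}$ on $\mathbb{R}$. The only small divergence is in the abelian case: you invoke the (correct but unjustified-in-the-plan) fact that a non-torsion abelian group admits a nontrivial homomorphism to $\mathbb{R}$, which the paper establishes by embedding $G/T$ into $G/T\otimes_\mathbb{Z}\mathbb{Q}$ and projecting to a $\mathbb{Q}$-summand; one can also cite injectivity of $\mathbb{R}$ as a $\mathbb{Z}$-module to extend a map from $\langle g\rangle$. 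Either way the argument closes, so the two proofs coincide up to this bookkeeping.
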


\begin{corollary}
Let $G$ be a finitely generated amenable group. Then $G$ is $\nl$ if and only if it does not surject onto $\mathbb{Z}$ or $\mathbb{D}_\infty$.
\end{corollary}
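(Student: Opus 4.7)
The plan is to invoke Theorem~\ref{intro:thm:amenable}, which reduces Property $\nl$ for an amenable group $G$ to the statement that every homomorphism $G \to \mathbb{R} \rtimes \mathbb{Z}/2\mathbb{Z}$ has image of order at most $2$. So it suffices to show: a finitely generated amenable group $G$ admits a homomorphism to $\mathbb{R} \rtimes \mathbb{Z}/2\mathbb{Z}$ with image of order $>2$ if and only if $G$ surjects onto $\mathbb{Z}$ or $\mathbb{D}_\infty$.

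For the easy direction, if $G \twoheadrightarrow \mathbb{Z}$ or $G \twoheadrightarrow \mathbb{D}_\infty = \mathbb{Z} \rtimes \mathbb{Z}/2\mathbb{Z}$, I compose with the canonical embedding into $\mathbb{R} \rtimes \mathbb{Z}/2\mathbb{Z}$ to get a homomorphism with infinite image.

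For the converse, let $\varphi\colon G \to \mathbb{R} \rtimes \mathbb{Z}/2\mathbb{Z}$ have image $H$ of order $>2$. Since $G$ is finitely generated, so is $H$, and hence so is the index-at-most-$2$ subgroup $H \cap \mathbb{R}$. Any finitely generated subgroup of $\mathbb{R}$ is free abelian of finite rank, so $H \cap \mathbb{R} \cong \mathbb{Z}^n$ for some $n \geq 0$. If $H \subseteq \mathbb{R}$, the hypothesis $|H|>2$ forces $n \geq 1$ and $G$ surjects onto $\mathbb{Z}^n$, hence onto $\mathbb{Z}$. Otherwise $H = \mathbb{Z}^n \rtimes \mathbb{Z}/2\mathbb{Z}$ with $\mathbb{Z}/2\mathbb{Z}$ acting by inversion (inherited from its action on $\mathbb{R}$); again $n=0$ is ruled out by $|H|>2$, so $n \geq 1$, and projecting $\mathbb{Z}^n$ onto a single coordinate yields a surjection $H \twoheadrightarrow \mathbb{Z} \rtimes \mathbb{Z}/2\mathbb{Z} = \mathbb{D}_\infty$; this is a group homomorphism because inversion commutes with coordinate projection.

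The only genuinely nontrivial step, and the one where the hypothesis enters, is invoking the structure of finitely generated subgroups of $\mathbb{R}$; without finite generation of $G$ one could have an image that is a dense subgroup of $\mathbb{R}$ not factoring through $\mathbb{Z}$, which is precisely why the corollary requires $G$ to be finitely generated. Everything else is a straightforward unpacking of the semidirect product structure.
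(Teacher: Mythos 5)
Your proof is correct and follows essentially the same route as the paper: invoke the first part of Proposition~\ref{prop:Amenable} to reduce to the claim about homomorphisms to $\mathbb{R}\rtimes\mathbb{Z}/2\mathbb{Z}$, use that a finitely generated subgroup of $\mathbb{R}$ is $\mathbb{Z}^n$, and split into the two cases according to whether the image lands in $\mathbb{R}$ or not, projecting onto a coordinate to get $\mathbb{Z}$ or $\mathbb{D}_\infty$. The only small notational difference is that the paper writes the argument inline within the proof of Proposition~\ref{prop:Amenable} rather than as a separate corollary, but the content is identical.
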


\subsection{Groups of homeomorphisms}

The main class of examples that is explored in this paper comes from groups of homeomorphisms. Given a compact Hausdorff space $X$ and a group $G$ acting faithfully by homeomorphisms on $X$, we produce a criterion that ensures that $G$ cannot admit general type (isometric) actions on hyperbolic spaces under the assumption that the action has certain dynamical properties (see Theorem \ref{thm:cH:criterion}). This is a dynamical version of an algebraic criterion from \cite{anthony}, which in certain cases can be combined with uniform perfection to obtain strong results about certain groups having $\nl$. The statements are natural, but a little too technical for this introduction, so we refer the reader to Section \ref{thompson} for the general results. Here, we limit ourselves to a list of examples we produce in this paper.

\begin{theorem}[Section \ref{thompson}]
\label{intro:thm:thompson}

The following groups are hereditary $\nl$:
\begin{enumerate}[noitemsep]
    \item Higman--Thompson groups $V_n(r)$;
    \item Some R\"{o}ver--Nekrashevych groups, including Neretin's group;
    \item Twisted Brin--Thompson groups $SV_\Gamma$ whenever $S$ is a countable faithful $\Gamma$-set;
    \item Higman--Stein--Thompson groups $T_{n_1, \ldots, n_k}$, with $k \geq 1$ and $n_1 = 2, n_2 = 3$;
    \item The golden ratio Thompson group $T_\tau$;
    \item The finitely presented group $S$ of piecewise projective homeomorphisms of the circle, from \cite{yashS};
    \item Symmetrizations of Higman--Thompson groups $QV_n(r)$ and $QT$.
\end{enumerate}
\end{theorem}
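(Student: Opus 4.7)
The strategy is a case-by-case application of the two machines developed earlier in the paper. For each group $G$ in the list, I would first exhibit a faithful action on a compact Hausdorff space satisfying the hypotheses of the dynamical criterion (Theorem \ref{thm:cH:criterion}), which rules out general type hyperbolic actions, and then combine this with uniform perfection via Proposition \ref{prop:ngt:to:nl} to promote $\ngt$ to full property $\nl$. The hereditary strengthening is essentially free: each of the listed groups is either simple, so its only finite-index subgroup is itself, or (as for $V_n(r), T_{n_1,\ldots, n_k}$ with nontrivial abelianization) has a simple commutator subgroup of finite index, to which the same compact Hausdorff action (restricted) and the same commutator argument apply verbatim.

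Concretely, the faithful compact Hausdorff actions I would use are as follows: $V_n(r)$ and $SV_\Gamma$ act on Cantor spaces, namely $\{1,\ldots,r\} \times \{1,\ldots,n\}^{\N}$ and $\{0,1\}^S$ respectively, with the faithfulness of $S$ as a $\Ga$-set ensuring that enough "small support" rigid-stabilizer elements exist for twisted Brin--Thompson groups; Neretin's group and the R\"over--Nekrashevych examples act on boundaries of regular rooted trees; $T_{n_1,\ldots,n_k}$, $T_\tau$ and Lodha's $S$ act on $S^1$; and $QV_n(r), QT$ inherit their actions from the ambient Higman--Thompson subgroups. In each case, the dynamical hypothesis of Theorem \ref{thm:cH:criterion} reduces to proximality together with the existence of elements with arbitrarily small support, which is a standard feature of Thompson-like groups and can be extracted from the existing literature or produced by direct construction. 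Uniform perfection is available for each of these families, either from published results or via a routine commutator-displacement trick (conjugate a small-support element into the support of any target, then fragment).

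The main obstacle will be verifying the dynamical hypothesis of Theorem \ref{thm:cH:criterion} for the less-studied examples: the twisted Brin--Thompson groups $SV_\Ga$, where the interaction between the $\Ga$-action on $S$ and the rigid stabilizer condition has to be handled carefully, and $T_\tau$, whose non-dyadic breakpoint structure forces a separate verification that elements of $T_\tau$ generate sufficiently rich dynamics on $S^1$. A secondary technical point is to ensure uniform perfection for the piecewise projective group $S$ of \cite{yashS} and for the symmetrizations $QV_n(r), QT$; these may require an adaptation rather than a direct citation of existing commutator-length results, though they should follow from the same general Thompson-type displacement argument. Once these verifications are in place, the theorem is assembled by one-line applications of Theorem \ref{thm:cH:criterion}, Proposition \ref{prop:ngt:to:nl}, and simplicity (or near-simplicity) in each case.
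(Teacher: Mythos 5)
Your overall architecture matches the paper's: the dynamical criterion of Theorem~\ref{thm:cH:criterion} to rule out general type actions, combined with a perfection/simplicity argument to promote $\ngt$ to (hereditary) $\nl$, and extension closure where needed. However, there are two genuine gaps.

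First, the treatment of $QV_n(r)$ and $QT$ is wrong. These are not subgroups of Higman--Thompson groups, and they do \emph{not} act faithfully on the boundary Cantor set (respectively, on $S^1$): the normal subgroup $S_\infty$ of finitely supported permutations of the tree vertices acts trivially there. So Theorem~\ref{thm:cH:criterion}, which requires a faithful action, cannot be applied to any ``inherited'' boundary action. The paper instead uses the short exact sequences $1 \to S_\infty \to QV_n(r) \to V_n(r) \to 1$ and $1 \to S_\infty \to QT \to T \to 1$, together with Proposition~\ref{prop:extension} (stability under extensions) and the observation that $S_\infty$, being torsion, is hereditary $\nl$. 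You need that route, not the dynamical criterion, for item (7).

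Second, the claim that the compact action and the commutator argument apply ``verbatim'' to the commutator subgroup is too glib. The commutator subgroup $G'$ is generally \emph{not} a topological full group, so the local realization hypothesis $(\mathbf{L})$ and the transitivity on $\mathcal{I}^{(3)}$ do not automatically pass down to $G'$; the paper must re-derive them by constructing suitable commutators, and this costs an extra degree of transitivity (transitivity on $\mathcal{I}^{(4)}$ in Corollary~\ref{cor:cantor:criterion}, and on $8$-tuples rather than $6$-tuples in Corollary~\ref{cor:circle:criterion}). Relatedly, your summary that the dynamical hypothesis ``reduces to proximality together with the existence of elements with arbitrarily small support'' understates what is needed: the criterion genuinely requires high transitivity on the collection $\mathcal{I}$, and this can fail for Thompson-like groups -- the paper's own remark after Example~\ref{ex:T} points out that $T_3$ acts transitively but \emph{not} doubly transitively on its breakpoint orbit, which is precisely why the hypothesis $n_1 = 2, n_2 = 3$ appears in item (4). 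So the transitivity verifications in items (4)--(6) are not routine and must be checked against the specific orbit structure, as the paper does by citing targeted results for each family.
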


In particular, the result on twisted Brin--Thompson groups shows that there are plenty of hereditary $\nl$ groups, in the following sense.

\begin{corollary}\label{cor:QI-embed}
Every finitely generated group quasi-isometrically embeds into a finitely generated simple group, which is moreover hereditary $\nl$.
\end{corollary}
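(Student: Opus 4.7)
The plan is to combine item (3) of Theorem \ref{intro:thm:thompson} with the Belk--Zaremsky construction of twisted Brin--Thompson groups. Concretely, Belk and Zaremsky showed that for every finitely generated group $\Gamma$ one can choose a countable faithful $\Gamma$-set $S$ (essentially, $\Gamma$ acting on itself by left multiplication, with a few orbits added to ensure the relevant hypotheses) such that the resulting twisted Brin--Thompson group $SV_\Gamma$ is finitely generated and simple, and such that the natural inclusion $\Gamma \hookrightarrow SV_\Gamma$ is a quasi-isometric embedding. I would invoke this result directly.

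Given this input, the proof is a one-liner. First, given a finitely generated group $G$, apply the Belk--Zaremsky construction to obtain a countable faithful $G$-set $S$ with the properties above; in particular $SV_G$ is a finitely generated simple group into which $G$ quasi-isometrically embeds. Second, appeal to item (3) of Theorem \ref{intro:thm:thompson}, which tells us that $SV_\Gamma$ is hereditary $\nl$ whenever $S$ is a countable faithful $\Gamma$-set; applying this with $\Gamma = G$ and the specific $S$ above gives the desired conclusion.

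There is no real obstacle inside the proof of the corollary itself: everything is done once we have Theorem \ref{intro:thm:thompson}(3) and the Belk--Zaremsky embedding theorem available. The genuine work sits one level up, in proving that twisted Brin--Thompson groups fall under the dynamical criterion of Theorem \ref{thm:cH:criterion}, and in Belk--Zaremsky's construction of a countable faithful $\Gamma$-set producing a finitely generated simple group containing $\Gamma$ quasi-isometrically. Since the latter is an external input and the former is the substantive result of Section \ref{thompson}, the corollary is essentially formal. In writing it up I would be careful to cite Belk--Zaremsky explicitly for the quasi-isometric embedding and the finite generation/simplicity statements, and to note that faithfulness of the $G$-action on $S$ is automatic from the construction, so that Theorem \ref{intro:thm:thompson}(3) applies without further checking.
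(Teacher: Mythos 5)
Your proposal is correct and follows essentially the same route as the paper: the paper's Theorem~\ref{thm:qi} (which restates Corollary~\ref{cor:QI-embed}) combines Proposition~\ref{cor:twistedV} (item (3) of Theorem~\ref{intro:thm:thompson}) with Belk--Zaremsky's Theorems A, B and the simplicity of $SV_\Gamma$, using $S = \Gamma$ acting on itself by left translation. One small note: the paper does not add any extra orbits to $S$; taking $S=\Gamma$ with the left-regular action already gives a faithful action with finitely many (indeed one) orbits, which is all Belk--Zaremsky require for finite generation and the quasi-isometric embedding.
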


We warn the reader that our criterion ensures Property $\nl$ under the existence of flexible enough actions on compact Hausdroff spaces, and it is not saying that in general all ``Thompson-like'' groups have Property $\nl$. For instance, Braided Thompson groups have rich actions on hyperbolic spaces \cite{bV}.

\subsection{Stability properties}

An advantage of our systematic approach is that it naturally leads to the study of the stability of these properties under natural group-theoretic operations. The following theorem summarizes some of our results from Section \ref{s:stability}.

\begin{theorem}[Section \ref{s:stability}]
\label{intro:thm:stability}

    The class of (hereditary) $\nl$ groups is closed under the following operations:
    \begin{enumerate}[noitemsep]
        \item Quotients;
        \item Directed unions;
        \item Extensions;
        \item Certain permutational wreath products.
    \end{enumerate}
The property hereditary $\nl$ is also preserved under commensurability.
\end{theorem}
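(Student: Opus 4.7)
My plan is to prove closure under extensions as the main ingredient, from which the other items follow without much additional work. For quotients and directed unions the arguments are immediate: if $\pi : G \twoheadrightarrow Q$ and $Q$ admits an action $Q \acts X$ with a loxodromic element $\pi(g)$, then pulling back along $\pi$ yields an action of $G$ in which $g$ itself is loxodromic, so $G$ fails $\nl$; and if $G = \bigcup_i G_i$ admits an action with a loxodromic element $g$, then $g \in G_i$ for some $i$, and the restricted $G_i$-action has that same loxodromic element. These contrapositives give closure under quotients and directed unions.

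For the extension $1 \to N \to G \to Q \to 1$ with $N$ and $Q$ both $\nl$, I assume $G$ acts on a hyperbolic space $X$ with a loxodromic element $g$ and aim for a contradiction. The restriction to $N$ has no loxodromic element, so by the classification of isometric hyperbolic actions (Theorem~\ref{thm:ClassHypAct}) it is either elliptic or horocyclic. In the horocyclic case, $N$ fixes a unique boundary point $\xi$, and normality of $N$ forces all of $G$ to fix $\xi$; thus the $G$-action is lineal or focal. The Busemann quasicharacter based at $\xi$ (or the Busemann quasicocycle of Proposition~\ref{prop:quasicocycle} in the lineal non-oriented case) is bounded on $N$ and hence descends to $Q$, where it witnesses that the image of $g$ acts non-trivially on $\mathbb{R}$, respectively on $\mathbb{R} \rtimes \mathbb{Z}/2\mathbb{Z}$, contradicting $Q$ being $\nl$. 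In the elliptic case, the $N$-orbits on $X$ are uniformly bounded, and quotienting $X$ by the $N$-orbit equivalence (with the induced pseudo-metric) yields a hyperbolic space on which $Q$ acts with $gN$ loxodromic, again a contradiction. The elliptic case is where I expect the bulk of the technical work, since one has to verify genuine hyperbolicity of the quotient and the persistence of loxodromicity of $g$ under the projection.

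The wreath-product claim reduces to extensions and directed unions: in $H \wr_X G = \bigl(\bigoplus_{x \in X} H\bigr) \rtimes G$, the base group is the directed union of the finite direct sums $\bigoplus_{x \in F} H$ for finite $F \subset X$, each of which is an iterated extension of copies of $H$, so if $H$ is $\nl$ then so is $\bigoplus_{x \in X} H$, and then $H \wr_X G$ is an extension of two $\nl$ groups. For the hereditary versions of the first four items, the same proofs go through once one observes that finite-index subgroups of a quotient, directed union, extension, or wreath product are naturally built from finite-index subgroups of the ingredients. Finally, for commensurability of hereditary $\nl$: given $G$ hereditary $\nl$ and $H$ commensurable with $G$ via a common finite-index subgroup $K$, and any finite-index $L \leq H$, I pass to a finite-index normal subgroup $M$ of $L$ contained in $L \cap K$; then $M$ has finite index in $G$ and is therefore $\nl$, so $L$ is an extension of $\nl$ by the finite group $L/M$, hence $\nl$ by closure under extensions.
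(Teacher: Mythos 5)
The proposal handles quotients, directed unions, the wreath-product reduction, and commensurability correctly and in essentially the same way as the paper, and your horocyclic sub-case of the extension argument is sound. (One small over-reach there: since $N$ is normal, all of $G$ fixes the unique boundary point of $N$, so if $G$ is lineal the action must be orientable; the non-oriented lineal case and hence the Busemann quasicocycle never actually arise. The paper rules out the lineal case by first passing to an essential action and noting that a quasi-line admits no horocyclic action — either route works.)

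The genuine gap is in the elliptic sub-case of the extension argument. Your stated premise — that the $N$-orbits are \emph{uniformly} bounded — is false. If $N$ acts with bounded orbits on a hyperbolic space $X$, then indeed every orbit is bounded (by the triangle inequality the diameter of $N\cdot y$ is at most $2d(x,y)$ plus the diameter of $N\cdot x$), but this bound grows with $d(x,y)$, so there is no uniform bound. A simple instance: a finite rotation group of $\mathbb{H}^2$ fixing a point $p$ has $N$-orbits of diameter comparable to $2\,d(x,p)$. With only pointwise boundedness, it is not at all clear that the quotient pseudo-metric space $X/N$ is hyperbolic, nor that $g$ remains loxodromic there; these are precisely the issues you defer as ``the bulk of the technical work,'' but they do not resolve themselves once the uniform-boundedness claim is withdrawn. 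The paper's Lemma~\ref{lem:NormalElliptic} takes a different and considerably more careful route: it does not quotient $X$, but shows that the set $F$ of points whose $N$-orbits have diameter at most $2\delta$ (with $\delta$ a hyperbolicity constant) is nonempty and quasiconvex — nonemptiness via the existence of quasi-centers of bounded orbits, quasiconvexity via convexity of the metric — and then constructs a new graph $Y$ by adjoining ``apex'' vertices for small-diameter subsets, so that the inclusion $X\hookrightarrow Y$ is a quasi-isometry, the subgraph $Z$ of $N$-fixed apices is $G$-invariant, quasiconvex, and quasi-dense, and $N$ lies in the kernel of the $G$-action on $Z$. This is what makes the factorization through $Q$ precise (in the sense of Definition~\ref{def:factoringaction}), and it is also why the paper first replaces $X$ by the quasiconvex hull of the limit set so that the action is essential. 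Without an argument of this kind, the elliptic case remains open in your proposal.

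A minor secondary point: your wreath-product reduction (treating the base group as a directed union of iterated extensions) proves a correct \emph{sufficient} condition — that $A\wr_S B$ is $\nl$ whenever both $A$ and $B$ are $\nl$ — but the paper's Theorem~\ref{thm:WreathNL} isolates exactly when the wreath product is $\nl$, allowing for instance $A_s = \mathbb{Z}$ when all $B$-orbits are infinite. Since the theorem statement says ``certain'' wreath products, your reduction does address the claim, just less sharply.
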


The case of permutational wreath products is analyzed in detail, and the precise conditions that ensure that the resulting group is (hereditary) $\nl$ are both necessary and sufficient. They are expressed in terms of existence of quasimorphisms of the base group with some specific properties (see Section \ref{s:wreath}).

These stability results are useful to construct some further examples: indeed, the stability under extensions is explicitly used to cover some of the items of Theorem \ref{intro:thm:thompson}.

\subsection{Connection to other properties}

Throughout this paper an important role will be played by two weaker properties: $\ngt$ -- for \emph{No General Type} -- which prevents the existence of general type actions, and $\nne$ -- for \emph{No Non Elementary} -- which prevents the existence of non-elementary actions. In certain situations, such as the dynamical criterion, establishing these properties first is more natural, and the passage from Property $\ngt$ to $\nne$ to $\nl$ can be done separately by means of more analytic considerations, mostly relating to the absence of unbounded quasimorphisms.

The last section of this paper aims at initiating the discussion of Property (hereditary) $\nl$ as it relates to other popular areas of study in geometric group theory. This includes fixed point properties like Property (FA), Property ($\mathrm{F}\mathbb{R}$), hierarchical hyperbolicity, and actions on products of hyperbolic spaces. While we study some implications -- mostly to record our observations and serve as motivation -- a full investigation of the extent of this connection is beyond the scope of this paper; and so we leave it to future work to explore more thoroughly. 

Lastly, we study the relationship between these properties and the poset of hyperbolic structures introduced in \cite{ABO}, in particular we will see that Property $\nl$ is equivalent to a version of Property $\nl$ for cobounded actions. This will follow from the work contained in the appendix -- a general result about associating cobounded actions to arbitrary actions on hyperbolic spaces, while preserving several useful properties, which is also of independent interest.

\paragraph{Acknowledgements.} The authors wish to thank Yash Lodha for asking the question that motivated this paper. They are indebted to him, Waltraud Lederle, Nicol{\'a}s Matte Bon, Romain Tessera, Abdul Zalloum and Matt Zaremsky for useful conversations. We also thank the anonymous referee for a thorough reading of the paper and for making several useful suggestions. The first author was supported by the Deutsche Forschungsgemeinschaft (DFG, German Research Foundation) -Project-ID 427320536 – SFB 1442, as well as under Germany's Excellence Strategy EXC 2044 390685587, Mathematics Münster: Dynamics–Geometry–Structure. 

\paragraph{Outline of the paper.} We start with some preliminaries in Section \ref{sec:prelims}, covering the definitions and relations between the properties $\ngt, \nne$ and $\nl$. In Section \ref{s:amenable} we investigate amenable groups, and prove Theorem \ref{intro:thm:amenable}. In Section \ref{s:stability} we study stability properties and prove Theorem \ref{intro:thm:stability}. The dynamical criterion and the examples from Theorem \ref{intro:thm:thompson} are dealt with in Section \ref{thompson}. Finally, we explore connections to other properties in Section \ref{sec:otherprops}. The appendix outlines a method to pass from arbitrary actions to cobounded ones.

\section{Preliminaries}\label{sec:prelims}

Unless mentioned otherwise, all actions on hyperbolic spaces considered in this paper are by isometries. However, in Section \ref{thompson} we will consider groups acting by homeomorphisms on compact Hausdorff spaces. In this section, we shall recall some necessary background information as well as state some basic results and examples about the properties considered in this paper. These results will also be of relevance in later sections of this paper. 

We begin by recalling some standard facts and definitions pertaining to isometric group actions on hyperbolic spaces, which we sometimes call \emph{hyperbolic actions}. Given a hyperbolic space $X$, we denote by $\partial X$ its Gromov boundary. In general, $X$ is not assumed to be proper, and its boundary is defined as the set of equivalence classes of sequences convergent at infinity. Given a group $G$ acting on a hyperbolic space $X$,  we denote by $\Lambda (G)$ the set of limit points of $G$ on $\partial X$. That is, $$\Lambda (G)=\partial X\cap \overline{Gx},$$ where $\overline{Gx}$ denotes the closure of a $G$--orbit in $X\cup \partial X$, for any choice of basepoint $x\in X$.  This definition is independent of the choice of $x\in X$; for details the reader is referred to \cite{Gro}.  The action of $G$ is called \emph{elementary} if $|\Lambda (G)|\le 2$ and \emph{non-elementary} otherwise. The action of $G$ on $X$ naturally extends to a continuous action of $G$ on $\partial X$.

\subsection{Classification of isometries and hyperbolic actions} Given any action of a group $G$ on a hyperbolic space $X$, isometries can be of exactly one of the following three types. An element $g\in G$ is called
\begin{itemize}[noitemsep]
\item[(i)] \emph{elliptic} if $\langle g\rangle $ has bounded orbits;
\item[(ii)] \emph{loxodromic} if the map $n \mapsto g^nx, n \in \Z$ is a quasi-isometric embedding for some (equivalently any) $x \in X$; 
\item[(iii)] \emph{parabolic} otherwise.
\end{itemize} 

Every loxodromic element $g\in G$ has exactly $2$ fixed points $g{^{\pm \infty}}$ on $\partial X$, where $g{^{+\infty}}$ (respectively, $g{^{-\infty}}$) is the limit of the sequence $(g{^{n}}x)_{n\in \mathbb N}$ (respectively, $(g{^{-n}}x)_{n\in \mathbb N}$) for any fixed $x\in X$. Thus $\Lambda (\langle g\rangle) =\{ g{^{\pm \infty}}\}$. In a similar vein, a parabolic element has exactly one fixed point $\xi$ on $\partial X$, which is the limit of both sequences $(g{^{n}}x)_{n \in \mathbb{N}}$ and $(g{^{-n}}x)_{n \in \mathbb{N}}$. 

The following theorem summarizes the standard classification of groups acting on hyperbolic spaces due to Gromov \cite[Section 8.2]{Gro} and the results  \cite[Propositions 3.1 and 3.2]{Amen}.

\begin{theorem}\label{thm:ClassHypAct}
Let $G$ be a group acting on a hyperbolic space $X$. Then exactly one of the following conditions holds.
\begin{itemize}
\item[1)] $|\Lambda (G)|=0$. Equivalently,  $G$ has bounded orbits. In this case the action of $G$ is called \emph{elliptic}.

\item[2)] $|\Lambda (G)|=1$. In this case the action of $G$ is called \emph{horocyclic} (or \emph{parabolic}). A horocyclic action cannot be cobounded. 

\item[3)] $|\Lambda (G)|=2$. Equivalently, $G$ contains a loxodromic element and any two loxodromic elements have the same limit points on $\partial X$. In this case the action of $G$ is called \emph{lineal}. A lineal action of a group $G$ on a hyperbolic space is \emph{orientable} if no element of $G$ permutes the two limit points of $G$ on $\partial X$, and \emph{non-orientable} otherwise.

\item[4)] $|\Lambda (G)|=\infty$. Then $G$ always contains loxodromic elements. In this case the action of $G$ is called \emph{non-elementary}. In turn, this case breaks into two subcases.
\begin{itemize}
\item[(a)] $G$ fixes a point of $\partial X$. Equivalently, any two loxodromic elements of $G$ have a common limit point on the boundary. In this case the action of $G$ is called \emph{focal} (or \emph{quasi-parabolic}). 
\item[(b)] $G$ does not fix any point of $\partial X$. In this case the action of $G$ is said to be of \emph{general type}.
\end{itemize}
\end{itemize}
\end{theorem}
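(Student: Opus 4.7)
The plan is to proceed by case analysis on the cardinality of the limit set $\Lambda(G)$, invoking the standard Gromov classification of individual isometries as a starting point. The statement is a (by now classical) recollection, so my proposal focuses on how the pieces fit together; the technical dynamical arguments are worked out in detail in \cite{Gro} and \cite{Amen}. First I would invoke the trichotomy for individual isometries: any $g$ acting on a hyperbolic space $X$ is exactly one of elliptic, loxodromic, or parabolic, with fixed-point sets on $\partial X$ of cardinality $0$, $2$, or $1$ respectively. This is established via stable translation length together with the stability of quasi-geodesics in $\delta$-hyperbolic spaces, and in particular justifies the assertions about the limit sets of cyclic subgroups generated by parabolic or loxodromic elements.

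The main step, and the principal obstacle, is to show that $|\Lambda(G)| \in \{0,1,2,\infty\}$, together with the production of loxodromics once the limit set is large enough. Specifically, given three distinct boundary points $\xi_1, \xi_2, \xi_3 \in \Lambda(G)$ approximated by orbit points $g_i x$, I would run a ping-pong-type argument exploiting the thin-triangle property of $X$: one manufactures group elements with disjoint attracting and repelling basins on $\partial X$, obtains loxodromics with pairs of fixed points distinct from $\{\xi_i\}$, and by iterating this procedure produces uncountably many limit points, so that no finite value $\geq 3$ is possible.

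Next I would dispatch the five resulting cases. For $|\Lambda(G)|=0$, all orbits are bounded and the action is elliptic by definition. For $|\Lambda(G)|=2$, if no loxodromic existed then every element with unbounded orbit would be parabolic; two parabolics with distinct boundary fixed points combine, via ping-pong again, to yield a loxodromic, so either a loxodromic exists or all parabolics share a fixed point, forcing $|\Lambda(G)|\leq 1$, a contradiction. The loxodromic's two fixed points then exhaust $\Lambda(G)$ and are therefore shared by any other loxodromic. For $|\Lambda(G)|=\infty$, loxodromics exist by the main step, and the focal versus general type dichotomy is by definition of whether $G$ globally fixes a boundary point; the equivalent characterization as all loxodromics sharing a common limit point is immediate, since in the focal case the fixed boundary point is the unique common fixed point of the loxodromics.

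The remaining case $|\Lambda(G)|=1$ is horocyclic, and here I would separately verify that the action cannot be cobounded. If it were, every point of $X$ would lie within uniformly bounded distance of some orbit point, but orbit points accumulate only at the unique boundary fixed point $\xi$; a thin-triangles estimate then shows that points on geodesic rays emanating from a basepoint in directions away from $\xi$ eventually escape every bounded neighborhood of the orbit, giving the contradiction. This completes the classification, and the proof reduces to assembling these classical ingredients in the appropriate order.
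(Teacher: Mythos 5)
The paper does not actually prove this theorem: it is presented as ``summariz[ing] the standard classification\ldots\ due to Gromov'' and \cite[Propositions 3.1 and 3.2]{Amen}, so the paper's own treatment is a citation, and any proof you supply is by construction a different (full-length) route. The question is therefore whether your sketch is sound as a self-contained argument, and there is a genuine gap at the crux.

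In the case $|\Lambda(G)|=2$ you argue: no loxodromics implies every unbounded element is parabolic, all parabolics share a boundary fixed point, ``forcing $|\Lambda(G)|\leq 1$.'' That last implication does not follow. The limit set $\Lambda(G)=\partial X\cap\overline{Gx}$ is the accumulation set of the \emph{entire} orbit, and it can strictly contain the union of the limit sets of cyclic subgroups: the orbit $Gx$ may accumulate at boundary points that are not of the form $\lim g^n x$ for any single $g$. Knowing that every parabolic fixes the same point $\xi$ does not by itself control where $Gx$ accumulates. What is actually needed — and this is precisely the nontrivial content of \cite[Propositions 3.1 and 3.2]{Amen} that the paper points to — is the stronger statement that a group with unbounded orbit and no loxodromic has its whole orbit converging to a \emph{single} point of $\partial X$; the proof goes through horofunctions and a translation-length estimate, not an element-by-element reduction. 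The same concern touches your ping-pong step for $|\Lambda(G)|\geq 3$: the hypothesis $g_i x\to\xi_i$ controls only the forward orbit, whereas ping-pong requires simultaneously controlling $g_i^{-1}x$, and nothing you assume gives that. The robust route is the one the paper cites: prove the key proposition above, deduce that $|\Lambda(G)|\geq 2$ forces a loxodromic, then use conjugation of loxodromics to show $|\Lambda(G)|\geq 3$ forces $|\Lambda(G)|=\infty$. Your argument that horocyclic actions are not cobounded is fine once it is known that the unique accumulation point is also a global fixed point, which again comes from the cited propositions rather than from your element-wise considerations.
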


\begin{remark} In the case of non-orientable lineal actions, the action on the set of limit points of $G$ induces a surjective homomorphism $G \to \Z/ 2\Z$. The kernel of this homomorphism has index $2$, and the restriction of the action to this kernel is lineal and orientable.
\end{remark}

In addition to Property $\nl$ defined in the introduction, we are also interested in the following related properties.

\begin{definition}
\label{def:ngt:nne}

We say that a group has property 
\begin{itemize}[noitemsep]
	\item[1.] $\ngt$ if it does not admit an action of general type on a hyperbolic space;
	\item[2.] $\nne$ if it does not admit a non-elementary action on a hyperbolic space.
\end{itemize}
\end{definition} 

In other words, $\ngt$ excludes general type actions and $\nne$ excludes focal and general type actions. These can be thought of as fixed point properties as either the group has a bounded orbit (in case of elliptic actions) or it has an orbit in the boundary of size at most $2$ (in case of lineal or focal actions). Property $\ngt$ was considered in \cite{anthony} with the name \emph{hyperbolically elementary}.

\subsection{The Busemann quasimorphism} 

A function $q\colon G\to \mathbb R$ is a \emph{quasimorphism} (or \emph{quasicharacter}) if there exists a constant $D$ such that $|q(gh)-q(g)-q(h)|\le D$ for all $g,h\in G$. The infimum of such $D$ is called the \emph{defect} of $q$ and is denoted $D(q)$; in particular $D(q) = 0$ if and only if $q$ is a homomorphism. If, in addition, the restriction of $q$ to every cyclic subgroup of $G$ is a homomorphism, $q$ is called a \emph{homogeneous quasimorphism} (or \emph{pseudocharacter}). Every quasimorphism $q$ gives rise to a homogeneous quasimorphism $\beta$ defined by the following limit, which always exists. For any $g\in G$:
$$
\beta(g):=\lim_{n\to \infty} \frac{q(g{^{n}})}n.
$$
The function $\beta$ is called the \emph{homogenization of $q$.} It is straightforward to check that
$|\beta(g) -q(g)|\le D(q)$ for all $g\in G$. In particular $\beta$ is also a quasimorphism, and moreover it follows directly from the definition that it is homogeneous \cite[Section 2.2.2]{scl}. Note moreover that any bounded homogeneous quasimorphism is trivial, therefore it follows that the homogenization of $q$ is the \emph{unique} homogeneous quasimorphism at a bounded distance from $q$.

In some sense, unbounded quasimorphisms on a group detect properties of negative curvature of the group, as discussed below. In the opposite direction, we have the following:

\begin{proposition}[see e.g. {\cite[Proposition 2.65]{scl}}]
\label{prop:amenable:qm}

Let $G$ be an amenable group. Then every homogeneous quasimorphism on $G$ is a homomorphism.
\end{proposition}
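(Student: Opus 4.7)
The plan is to exploit an invariant mean to produce a homomorphism at bounded distance from $q$, and then invoke the uniqueness of the homogenization mentioned earlier in the paper to conclude that $q$ itself is already that homomorphism.

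First, since $G$ is amenable, fix a left-invariant mean $m \colon \ell^\infty(G) \to \mathbb{R}$. For each $g \in G$, define
\[
f_g \colon G \to \mathbb{R}, \qquad f_g(h) := q(gh) - q(h).
\]
The defect inequality gives $|f_g(h) - q(g)| \leq D(q)$ for all $h \in G$, so $f_g \in \ell^\infty(G)$ with $\|f_g\|_\infty \leq |q(g)| + D(q)$. I would then set $\varphi(g) := m(f_g)$. From the pointwise estimate above, $|\varphi(g) - q(g)| \leq D(q)$, so $\varphi$ is at bounded distance from $q$ (and is itself a quasimorphism).

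Next I would check that $\varphi$ is a homomorphism using left-invariance of $m$. Writing out the telescoping identity
\[
f_{g_1 g_2}(h) = \bigl( q(g_1 g_2 h) - q(g_2 h) \bigr) + \bigl( q(g_2 h) - q(h) \bigr) = f_{g_1}(g_2 h) + f_{g_2}(h),
\]
and applying $m$ together with left-invariance (which converts $m_h(f_{g_1}(g_2 h))$ into $m_{h'}(f_{g_1}(h'))$ after the change of variable $h' = g_2 h$), one obtains $\varphi(g_1 g_2) = \varphi(g_1) + \varphi(g_2)$.

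Finally, I would invoke uniqueness of the homogenization recorded just before the statement: there is a \emph{unique} homogeneous quasimorphism at bounded distance from any given quasimorphism. Since $\varphi$ is a homomorphism it is automatically homogeneous, and by construction it lies at bounded distance from $q$; but $q$ is also homogeneous and at bounded distance from itself. Hence $q = \varphi$, which is a homomorphism. The only real subtlety is the invariance manipulation in the middle step, and even that is routine once one writes down the telescoping decomposition of $f_{g_1 g_2}$.
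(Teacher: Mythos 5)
Your proof is correct, and it is the standard argument (indeed essentially the one given in the reference the paper cites, Calegari's Proposition 2.65): average the difference functional $f_g(h)=q(gh)-q(h)$ against a left-invariant mean to obtain a homomorphism $\varphi$ at distance $\le D(q)$ from $q$, then invoke uniqueness of the homogeneous representative in a bounded class. The telescoping identity and the change-of-variable step via left-invariance are handled correctly, so there is nothing to add.
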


We will discuss hyperbolic actions of amenable groups in detail in Section \ref{s:amenable}, the reader is referred there for reminders about the definition of amenability and relevant subclasses.

Given any action of a group $G$ on a hyperbolic space $X$ fixing a point $\xi$ on the boundary, one can associate a natural homogeneous quasimorphism $\beta$ called the \emph{Busemann quasimorphism (based at $\xi$)}. We will briefly go over the definition and main properties of $\beta$, and refer the reader to \cite[Sec. 7.5.D]{Gro} and \cite[Sec. 4.1]{Man} for further details.

Let $\mathbf{x} = (x_n)_{n \geq 0}$ be a sequence converging to $\xi \in \partial X$. We define
$q_{\mathbf{x}} : G \to \mathbb{R}$ by the rule
$$q_{\mathbf{x}}(g) = \limsup\limits_{n \to \infty} (d(g x_0, x_n) - d(x_0, x_n)).$$

\begin{proposition}[{\cite[Sec. 4.1]{Man}}]
\label{prop:busemann}

With the above setup, the following properties hold:
\begin{enumerate}
    \item[(1)] $q_{\mathbf{x}}$ is a quasimorphisms.
    \item[(2)] The homogenization of $q_{\mathbf{x}}$ is independent of $\mathbf{x}$: we call it the \emph{Busemann quasimorphism} and denote it by $\beta_\xi$, or simply by $\beta$ when it is clear from the context.
    \item[(3)] $\beta(g) \neq 0$ if and only if $g$ is loxodromic. In particular, $\beta$ is not identically zero whenever $G \curvearrowright X$ is focal or orientable lineal.
\end{enumerate}
\end{proposition}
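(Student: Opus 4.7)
The plan is to introduce the two-variable Busemann function $B(a,b) := \limsup_{n\to\infty}(d(a,x_n) - d(b,x_n))$, note that $q_{\mathbf{x}}(g) = B(gx_0, x_0)$, and derive parts (1) and (2) from two standard $\delta$-hyperbolic estimates on $B$: (i) the near-cocycle identity $|B(a,c) - B(a,b) - B(b,c)| \le C\delta$, and (ii) sequence-independence, $|B(a,b) - B_{\mathbf{y}}(a,b)| \le C'\delta$ for any other sequence $\mathbf{y}$ converging to $\xi$. Both come from the four-point condition applied with one vertex at infinity, equivalently from the fact that Gromov products $(a|\xi)_b$ are defined up to $\delta$. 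A consequence of (ii) together with $g\xi = \xi$ is the equivariance-up-to-$\delta$ estimate $|B(ga, gb) - B(a,b)| \le C'\delta$, since the sequence $(g^{-1}x_n)_n$ also converges to $\xi = g^{-1}\xi$.

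With these in hand, part (1) is a short computation:
\begin{align*}
q_{\mathbf{x}}(gh) &= B(ghx_0, x_0) = B(ghx_0, gx_0) + B(gx_0, x_0) + O(\delta) \\
&= B(hx_0, x_0) + q_{\mathbf{x}}(g) + O(\delta) = q_{\mathbf{x}}(h) + q_{\mathbf{x}}(g) + O(\delta),
\end{align*}
using the near-cocycle identity and then equivariance. Part (2) is immediate from (ii): $|q_{\mathbf{x}}(g) - q_{\mathbf{y}}(g)| \le C'\delta$ uniformly in $g$, and since homogenization is the unique homogeneous quasimorphism at bounded distance from $q_{\mathbf{x}}$, both sequences produce the same $\beta_\xi$.

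For part (3), I split by the classification of isometries. For elliptic $g$, $\{g^n x_0\}$ is bounded so $|q_{\mathbf{x}}(g^n)| \le d(g^n x_0, x_0)$ is bounded and $\beta(g)=0$. For parabolic $g$, the stable translation length $\tau(g) := \lim_n d(x_0, g^n x_0)/n$ vanishes by the standard dichotomy, and the same bound $|q_{\mathbf{x}}(g^n)|/n \le d(g^n x_0, x_0)/n$ forces $\beta(g)=0$. For loxodromic $g$, the hypothesis $g\xi = \xi$ forces $\xi \in \{g^{+\infty}, g^{-\infty}\}$; choosing $\mathbf{x} = (g^n x_0)_n$ or $(g^{-n} x_0)_n$ accordingly, a direct computation using $d(g^k x_0, g^n x_0) = d(x_0, g^{n-k}x_0)$ and the linear growth of $n \mapsto d(x_0, g^n x_0)$ yields $\beta(g) = \pm \tau(g) \neq 0$. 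The ``in particular'' clause is immediate since focal and orientable lineal actions by definition fix a boundary point and contain loxodromic elements.

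The main technical obstacle is establishing the two $\delta$-hyperbolic estimates (i) and (ii) uniformly. Limsups do not interact well with sums, so one must exploit the fact that $d(a, x_n) - d(b, x_n)$ is not merely bounded but ``eventually Cauchy up to $\delta$'' in a hyperbolic space for $x_n \to \xi$, so that its limsup and liminf agree up to $O(\delta)$. Once this technical input is in place, the remainder of the proposition unfolds with minor computations.
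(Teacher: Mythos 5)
The paper does not give a proof of this proposition; it cites it directly to Manning \cite{Man}. Your proposal supplies a complete standalone argument, and it is correct: it is essentially the standard treatment from that reference. The two estimates you isolate for the two-variable function $B(a,b)$ -- the near-cocycle identity up to $O(\delta)$ and the independence of the defining sequence up to $O(\delta)$ -- are exactly the right technical inputs, and you correctly flag that the real work is showing $d(a,x_n)-d(b,x_n)$ is eventually Cauchy up to $O(\delta)$ so that $\limsup$ and $\liminf$ agree to within $O(\delta)$ (otherwise one cannot add limsups). Your derivation of equivariance-up-to-$\delta$ from sequence-independence together with $g\xi=\xi$ is the clean way to get the quasimorphism defect bound, and the case split in part (3) (bounded orbit; sublinear drift for parabolics via $|q_{\mathbf{x}}(g^n)|\le d(g^nx_0,x_0)$; linear drift $\tau(g)$ for loxodromics with $\xi\in\{g^{\pm\infty}\}$) is exactly how the nonvanishing criterion is established. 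In short: the proposal is correct and takes the same route as the cited source; the only thing to tighten in a fully written version is the $O(\delta)$-Cauchy lemma, which you already correctly identify as the one nontrivial step.
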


If $\beta$ is a homomorphism, then the action $G\curvearrowright X$ is called \emph{regular}.

\medskip
Conversely, given a quasimorphism on a group $G$, one can always construct an orientable lineal action. 

\begin{lemma}[{\cite[Lemma 4.15]{ABO}}]\label{lem:constrlin} Let $p\colon G \to \R$ be an unbounded homogeneous quasimorphism. Let $C$ be any constant such that $D(p) \leq C/2$ and there exists a value of $p$ in the interval $(0,C/2)$. Let $$ X =X_{p,C} =\{g \in G: |p(g)| < C\}.$$ Then $X$ generates $G$ and the map $p \colon (G,d_X) \to \R$ is a quasi-isometry. In particular, if $\Ga(G,X)$ denotes the Cayley graph of $G$ with respect to $X$, $G \acts \Gamma(G,X)$ is an orientable lineal action.
\end{lemma}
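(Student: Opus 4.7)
The plan is to use the element given by the hypothesis (some $h \in G$ with $0 < p(h) < C/2$) as an ``approximate ruler'' on $G$, since by homogeneity $p(h^{k}) = kp(h)$ hits a $p(h)$-spaced lattice in $\R$, with $p(h) < C/2$. The whole argument rests on the interaction between this spacing and the defect bound $D(p) \le C/2$.

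First I would check that $X$ generates $G$. Given any $g \in G$, choose $k \in \Z$ with $|p(g) - kp(h)| < p(h) < C/2$. Since $p$ is a homogeneous quasimorphism, $p(g^{-1}) = -p(g)$ and the quasimorphism inequality gives
\[
|p(gh^{-k}) - p(g) + kp(h)| \le D(p) \le C/2,
\]
so $|p(gh^{-k})| < C/2 + C/2 = C$, meaning $gh^{-k} \in X$. Since $h, h^{-1} \in X$ (as $|p(h^{\pm 1})| < C/2 < C$), we get $g = (gh^{-k}) h^{k} \in \langle X\rangle$.

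Next I would prove that $p \colon (G,d_X) \to \R$ is a quasi-isometry. For the upper bound, if $d_X(e,g) = n$ write $g = x_1 \cdots x_n$ with $x_i \in X$; induction on $n$ together with the quasimorphism inequality yields
\[
|p(g)| \le \sum_{i=1}^n |p(x_i)| + (n-1)D(p) \le nC + (n-1)C/2 \le \tfrac{3}{2}Cn,
\]
so $|p(g)| \le \tfrac{3}{2}C\,d_X(e,g)$. For the lower bound, the decomposition $g = (gh^{-k})h^{k}$ from the previous paragraph gives $d_X(e,g) \le 1 + |k| \le |p(g)|/p(h) + 2$. Translating both estimates by the identity $|p(g^{-1}g') - (p(g')-p(g))| \le D(p)$ gives
\[
\tfrac{1}{\frac{3}{2}C}\bigl(|p(g)-p(g')| - D(p)\bigr) \;\le\; d_X(g,g') \;\le\; \tfrac{1}{p(h)}|p(g)-p(g')| + \text{const},
\]
which is the quasi-isometric embedding statement. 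Coarse surjectivity is immediate: $\{kp(h) : k \in \Z\}$ is $p(h)$-dense in $\R$, and each $kp(h)$ equals $p(h^k)$ exactly.

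Finally, the Cayley graph $\Gamma(G,X)$ is quasi-isometric to $\R$, hence hyperbolic with two-point Gromov boundary, so the $G$-action on it is lineal with $\Lambda(G) = \{+\infty, -\infty\}$. Orientability follows because $p$ is homogeneous: an element that swapped the two boundary points would induce a reflection on $\R$, but the quasi-action of $g$ on $\R$ (via $p$) is approximate translation by $-p(g)$, and no element can simultaneously translate $+\infty$ to $-\infty$ under a quasi-isometry. Equivalently, unboundedness of $p$ produces loxodromics, and for every such $g$ the Busemann quasimorphism at $+\infty$ equals $-p(g)$, which is nonzero with consistent sign on $\langle g \rangle$; so the limit-point swapping subgroup is trivial.

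The main obstacle is purely bookkeeping of constants: the delicate point is that both the upper bound on $D(p)$ and the existence of a value of $p$ in $(0, C/2)$ must be used together in the generation step, so that one round of ``subtracting off $h^{k}$'' lands inside $X$ rather than just closer to it. Once this is arranged there is no iteration needed and the rest is a direct translation between $|p|$ and $d_X$.
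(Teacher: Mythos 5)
The paper cites this statement from~\cite[Lemma~4.15]{ABO} and does not reprove it, so there is no in-text proof to compare against; your argument is the standard one and is correct. The key points --- using an element $h$ with $0 < p(h) < C/2$ to reduce any $g$ to $gh^{-k}\in X$ via one application of the defect bound, the telescoping estimate $|p(x_1\cdots x_n)| \le nC + (n-1)D(p)$ for the upper bound, the generation estimate $d_X(e,g)\le |k|+1$ for the lower bound, coarse surjectivity from density of $p(h)\mathbb{Z}$, and orientability from the quasi-action on $\mathbb{R}$ being by approximate translations $p(g')\mapsto p(gg')\approx p(g)+p(g')$ --- all check out (modulo a harmless sign typo, ``translation by $-p(g)$'' should be $p(g)$).
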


An immediate corollary is the following implication for groups with Property $\nl$. 

\begin{corollary}
\label{cor:nl:noqm}

If $G$ has Property $\nl$, then $G$ has no unbounded quasimorphisms. In particular, if $G$ is finitely generated and $\nl$, then $G$ has finite abelianization. 
\end{corollary}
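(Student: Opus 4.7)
The strategy is a direct application of Lemma \ref{lem:constrlin}, which manufactures an orientable lineal action from any unbounded homogeneous quasimorphism. The plan is to argue by contrapositive: assuming $G$ admits an unbounded quasimorphism, I will produce an action on a hyperbolic space with loxodromic elements, contradicting $\nl$.

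First I would reduce to the case of a homogeneous quasimorphism. Given any unbounded quasimorphism $q$ on $G$, its homogenization $\beta$ satisfies $|\beta(g) - q(g)| \leq D(q)$ for all $g \in G$, as recalled in the paragraph preceding Proposition \ref{prop:amenable:qm}. Hence if $q$ is unbounded then so is $\beta$. Feeding $\beta$ into Lemma \ref{lem:constrlin} with a suitable constant $C$ produces a generating set $X$ of $G$ such that the Cayley graph $\Gamma(G,X)$ is quasi-isometric to $\mathbb{R}$ via $\beta$, and such that the translation action of $G$ on $\Gamma(G,X)$ is orientable lineal. By Theorem \ref{thm:ClassHypAct}(3), any lineal action contains loxodromic elements, contradicting Property $\nl$. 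This proves the first assertion.

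For the second assertion, I would argue by contrapositive again. Suppose $G$ is finitely generated with infinite abelianization. Then $G^{\mathrm{ab}}$ is a finitely generated infinite abelian group, so it has positive free rank and thus surjects onto $\mathbb{Z}$. Composing with the abelianization map yields a surjective homomorphism $G \to \mathbb{Z}$, which is in particular an unbounded quasimorphism (of defect zero). By the first part, $G$ cannot be $\nl$.

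There is no real obstacle here: the corollary is a direct packaging of Lemma \ref{lem:constrlin} together with the standard homogenization trick. The only subtle point worth flagging is the necessity of finite generation in the second statement — a non-finitely-generated group can have infinite torsion abelianization (for instance $\bigoplus_{\mathbb{N}} \mathbb{Z}/2\mathbb{Z}$) while still admitting no unbounded quasimorphisms, so the passage from ``no unbounded quasimorphisms'' to ``finite abelianization'' genuinely uses that $G^{\mathrm{ab}}$ is finitely generated.
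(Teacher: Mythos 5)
Your proof is correct and is exactly the argument the paper intends — the corollary appears immediately after Lemma~\ref{lem:constrlin} and is introduced as an immediate consequence of it, which is precisely what you supply (homogenize, apply Lemma~\ref{lem:constrlin} to get an orientable lineal action, contradict $\nl$; for the second part, note that a finitely generated infinite abelianization surjects onto $\mathbb{Z}$, giving an unbounded homomorphism). Your remark about why finite generation is needed in the second statement is also a correct and worthwhile observation.
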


\subsection{The Busemann quasicocycle}

The Busemann quasimorphism is classically defined only for actions fixing a point in the boundary. Here we introduce a generalization to non-orientable lineal actions, where instead two points on the boundary are fixed setwise. This entails the notion of a \emph{quasicocycle}.

\begin{definition}
Let $\varepsilon : G \to (\{ \pm 1 \}, \times)$ be a homomorphism. An \emph{$\varepsilon$-quasicocycle} is a map $\varphi : G \to \mathbb{R}$ such that there exists a constant $D$ with
$$|\varphi(gh) - \varphi(g) - \varepsilon(g)\varphi(h)| \leq D$$
for all $g, h \in G$. The infimum of such $D$ is called the \emph{defect of $\varphi$} and is denoted $D(\varphi)$.
\end{definition}

Note that if $\varepsilon$ is the trivial homomorphism, we recover the usual definition of quasimorphism. Just as with quasimorphisms and homomorphisms, there is a simplest type of $\varepsilon$-quasicocycle:

\begin{lemma}
\label{lem:cocycle}

Let $\varphi$ be an $\varepsilon$-quasicocycle. Then $D(\varphi) = 0$ if and only if the map $G \to \mathbb{R} \rtimes \{ \pm 1 \} : g \mapsto (\varphi(g), \varepsilon(g))$ is a homomorphism. In this case, $\varphi$ is called an \emph{$\varepsilon$-cocycle}.
\end{lemma}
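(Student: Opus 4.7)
The plan is simply to unfold the two definitions and verify that they match. The statement is an ``if and only if'' of the form ``defect zero $\Leftrightarrow$ homomorphism,'' entirely parallel to the familiar fact that a quasimorphism of defect zero is a homomorphism, and the proof should be essentially definitional.

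First, I would observe that $D(\varphi) = 0$ means the infimum of constants $D$ for which
$$|\varphi(gh) - \varphi(g) - \varepsilon(g)\varphi(h)| \leq D \quad \text{for all } g, h \in G$$
equals zero. Since the quantities $|\varphi(gh) - \varphi(g) - \varepsilon(g)\varphi(h)|$ are nonnegative reals, this infimum being zero is equivalent to the exact equality
$$\varphi(gh) = \varphi(g) + \varepsilon(g)\varphi(h) \quad \text{for all } g, h \in G. \qquad (\ast)$$

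Next, I would spell out the group law in $\mathbb{R} \rtimes \{\pm 1\}$, where $\{\pm 1\}$ acts on $\mathbb{R}$ by multiplication: explicitly,
$$(a, s) \cdot (b, t) = (a + s b, \, st).$$
Writing $\Phi(g) := (\varphi(g), \varepsilon(g))$, the condition $\Phi(gh) = \Phi(g)\Phi(h)$ reads
$$(\varphi(gh), \varepsilon(gh)) = (\varphi(g) + \varepsilon(g)\varphi(h), \, \varepsilon(g)\varepsilon(h)).$$
The second coordinates always agree because $\varepsilon$ is assumed to be a homomorphism, so $\Phi$ is a homomorphism if and only if the first coordinates agree, which is precisely $(\ast)$.

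Combining these two observations yields the equivalence. There is no real obstacle here: the only subtlety worth flagging is checking that the semidirect product convention used (with $\{\pm 1\}$ acting on $\mathbb{R}$ by multiplication) is the one that makes the cocycle identity appear, which is exactly the natural choice motivated by the Busemann quasicocycle construction that follows.
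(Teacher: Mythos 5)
Your proof is correct and is exactly the definitional unfolding the paper has in mind; indeed, the paper omits the proof entirely, remarking that it is an immediate application of the definitions. No further comment is needed.
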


The proof of the above lemma is an immediate application of the definition, and we leave it to the reader as an exercise. This setup allows us to get the following proposition. 

\begin{proposition}
\label{prop:quasicocycle}

Let $G$ be a group with a non-orientable lineal action on the hyperbolic space $X$, with limit points $\xi_{\pm}$. Define $\varepsilon(g) = -1$ if $g$ swaps $\xi_{\pm}$, and $\varepsilon(g) = 1$ otherwise. Let $K$ be the kernel of $\varepsilon$. Then the Busemann quasimorphism $\beta_{\xi_+} : K \to \mathbb{R}$ extends to an $\varepsilon$-quasicocycle $\varphi$ on $G$ of defect at most $2 D(\beta_{\xi_+})$. In particular, $\varphi$ is a cocycle if and only if the action of $K$ on $X$ is regular.
\end{proposition}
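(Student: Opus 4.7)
The plan is to choose a representative $s_0 \in G \setminus K$ (which exists because $\varepsilon$ is non-trivial), define
\[ \varphi(g) := \begin{cases} \beta_{\xi_+}(g) & \text{if } g \in K, \\ \beta_{\xi_+}(gs_0^{-1}) & \text{if } g \notin K, \end{cases} \]
(so that $gs_0^{-1} \in K$ in the second branch and $\varphi$ is well-defined), and then verify the $\varepsilon$-quasicocycle inequality by a case analysis according to whether $g,h$ lie in $K$.

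Two observations will carry the analysis. First, unwinding the definition of the Busemann quasimorphism gives the conjugation identity $\beta_{\xi_+}(sks^{-1}) = \beta_{s^{-1}\xi_+}(k)$ for any $s \in G$, $k \in K$; for $s \in G \setminus K$ this specialises to $\beta_{\xi_+}(sks^{-1}) = \beta_{\xi_-}(k) = -\beta_{\xi_+}(k)$, using the standard fact that the Busemann quasimorphisms at the two limit points of an orientable lineal action are opposite. Second, loxodromic elements must fix $\xi_+$ and $\xi_-$ individually, so no element of $G \setminus K$ is loxodromic; in particular $s_0^2 \in K$ is non-loxodromic and $\beta_{\xi_+}(s_0^2) = 0$ by Proposition \ref{prop:busemann}.(3).

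With these in hand, the three cases in which at least one of $g, h$ lies in $K$ reduce to a single application of the quasimorphism inequality for $\beta_{\xi_+}$ on $K$, combined where necessary with the conjugation identity to reconcile the sign $\varepsilon(g) = -1$. Each such case incurs error at most $D(\beta_{\xi_+})$. The main obstacle is the case $g, h \notin K$ (so $gh \in K$ and $\varepsilon(g) = -1$): setting $k_0 := s_0 h \in K$, I would use the factorisations $gh = (gs_0^{-1})(s_0 h)$ and $hs_0^{-1} = (s_0^{-1} k_0 s_0) \cdot s_0^{-2}$, apply the quasimorphism inequality to each (one error of $D(\beta_{\xi_+})$ apiece), rewrite $\beta_{\xi_+}(s_0^{-1} k_0 s_0) = -\beta_{\xi_+}(k_0)$ via the conjugation identity, and use that $\beta_{\xi_+}(s_0^{-2}) = 0$. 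The $\beta_{\xi_+}(k_0)$ contributions then cancel, leaving a total error of at most $2 D(\beta_{\xi_+})$.

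The final claim on cocycles is immediate from this bookkeeping: every error term traced through the argument is a multiple of $D(\beta_{\xi_+})$, so if $K \curvearrowright X$ is regular then $\beta_{\xi_+}$ is a homomorphism and $\varphi$ is an $\varepsilon$-cocycle; conversely, the cocycle condition applied to pairs from $K$ forces $\beta_{\xi_+} = \varphi|_K$ to be a homomorphism, i.e.\ the action of $K$ on $X$ to be regular.
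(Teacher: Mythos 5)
Your proof is correct and takes essentially the same approach as the paper's: the same definition of $\varphi$ (your $\varphi(g) = \beta_{\xi_+}(gs_0^{-1})$ for $g \notin K$ is precisely the paper's $\varphi(ks^i) = \beta(k)$ under $g = ks_0$), the same conjugation identity $\beta_{\xi_+}(sks^{-1}) = -\beta_{\xi_+}(k)$ as the central lemma, and the same case split producing $D(\beta)$ error when at most one of $g,h$ leaves $K$ and $2D(\beta)$ when both do. The only cosmetic difference is your derivation of $\beta_{\xi_+}(s_0^2) = 0$ (from $s_0^2$ being non-loxodromic, since no element of $G \setminus K$ can be loxodromic) versus the paper's (which deduces it directly from the conjugation identity applied to $s^2$); both are valid and the rest of the argument is identical in substance.
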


\begin{proof}
Since we are assuming the action to be non-orientable, we can fix an element $s \in G$ such that $\varepsilon(s) = -1$. To start, we will show that for every $g \in K$, it holds $\beta_{\xi_+}(sgs^{-1}) = -\beta_{\xi_+}(g)$ (notice that $K$ is normal in $G$, thus $sgs^{-1} \in K)$. If $g$ is not loxodromic, then neither is $sgs^{-1}$, so the above equality follows from Proposition \ref{prop:busemann}(3). Otherwise, if $g$ is loxodromic, for a fixed $x \in X$ we have $g^{\pm n} x \to \xi_{\pm}$ (up to replacing the indices on $\xi_{\pm}$). Denote these sequences by $\mathbf{x}_{\pm}$. We then compute, for $k \geq 1$:
\begin{align*}
    q_{\mathbf{x}_-}(g^k) &= \limsup\limits_{n \to \infty} d(g^k x, g^{-n} x) - d(x, g^{-n} x) \\
    &= \limsup\limits_{n \to \infty} d(g^n x, g^{-k} x) - d(g^n x, x) \\
    &= q_{\mathbf{x}_+}(g^{-k}).
\end{align*}
On the other hand
\begin{align*}
    q_{\mathbf{x}_+}(sg^ks^{-1}) &= \limsup\limits_{n \to \infty} d(s g^k s^{-1}x, g^n x) - d(x, g^n x) \\
    &= \limsup\limits_{n \to \infty} d(g^ks^{-1} x, s^{-1} g^n x) - d(s^{-1} x, s^{-1} g^n x) \\
    &= q_{s^{-1} \mathbf{x}_+}(g^k).
\end{align*}
By Proposition \ref{prop:busemann}(2), we use can both $s^{-1} \mathbf{x}_+$ and $\mathbf{x}_-$ to compute $\beta_{\xi_-}$ (since both sequences converge to $\xi_-$). Using the above computations, we obtain
\begin{align*}
    \beta_{\xi_+}(sgs^{-1}) &= \lim\limits_{k \to \infty} \frac{q_{\mathbf{x}_+}(sg^ks^{-1})}{k} = \lim\limits_{k \to \infty} \frac{q_{s^{-1} \mathbf{x}_+}(g^k)}{k} \\
    &= \beta_{\xi_-}(g) = \lim\limits_{k \to \infty} \frac{q_{\mathbf{x}_-}(g^k)}{k} = \lim\limits_{k \to \infty} \frac{q_{\mathbf{x}_+}(g^{-k})}{k} \\
    &= \beta_{\xi_+}(g^{-1}) = -\beta_{\xi_+}(g),
\end{align*}
as desired. As a corollary, we obtain
$$\beta_{\xi_+}(s^2) = \beta_{\xi_+}(s s^2 s^{-1}) = - \beta_{\xi_+}(s^2)$$
and thus $\beta_{\xi_+}(s^2) = 0$ (notice that $K$ has index $2$ in $G$, so $s^2 \in K$).

Now we are ready to define the quasicocycle. We set $\beta := \beta_{\xi_+}$ for simplicity for the rest of the proof. Each element in $G$ may be uniquely written as $k s^i$ where $k \in K$ and $i \in \{ 0, 1 \}$. We set $\varphi(ks^i) := \beta(k)$, and we claim that this is an $\varepsilon$-quasicocycle, where $\varepsilon$ is as defined in the statement of the proposition. Let $g_1 = k_1 s^{i_1}, g_2 = k_2 s^{i_2} \in G$. We compute
$$\varphi(g_1 g_2) = \varphi(k_1 s^{i_1} . k_2 s^{i_2}) = \varphi(k_1 . s^{i_1} k_2 s^{-i_1} . s^{i_1 + i_2}).$$
Suppose first that $i_1 +i_2 = 0 \text{ or } 1$. Then it is easy to check that the above equals $$\beta(k_1 . s^{i_1} k_2 s^{-i_1}) \sim \beta(k_1) + \varepsilon(s^{i_1}) \beta(k_2) = \varphi(g_1) + \varepsilon(g_1) \varphi(g_2),$$
where $\sim$ denotes an error of at most $D(\beta)$. Otherwise, if $i_1 = i_2 = 1$, then the above equals
$$\beta(k_1 . s k_2 s^{-1} . s^2) \sim \beta(k_1) + \varepsilon(s) \beta(k_2) + \beta(s^2) = \varphi(g_1) + \varepsilon(g_1) \varphi(g_2) + 0,$$
where $\sim$ denotes an error of at most $2 D(\beta)$. We conclude that $\varphi$ is a quasicocycle of defect at most $2 D(\beta)$. Lastly, $\beta$ is the restriction of $\varphi$ to $K$, and so $D(\beta) \leq D(\varphi)$. In particular $D(\varphi) = 0$ if and only if $D(\beta) = 0$, so the last statement follows from Lemma \ref{lem:cocycle}.
\end{proof}

We record the following special case.

\begin{corollary}\label{cor:LinealAmenable}
Let $G$ be a group with the property that every homogeneous quasimorphism on $G$ or an index-$2$ subgroup of $G$ is a homomorphism. Then every lineal action of $G$ on a hyperbolic space $X$ defines a homomorphism $\psi : G \to \mathbb{R} \rtimes \mathbb{Z}/2\mathbb{Z}$ such that
\begin{enumerate}
    \item If the image of $\psi$ has order greater than $2$, then it contains a non-trivial (thus unbounded) subgroup of $\mathbb{R}$;
    \item The projection of $\psi(g)$ onto $\mathbb{Z}/2\mathbb{Z}$ is non-trivial if and only if $g$ swaps the pair of points in $\partial X$ fixed by $G$.
\end{enumerate}
In particular, this holds if $G$ is amenable.
\end{corollary}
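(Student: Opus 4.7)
The plan is to split into two cases according to whether the given lineal action $G \curvearrowright X$ is orientable.

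In the orientable case, where $G$ fixes a point $\xi \in \partial X$, Proposition \ref{prop:busemann} supplies the Busemann quasimorphism $\beta_\xi : G \to \mathbb{R}$, which is homogeneous. The hypothesis applied to $G$ itself makes it a homomorphism, and I would simply set $\psi(g) := (\beta_\xi(g), 0) \in \mathbb{R} \rtimes \mathbb{Z}/2\mathbb{Z}$, observing that the projection to $\mathbb{Z}/2\mathbb{Z}$ is automatically trivial, matching (2) since no element of $G$ swaps the limit points in this case.

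For the non-orientable case, my approach is to invoke Proposition \ref{prop:quasicocycle}, producing the index-$2$ subgroup $K = \ker \varepsilon$ and the $\varepsilon$-quasicocycle $\varphi : G \to \mathbb{R}$ of defect at most $2 D(\beta_{\xi_+})$ with $\beta_{\xi_+} = \varphi|_K$. Since $K$ has index $2$ in $G$, the hypothesis applies to it and forces $\beta_{\xi_+}$ to be a homomorphism, hence $D(\beta_{\xi_+}) = 0$ and consequently $D(\varphi) = 0$. Then $\varphi$ is an $\varepsilon$-cocycle, so Lemma \ref{lem:cocycle} yields that $\psi(g) := (\varphi(g), \varepsilon(g))$ is a homomorphism to $\mathbb{R} \rtimes \{\pm 1\}$, which I would identify with $\mathbb{R} \rtimes \mathbb{Z}/2\mathbb{Z}$. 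Property (2) follows immediately from the definition of $\varepsilon$ in Proposition \ref{prop:quasicocycle}.

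To establish (1), I would use the elementary observation that any element of $\mathbb{R} \rtimes \mathbb{Z}/2\mathbb{Z}$ with non-trivial second coordinate is an involution, so every finite subgroup of $\mathbb{R} \rtimes \mathbb{Z}/2\mathbb{Z}$ has order at most $2$. If $|\im(\psi)| > 2$, then $\im(\psi)$ must be infinite, and since $\im(\psi) / (\im(\psi) \cap \mathbb{R})$ embeds into $\mathbb{Z}/2\mathbb{Z}$, the intersection $\im(\psi) \cap \mathbb{R}$ is itself infinite, yielding the required non-trivial (and thus unbounded) subgroup of $\mathbb{R}$. Finally, the ``in particular'' assertion follows from Proposition \ref{prop:amenable:qm} together with the fact that subgroups of amenable groups are amenable. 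I do not anticipate a real obstacle: once Proposition \ref{prop:quasicocycle} is in hand, the corollary is essentially a bookkeeping argument, the only mildly delicate step being the observation about finite subgroups used to prove (1).
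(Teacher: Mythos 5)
Your proposal is correct and follows essentially the same route as the paper: the paper states that the corollary is a direct consequence of Proposition \ref{prop:quasicocycle} together with the elementary structure of subgroups of $\mathbb{R} \rtimes \mathbb{Z}/2\mathbb{Z}$, and leaves the bookkeeping (including the orientable case via the Busemann quasimorphism, and the fact that the defect of $\varphi$ vanishes because $\beta_{\xi_+}|_K$ is a homomorphism) implicit. You spell this out carefully, including the clean observation that any element of $\mathbb{R} \rtimes \mathbb{Z}/2\mathbb{Z}$ with nontrivial second coordinate is an involution, so every finite subgroup has order at most $2$; this is exactly the missing detail behind the paper's phrase ``every subgroup of order greater than $2$ intersects $\mathbb{R}$ non-trivially.''
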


\begin{proof}
This is a direct consequence of Proposition \ref{prop:quasicocycle}, together with the fact that every subgroup of $\mathbb{R} \rtimes \mathbb{Z}/2\mathbb{Z}$ of order greated than $2$ intersects $\mathbb{R}$ non-trivially. The last statement then follows from Proposition \ref{prop:amenable:qm}, and the fact that (index-$2$) subgroups of amenable groups are amenable.
\end{proof}

\begin{remark}
While the Busemann quasimorphism is canonically defined (Proposition \ref{prop:busemann}), the Busemann quasicocycle we constructed in Proposition \ref{prop:quasicocycle} is not. Indeed, we could have chosen any element $s' \in G$ swapping $\xi_{\pm}$, and this would have lead to an $\varepsilon$-quasicocycle $\varphi'$ such that $\varphi'(s') = 0$.
This choice is due to the fact that every identification of $\mathrm{Isom}(\mathbb{R})$ with $\mathbb{R} \rtimes \mathbb{Z} / 2 \mathbb{Z}$ comes with a choice of basepoint, i.e. of the fixpoint of the distinguished $\mathbb{Z} / 2 \mathbb{Z}$ factor.
\end{remark}

\begin{remark}
There is a strong relation between quasimorphisms and \emph{bounded cohomology} \cite{frigerio, scl}, namely quasimorphisms modulo homomorphisms and bounded functions -- or equivalently homogeneous quasimorphisms modulo homomorphisms -- represent those classes in $H^2_b(G; \mathbb{R})$ that lie in the kernel of the natural \emph{comparison map} $H^2_b(G; \mathbb{R}) \to H^2(G; \mathbb{R})$. Similarly, given a homomorphism $\varepsilon : G \to \mathbb{Z}/2\mathbb{Z}$, we denote by $\mathbb{R}_{\varepsilon}$ the $G$-module $\mathbb{R}$ endowed with the linear isometric action defined by $\varepsilon$. Then $\varepsilon$-quasicocycles modulo $\varepsilon$-cocycles and bounded functions represent those classes in $H^2_b(G; \mathbb{R}_{\varepsilon})$ that lie in the kernel of the natural comparison map $H^2_b(G; \mathbb{R}_{\varepsilon}) \to H^2(G; \mathbb{R}_{\varepsilon})$.

It follows that the conclusion of Corollary \ref{cor:LinealAmenable} also holds under the assumption that this comparison map is injective, without assuming anything about finite-index subgroups. This is the case, for instance, for all amenable groups \cite{frigerio}, some high rank lattices \cite{bm1, bm2}, and some lamplighters and Thompson groups \cite{lamplighters}. However, we preferred to state Corollary \ref{cor:LinealAmenable} in a way that requires no understanding of bounded cohomology, and that is still sufficient for our purposes.
\end{remark}

Further in the non-orientable case, the correspondence between lineal actions and quasicocycles continues to hold. The following lemma is an analogue of Lemma \ref{lem:constrlin}.

\begin{lemma}
\label{lem:constrlin2}

Let $\varepsilon : G \to \{ \pm 1 \}$ be a homomorphism with kernel $K$, and let $\varphi : G \to \mathbb{R}$ be an unbounded $\varepsilon$-quasicocycle, which restricts to a homogeneous quasimorphism on $K$. Let $C > 0$ be such that $D(\varphi) \leq C/2$ and $\varphi(K) \cap (0, C/2) \neq \varnothing$ and $\varphi(G \setminus K) \cap [0, C/2) \neq \varnothing$. Let $X := \{ g \in G : |\varphi(g)| < C \}$. Then $X$ generates $G$ and the map $\varphi : (G, d_X) \to \mathbb{R}$ is a quasi-isometry.
\end{lemma}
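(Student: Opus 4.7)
The plan is to imitate the proof of Lemma \ref{lem:constrlin}, while carefully tracking the sign twist in the $\varepsilon$-quasicocycle relation. By hypothesis, choose $k \in K$ with $\alpha := \varphi(k) \in (0, C/2)$ and $t \in G \setminus K$ with $\beta := \varphi(t) \in [0, C/2)$; both elements lie in $X$. Since $\varepsilon|_K \equiv 1$ and $\varphi|_K$ is a homogeneous quasimorphism, one has $\varphi(k^m) = m \alpha$ for every $m \in \Z$, and the quasicocycle inequality restricted to products of elements of $K$ reduces to the ordinary quasimorphism inequality with the same defect.

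First, I would show that $X$ generates $G$. If $g \in G \setminus K$, then $gt^{-1} \in K$ (since $\varepsilon(g) = \varepsilon(t^{-1}) = -1$), so it suffices to handle the case $g \in K$. For such $g$, pick $n \in \Z$ with $|\varphi(g) - n\alpha| \leq \alpha/2$. Applying the $\varepsilon$-quasicocycle inequality to the product $k^{-n} \cdot g$ (where no sign intervenes, because $k^{-n} \in K$), one obtains
$$|\varphi(k^{-n}g)| \leq |\varphi(g) - n\alpha| + D(\varphi) \leq \alpha/2 + C/2 < C,$$
so that $k^{-n} g \in X$. Then $g = k \cdots k \cdot (k^{-n}g)$ is a product of at most $|n|+1$ elements of $X$; and for $g \notin K$ one writes $g = (gt^{-1}) \cdot t$ and applies the previous case to $gt^{-1}$.

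Next I would establish the two quasi-isometry estimates. For the upper Lipschitz bound, iterating the $\varepsilon$-quasicocycle identity on a product $g = x_1 \cdots x_n$ with $x_i \in X$ yields
$$\left| \varphi(g) - \sum_{i=1}^{n} \varepsilon(x_1 \cdots x_{i-1}) \varphi(x_i) \right| \leq (n-1) D(\varphi),$$
and combining this with $|\varphi(x_i)| < C$ and $D(\varphi) \leq C/2$ gives $|\varphi(g)| \leq 2 C \cdot d_X(e,g)$. For the lower bound, the construction in the previous paragraph shows $d_X(e, g) \leq |\varphi(g)|/\alpha + O(1)$, where the additive constant covers both the choice of the integer $n$ and, if needed, the extra factor of $t$; for the latter one uses that $|\varphi(gt^{-1}) - \varphi(g)| \leq \beta + D(\varphi)$ is uniformly bounded. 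Finally, cobounded image is immediate: $\{m\alpha : m \in \Z\} \subseteq \varphi(G)$ is $\alpha$-dense in $\R$, and $\alpha < C/2$.

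I do not expect a serious obstacle; the proof is essentially the verification in Lemma \ref{lem:constrlin} with bookkeeping of the sign twist. The one place where the twist genuinely matters is the insistence on using a power of $k \in K$ (where $\varepsilon$ is trivial) to adjust the $\varphi$-value of $g$, rather than an element outside $K$ whose sign would cause a sign flip in the quasicocycle identity rather than a cancellation. All other sign contributions are absorbed into bounded additive errors and do not affect the quasi-isometry conclusion.
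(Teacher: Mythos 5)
Your proof is correct, and the overall scheme is the same as the paper's (reduce to the kernel $K$, then handle the $K$-cosets via one fixed element of $G\setminus K$). The genuine difference is in how the reduction to $K$ is carried out: the paper sets $Y := X \cap K$ and invokes Lemma \ref{lem:constrlin} as a black box to conclude that $Y$ generates $K$ and that $\varphi|_K : (K, d_Y) \to \mathbb{R}$ is a quasi-isometry, then extends to $G$ using the coset decomposition $g = k s^i$; you instead re-derive that $K$-side argument from scratch, fixing $k$ with $\alpha := \varphi(k) \in (0, C/2)$ and adjusting $\varphi(g)$ by powers of $k$. Both routes prove the same thing; the paper's is just more economical by citing the earlier lemma, while yours is self-contained.

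One small inaccuracy in the lower-bound bookkeeping: you assert ``$|\varphi(gt^{-1}) - \varphi(g)| \leq \beta + D(\varphi)$,'' but the quasicocycle relation gives $|\varphi(gt^{-1}) - \varphi(g) - \varepsilon(g)\varphi(t^{-1})| \leq D(\varphi)$, and $\varphi(t^{-1})$ is a priori not $\beta = \varphi(t)$; one needs an extra step (applying the relation to $t\cdot t^{-1}$ and using $|\varphi(e)|\le D(\varphi)$) to see that $|\varphi(t^{-1})|$ is controlled. The correct additive constant is larger, but it is still uniform, so the conclusion $d_X(e,g) \leq |\varphi(g)|/\alpha + O(1)$ is unaffected.
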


Note that any $\varepsilon$-quasicocycle is at a bounded distance from one that restricts to a homogeneous quasimorphism on the kernel of $\varepsilon$: it suffices to replace $\varphi$ with its homogenization on $K$, and keep it equal on $G \setminus K$.

\begin{proof}
Let $Y := X \cap K$. By Lemma \ref{lem:constrlin} \cite[Lemma 4.15]{ABO}, we know that $Y$ generates $K$, and that the map $\varphi : (K, d_Y) \to \mathbb{R}$ is a quasi-isometry. By assumption, there exists $s \in G \setminus K \cap X$, and every element of $G$ may be uniquely written as $k.s^i$ for $k \in K, i \in \{0, 1\}$. It follows that $X$ generates $G$, and
$$|k.s^i|_X \leq |k|_X + 1 \leq |k|_Y + 1 << |\varphi(k)| \leq |\varphi(k.s^i)| + D(\varphi) + |\varphi(s)|,$$ where $<<$ denotes the inequality from the quasi-isometry $\varphi$ to $\R$.
For the other direction, let $g \in G$ and let $g = x_1 \cdots x_n$ be an expression in $X$ of shortest length. Then
\begin{align*}
|\varphi(g)| &= |\varphi(x_1 \cdots x_n)| \leq |\varphi(x_1 \cdots x_{n-1})| + |\varphi(x_n)| + D(\varphi) \leq \cdots \\
& \leq n \max_i |\varphi(x_i)| + (n-1) D(\varphi) \leq 2C|g|_X.
\end{align*}
\end{proof}

Similarly to Corollary \ref{cor:nl:noqz} we obtain:

\begin{corollary}
\label{cor:nl:noqz}

If $G$ has Property $\nl$, then $G$ has no unbounded quasicocycle.
\end{corollary}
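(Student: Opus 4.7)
The plan is to establish the contrapositive: if $G$ admits an unbounded $\varepsilon$-quasicocycle $\varphi$ for some homomorphism $\varepsilon : G \to \{\pm 1\}$, then $G$ admits a lineal action on a hyperbolic space with loxodromic elements, contradicting Property $\nl$. The argument parallels Corollary \ref{cor:nl:noqm}, with Lemma \ref{lem:constrlin2} playing the role of Lemma \ref{lem:constrlin}.

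First I would dispose of the easy case: if $\varepsilon$ is trivial then $\varphi$ is an unbounded quasimorphism, and Corollary \ref{cor:nl:noqm} applies. Otherwise, $\varepsilon$ surjects onto $\{\pm 1\}$ with kernel $K$ of index $2$. By the note following Lemma \ref{lem:constrlin2}, I may replace $\varphi$ by one at bounded distance whose restriction to $K$ is a homogeneous quasimorphism. Then $\varphi|_K$ must itself be unbounded: for any $g, h \in G \setminus K$ we have $gh \in K$ and $\varepsilon(g) = -1$, so the quasicocycle relation gives $\varphi(g) = -\varphi(h) + \varphi(gh) + O(D(\varphi))$. If $\varphi|_K$ were bounded (hence identically zero as a homogeneous quasimorphism), this would force $\varphi|_{G \setminus K}$ to be bounded as well, contradicting the assumed unboundedness of $\varphi$.

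The remaining step is to verify the hypotheses of Lemma \ref{lem:constrlin2} for a sufficiently large constant $C$. Since $\varphi|_K$ is a non-trivial homogeneous quasimorphism, homogeneity forces its image to contain $a \mathbb{Z}$ for some $a > 0$, so $\varphi(K) \cap (0, C/2) \neq \varnothing$ once $C/2 > a$. For the asymmetric condition $\varphi(G \setminus K) \cap [0, C/2) \neq \varnothing$, fix $s \in G \setminus K$ and exploit $\varphi(ks) = \varphi(k) + \varphi(s) + O(D(\varphi))$: picking $k = k_0^n$ for a suitable integer $n$, where $k_0 \in K$ satisfies $\varphi(k_0) = a$, we can place $\varphi(ks)$ in $[0, C/2)$ once $C$ is large compared to $a$ and $D(\varphi)$. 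Taking also $C \geq 2 D(\varphi)$, Lemma \ref{lem:constrlin2} produces a generating set $X$ and a quasi-isometry $\varphi : (G, d_X) \to \mathbb{R}$; in particular the Cayley graph $\Gamma(G, X)$ is quasi-isometric to $\mathbb{R}$, hence hyperbolic, and $G$ acts on it by isometries. Any $k \in K$ with $\varphi(k) \neq 0$ then has $\varphi(k^n) = n \varphi(k) \to \infty$, so its orbit escapes linearly and $k$ acts loxodromically, contradicting $\nl$.

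The main subtlety, and the reason a naive transcription of the proof of Corollary \ref{cor:nl:noqm} does not go through, is the asymmetric third condition of Lemma \ref{lem:constrlin2}: unlike in the quasimorphism setting, one cannot normalize $\varphi$ by adding a real constant without breaking the cocycle relation $\varphi(gh) = \varphi(g) + \varepsilon(g) \varphi(h)$. Instead, one must move inside the coset $sK$ and invoke the unboundedness (and homogeneity) of $\varphi|_K$, which is exactly what the claim established in paragraph two provides.
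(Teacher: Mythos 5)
Your proof is correct and matches the approach the paper intends: the paper itself provides no explicit argument (it only says ``Similarly to \ldots'', clearly relying on Lemma \ref{lem:constrlin2} in place of Lemma \ref{lem:constrlin}), and your write-up supplies exactly the details needed to make that work, including the key observation that $\varphi|_K$ must itself be unbounded. The only blemish is a harmless sign typo in the displayed quasicocycle relation (it should read $\varphi(g) = \varphi(gh) + \varphi(h) + O(D(\varphi))$), which does not affect the boundedness argument.
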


\subsection{Relations between the properties} As mentioned before, we are also interested in the properties $\ngt$ and $\nne$ (see Definition \ref{def:ngt:nne}). Clearly hereditary $\nl$ implies $\nl$ and it follows easily from Theorem \ref{thm:ClassHypAct}, that $\nl \implies \nne \implies \ngt$. We record additional relations between these properties in this section, which will also be used later in this paper. 

\begin{proposition}[From $\ngt$ to $\nl$]
\label{prop:ngt:to:nl}
Let $G$ be a group with Property $\ngt$. If $G$ admits no unbounded quasimorphisms, then $G$ has Property $\nne$. If moreover no index-$2$ subgroup of $G$ admits unbounded quasimorphisms, then $G$ has Property $\nl$.
\end{proposition}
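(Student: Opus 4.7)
The plan is to proceed by contrapositive in both parts, using Theorem~\ref{thm:ClassHypAct} to narrow down the ``unwanted'' actions and then extracting an unbounded homogeneous quasimorphism via Proposition~\ref{prop:busemann}. The key fact used throughout is that a homogeneous quasimorphism is either identically zero or unbounded, since any bounded homogeneous quasimorphism vanishes.

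For the first implication, suppose $G$ has Property $\ngt$ but fails $\nne$. Then $G$ admits a non-elementary action on some hyperbolic space $X$, i.e.\ case (4) of Theorem~\ref{thm:ClassHypAct}. Since general type actions are excluded by $\ngt$, the action must be focal, so in particular $G$ fixes a point $\xi \in \partial X$ and contains a loxodromic element. By Proposition~\ref{prop:busemann}(3), the Busemann quasimorphism $\beta_\xi : G \to \mathbb{R}$ is non-zero on any loxodromic element, hence not identically zero. Being homogeneous, it must then be unbounded, contradicting the hypothesis.

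For the second implication, assume additionally that no index-$2$ subgroup of $G$ admits an unbounded quasimorphism. The first part already gives Property $\nne$, so if $G$ fails $\nl$, then $G$ admits a lineal action on some hyperbolic space $X$. If this action is orientable, $G$ fixes a point $\xi \in \partial X$ and the loxodromic elements provide a non-zero value of the Busemann quasimorphism $\beta_\xi$ by Proposition~\ref{prop:busemann}(3); homogeneity forces it to be unbounded, contradicting the assumption on $G$. If the action is non-orientable, then by the remark following Theorem~\ref{thm:ClassHypAct} the action on the pair of fixed boundary points yields a surjection $G \twoheadrightarrow \mathbb{Z}/2\mathbb{Z}$ whose kernel $K$ has index $2$ in $G$, and the restriction of the action to $K$ is orientable lineal. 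Applying the orientable case to $K$ produces an unbounded homogeneous quasimorphism on $K$, contradicting the hypothesis on index-$2$ subgroups.

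The only slightly delicate step is the non-orientable lineal case, which is precisely why the strengthened hypothesis on index-$2$ subgroups is needed; once one passes to the orientation-preserving subgroup $K$, the argument reduces to the same Busemann quasimorphism construction as in the orientable case. One could alternatively invoke the Busemann quasicocycle of Proposition~\ref{prop:quasicocycle} directly on $G$, but the reduction to an index-$2$ subgroup is cleaner and matches the form of the hypothesis exactly.
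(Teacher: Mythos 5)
Your proof is correct and follows essentially the same approach as the paper's: exclude the focal case by producing an unbounded Busemann quasimorphism on $G$, and exclude the lineal case by passing to the index-at-most-$2$ subgroup fixing the two limit points and producing an unbounded Busemann quasimorphism there. The only cosmetic difference is that you split the lineal case explicitly into orientable and non-orientable subcases, whereas the paper phrases both at once by saying the fixing subgroup has ``index at most $2$''.
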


\begin{proof}
Let $X$ be a hyperbolic space on which $G$ acts. Since $G$ has property $\ngt$, the action is elliptic, horocyclic, lineal, or focal. In the focal case, $G$ fixes a point on $\partial X$ and has a loxodromic element, therefore the Busemann quasimorphism is an unbounded quasimorphism on $G$; a contradiction. In the lineal case, $G$ preserves a pair of points on $\partial X$, so it admits a subgroup $K \leq G$ of index at most $2$ that fixes both. This latter subgroup now fixes a point at infinity and has a loxodromic element, therefore once again the Busemann quasimorphism is an unbounded quasimorphism on $K$; a contradiction.
\end{proof}

Cases of special interest for us, especially in Section \ref{thompson}, will be groups that fall into the following framework. 

\begin{definition}
Let $n \geq 1$. A group $G$ is said to be \emph{$n$-uniformly perfect} if every element in $G$ can be written as a product of at most $n$ commutators. It is said to be \emph{$n$-uniformly simple} if for every pair of elements $g, h \in G$, $h$ can be written as a product of at most $n$ conjugates of $g, g^{-1}$.

We say that $G$ is uniformly perfect (resp. uniformly simple) if there exists $n \geq 1$ such that $G$ is $n$-uniformly perfect (resp. $n$-uniformly simple).
\end{definition}

It is straightforward to verify that every uniformly perfect group is perfect, and every uniformly simple group is both simple and uniformly perfect (as one can take $g$ to be a commutator in the definition). 

\begin{corollary}\label{cor:nl:uperfect}
Let $G$ be a group with Property $\ngt$. If $G$ is uniformly perfect, then $G$ has $\nl$. If $G$ is uniformly simple, then $G$ is hereditary $\nl$.
\end{corollary}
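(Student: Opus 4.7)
The plan is to reduce both clauses to Proposition~\ref{prop:ngt:to:nl}, which promotes $\ngt$ to $\nl$ provided neither $G$ nor any index-$2$ subgroup of $G$ admits an unbounded quasimorphism.

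For the first clause, assume $G$ is $n$-uniformly perfect. I would invoke the standard fact that uniformly perfect groups admit no nonzero homogeneous quasimorphism: recalling that a homogeneous quasimorphism $\beta$ satisfies $|\beta([a,b])| \leq D(\beta)$, a writing of $g^k$ as a product of at most $n$ commutators together with the quasimorphism inequality yields $|\beta(g^k)| \leq (2n-1) D(\beta)$, and homogeneity forces $\beta(g) = 0$ for every $g$. Since every quasimorphism lies at bounded distance from its homogenization, $G$ has no unbounded quasimorphism. Moreover, $G$ being perfect rules out any subgroup of index $2$ (such a subgroup would be normal with abelian quotient, incompatible with $G = [G,G]$), so the index-$2$ hypothesis of Proposition~\ref{prop:ngt:to:nl} is vacuous, and $G$ has Property $\nl$.

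For the second clause, uniform simplicity implies uniform perfectness, so $G$ itself is $\nl$ by the first clause; it remains to check every finite-index subgroup. If $G$ is finite, simplicity forces it to be torsion, and all of its subgroups are then trivially $\nl$ since loxodromic elements have infinite order. If $G$ is infinite and $H \leq G$ has finite index, then the normal core $N := \bigcap_{g \in G} gHg^{-1}$ is a finite-index normal subgroup of $G$, hence either trivial or all of $G$; the former is impossible, as $G$ would embed into a finite symmetric group, contradicting infiniteness. Therefore $N = G$, so $H = G$, which is $\nl$. Either way, $G$ is hereditary $\nl$.

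There is no real obstacle here: the entire argument is an assembly of standard facts (uniformly perfect groups kill homogeneous quasimorphisms; perfect groups have no index-$2$ subgroups; simple infinite groups have no proper finite-index subgroups) combined with Proposition~\ref{prop:ngt:to:nl}. The only point that requires a short computation rather than a citation is the uniform bound $|\beta(g^k)| \leq (2n-1) D(\beta)$, and this is routine.
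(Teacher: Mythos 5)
Your proof is correct and follows essentially the same route as the paper: establish that uniformly perfect groups kill homogeneous quasimorphisms, feed this into Proposition~\ref{prop:ngt:to:nl}, and then note that simplicity rules out proper finite-index subgroups for the hereditary statement. One small point where you are actually cleaner than the paper's own write-up: you observe that a perfect group simply has \emph{no} index-$2$ subgroup (since such a subgroup would be normal with abelian quotient), so the index-$2$ hypothesis of Proposition~\ref{prop:ngt:to:nl} is vacuous, whereas the paper phrases this slightly awkwardly by reasoning about what an index-$2$ subgroup would look like; your version is both shorter and more transparent, and your explicit treatment of the finite simple case (where ``no proper finite-index subgroups'' fails literally but the conclusion still holds by torsion) is more careful than the paper's one-liner.
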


\begin{proof}
Every quasimorphism of a uniformly perfect group is bounded \cite[Section 2.2.3]{scl}, so the first statement follows from Proposition \ref{prop:ngt:to:nl}. Indeed, $\nne$ follows immediately. It is easy to check that since index 2-subgroups are normal, such a subgroup would also be uniformly perfect and thus have no unbounded quasimorphisms, implying Property $\nl$. Lastly, a uniformly simple group is uniformly perfect, and being simple has no finite-index subgroups, so hereditary $\nl$ follows automatically. \end{proof}

\subsection{Factorisation of actions} A common problem considered in geometric group theory is to reduce a given action of a group to a well-understood action of the same group or a subgroup. A related problem is the \emph{extension problem}, where one tries to build a hyperbolic action of $G$ starting from a hyperbolic action of a subgroup $H$, such that the extended action still ``witnesses'' the action of $H$. In this paper, we shall make use of the following notions of \emph{factorization of actions} and  \emph{essential} actions.  

In what follows, a subspace $X'$ of a geodesic metric space $(X,d)$ is called \emph{quasiconvex} if there is a constant $\lambda \geq 0$ such that for every $x,y \in X'$, every geodesic connecting $x,y$ in $X$ lies in the $\lambda$-neighborhood of $X'$. It is well known that quasiconvex subspaces of hyperbolic spaces are themselves hyperbolic. A subspace $X' \subset X$ is called \emph{quasi-dense} if there is a constant $C \geq 0$ such that for every $x \in X$, there is a $y \in X'$ such that $d(x,y) \leq C$. 

\begin{definition}\label{def:factoringaction}
Let $G,H$ be two groups and $\varphi : G \to H$ a homomorphism. An action $G\curvearrowright X$ \emph{factors through $\varphi$} if there exist a $G$-invariant quasiconvex $X' \subset X$, an action $H \curvearrowright Y$, and a $\varphi$-equivariant quasi-isometry $X' \to Y$. 
\end{definition}

In other words, an action of $G$ factors through $H$ via $\varphi$ up to perturbing the hyperbolic space a little bit and extracting an invariant quasiconvex subspace. 

\begin{definition}
An action on a hyperbolic space is \emph{essential} if every invariant quasiconvex subspace is quasi-dense.
\end{definition}

It is worth noticing that hyperbolic actions can be assumed to be essential without loss of generality. Indeed, the following lemma is well-known to experts; we add a sketch of the proof for the reader's convenience.

\begin{lemma}\label{lem:Essential}
Let $G$ be a group acting on a hyperbolic space $X$. Assume that the limit set $\Lambda \subset \partial X$ of $G$ contains at least two points. There exist constants $A,B>0$ such that the union of all the $(A,B)$-quasigeodesics between any two distinct points in $\Lambda$ is a quasiconvex subspace on which $G$ acts essentially.
\end{lemma}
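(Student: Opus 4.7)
The plan is to define $Y$ as the union of all $(A,B)$-quasigeodesics joining pairs of distinct points of $\Lambda$, for suitable universal constants $A,B$, and then check in turn the three required properties: $G$-invariance, quasiconvexity in $X$, and essentiality of the induced action. The key tool throughout is the Morse lemma for hyperbolic spaces, which says that two $(A,B)$-quasigeodesics sharing the same pair of (possibly ideal) endpoints lie within Hausdorff distance bounded solely in terms of $A,B$ and the hyperbolicity constant $\delta$ of $X$.

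First I would fix $A,B$ large enough that there exists at least one $(A,B)$-quasigeodesic between any two distinct points of $\partial X$; this is a standard fact in hyperbolic geometry which does not require $X$ to be proper. Since $G$ acts by isometries and preserves $\Lambda$, the image of such a quasigeodesic under any $g \in G$ is again of the same type with endpoints in $\Lambda$, so $Y$ is automatically $G$-invariant. For quasiconvexity, given $y_1,y_2 \in Y$ lying on quasigeodesics $\gamma_1,\gamma_2$ with endpoints $\xi_i,\eta_i \in \Lambda$, I would concatenate appropriate subrays of $\gamma_1,\gamma_2$ with a geodesic segment $[y_1,y_2]$ to produce a broken path joining two points of $\Lambda$; after enlarging $A,B$ once and for all, the Morse lemma ensures that this broken path, and in particular $[y_1,y_2]$, lies in a uniform neighborhood of some genuine $(A,B)$-quasigeodesic with the same boundary endpoints, hence of $Y$.

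For essentiality, let $Z \subseteq Y$ be any non-empty $G$-invariant quasiconvex subspace. Since $Z$ is $G$-invariant, every $G$-orbit in $Z$ accumulates on the full set $\Lambda$. Given any $y \in Y$, lying on some $(A,B)$-quasigeodesic between $\xi,\eta \in \Lambda$, I would pick sequences $(z_n),(z_n') \subseteq Z$ with $z_n \to \xi$ and $z_n' \to \eta$: by quasiconvexity of $Z$ in $X$ the geodesic segments $[z_n,z_n']$ remain in a uniform neighborhood of $Z$, while by the Morse lemma they also remain in a uniform neighborhood of the fixed quasigeodesic through $y$. Letting $n \to \infty$ and using that $y$ lies on a compact portion of the quasigeodesic places $y$ within bounded distance of $Z$, so $Z$ is quasi-dense in $Y$. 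The main technical point will be to ensure that all Morse-type estimates can be made uniform for a single choice of $A,B$, and that the concatenation argument for quasiconvexity actually produces a tame enough broken quasigeodesic near its breakpoints; both issues are routine consequences of thin-triangle hyperbolicity, and not genuine obstacles.
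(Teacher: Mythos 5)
Your definition of $Y$ and the essentiality argument match the paper's proof, but the quasiconvexity step has a genuine gap. The concatenation of a quasigeodesic ray, a geodesic segment $[y_1,y_2]$, and another quasigeodesic ray need \emph{not} be a quasigeodesic of any quality: the resulting path can backtrack badly. For instance, if the geodesic $[y_1,y_2]$ travels roughly in the same direction as the chosen initial ray toward $\zeta_1$, or if $\zeta_1$ and $\zeta_2$ are close together on $\partial X$ while $[y_1,y_2]$ wanders far away, the broken path fails to be a quasigeodesic, and the Morse lemma then gives no information about $[y_1,y_2]$ at all. You flag the issue (``appropriate subrays'', ``tame enough near its breakpoints'') but dismiss it as routine, and it is not: one must pick, at each $y_i$, the ideal endpoint $\zeta_i$ of $\gamma_i$ so that the Gromov product $(\zeta_i\mid y_{3-i})_{y_i}$ is uniformly bounded -- possible because $y_i$ lies on a quasigeodesic between two boundary points whose Gromov product at $y_i$ is small -- and then invoke the local-to-global principle for piecewise quasigeodesics with long legs and controlled corner products. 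Even then, the case where $[y_1,y_2]$ is short causes additional complications, since the two breakpoints merge. All of this is fillable, but it is real work that your outline omits.

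The paper avoids the issue entirely by a different route. Rather than trying to promote the broken path to a single quasigeodesic, it fixes finitely many marker points $a^\pm$ on $\gamma_1$ far from $p:=y_1$ and $b^\pm$ on $\gamma_2$ far from $q:=y_2$, and invokes the approximating-tree theorem to produce a subtree $T\subset X$ containing $p,q,a^\pm,b^\pm$ and roughly isometrically embedded. The tree geodesic from $p$ to $q$ in $T$ must travel along the arcs toward the markers, i.e.\ along neighborhoods of $\gamma_1\cup\gamma_2 \subset Y$; the Morse property then forces the $X$-geodesic $[p,q]$ to stay close to this arc and hence to $Y$. This is both cleaner and immune to the backtracking problem. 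If you revise, either switch to the approximating-tree approach or spell out carefully the choice of $\zeta_i$ and the local-to-global estimate for the broken path.

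Your essentiality argument is sound and is essentially the paper's, merely phrased via approximating sequences rather than directly through $\partial Z\supset\Lambda$.
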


\begin{proof}[Sketch of proof.]
Fix two constants $A,B>0$ very large compared to the hyperbolicity constant of $X$ and let $Y \subset X$ denote the union of all the $(A,B)$-quasigeodesics between any two distinct points in $\Lambda$. 

\medskip
Given two points $p,q \in Y$, we claim that any geodesic between $p$ and $q$ stays in a controlled neighbourhood of $Y$. By definition, there exist two $(A,B)$-quasigeodesic lines $\alpha,\beta$ respectively passing through $p,q$ and connecting points $\alpha^\pm, \beta^\pm$ in $\Lambda$. Fix two points $a^\pm \in \alpha$ (resp. $b^\pm$) very far from $p$ (resp. $q$) in such a way that $p$ (resp. $q$) belongs to the subsegment $\alpha_0 \subset \alpha$ between $a^+$ and $a^-$ (resp. $\beta_0 \subset \beta$ between $b^+$ and $b^-$). As a consequence of \cite[Section~6.4]{MR2243589}, there exists a subtree $T \subset X$ containing $p,q,a^\pm, b^\pm$ and roughly embedded (i.e.\ the metric on $T$ differs from the metric induced by $X$ only by an additive constant (independent of our points)). It follows from the Morse property \cite[III.H Theorem 1.7]{bridson_haefliger} that a geodesic in $X$ between $p$ and $q$ must stay close to the arc in $T$ between $p$ and $q$, which is itself in a controlled neighbourhood of $\alpha_0 \cup \beta_0$ and a fortiori of $Y$. 

\medskip 
Thus, we have proved that $Y$ is quasiconvex and hence hyperbolic. Clearly, $G$ preserves $Y$ so we may restrict the action to $Y$. It remains to show the action on $Y$ is essential.  For this, assume that $Z \subset Y$ is a $G$-invariant quasiconvex subspace. Fix a point $z \in Z$. Then $\partial Z$ has to contain the accumulation points of the orbit $G . z$ in $\partial X$, namely the limit set $\Lambda$. Because $Z$ is quasiconvex, any two points in $\partial Z$ are connected by a quasigeodesic, so it follows from the Morse property that $Z$ is quasidense in $Y$. Therefore, $G$ acts on $Y$ essentially. 
\end{proof}

\section{Amenable groups}
\label{s:amenable}

In this section, we focus on Property $\nl$ in the context of \emph{amenable} groups. A group $G$ is amenable if there exists a finitely additive probability measure on $G$ that is invariant under left translation. The structure of amenable groups allows us to prove strong results for this class of groups. A first result in this vein is the following. 

\begin{proposition}\label{prop:Amenable}
Every amenable group $G$ satisfies $\ngt$. Moreover, $G$ satisfies (hereditary) $\nl$ if and only if every homomorphism from (a finite-index subgroup of) $G$ to $\mathbb{R} \rtimes \mathbb{Z}/2\mathbb{Z}$ has image of order at most $2$. If $G$ is finitely generated, this amounts to saying that it does not (virtually) surject onto $\mathbb{Z}$ nor~$\mathbb{D}_\infty$. 
\end{proposition}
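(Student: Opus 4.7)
The plan is to prove the three assertions in order: first $\ngt$ for all amenable groups, then the characterisation of (hereditary) $\nl$, and finally the translation into a virtual surjection condition in the finitely generated setting.

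I would establish $\ngt$ via a standard ping-pong argument. Any general type action contains two loxodromic elements whose attracting/repelling pairs on the boundary are pairwise disjoint, and sufficiently high powers of such elements generate a non-abelian free subgroup of $G$. Since amenability is inherited by subgroups and non-abelian free groups are non-amenable, this is impossible.

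For the $\nl$ characterisation, I would argue both directions. For the ``only if'' direction, suppose $\psi \colon G \to \mathbb{R} \rtimes \mathbb{Z}/2\mathbb{Z}$ has image of order greater than $2$. Every such subgroup of $\mathbb{R} \rtimes \mathbb{Z}/2\mathbb{Z}$ intersects the $\mathbb{R}$-factor non-trivially, so the projection of $\psi$ to $\mathbb{R}$ is an unbounded $\varepsilon$-cocycle, where $\varepsilon$ is the projection to $\mathbb{Z}/2\mathbb{Z}$. A short case check shows the restriction of this projection to $K := \ker \varepsilon$ is then necessarily non-trivial. Lemma \ref{lem:constrlin} (if $\varepsilon$ is trivial) or Lemma \ref{lem:constrlin2} (otherwise) then produces a lineal action of $G$ on a Cayley graph with loxodromic elements, contradicting $\nl$. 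For the converse, assume every homomorphism $G \to \mathbb{R} \rtimes \mathbb{Z}/2\mathbb{Z}$ has image of order at most $2$. By $\ngt$, every hyperbolic action of $G$ is elliptic, horocyclic, lineal, or focal, and only the last two admit loxodromics. In the focal case, the Busemann quasimorphism of Proposition \ref{prop:busemann} is unbounded and, by Proposition \ref{prop:amenable:qm}, an honest homomorphism $G \to \mathbb{R}$; composing with $\mathbb{R} \hookrightarrow \mathbb{R} \rtimes \mathbb{Z}/2\mathbb{Z}$ yields a homomorphism of infinite image, contradicting the hypothesis. In the lineal case, Corollary \ref{cor:LinealAmenable} directly provides a homomorphism $\psi \colon G \to \mathbb{R} \rtimes \mathbb{Z}/2\mathbb{Z}$ whose image has order greater than $2$ because the presence of loxodromics forces the Busemann (quasi)cocycle to be non-trivial on the index-$2$ subgroup $K$. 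Again a contradiction.

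The hereditary version follows by applying the same characterisation to each finite-index subgroup of $G$, which is itself amenable. For the finitely generated statement, I would use that the image of a finitely generated group under $\psi$ is finitely generated. A non-trivial finitely generated subgroup of $\mathbb{R}$ is free abelian of positive rank and hence surjects onto $\mathbb{Z}$; a finitely generated subgroup of $\mathbb{R} \rtimes \mathbb{Z}/2\mathbb{Z}$ with non-trivial $\mathbb{R}$-part and containing an element outside $\mathbb{R}$ takes the form $\mathbb{Z}^n \rtimes \mathbb{Z}/2\mathbb{Z}$ (with the flip acting by inversion) and surjects onto $\mathbb{D}_\infty$. Conversely, surjections onto $\mathbb{Z}$ or $\mathbb{D}_\infty$ obviously yield homomorphisms into $\mathbb{R} \rtimes \mathbb{Z}/2\mathbb{Z}$ of image of order greater than $2$. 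The main subtlety is the non-orientable lineal case: the natural obstruction is only a quasicocycle, not a quasimorphism, so one really needs Corollary \ref{cor:LinealAmenable} (itself built on Proposition \ref{prop:quasicocycle} together with the amenability input of Proposition \ref{prop:amenable:qm}) to upgrade this data to an actual homomorphism into $\mathbb{R} \rtimes \mathbb{Z}/2\mathbb{Z}$.
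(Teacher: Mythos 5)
Your proof is correct and follows essentially the same approach as the paper: ping-pong for $\ngt$, Corollary \ref{cor:LinealAmenable} and Proposition \ref{prop:amenable:qm} (via the Busemann (quasi)cocycle) to pass from focal/lineal actions with loxodromics to homomorphisms into $\mathbb{R} \rtimes \mathbb{Z}/2\mathbb{Z}$, and then the same finitely-generated structure analysis. The only difference is in the reverse direction: given a homomorphism $\psi\colon G \to \mathbb{R} \rtimes \mathbb{Z}/2\mathbb{Z}$ with image of order greater than $2$, you route through Lemmas \ref{lem:constrlin}/\ref{lem:constrlin2} to build a lineal Cayley graph action, whereas the paper observes more directly that $\mathbb{R} \rtimes \mathbb{Z}/2\mathbb{Z} = \mathrm{Isom}(\mathbb{R})$, so $\psi$ already gives an isometric action on the hyperbolic space $\mathbb{R}$ with loxodromics; your detour is sound but strictly unnecessary.
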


\begin{proof}
It follows from the standard Ping-Pong argument, that a group admitting a general type action on a hyperbolic space contains a non-abelian free subgroup. However, any amenable group $G$ contains no free subgroups, and so it satisfies $\ngt$. If $G$ admits a focal action or a lineal action, then we know from Corollary~\ref{cor:LinealAmenable} that there exists a homomorphism $G \to \mathbb{R} \rtimes \mathbb{Z}/2\mathbb{Z}$ (which just takes values in $\mathbb{R}$ if the action is focal or lineal and oriented) whose image has order greater than $2$. Conversely, given a group homomorphism $G \to \mathbb{R} \rtimes \mathbb{Z}/2\mathbb{Z}$ there exists an isometric action on $\mathbb{R}$, simply because $\mathbb{R} \rtimes \mathbb{Z}/2\mathbb{Z} = \mathrm{Isom}(\mathbb{R})$. This action has loxodromic elements if and only if the restriction to the subgroup of elements whose projection to $\mathbb{Z}/2\mathbb{Z}$ is trivial, is unbounded. In turn, this occurs whenever the image of the homomorphism is of order greater than $2$. The statement for hereditary $\nl$ follows since subgroups of amenable groups are amenable. This concludes the proof of the second statement.
 
 Lastly suppose that $G$ is finitely generated. Clearly if $G$ surjects onto $\mathbb{Z}$ or $\mathbb{D}_\infty$, then there exists a homomorphism of $G$ to $\mathbb{R} \rtimes \mathbb{Z}/2\mathbb{Z}$ such that the image has order greater than $2$.
Conversely, let $\psi : G \to \mathbb{R} \rtimes \mathbb{Z}/2\mathbb{Z}$ be such a homomorphism. Let $H$ be the subgroup of elements whose image lands in $\mathbb{R}$, which has index at most $2$. Then $\psi(H)$ is a finitely generated unbounded subgroup of $\mathbb{R}$, which implies that it is isomorphic to $\mathbb{Z}^n$ for some integer $n \geq 1$. If $H = G$, then we can produce a surjective homomorphism $G \to \mathbb{Z}$. Otherwise, let $s \in G$ be such that $\psi(s)$ has order $2$. Then $\psi(G) = \langle \psi(H), \psi(s) \rangle \cong \mathbb{Z}^n \rtimes \mathbb{Z}/2\mathbb{Z}$. Since the action of the involution preserves each factor of $\mathbb{Z}^n$, this in turn surjects onto $\mathbb{Z} \rtimes \mathbb{Z}/2\mathbb{Z} \cong \mathbb{D}_\infty$
\end{proof}

 \subsection{Abelian and nilpotent groups} Abelian and nilpotent groups are widely studied classes of groups among amenable groups. The rigid structure of both these classes of groups allows us to prove even stronger results about them and Property $\nl$.

\begin{proposition}\label{prop:nosubsemi}
Let $G$ be a group with no non-abelian free sub-semigroup. Then $G$ satisfies $\nne$. If $G$ is amenable and admits a lineal action, then the action factors through a homomorphism $G \to \mathbb{R} \rtimes \mathbb{Z}/2\mathbb{Z}$. 
\end{proposition}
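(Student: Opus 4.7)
The plan is to split the proof into the two statements.

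For the first statement, I argue by contrapositive: suppose $G$ admits a non-elementary action on a hyperbolic space $X$. By Theorem \ref{thm:ClassHypAct} the action is either general type or focal. In the general type case, the classical Gromov ping-pong applied to two independent loxodromic elements produces a non-abelian free subgroup, which in particular contains a non-abelian free sub-semigroup. In the focal case, all loxodromics share a common boundary fixed point $\xi$, but one can still find two loxodromics $g,h$ whose other fixed points differ; a one-sided ping-pong on sufficiently high positive powers of $g$ and $h$, around disjoint boundary neighborhoods of their attracting fixed points, produces a non-abelian free sub-semigroup (this is well known; see for instance the treatment of focal hyperbolic actions in Caprace--Cornulier--Monod--Tessera). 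Either way we contradict the hypothesis, so $G$ satisfies $\nne$.

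For the second statement, let $G$ be amenable and $G \curvearrowright X$ lineal with $\Lambda(G) = \{\xi_+, \xi_-\}$. Corollary \ref{cor:LinealAmenable} produces a homomorphism $\psi : G \to \mathbb{R} \rtimes \mathbb{Z}/2\mathbb{Z}$ whose projection $\varepsilon$ onto $\mathbb{Z}/2\mathbb{Z}$ records swapping of $\xi_\pm$, and whose $\mathbb{R}$-part, restricted to $K := \ker \varepsilon$, coincides with the Busemann quasimorphism $\beta_{\xi_+}$; the latter is in fact an honest homomorphism on the amenable subgroup $K$ by Proposition \ref{prop:amenable:qm}. Apply Lemma \ref{lem:Essential} to replace $X$ by the $G$-invariant quasiconvex subspace $X' \subset X$ of $(A,B)$-quasigeodesics between $\xi_-$ and $\xi_+$, on which $G$ acts essentially; by the Morse lemma, all such quasigeodesics lie within uniformly bounded Hausdorff distance of one another, so $X'$ is quasi-isometric to $\mathbb{R}$.

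It remains to promote this qualitative identification to a strictly $\psi$-equivariant quasi-isometry $X' \to \mathbb{R}$, where $\mathbb{R}$ carries the tautological action of $\mathbb{R} \rtimes \mathbb{Z}/2\mathbb{Z} = \mathrm{Isom}(\mathbb{R})$. I would begin with a Busemann horofunction $f_0 : X' \to \mathbb{R}$ based at $\xi_+$, namely $f_0(x) := \limsup_n \bigl( d(x_0, y_n) - d(x, y_n) \bigr)$ for a fixed $x_0 \in X'$ and any sequence $y_n \to \xi_+$. Standard Busemann estimates, combined with the fact that $\beta_{\xi_+}$ is an exact homomorphism on $K$ (extended to all of $G$ via $\psi$ and $\varepsilon$), show that $f_0$ is a quasi-isometry onto its image and satisfies $f_0(g \cdot x) = \psi(g) \cdot f_0(x) + O(1)$ uniformly in $g$ and $x$. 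To upgrade to strict equivariance, I would redefine the map on the orbit $G \cdot x_0$ by $g x_0 \mapsto \psi(g) \cdot 0$ and extend by a nearest-point rule on $X'$: the result is a strictly $\psi$-equivariant quasi-isometry, because essentiality together with the presence of a loxodromic element guarantees that $G \cdot x_0$ is quasi-dense in $X'$. The principal technical obstacle lies in this last ``almost-to-strictly equivariant'' passage; it rests crucially on amenability, which is precisely what turns the Busemann quasicocycle into a genuine cocycle and so removes the obstruction coming from the defect.
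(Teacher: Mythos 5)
The first part of your proof --- that absence of non-abelian free sub-semigroups forbids general type and focal actions via ping-pong --- matches the paper's argument exactly.

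For the second part your strategy is close in spirit to the paper's, but the way you organize it introduces a gap. You pass to the quasi-convex hull $X'$ of the limit set, construct a Busemann horofunction $f_0 : X' \to \mathbb{R}$, note coarse $\psi$-equivariance, and then attempt to replace $f_0$ by a strictly equivariant map, first on the orbit $G \cdot x_0$ via $g x_0 \mapsto \psi(g) \cdot 0$ and then on all of $X'$ by a nearest-point rule. Two problems arise. First, the orbit map $g x_0 \mapsto \psi(g)\cdot 0$ is only well defined if $\psi(\mathrm{Stab}(x_0))$ fixes $0 \in \mathbb{R}$. For elements of $\mathrm{Stab}(x_0)$ swapping $\xi_{\pm}$, this requires the quasicocycle $\varphi$ to vanish on them, which is not automatic: Proposition~\ref{prop:quasicocycle} defines $\varphi$ using an arbitrary orientation-reversing representative $s$, and the resulting $\varphi$ is determined only up to a translation of $\mathbb{R}$ (see the remark following the proposition). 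You must normalize the target basepoint to be the fixed point of $\psi\bigl(\mathrm{Stab}(x_0) \setminus K\bigr)$ when that set is nonempty, rather than using $0$. Second, the nearest-point extension from $G \cdot x_0$ to $X'$ need not be equivariant: nearest points can be non-unique and the choices may break equivariance. Note also that once you redefine on the orbit, the horofunction $f_0$ you constructed plays no role.

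The paper avoids both issues by a simpler choice of quasiconvex subspace: rather than the quasi-convex hull of the limit set, it takes the orbit $Z' = G.z$ itself, which is quasiconvex for a lineal action, and uses $g.z \mapsto \varphi(g)$ directly. This removes the extension step entirely. Your argument would become correct by replacing $X'$ with the orbit $G \cdot x_0$ (which is quasi-dense in $X'$, as you observe) and by normalizing the basepoint of the quasicocycle as above.
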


\begin{proof}
Let $G$ be a group with no non-abelian free subsemigroups, and let $G$ act on some hyperbolic space $X$. The action cannot be of general type nor focal, as the standard Ping-Pong argument will produce free subgroups or free sub-semigroups, respectively. Therefore $G$ satisfies $\nne$.

For the second statement, let $G$ be an amenable group. First suppose that $G \acts Z$ is an orientable lineal action. Let $\{ \xi_-, \xi_+ \} \subset X$ be the finite orbit at infinity. The corresponding Busemann quasimorphism $\varphi : G \to \R$ is a homomorphism, by Corollary \ref{cor:LinealAmenable}.  By Lemma \ref{lem:constrlin}, there is a generating set $X \subset G$ such that $\varphi : (G, d_X) \to \R$
 is a $G-$equivariant quasi-isometry. Further, it follows from Lemma \ref{lem:constrlin}, that the orbit map $q \colon(G, d_X) \to Z$ defines a $G-$equivariant quasi-isometry. Consequently, there is a $\varphi-$equivariant quasi-isometry between the orbit $Z' = G.z$ and $\R$ (given by $g.z \to \varphi(g)$). As the action of $G$ is lineal, $Z'$ is quasi-convex and $G-$ invariant in $Z$. A similar explanation hols in the non-orientable case, using the Busemann quasicocycle, Proposition \ref{prop:quasicocycle} and Lemma \ref{lem:constrlin2}.
 \end{proof}

\begin{remark} As a direct consequence of the above result, the following groups have $\nne$: Groups of subexponential growth, virtually nilpotent groups \cite{nofreesubsemigroups}  and supramenable groups \cite{supramenable}. 
\end{remark}

\begin{corollary}\label{cor:AbelianNL}
Let $G$ be an abelian group. Every action of $G$ on a hyperbolic space is elliptic, horocyclic, or lineal and oriented. In the latter case, the action factors through a homomorphism $G \to \mathbb{R}$. 
\end{corollary}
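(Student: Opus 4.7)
The plan is to combine the Gromov classification from Theorem \ref{thm:ClassHypAct} with the results we have just established for amenable groups, using the specific feature that abelian groups are simultaneously amenable and free of (non-abelian) free sub-semigroups. I would first knock out the two richest types of actions via the earlier propositions, and then use abelianness as an extra rigidity constraint to remove non-orientable lineal actions.

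First, since $G$ is abelian it is amenable, so Proposition \ref{prop:Amenable} gives Property $\ngt$, ruling out general type actions. Since $G$ is abelian it also contains no non-abelian free sub-semigroup, so Proposition \ref{prop:nosubsemi} gives Property $\nne$, which additionally rules out focal actions. Thus the only remaining possibilities from Theorem \ref{thm:ClassHypAct} are elliptic, horocyclic, and lineal.

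Second, I would rule out non-orientable lineal actions. Suppose for contradiction that $G \curvearrowright X$ is non-orientable lineal with fixed boundary pair $\{\xi_+,\xi_-\}$. Since the action is lineal, $G$ contains a loxodromic element $g$ with attracting point (say) $\xi_+$ and repelling point $\xi_-$, and since the action is non-orientable there exists $s\in G$ that swaps $\xi_+$ and $\xi_-$. Then $sgs^{-1}$ is loxodromic with attracting point $s\xi_+=\xi_-$ and repelling point $s\xi_-=\xi_+$. But $G$ is abelian, so $sgs^{-1}=g$, forcing $g$ to have both dynamics simultaneously; a contradiction. Hence the lineal action must be oriented.

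Third, for an oriented lineal action, the second part of Proposition \ref{prop:nosubsemi} (applied using that $G$ is amenable) yields that the action factors through a homomorphism to $\mathbb{R} \rtimes \mathbb{Z}/2\mathbb{Z}$; since the action is oriented, the $\mathbb{Z}/2\mathbb{Z}$ factor is trivial and we land in $\mathbb{R}$. Alternatively, and more directly, the Busemann quasimorphism $\beta_{\xi_+}\colon G\to\mathbb{R}$ provided by Proposition \ref{prop:busemann} is a homogeneous quasimorphism on an amenable group, hence a homomorphism by Proposition \ref{prop:amenable:qm}; Lemma \ref{lem:constrlin} then produces an invariant quasiconvex subspace on which $\beta_{\xi_+}$ realises the desired equivariant quasi-isometry to $\mathbb{R}$, giving the required factorisation.

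I do not expect any substantive obstacle: every ingredient is already packaged in the earlier results. The only step that requires a small dynamical observation rather than a direct citation is ruling out non-orientable lineal actions, where the point is that abelianness forces $sgs^{-1}=g$, which is incompatible with $s$ reversing the orientation of the axis of a loxodromic.
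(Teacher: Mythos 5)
Your proof is correct, and all three steps close properly, but the key step---ruling out non-orientable lineal actions---is done by a genuinely different argument than the paper's. The paper deduces orientability from Corollary \ref{cor:LinealAmenable}: any lineal action of an amenable group gives a homomorphism $\psi : G \to \mathbb{R} \rtimes \mathbb{Z}/2\mathbb{Z}$ whose image is of order $>2$, and one then observes that the only abelian subgroups of $\mathbb{R} \rtimes \mathbb{Z}/2\mathbb{Z}$ of order $>2$ are contained in $\mathbb{R}$, so the $\mathbb{Z}/2\mathbb{Z}$-component (which tracks whether an element swaps the two boundary points) is trivial. You instead argue purely dynamically: a hypothetical $s$ swapping $\xi_\pm$ would conjugate any loxodromic $g$ to one with reversed attracting/repelling points, yet $sgs^{-1}=g$ by commutativity, a contradiction. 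Your argument is more elementary and bypasses the Busemann quasicocycle machinery entirely for this step (you only need it, via Proposition \ref{prop:nosubsemi}, for the factorisation through $\mathbb{R}$); the paper's route is a bit heavier but has the virtue of using the same packaged tool (Corollary \ref{cor:LinealAmenable}) that the subsequent nilpotent case relies on, giving a more uniform treatment across the section. Both are valid, and the remaining steps (using Proposition \ref{prop:nosubsemi} to reduce to elliptic/horocyclic/lineal, and to obtain the factorisation through $\mathbb{R}$ once orientability is known) match the paper's argument.
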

\begin{proof}
Every lineal action of an abelian group is necessarily orientable: this follows from Corollary \ref{cor:LinealAmenable} and the fact that the only abelian subgroups of $\mathbb{R} \rtimes \mathbb{Z}/2\mathbb{Z}$ are either of order $2$ or contained in $\mathbb{R}$. The result now follows from Proposition \ref{prop:nosubsemi}.
\end{proof}

\begin{corollary}
An abelian group $G$ is (hereditary) $\nl$ if and only if it is a torsion group. 
\end{corollary}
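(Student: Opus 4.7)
My plan is to deduce the statement as a direct consequence of the tools already developed in the excerpt, namely Corollary \ref{cor:AbelianNL}, combined with a purely algebraic observation about abelian groups. The backward direction is immediate: a torsion group contains no element of infinite order, and since loxodromic elements give a quasi-isometric embedding of $\mathbb{Z}$ in any action, a torsion group can never exhibit loxodromic behaviour. As every subgroup of a torsion abelian group is again torsion abelian, the same argument passes to all subgroups, so one obtains hereditary $\nl$ at once; there is no real difference between $\nl$ and hereditary $\nl$ in this class.

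For the forward direction, it therefore suffices to prove the contrapositive: if $G$ is abelian and not torsion, then $G$ fails $\nl$. By Corollary \ref{cor:AbelianNL}, every hyperbolic action of an abelian group is elliptic, horocyclic, or orientable lineal, and the lineal ones factor through a homomorphism $G \to \mathbb{R}$. Running this characterisation in reverse, I would produce a non-trivial homomorphism $\varphi : G \to \mathbb{R}$. Such a $\varphi$ is automatically unbounded (any non-zero subgroup of $\mathbb{R}$ is unbounded) and is a homogeneous quasimorphism of defect zero. Feeding $\varphi$ into Lemma \ref{lem:constrlin} then yields an orientable lineal action of $G$ on a Cayley graph with loxodromic elements, contradicting $\nl$. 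The whole problem reduces to the algebraic claim below.

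The main (mild) obstacle is showing that every non-torsion abelian group admits a non-trivial homomorphism to $\mathbb{R}$. I would argue by passing to the torsion-free quotient. Let $T(G)$ denote the torsion subgroup of $G$; then $A := G/T(G)$ is a non-trivial torsion-free abelian group, since $G$ is not torsion. The canonical map $A \hookrightarrow A \otimes_{\mathbb{Z}} \mathbb{Q}$ is injective because $A$ is torsion-free, and its target is a non-zero $\mathbb{Q}$-vector space. Choosing any non-zero $\mathbb{Q}$-linear functional $A \otimes_{\mathbb{Z}} \mathbb{Q} \to \mathbb{Q}$ (for instance, projection onto one coordinate of a Hamel basis) and composing with $\mathbb{Q} \hookrightarrow \mathbb{R}$ and with the quotient $G \twoheadrightarrow A$ produces the desired non-trivial homomorphism $G \to \mathbb{R}$, completing the proof.
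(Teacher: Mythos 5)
Your proof is correct and follows essentially the same route as the paper: both pass to the torsion-free quotient $G/T$, embed it into the $\mathbb{Q}$-vector space $G/T \otimes_{\mathbb{Z}} \mathbb{Q}$ to produce a nontrivial homomorphism to $\mathbb{R}$, and conclude that a non-torsion abelian group cannot be $\nl$ (you via Lemma \ref{lem:constrlin} directly, the paper via Corollary \ref{cor:nl:noqm}, which is derived from that lemma). The handling of the hereditary statement is also the same observation, namely that being virtually torsion is the same as being torsion for abelian groups.
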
 

\begin{proof}
Let $T$ denote the torsion subgroup of $G$. So $G/T$ is a torsion-free abelian group. As a consequence, $G/T$ embeds into the tensor product $G/T \otimes_\mathbb{Z} \mathbb{Q}$ where $G/T$ is thought of as a $\mathbb{Z}$-module. This tensor product is naturally a vector space over $\mathbb{Q}$, so, as an abelian group, it is a direct sum of copies of $\mathbb{Q}$. It follows that either $G/T$ is trivial, which amounts to saying that $G$ is a torsion group; or $G/T$ (and a fortiori $G$) has a non-trivial homomorphism to $\mathbb{Q}$ (and a fortiori to $\mathbb{R}$). In this case, $G$ cannot be $\nl$, by Corollary \ref{cor:nl:noqm}.

By Corollary~\ref{cor:AbelianNL}, we conclude that an abelian group is $\nl$ if and only if it is a torsion group. As being virtually a torsion group amounts to being a torsion group, the complete statement of the corollary follows. 
\end{proof}

\begin{corollary}
\label{cor:NilpotentNL}
Let $G$ be a nilpotent group. Every action of $G$ on a hyperbolic space is elliptic, horocyclic, or lineal and oriented. In the latter case, the action factors through a homomorphism $G \to \mathbb{R}$. Consequently, a nilpotent group $G$ is $\nl$ if and only if every homomorphism to $\mathbb{R}$ is trivial.
\end{corollary}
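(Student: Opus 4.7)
The plan is to deduce this from Proposition \ref{prop:nosubsemi} after ruling out the non-orientable lineal case, and then to use the quasimorphism characterization for the ``consequently'' part. The starting point is the classical fact (e.g.\ Rosenblatt) that every nilpotent group has no non-abelian free sub-semigroup. By Proposition \ref{prop:nosubsemi}, $G$ therefore satisfies $\nne$, and any lineal action factors through a homomorphism $\psi : G \to \mathbb{R} \rtimes \mathbb{Z}/2\mathbb{Z}$ (which for oriented lineal actions lands in $\mathbb{R}$). So only elliptic, horocyclic, or lineal actions are possible, and I only need to rule out the non-orientable lineal case.

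For this, I would make a short direct computation inside $\mathbb{R} \rtimes \mathbb{Z}/2\mathbb{Z}$. Suppose the action is non-orientable lineal; then some $g \in G$ swaps the two boundary points, so $\psi(g) = (r, -1)$ for some $r$, and since the action has loxodromics there is some $h \in G$ with $\psi(h) = (t, 1)$, $t \neq 0$. A direct calculation gives $\psi(g) \psi(h) \psi(g)^{-1} = (-t, 1) = \psi(h)^{-1}$, so $[\psi(g), \psi(h)] = \psi(h)^{-2}$. Iterating the commutator with $\psi(g)$ multiplies the exponent by $-2$ each time, hence $[\psi(g), [\psi(g), \ldots, [\psi(g), \psi(h)] \ldots ]] = \psi(h)^{\pm 2^n}$ is non-trivial for all $n$. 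This contradicts the nilpotence of $\psi(G)$ (which is a quotient of $G$), so the lineal action must be orientable. The corresponding Busemann quasimorphism $\beta : G \to \mathbb{R}$ is a homomorphism by Proposition \ref{prop:amenable:qm}, and this is the homomorphism through which the action factors.

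For the ``consequently'' part, suppose every homomorphism $G \to \mathbb{R}$ is trivial. By the dichotomy just proved, no lineal action with loxodromics exists. A focal action is excluded as well: its Busemann quasimorphism would be an unbounded homogeneous quasimorphism on the nilpotent (hence amenable) group $G$, which by Proposition \ref{prop:amenable:qm} is a homomorphism to $\mathbb{R}$, contradicting the hypothesis. Combined with $\ngt$ (from Proposition \ref{prop:nosubsemi} or Proposition \ref{prop:Amenable}), this gives $\nl$. Conversely, if a non-trivial homomorphism $p : G \to \mathbb{R}$ exists, then $p$ is an unbounded homogeneous quasimorphism and Lemma \ref{lem:constrlin} produces an orientable lineal action of $G$ with loxodromic elements, so $G$ is not $\nl$.

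The only non-routine step is the ruling out of non-orientable lineal actions; all the other pieces are bookkeeping on top of Proposition \ref{prop:nosubsemi}, Corollary \ref{cor:LinealAmenable}, and Lemma \ref{lem:constrlin}. The commutator computation above is short, but it is the place where nilpotence (as opposed to mere amenability or absence of free sub-semigroups) is genuinely used, and it is the main obstacle to be checked carefully.
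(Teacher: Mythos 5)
Your proof is correct and follows essentially the same route as the paper: apply Proposition \ref{prop:nosubsemi} to get $\nne$ and the factoring through $\mathbb{R}\rtimes\mathbb{Z}/2\mathbb{Z}$, then rule out the non-orientable lineal case by showing the image in $\mathbb{R}\rtimes\mathbb{Z}/2\mathbb{Z}$ would fail to be nilpotent. The paper phrases the key step by observing that $\langle g,s\rangle\cong\mathbb{D}_\infty$ has trivial center and so cannot occur as a subgroup of a quotient of a nilpotent group, whereas you make the same point explicit by iterating the commutator $[\psi(g),\psi(h)]=\psi(h)^{-2}$ to show the lower central series does not terminate -- this is an unpacking of the same fact rather than a different argument.
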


\begin{proof}
The fact that every action is elliptic, horocyclic or lineal follows from Proposition \ref{prop:nosubsemi} since nilpotent groups contain no free sub-semigroups; see \cite{nofreesubsemigroups}. So it remains to exclude non-oriented lineal actions.

Suppose that $\varphi : G \to \mathbb{R} \rtimes \mathbb{Z}/2\mathbb{Z}$ is a homomorphism that contains an unbounded subgroup of $\mathbb{R}$. By Corollary \ref{cor:LinealAmenable}, it suffices to show that such a homomorphism necessarily lands in $\mathbb{R}$. Otherwise, there exists an element $g \in \mathbb{R}$ of infinite order, and an element $s$ of order $2$, such that $\langle g, s \rangle \leq \varphi(G)$. But $\langle g, s \rangle \cong \mathbb{D}_\infty$, which is not nilpotent (for instance because it has trivial center). Moreover, it is a subgroup of a quotient of $G$, and this contradicts the fact that $G$ itself is nilpotent.
\end{proof}

\subsection{Solvable groups} Solvable groups are another well studied class of amenable groups, which include abelian and nilpotent groups. However, unlike abelian and nilpotent groups, these can admit (many) focal actions. Indeed, recent results from \cite{BalLamp, ABR, AR} have classified the actions of well studied solvable groups, including Lamplighter and solvable Baumslag Solitar groups, all of which admit focal actions. Thus, we prove a result that classifies which solvable groups admit focal actions. 

\begin{proposition}
A finitely generated solvable group is either virtually nilpotent or contains a finite-index subgroup admitting a focal action. 
\end{proposition}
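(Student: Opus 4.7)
The plan is to trade the failure of virtual nilpotency for a concrete algebraic structure on a finite-index subgroup -- a "Baumslag--Solitar-like" subquotient with an expanding conjugation action -- and then use that structure to build a focal action on a tree or on the hyperbolic plane.

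The starting point is that, by the Milnor--Wolf theorem combined with Gromov's polynomial growth theorem, a finitely generated solvable group is virtually nilpotent if and only if it has polynomial growth. Hence, if $G$ is not virtually nilpotent it has exponential growth, and by Rosenblatt (see \cite{nofreesubsemigroups}) it contains a non-abelian free subsemigroup. In the solvable setting, this must come from a \emph{distortion} phenomenon: some conjugation action on an abelian section has spectral radius different from $1$.

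I would then show that, after passing to a suitable finite-index subgroup $G' \leq G$, one can extract the following data: a surjection $\phi : G' \twoheadrightarrow \mathbb{Z}$, an abelian normal subgroup $A \trianglelefteq G'$ contained in $\ker(\phi)$, and an element $t \in G'$ with $\phi(t) = 1$ whose conjugation on $A \otimes \mathbb{Q}$ has an eigenvalue of modulus different from $1$. The extraction proceeds by descending the derived series until one finds a metabelian section witnessing the failure of virtual nilpotency, and by passing to a finite-index subgroup in order to secure a positive first Betti number and to realize the relevant eigenvalue integrally. With this data, the subgroup $\langle t, A \rangle$ is a quotient of a generalized Baumslag--Solitar group $A \rtimes_t \mathbb{Z}$ with expanding generator, which admits a well-known focal action on a Bass--Serre tree (or on the hyperbolic plane): $t$ acts loxodromically while $A$ fixes a point at infinity. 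This action extends to $G'$ by having $\ker(\phi)$ act parabolically at the same boundary point and any lift of a generator of $\mathbb{Z}$ act loxodromically; the resulting $G'$-action has exactly one fixed point in the boundary together with loxodromic elements, i.e.\ is focal in the sense of Theorem~\ref{thm:ClassHypAct}.

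The main obstacle is the algebraic extraction step. A finitely generated solvable group may have trivial first Betti number, so the character $\phi$ will in general only exist after passing to finite index; moreover one must be careful to take $A$ \emph{normal} in $G'$ (not merely in a smaller subgroup) so that the focal action genuinely extends to $G'$. In the polycyclic case this is manageable via the Fitting subgroup and the action of the virtually abelian Fitting quotient on it; the non-polycyclic case requires handling infinite-rank abelian normal subgroups such as $\mathbb{Z}[1/n]$, and would naturally invoke the Bieri--Strebel invariant of metabelian groups to detect the appropriate character $\phi$. Once the algebraic data is in place, the construction of the focal action is by now a standard application of the machinery developed in \cite{BalLamp, ABR, AR}.
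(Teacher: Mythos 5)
Both your plan and the paper's proof have the same skeleton: detect the failure of virtual nilpotency through an "expanding" metabelian subquotient and build a focal action from it. But as written, your argument has a genuine gap at the point where you pass from the subgroup $\langle t, A\rangle$ to all of $G'$. You cannot extend an isometric action from a subgroup to the whole group simply by declaring that $\ker(\phi)$ "acts parabolically at the same boundary point" and that some lift of a generator of $\mathbb Z$ "acts loxodromically" -- an action is a homomorphism to an isometry group, and you have not produced one on $G'$. In general, extending a hyperbolic action from a subgroup is precisely the hard extension problem, and generically impossible. The correct direction is the opposite: produce a \emph{quotient} $Q$ of a finite-index subgroup $G'$ that acts focally, and pull the action back along $G' \twoheadrightarrow Q$. (For that to happen, the conjugation action of $\ker(\phi)$ on $A$ must essentially be killed, and the relevant eigenvalue must survive in the image; none of this is automatic, and it is exactly what needs an argument.)

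This is what the paper's proof arranges. It invokes the structure theorem for \emph{just non-virtually nilpotent} finitely generated groups (\cite{MR474887}, see also \cite[Theorem~1.6]{MR2402591}): such a group is virtually metabelian, and a just non-virtually nilpotent metabelian group embeds in $\mathrm{Aff}(k)$ over a local field $k$ with cocompact image. The proof then passes to a just non-virtually nilpotent quotient $\overline G$ of $G$, takes a finite-index metabelian $H\le \overline G$, passes to a just non-virtually nilpotent metabelian quotient $\overline H$ of $H$, and lets $\overline H$ inherit the focal action of $\mathrm{Aff}(k)$ on $\mathbb H^2_{\mathbb R}$, $\mathbb H^2_{\mathbb C}$, or the Bruhat--Tits tree. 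The finite-index subgroup of $G$ is then the pre-image of $H$, which \emph{surjects onto} $\overline H$, and the focal action pulls back trivially. Your "extraction" step (existence of $\phi$, the abelian normal $A$, the eigenvalue of modulus $\ne 1$, the need to pass to finite index to realize these) is essentially a sketch of what the cited structure theorem delivers in a packaged form; you gesture at the Fitting subgroup and the Bieri--Strebel/BNS invariant, but you do not carry the argument out, and the non-polycyclic case in particular is nontrivial. To repair your proof you would need either to prove (or cite) an analogue of the just non-virtually nilpotent structure theorem, and, crucially, to rephrase the construction so that the focal action is built on a quotient of $G'$ rather than on a subgroup.
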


\begin{proof}
The proposition is essentially a consequence of \cite{MR474887} (see also \cite[Theorem~1.6]{MR2402591}), which states that a finitely generated group that is \emph{just non-virtually nilpotent} (i.e.\ that is not virtually nilpotent but all of whose proper quotients are virtually nilpotent) must be virtually metabelian. Moreover, in case it is metabelian, it has to embed in the affine group $\mathrm{Aff}(k)$ over some local field $k$ with cocompact image. 

Now, let $G$ be a finitely generated solvable group that is not virtually nilpotent. Then $G$ admits a quotient $\overline{G}$ that is just non-virtually nilpotent (see for instance \cite[Claim~2 page~961]{MR2402591}). As a consequence of the result mentioned above, $\overline{G}$ contains a metabelian subgroup $H$ of finite index. Because $\overline{G}$ is not virtually nilpotent, so is $H$, which implies that $H$ also surjects onto some just non-virtually nilpotent group $\overline{H}$. As a quotient of a metabelian group, $\overline{H}$ must be metabelian, so it embeds into $\mathrm{Aff}(k)$ over some local field $k$ with cocompact image.

Thus, $G$ contains a finite-index subgroup (namely, the pre-image of $H$ under $G \twoheadrightarrow \overline{G}$) that surjects onto $\overline{H}$, and the latter admits a focal action either on a real or complex hyperbolic space (if $k=\mathbb{R}$ or $\mathbb{C}$, see \cite[Chapter 2]{martelli}) or on a Bruhat--Tits tree (if $k$ is non-Archimedean, see for instance \cite[Section~II.1.3]{Serre} or \cite[Section~4]{MR1306556}). 
\end{proof}

The following corollary is now an easy consequence.

\begin{corollary}
A finitely generated solvable group is hereditary $(\mathrm{NNE})$ if and only if it is virtually nilpotent. 
\end{corollary}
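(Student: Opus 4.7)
The proof plan is to combine the preceding proposition with the previously established connection between free sub-semigroups and Property $\nne$, handling both implications directly.

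For the ``if'' direction, I would observe that every finite-index subgroup of a finitely generated virtually nilpotent group is itself virtually nilpotent, so it suffices to show that every virtually nilpotent group $H$ satisfies $\nne$. Since $H$ has polynomial growth, and in particular subexponential growth, it contains no non-abelian free sub-semigroup (as recorded in the remark following Proposition \ref{prop:nosubsemi}). Proposition \ref{prop:nosubsemi} then immediately gives Property $\nne$ for $H$, and hence hereditary $\nne$ for $G$.

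For the ``only if'' direction, I would proceed by contrapositive. Suppose $G$ is a finitely generated solvable group that is \emph{not} virtually nilpotent. By the preceding proposition, there exists a finite-index subgroup $H \leq G$ that admits a focal action on a hyperbolic space. By Theorem \ref{thm:ClassHypAct}, a focal action has $|\Lambda(H)| = \infty$, so in particular it is non-elementary. Therefore $H$ does not satisfy $\nne$, and consequently $G$ is not hereditary $\nne$.

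The main obstacle here has already been overcome in the preceding proposition, which packaged the deep structural result of \cite{MR474887} about just non-virtually nilpotent groups together with the embedding into $\mathrm{Aff}(k)$ and the construction of focal actions on (real, complex, or Bruhat--Tits) hyperbolic spaces. With that in hand, the corollary is a formal combination, and no further calculation is required beyond reading off that focal actions violate $\nne$ and that subexponential growth rules out free sub-semigroups.
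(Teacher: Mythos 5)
Your proof is correct and follows exactly the route the paper intends (the paper merely states the corollary is "an easy consequence" of the preceding proposition). Both directions are handled cleanly: the "if" direction via Proposition \ref{prop:nosubsemi} and the observation that finite-index subgroups of finitely generated virtually nilpotent groups remain virtually nilpotent, and the "only if" direction via the preceding proposition plus the fact that a focal action is non-elementary by Theorem \ref{thm:ClassHypAct}.
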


\section{Stability under operations}
\label{s:stability}

In this section, we study the question of when the properties $\nl, \ngt, \nne$ are preserved under group operations. We start with the simplest ones. 

\begin{proposition}\label{prop:DirectSums} The properties $(\mathrm{NGT})$, $(\mathrm{NNE})$, $(\mathrm{NL})$ as well as their hereditary versions are preserved under taking 
\begin{itemize}[nolistsep,noitemsep]
\item[(i)] Quotients
\item[(ii)] Directed unions 
\item[(iii)] Direct sums
\end{itemize}
\end{proposition}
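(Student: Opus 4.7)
The plan is to treat all three properties uniformly by observing that each failure is witnessed by at most two elements: a single loxodromic when $\nl$ fails, a pair of loxodromics with different fixed-point sets when $\nne$ fails, and a pair of loxodromics with disjoint fixed points (a ping-pong pair) when $\ngt$ fails. I would then handle the three operations in order.

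For quotients $\pi : G \twoheadrightarrow Q$, I would note that any $Q$-action on a hyperbolic space $X$ pulls back to a $G$-action of the same dynamical type: an element of $Q$ is loxodromic iff any (equivalently every) lift is so for the pulled-back $G$-action. Hence a witness in $Q$ induces one in $G$, proving the non-hereditary case; the hereditary case uses that finite-index subgroups of $Q$ pull back to finite-index subgroups of $G$. For directed unions $G = \bigcup_i G_i$, any two-element witness already lies in some single $G_i$ by directedness, and its restricted action exhibits the same failure for $G_i$. For the hereditary case I would use that a finite-index $K \leq G$ decomposes as $\bigcup_i (K \cap G_i)$, with each $K \cap G_i$ of finite index in $G_i$ and hence having the property.

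The main work is for direct sums. I would first reduce to the binary case $G_1 \times G_2$ via the identity $\bigoplus_{i \in I} G_i = \bigcup_{J \subset I \text{ finite}} \prod_{j \in J} G_j$ together with part (ii). The key lemma for two factors is: whenever $(g_1, g_2) \in G_1 \times G_2$ is loxodromic on $X$, at least one of $(g_1, 1)$ or $(1, g_2)$ is also loxodromic, with the same pair of boundary fixed points. To prove it I would consider the abelian subgroup $A = \langle (g_1, 1), (1, g_2) \rangle$: by Corollary \ref{cor:AbelianNL} its action on $X$ is elliptic, horocyclic, or oriented lineal, and the loxodromicity of $(g_1, g_2) = (g_1, 1)(1, g_2)$ forces the oriented lineal case. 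In particular the $A$-action factors through a homomorphism $A \to \R$ whose non-vanishing elements are precisely the loxodromic ones, all sharing a common pair of fixed points. The lemma then yields $\nl$ directly. For $\nne$ and $\ngt$ I would apply it to each of the two witnessing loxodromics, producing four cases: the two ``diagonal'' cases (both coming from the same factor) contradict the property for that factor, while the two ``mixed'' cases yield commuting loxodromics living in the two opposite factors, which must share their fixed-point pairs --- contradicting the assumption that the witnessing pair has different (resp. disjoint) fixed points.

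For the hereditary version of (iii), I would set $K_i := K \cap G_i$ for a finite-index $K \leq G_1 \times G_2$; each $K_i$ has finite index in $G_i$ and hence the property, so $K_1 \times K_2$ has it by the two-factor case just proven and has finite index in $K$. Any witness for a failure in $K$ consists of one or two loxodromic elements; raising each to a suitable positive power, they land in $K_1 \times K_2$ while remaining loxodromic with the same fixed-point pair, yielding a witness of the same type inside $K_1 \times K_2$ --- a contradiction. The main obstacle I expect is the mixed-case analysis in the two-factor argument, which relies on the standard but crucial fact that two commuting loxodromics on a hyperbolic space must share their pair of boundary fixed points.
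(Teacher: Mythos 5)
Your proof is correct. Parts (i) and (ii) match the paper's argument essentially verbatim (quotients pull back, two-element witnesses land in some $G_i$ of the directed system). Part (iii) takes a genuinely different route. The paper proves a structural result, Lemma~\ref{lem:ProductActingHyp}, showing that for a direct sum acting on a hyperbolic space, if some summand's restricted action is not elliptic then the whole action has the \emph{same type} as that summand's; combined with Lemma~\ref{lem:CommutingElliptic} this settles the non-hereditary case, and Goursat's lemma is then used for the hereditary case. You instead reduce to two factors and apply Corollary~\ref{cor:AbelianNL} to the abelian subgroup $\langle (g_1,1),(1,g_2)\rangle$ to get a ``splitting lemma'': whenever $(g_1,g_2)$ is loxodromic, one of $(g_1,1)$, $(1,g_2)$ is loxodromic with the same endpoints. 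This is weaker than the paper's lemma but sufficient, once one adds the standard fact that commuting loxodromics share their fixed-point pair (which also underlies the paper's Lemma~\ref{lem:ProductActingHyp}). Your hereditary step for (iii) is actually simpler than the paper's: rather than invoking Goursat, you observe $K_1 \times K_2 = (K\cap G_1)\times(K\cap G_2)$ has finite index in $K$ and has the property, then use the ``finite-index subgroup with the property implies the group has it'' observation (which the paper only records a few lines later as Lemma~\ref{lem:fi:subgroup}, but which is elementary and independent). The one trade-off: Corollary~\ref{cor:AbelianNL} is in Section 3 and creates no circularity, but it routes through the Busemann quasimorphism/quasicocycle machinery for amenable groups, so your argument for (iii) is less self-contained than the paper's purely dynamical one; in exchange, you avoid a case analysis over all five action types and the appeal to Goursat.
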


We will prove each part of the proposition in a series of results. In order to prove stability under quotients, we will make use of the following result which is straightforward to verify. 

\begin{lemma}\label{lem:quotaction} Let $N$ be a normal subgroup of $G$ and $\phi: G \to G/N$ be the quotient map. If $G/N \acts X$ is a hyperbolic action, then the action of $G \acts X$ defined by $g.x = \phi(g).x$ is a well defined hyperbolic action of the same type. In particular, if $G/N \acts X$ contains a loxodromic element,  then so does $G \acts X$. \end{lemma}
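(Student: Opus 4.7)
The plan is to simply unpack the definitions; the statement is essentially a tautology, and the only thing to check is that nothing degenerates. Let me describe the three verifications I would carry out.

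First, I would check that $g \cdot x := \phi(g) \cdot x$ defines a genuine isometric action of $G$ on $X$. Since $\phi : G \to G/N$ is a group homomorphism and $G/N$ acts by isometries on $X$, the composition is automatically a homomorphism $G \to \operatorname{Isom}(X)$: we have $(gh)\cdot x = \phi(gh)\cdot x = \phi(g)\phi(h)\cdot x = g\cdot(h\cdot x)$, and each $\phi(g)$ is an isometry. So the action is well defined and by isometries.

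Second, I would show that the two actions have the same limit set, and therefore fall into the same case of the Gromov classification (Theorem \ref{thm:ClassHypAct}). Since $\phi$ is surjective, for any basepoint $x \in X$ we have $G \cdot x = \phi(G) \cdot x = (G/N) \cdot x$, so the orbits coincide as subsets of $X$. Taking closures in $X \cup \partial X$ gives $\overline{G\cdot x} = \overline{(G/N)\cdot x}$, hence $\Lambda(G) = \Lambda(G/N)$ inside $\partial X$. Since the classification in Theorem \ref{thm:ClassHypAct} is determined by $|\Lambda(\cdot)|$ together with the existence of a globally fixed boundary point (for the focal versus general type dichotomy), and since any point of $\partial X$ fixed by $G/N$ is automatically fixed by $G$ through $\phi$ and conversely, the two actions are of the same type.

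Third, I would observe that an element $g \in G$ is loxodromic, elliptic, or parabolic for $G \curvearrowright X$ precisely when $\phi(g)$ is for $G/N \curvearrowright X$, because the orbit map $n \mapsto g^n \cdot x = \phi(g)^n \cdot x$ is literally the same map for both actions, and the three types are defined in terms of this orbit map. In particular, the existence of a loxodromic in $G/N \curvearrowright X$ transfers immediately to $G \curvearrowright X$ via any preimage under $\phi$.

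There is no real obstacle here; the only subtle point to flag is that one must use the surjectivity of $\phi$ in the second step to conclude equality (not just inclusion) of orbits and limit sets, but this is part of the hypothesis that $\phi$ is the quotient map.
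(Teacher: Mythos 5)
Your proof is correct and carries out exactly the routine verification the paper leaves to the reader (the paper states the lemma as ``straightforward to verify'' and gives no proof). Composing $\phi$ with the action homomorphism, matching orbits and hence limit sets and boundary fixed points, and noting that the orbit map of $g$ equals that of $\phi(g)$ is the natural and intended argument.
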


\begin{proof}[Proof of Proposition \ref{prop:DirectSums}(i) and (ii)] The proof of (i) follows from Lemma \ref{lem:quotaction} and the fact that finite-index subgroups of $G/N$ are of the form $H/N$, where $H \leq G$ is a finite-index subgroup such that $N \leq H \leq G$. 

The assertions in (ii) follows from the fact that only finitely many elements are needed to determine whether a given action is of general type, non-elementary, or contains a loxodromic (namely, two independent loxodromic isometries, two loxodromic isometries with distinct quasi-axes, one loxodromic isometry); and from the fact that a finite-index subgroup in a directed union of groups is a directed union of finite-index subgroups. 
\end{proof}

In order to deal with direct sums, we will need the following general results. 

\begin{lemma}\label{lem:ProductActingHyp}
Let $G$ be a direct sum $\displaystyle \bigoplus_i H_i$ acting on a hyperbolic space $X$. If there exists some $i$ such that the restriction of the action to $H_{i}$ is not elliptic, then the actions of $G$ and $H_i$ on $X$ have the same type.
\end{lemma}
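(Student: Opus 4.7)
The plan is to exploit the normality of $H := H_i$ in $G = H \oplus K$ (where $K := \bigoplus_{j \neq i} H_j$), together with the elementwise commutativity of $H$ and $K$. The preliminary observation is that, since $H \trianglelefteq G$, the limit set $\Lambda(H) \subseteq \partial X$ is $G$-invariant: for every $g \in G$,
\[
g \cdot \Lambda(H) = \Lambda(gHg^{-1}) = \Lambda(H),
\]
as the limit set depends only on the conjugacy class of the subgroup, and trivially $\Lambda(H) \subseteq \Lambda(G)$. With this in hand, I would proceed by case analysis on the (non-elliptic) type of $H \curvearrowright X$ via Theorem \ref{thm:ClassHypAct}.

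The two non-elementary cases are quick. If $H$ is of general type then $\mathrm{Fix}_{\partial X}(H) = \varnothing$, so also $\mathrm{Fix}_{\partial X}(G) = \varnothing$, and $G$ inherits non-elementarity from $H$, hence is of general type. If $H$ is focal with unique fixed point $\xi$, then for any $g \in G$ the point $g\xi$ is also $H$-fixed (since $H = gHg^{-1}$ fixes it), so $g\xi = \xi$ by uniqueness; combined with non-elementarity, $G$ is focal.

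In the lineal case $\Lambda(H) = \{\xi_+, \xi_-\}$, the $G$-action on this pair gives a homomorphism $G \to \mathbb{Z}/2\mathbb{Z}$ whose kernel $K_0$ has index at most $2$ and fixes $\xi_\pm$ pointwise. A suitable power of a loxodromic in $H$ lies in $K_0$ with fixed point set exactly $\{\xi_\pm\}$, so $K_0$ is neither focal (which requires a unique fixed point) nor general type, and thus is lineal with $\Lambda(K_0) = \{\xi_\pm\}$. A routine finite-index argument using that $G$ preserves $\{\xi_\pm\}$ then yields $\Lambda(G) = \{\xi_\pm\}$, so $G$ itself is lineal.

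The horocyclic case ($\Lambda(H) = \{\xi\}$, and $G$ also fixing $\xi$ by the same uniqueness argument) is the most delicate. The key intermediate step is to rule out loxodromics in $K$: any such $k$ commutes with $H$ and fixes $\xi$, forcing $H$ to fix both endpoints of $k$'s axis, which then contradicts the horocyclic nature of $H$ --- either directly via a parabolic in $H$, whose boundary fixed point must be unique; or, in the purely elliptic subcase, via the geometric rigidity that a group of elliptic isometries fixing two distinct boundary points acts essentially trivially on the quasi-axis between them and so has bounded orbits. Granted this, suppose $m \in G$ is loxodromic and decompose $m = hk$ with $h \in H$, $k \in K$; the commutation $[h,k] = 1$ yields $m^n = h^n k^n$, and the triangle inequality gives
\[
d(x_0, m^n x_0) \leq d(x_0, h^n x_0) + d(x_0, k^n x_0) = o(n),
\]
since neither $h$ nor $k$ is loxodromic, contradicting the linear growth forced by $m$ being loxodromic. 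Hence $G$ has no loxodromic and, containing non-elliptic $H$, must be horocyclic. I expect the main obstacle to be the purely elliptic subcase inside the first step of the horocyclic argument, where care is required to invoke the rigidity of elliptic isometries pinned to a two-ended quasi-axis in a possibly non-proper hyperbolic space.
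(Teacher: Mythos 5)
Your proof is correct and follows the same overall strategy as the paper's: a case analysis on the type of $H_i \curvearrowright X$, exploiting the direct-sum structure (you via normality and $G$-invariance of $\Lambda(H_i)$, the paper via elementwise commutativity — essentially the same thing here). The general-type, focal, and lineal cases match the paper closely (the paper handles the lineal case by observing that the $H_j$ for $j\ne i$ must preserve $H_i$'s quasi-line, which is equivalent to your finite-index argument). The real divergence is in the horocyclic case: you rule out loxodromics in all of $K=\bigoplus_{j\ne i}H_j$ at once (commutation forces $H_i$ to fix both endpoints of any loxodromic $k\in K$), and then kill loxodromics in $G$ via the stable translation length estimate $d(x_0,m^nx_0)\le d(x_0,h^nx_0)+d(x_0,k^nx_0)=o(n)$. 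The paper instead rules out loxodromics component-by-component in each $H_j$ and then argues about the type of a general $g=(h_j)_j$ in a way that is somewhat terser; your translation-length computation is arguably the cleaner way to finish that step. Finally, you correctly identify the one genuinely non-trivial ingredient both arguments lean on — that a group with unbounded orbits and no loxodromics cannot fix two distinct boundary points — and it is indeed a real lemma rather than an immediate consequence of the classification. It is provable via the Busemann quasimorphism at one of the fixed points: if the action has no loxodromics then the homogeneous quasimorphism vanishes identically (Proposition \ref{prop:busemann}(3)), so the non-homogenized Busemann function is bounded; combined with the fact that orbits stay within bounded Hausdorff distance of the quasi-line joining the two fixed ends, this forces orbits to be bounded. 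The paper invokes this implicitly (``which is not possible by hypothesis''), so your explicit flagging of it is a fair and useful observation rather than a gap in your argument.
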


\begin{proof}
An element $h \in H_i$ is loxodromic for the restricted action $H_i \curvearrowright X$ if and only if its image in $G$ is loxodromic for the action $G \curvearrowright X$. This immediately implies that if the action of $H_i$ has general type, then the action of $G$ has general type.

If the action of $H_i$ is focal, then it fixes a point at infinity $\xi$, and it contains two loxodromic elements $h_1, h_2$ with distinct limit sets at infinity. Since $H_j$ commutes with $H_i$ for all $i \neq j$, the point $\xi$ must be fixed by the other $H_j$ as well, and therefore also by $G$. Moreover $G$ contains $h_1, h_2$, which are two loxodromic elements with distinct limit sets at infinity, and so the action of $G$ is also focal.

If the action of $H_i$ is lineal, then it preserves a quasi-line, which has to be preserved by each $H_j$ for $j \neq i$, and so the action of $G$ is also lineal.

Finally, suppose that the action of $H_i$ is horocyclic. Then it has a unique fixed point $\xi$ at infinity, which is fixed by all of $G$ by commutativity. If there existed a loxodromic element $g \in H_j$ for some $j \neq i$, then $H_i$ would preserve both $ g^{\pm \infty}$, which is not possible by hypothesis. Therefore no $H_j$ can contain a loxodromic element, which implies that the action of $H_j$ is either horocyclic or elliptic. 

Now consider an element $g \in G$, such that $g =(h_j)$ acts as $h_j \in H_j$, where all but finitely many entries are the identity element. If all components $h_j$ are elliptic, then it is straightforward to check that $g$ is elliptic. As $H_j$ contains no loxodromic elements, it suffices to consider the case when some component $h_j$ is parabolic. Then for any $n \in \Z$, $g^n$ contains a parabolic component $h^n_j$, which fixes the unique fixed point $\xi$. By commutativity, all components of $g^n$ fix $\xi$ and therefore so does $g^n$ for every $n$. Consequently, $g$ is a parabolic element. Thus the action of $G$ is horocyclic, as it has an unbounded orbit (since $H_i$ does) and no loxodromic elements.
\end{proof}

The only case the previous lemma does not cover, is when every $H_i$ is elliptic. This is taken care of by the following lemma.

\begin{lemma}\label{lem:CommutingElliptic}
Let $A,B$ be two groups acting on a metric space $X$ with bounded orbits. If every isometry in $A$ commutes with every isometry in $B$, then $\langle A,B \rangle$ has bounded orbits in $X$.
\end{lemma}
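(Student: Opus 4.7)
The plan is to exploit elementwise commutativity to give a normal form for elements of $\langle A, B \rangle$ as products $ab$ with $a \in A$ and $b \in B$, and then bound the displacement of such elements using the triangle inequality and the isometry assumption.

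First I would fix a basepoint $x_0 \in X$, and note that by hypothesis there exist constants $R_A, R_B \geq 0$ such that $d(a.x_0, x_0) \leq R_A$ for all $a \in A$ and $d(b.x_0, x_0) \leq R_B$ for all $b \in B$ (here I use that an action by isometries has all orbits bounded as soon as one orbit is bounded, via the triangle inequality). Next I would observe that since every isometry in $A$ commutes with every isometry in $B$, any product $a_1 b_1 a_2 b_2 \cdots a_n b_n$ can be rearranged to $(a_1 a_2 \cdots a_n)(b_1 b_2 \cdots b_n)$. Hence every element $g \in \langle A, B\rangle$ can be written as $g = ab$ for some $a \in A$, $b \in B$.

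Finally, I would estimate
\[
d(g.x_0, x_0) = d(ab.x_0, x_0) \leq d(ab.x_0, a.x_0) + d(a.x_0, x_0) = d(b.x_0, x_0) + d(a.x_0, x_0) \leq R_A + R_B,
\]
where the middle equality uses that $a$ acts as an isometry. This bounds the $\langle A, B\rangle$-orbit of $x_0$ by $R_A + R_B$, as desired. There is no real obstacle; the only subtle point is remembering that commutativity of the elementwise kind is strong enough to collapse an arbitrary alternating word to a single $AB$-product, which is what makes the triangle inequality argument go through.
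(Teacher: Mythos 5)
Your proof is correct and is essentially the same as the paper's: fix a basepoint, observe that elementwise commutativity gives $\langle A,B\rangle = AB$, and then bound the displacement of $ab$ by the triangle inequality together with the fact that $a$ acts by isometries. No substantive differences.
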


\begin{proof}
Fix a point $x \in X$. Let $M$ (resp. $N$) denote the diameter of $A . x$ (resp. $B . x$). Then 
$$d(x,ab . x) \leq d(x,a . x) + d(x,b . x) \leq M+N$$
for every $a \in A$, $b \in B$. But $A$ and $B$ commute, so $\langle A , B \rangle = AB$. We conclude that the orbit of $x$ under $\langle A,B \rangle$ has diameter $\leq M+N$. 
\end{proof}

We are now ready to complete the proof of Proposition \ref{prop:DirectSums}.

\begin{proof}[Proof of Proposition~\ref{prop:DirectSums} (iii).]
Let $A,B$ be two groups. If $A \oplus B$ acts on some hyperbolic space $X$, then it follows from Lemmas~\ref{lem:ProductActingHyp} and~\ref{lem:CommutingElliptic} that the action $A \oplus B \curvearrowright X$ has the same type as $A \curvearrowright X$ or $B \curvearrowright X$. More precisely, $A \oplus B$ is elliptic if and only if $A$ and $B$ are elliptic by Lemma \ref{lem:CommutingElliptic}. If either $A$ or $B$ is not elliptic, then $A \oplus B$ has the same type.
This implies that the properties $(\mathrm{NGT})$, $(\mathrm{NNE})$, and $(\mathrm{NL})$ are preserved by finite direct sums. 

For the hereditary version, let $H \leq A \times B$ be a finite index subgroup. Consider the projections $A', B'$ of $H$ to each factor. As $H$ is finite index in $A \times B$, it follows that $A' \leq A$ and $B' \leq B$ subgroups of finite index and $H$ is a finite index subgroup of $A' \times B'$, with surjective projections to each factor. By Goursat's lemma, there exist subgroups $N \trianglelefteq A' , M \trianglelefteq B'$ and an isomorphism $\theta : A'/N \to B'/M$ such that $H$ can be identified with the graph of the map $\theta$. As $A,B$ are hereditary $\nl$, so are $A', B'$. As the property survives under quotients, $A'/N, B'/M$ are also hereditary $\nl$. As $H$ is the graph of the isomorphism $\theta$, it follows that $H$ is also hereditary $\nl$. 

Thus, the proposition holds for finite direct sums. The general case now follows from Proposition \ref{prop:DirectSums} (ii) about stability under directed unions.
\end{proof}

It is worth noting that only the hereditary versions of the properties are preserved under commensurability. Recall that two groups $G, H$ are \emph{commensurable} if they contain isomorphic finite-index subgroups.

\begin{lemma}
\label{lem:fi:subgroup}

Let $G_0 \leq G$ be a subgroup of finite-index. If $G_0$ is $\nl$ (respectively $\nne, \ngt$), then $G$ is $\nl$ (respectively $\nne, \ngt$).
\end{lemma}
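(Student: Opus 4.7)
The plan is to prove each of the three implications by contraposition: starting from an action of $G$ on a hyperbolic space $X$ of the relevant type (with a loxodromic, non-elementary, or of general type), I will produce an action of $G_0$ of the same type simply by restriction. The only input I need is that some positive power of every element of $G$ lies in $G_0$, which follows from $[G:G_0]<\infty$.

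For Property $\nl$, if $G\curvearrowright X$ admits a loxodromic element $g$, then some power $g^n$ lies in $G_0$; since a power of a loxodromic isometry is itself loxodromic (its cyclic subgroup is a subset of $\langle g\rangle$, which quasi-isometrically embeds), the restricted action $G_0\curvearrowright X$ contains a loxodromic. Hence $G$ cannot be non-$\nl$ while $G_0$ is $\nl$.

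For Property $\nne$, given a non-elementary action $G\curvearrowright X$, I would pick two independent loxodromic elements $g,h\in G$, meaning $\{g^{\pm\infty}\}\cap\{h^{\pm\infty}\}=\varnothing$, and then take a common exponent $n$ (for instance $n=[G:G_0]!$) so that $g^n,h^n\in G_0$. The fixed points of $g^n$ on $\partial X$ coincide with those of $g$ (and likewise for $h$), so $g^n$ and $h^n$ remain independent loxodromics for $G_0\curvearrowright X$, making that action non-elementary. For Property $\ngt$, the same step shows $G_0\curvearrowright X$ is non-elementary; if it were focal instead of general type, $G_0$ would fix some boundary point $\xi\in\partial X$, and in particular both $g^n$ and $h^n$ would fix $\xi$, contradicting the disjointness of their endpoint sets. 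Therefore $G_0\curvearrowright X$ is of general type.

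I do not anticipate any serious obstacle: the only non-trivial ingredient beyond elementary coset counting is the fact that a loxodromic isometry and each of its non-zero powers share the same ordered pair of endpoints on $\partial X$, which is immediate from the classification of individual isometries recalled before Theorem~\ref{thm:ClassHypAct}. The overall strategy mirrors a familiar theme: loxodromic behaviour and boundary dynamics for a group action on a hyperbolic space pass down to any finite-index subgroup without loss.
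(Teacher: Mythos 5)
Your proof is correct and follows essentially the same approach as the paper: some positive power of any $g\in G$ lies in $G_0$, and powers of loxodromics stay loxodromic with the same pair of boundary fixed points, so the relevant type of action restricts to $G_0$. You are in fact marginally more careful than the paper, which writes $g^{[G:G_0]}\in G_0$ (true only when $G_0$ is normal), whereas your choice $n=[G:G_0]!$ gives a common exponent landing in $G_0$ for an arbitrary finite-index subgroup.
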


\begin{proof}
This follows easily from the fact that, if $g \in G$ is loxodromic for an action on a hyperbolic space $X$, then $g^{[G : G_0]} \in G_0$ is loxodromic with the same axis.
\end{proof}

\begin{proposition}
Property hereditary $\nl$ (respectively, hereditary $\nne$, hereditary $\ngt$) is stable under commensurability.
\end{proposition}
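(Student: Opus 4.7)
The plan is to combine Lemma~\ref{lem:fi:subgroup} with a standard intersection argument; the three cases $\nl$, $\nne$, $\ngt$ are handled uniformly, since Lemma~\ref{lem:fi:subgroup} treats all three at once. Let $P$ denote any one of the three properties. Suppose $G$ is hereditary $P$, and let $H$ be commensurable with $G$, witnessed by an isomorphism $\phi : G_0 \to H_0$ between finite-index subgroups $G_0 \leq G$ and $H_0 \leq H$. Let $K \leq H$ be an arbitrary finite-index subgroup; I must show that $K$ itself has property $P$ (which will establish that $H$ is hereditary $P$).

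First I would form the intersection $L := K \cap H_0$. As the intersection of two finite-index subgroups of $H$, $L$ has finite index in $H$, and in particular finite index in both $K$ and in $H_0$. Transporting via $\phi^{-1}$, the image $\phi^{-1}(L) \leq G_0$ has finite index in $G_0$, and since $G_0$ has finite index in $G$, it also has finite index in $G$. Because $G$ is hereditary $P$, the subgroup $\phi^{-1}(L)$ satisfies $P$; and since $P$ is manifestly invariant under abstract group isomorphism (actions on hyperbolic spaces transport verbatim along $\phi$), the subgroup $L$ satisfies $P$ as well.

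Now I would apply Lemma~\ref{lem:fi:subgroup} to the pair $L \leq K$: since $L$ has finite index in $K$ and satisfies $P$, so does $K$. As $K \leq H$ was an arbitrary finite-index subgroup, $H$ is hereditary $P$. Swapping the roles of $G$ and $H$ shows that commensurability is a symmetric relation respecting hereditary $P$, so the class of hereditary $P$ groups is indeed closed under commensurability.

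There is no serious obstacle in this argument; the only point requiring care is to exhibit a single subgroup $L$ that simultaneously has finite index in the prescribed finite-index subgroup $K \leq H$ \emph{and} corresponds (under $\phi$) to a finite-index subgroup of $G$. Taking $L = K \cap H_0$ accomplishes this in one stroke, and Lemma~\ref{lem:fi:subgroup} then closes the loop from the subgroup back up to $K$.
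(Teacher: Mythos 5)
Your proof is correct and uses the same key ingredients as the paper's (Lemma~\ref{lem:fi:subgroup} plus the definition of commensurability), but you are actually more explicit and complete than the paper on the hereditary aspect. The paper's written proof deduces from $G_0 \cong H_0$ and Lemma~\ref{lem:fi:subgroup} only that $H$ itself is $\nl$, implicitly leaving the reader to observe that the same chain of implications applies to any finite-index $K \leq H$ (since $K$ is again commensurable with $G$). You make that step precise with the intersection $L = K \cap H_0$: you show $L$ transports to a finite-index subgroup of $G$, inherits $P$ there, and then Lemma~\ref{lem:fi:subgroup} pushes $P$ from $L$ up to $K$. This is exactly the right way to close the loop, and it is the argument the paper is tacitly appealing to.
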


\begin{proof}
Let $G, H$ be commensurable groups, so there exist finite-index subgroups $G_0 \leq G$ and $H_0 \leq H$ that are isomorphic. Suppose that $G$ is hereditary $\nl$. Then $G_0$ is $\nl$, and thus $H_0$ is also $\nl$. It then follows from Lemma \ref{lem:fi:subgroup} that $H$ is also $\nl$. The same proof works for the other hereditary properties in the statement.
\end{proof}

The following example shows that, in contrast, Property $\nl$ need not be preserved under commensurability. We will go into much more detail about permutational wreath products in Section \ref{s:wreath}, and this example may be considered as a warm-up.

\begin{example}\textit{A group which is $\nl$ but not hereditary $\nl$}. 
Let $G = (\mathbb{D}_\infty \oplus \mathbb{D}_\infty) \rtimes (\Z / 2\Z)$, where the generator $t$ of $\Z / 2\Z$ swaps the factors of the direct sum. Obviously, $H = \mathbb{D}_\infty \oplus \mathbb{D}_\infty$ has finite index in $G$ and it fails to have $\nl$. This implies that $G$ is not hereditary $\nl$. However, it is $\nl$. By Proposition~\ref{prop:Amenable}, since $G$ is finitely generated and amenable, it suffices to show that $G$ does not surject onto $\mathbb{Z}$ nor $\mathbb{D}_\infty$. We will prove that every homomorphism $G \to \mathbb{D}_\infty$ has finite image, which implies both statements at once.

We start by noticing that the only elements of order $2$ in $\mathbb{D}_\infty$ are the reflections, and two reflections commute if and only if they are equal. Now let $\varphi : G \to \mathbb{D}_\infty$ be a homomorphism. A presentation for the group $G$ is as follows 
\begin{align*}
   G =  \langle a, b, x, y, t \mid & a^2 = b^2 = x^2 = y^2 = t^2 = 1, \\
    & [a, x] = [b, x] = [a, y] = [b, y] = 1, \\
    & tat^{-1} = x, tbt^{-1} = y \rangle.
\end{align*}
If $\varphi(a) = 1$, then $\varphi(x) = 1$ as well, so $\varphi$ factors through $\langle G \mid a, x \rangle \cong \mathbb{Z}/2\mathbb{Z} \wr \mathbb{Z}/2\mathbb{Z}$, which is finite. A similar conclusion holds if any of $\varphi(b), \varphi(x), \varphi(y)$ equal $1$. Therefore we may assume that none of $a, b, x, y$ lies in the kernel of $\varphi$. By the previous remark, it follows that $\varphi(a), \varphi(x)$ are commuting reflections, therefore $\varphi(a) = \varphi(x)$. Repeating the argument with $a, y$ and then with $b, y$, we obtain that each of $a, b, x, y$ have the same image. Thus $\varphi$ factors through $\langle G \mid a = b = x = y \rangle \cong \mathbb{Z}/2\mathbb{Z} \times \mathbb{Z}/2\mathbb{Z}$, which is also finite.
\end{example}

The following example shows that $\nne$ and $\ngt$ are not preserved under commensurabilty. Again, the reader should consider this as a warm-up to Section \ref{s:wreath}.

\begin{example}\label{ex:nlnothere} \textit{A group which is $(\mathrm{NNE})$ but not hereditary $(\mathrm{NGT})$}.
Let $F$ be a non-abelian free group. It is easy to see that the group $G:= (F \oplus F) \rtimes \mathbb{Z}/2\mathbb{Z}$ is not hereditary $(\mathrm{NGT})$. Assume that $G$ acts on a hyperbolic space $X$. If the left copy of $F$ in $G$ contains two loxodromic elements $a$ and $b$, then so does the right copy of $F$ because the two copies of $F$ are conjugate. Thus, $a$ and $b$ commute with a common loxodromic isometry, which implies that they share the same quasi-axis. Thus, the actions of the two copies of $F$ must be elementary. It follows from Lemma~\ref{lem:ProductActingHyp} that $F \oplus F \curvearrowright X$ is elementary, and therefore so is $G \curvearrowright X$ (see the proof of Lemma \ref{lem:fi:subgroup}). Hence $G$ is $\nne$.
\end{example}

\subsection{Stability under extensions} In this section, we shall study the stability of the properties $\ngt, \nne, \nl$ and their hereditary versions under extensions. The main result of this section is the following. 

\begin{proposition}\label{prop:extension}
Let $G$ be a group that fits in a short exact sequence 
$$1 \to N \to G \to Q \to 1.$$
If $N$ and $Q$ are both (hereditary) $\nl$, then so is $G$.
\end{proposition}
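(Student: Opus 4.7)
The plan is to proceed by contrapositive: assume $G$ admits an action $G \curvearrowright X$ on a hyperbolic space with a loxodromic element $g_0$, and derive that either $N$ or $Q$ fails to be $\nl$. Since $N$ is $\nl$ by hypothesis, the restricted action $N \curvearrowright X$ has no loxodromic element, so Theorem \ref{thm:ClassHypAct} forces it to be elliptic or horocyclic. I would analyze these two cases separately and, in each, manufacture a loxodromic action of $Q$ on a hyperbolic space, contradicting $\nl$ of $Q$. The hereditary version will then follow from a short diagram chase.

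In the \emph{horocyclic} case, $N$ fixes a unique $\xi \in \partial X$. By normality, the singleton $\{\xi\}$ is $G$-invariant, so $G$ fixes $\xi$ as well. Let $\beta = \beta_\xi : G \to \mathbb{R}$ be the associated Busemann quasimorphism (Proposition \ref{prop:busemann}). Then $\beta(g_0) \neq 0$, and since the restriction of $\beta$ to $N$ coincides with the Busemann quasimorphism of $N \curvearrowright X$ at $\xi$ and $N$ has no loxodromic element, Proposition \ref{prop:busemann}(3) gives $\beta|_N \equiv 0$. Using that $N \trianglelefteq G$, one writes $(gn)^k = g^k n_k$ for some $n_k \in N$ (by induction on $k$), and combining the defect estimate with $\beta(n_k) = 0$ and homogeneity of $\beta$ yields $\beta(gn) = \beta(g)$; a symmetric computation gives $\beta(ng) = \beta(g)$. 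Hence $\beta$ descends to a well-defined unbounded homogeneous quasimorphism $\bar\beta : Q \to \mathbb{R}$, contradicting Corollary \ref{cor:nl:noqm}.

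In the \emph{elliptic} case, every $N$-orbit is bounded; fix a basepoint $x_0 \in X$ and set $B := \mathrm{diam}(Nx_0)$. By normality, $gNx_0 = Ngx_0$ has diameter $B$ for every $g \in G$. I would define a $Q$-invariant pseudometric on $Q$ by
$$d_Q(gN, hN) := d_{\mathrm{Haus}}(gNx_0, hNx_0),$$
where $d_{\mathrm{Haus}}$ denotes Hausdorff distance in $X$. A direct estimate gives $|d_Q(gN, hN) - d(gx_0, hx_0)| \leq B$, so after passing to the metric quotient, $(Q, d_Q)$ is quasi-isometric to the orbit $Gx_0 \subset X$. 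Since Gromov's four-point inequality is inherited by arbitrary subspaces, $Gx_0$ is hyperbolic, and hence so is $(Q, d_Q)$. The $Q$-action by left translation on $(Q, d_Q)$ is isometric, and $d_Q(N, (g_0 N)^k) \asymp d(x_0, g_0^k x_0)$ grows linearly, so $g_0 N$ is loxodromic; this contradicts $\nl$ of $Q$.

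For the hereditary version, let $G_0 \leq G$ have finite index. The short exact sequence $1 \to N \cap G_0 \to G_0 \to G_0/(N \cap G_0) \to 1$ has $N \cap G_0$ of finite index in $N$, and $G_0/(N \cap G_0) \cong G_0 N / N$ of finite index in $Q$; both are thus $\nl$ by the hereditary hypotheses on $N$ and $Q$. Applying the non-hereditary version just proved, $G_0$ is $\nl$, so $G$ is hereditary $\nl$. The main obstacle I foresee is the elliptic case: constructing an auxiliary hyperbolic space for $Q$ on which $g_0 N$ acts loxodromically requires some care, since there is no natural single-valued map from $Q$ into $X$. The Hausdorff-distance construction on coset orbits bypasses this cleanly, at the price of working with a pseudometric; verifying $Q$-equivariance and that the loxodromic dynamics descend are the key checks. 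The horocyclic case is, by comparison, a direct application of the Busemann machinery once one observes that $G$ must fix the unique limit point of $N$.
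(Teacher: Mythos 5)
Your horocyclic case is correct, and is arguably a bit more self-contained than the paper's version: you spell out why the Busemann quasimorphism is identically zero on $N$ (via Proposition~\ref{prop:busemann}(3)) and why it descends to $Q$, whereas the paper invokes Corollary~\ref{cor:nl:noqm} and an external reference for the descent. The hereditary reduction at the end is also fine and matches the paper.

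The elliptic case, however, has a genuine gap, and it is not where you anticipated. Equivariance and the descent of the loxodromic dynamics do go through, as you checked. The problem is that the space $(Q,d_Q)$ you produce is quasi-isometric to the discrete orbit $G x_0\subset X$, which satisfies the four-point inequality but is not a geodesic metric space -- and need not even be coarsely connected, for instance when $G$ is not finitely generated. Property~$\nl$, the classification in Theorem~\ref{thm:ClassHypAct}, the Gromov boundary, and the types of isometries are all set up for (quasi-)geodesic hyperbolic spaces, so exhibiting a loxodromic for $Q$ on a non-geodesic four-point space does not yet contradict $\nl$ of $Q$. Passing from your $(Q,d_Q)$ to an honest geodesic hyperbolic space on which $Q$ acts, with the loxodromic surviving, is precisely the technical content that the paper isolates in Lemma~\ref{lem:NormalElliptic}: there one works inside $X$, looks at the set $F$ of centers of $N$-orbits of small diameter, proves $F$ is non-empty and quasiconvex (using the convexity of the metric), and then builds a coned-off hyperbolic \emph{graph} $Y$ on which $G$ acts with $N$ in the kernel, i.e.\ the action factors through $Q$. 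To repair your argument you would either need to reproduce that construction, or invoke an equivariant version of the Bonk--Schramm embedding theorem (that any Gromov hyperbolic metric space embeds isometrically into a complete geodesic hyperbolic space) and verify that the $Q$-action extends. Either way, this is not a cosmetic step but the heart of the elliptic case, and it should be flagged as such rather than folded into the Hausdorff-distance estimate.
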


In order to prove the above result, we need the following lemmas about factoring actions through quotients. 

\begin{lemma}\label{lem:NormalElliptic}
Let $G$ be a group, $N \lhd G$ a normal subgroup, and $X$ a hyperbolic space on which $G$ acts. If $N$ has bounded orbits, then $G \curvearrowright X$ factors through $G \twoheadrightarrow G/N$. \end{lemma}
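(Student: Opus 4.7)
The plan is to produce the data required by Definition~\ref{def:factoringaction}: a $G$-invariant quasiconvex subspace $X' \subseteq X$, an action $G/N \acts Y$, and a $\varphi$-equivariant quasi-isometry $X' \to Y$, where $\varphi \colon G \twoheadrightarrow G/N$ is the projection.

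First I would fix a basepoint $x_0 \in X$ and set $r := \operatorname{diam}(N.x_0) < \infty$, which makes sense since $N$ acts with bounded orbits. Normality of $N$ gives $N.(g.x_0) = g.(g^{-1}Ng).x_0 = g.N.x_0$ for every $g \in G$, so every translate of $x_0$ has an $N$-orbit of diameter exactly $r$. Next I would take $X'$ to be the union of all geodesics in $X$ joining pairs of points of $G.x_0$; this set is automatically $G$-invariant, and it is quasiconvex with constant depending only on the hyperbolicity constant $\delta$ of $X$.

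The key technical step is to show that the $N$-action on $X'$ has uniformly bounded orbits. If $x \in X'$ lies on a geodesic $\gamma$ between $g.x_0$ and $h.x_0$, then for any $n \in N$ the translate $n.x$ lies on $n.\gamma$, which is a geodesic between $ng.x_0$ and $nh.x_0$. Since $d(g.x_0, ng.x_0) = d(x_0, (g^{-1}ng).x_0) \leq r$, and likewise at the other endpoint, the geodesics $\gamma$ and $n.\gamma$ have $r$-close endpoints; the standard fellow-traveling lemma in $\delta$-hyperbolic spaces then yields $d(x, n.x) \leq D$ for some constant $D = D(r,\delta)$ independent of $x \in X'$ and $n \in N$.

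Finally, I would define $Y := X'/N$ equipped with the quotient pseudometric $d_Y(\bar x, \bar y) := \inf_{n \in N} d(x, n.y)$. Since $N$ is normal, the $G$-action on $X'$ descends to an isometric action $G/N \acts Y$, and the quotient map $\pi \colon X' \to Y$ is $\varphi$-equivariant by construction. The uniform bound from the previous step makes $\pi$ a $(1,D)$-quasi-isometry: it is $1$-Lipschitz, and the bound $d_Y(\bar x, \bar y) \geq d(x, y) - D$ follows from $d(x, n.y) \geq d(x, y) - d(y, n.y) \geq d(x, y) - D$. The main obstacle is the uniform bound on $N$-orbits in the middle step; this is the only place where hyperbolicity enters, via the Morse/fellow-traveling property of geodesics with boundedly close endpoints, and it fails in general metric spaces where only one orbit of $N$ is assumed bounded.
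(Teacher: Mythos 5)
Your proof is correct but takes a genuinely different route from the one in the paper. The paper identifies the set $F$ of points whose $N$-orbit has diameter $\leq 2\delta$, shows $F$ is nonempty (via the existence of ``centers'' of bounded sets, {\cite[Lemma~2.1]{MR1776048}}) and quasiconvex (via $8\delta$-convexity of the metric), and then builds a new coned-off graph $Y$ in which the subgraph of $N$-fixed apices carries an isometric $G$-action with $N$ in the kernel. You instead take $X'$ to be the geodesic hull of a single orbit $G.x_0$, use the normality of $N$ together with the fellow-traveling/convexity property of geodesics to show $N$ acts on $X'$ with \emph{uniformly} bounded orbits, and then pass to the quotient metric space $X'/N$, on which $G/N$ acts isometrically and to which $X'$ maps by an equivariant $(1,D)$-quasi-isometry. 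Your route is more direct and avoids both the center argument (you obtain uniform control over $N$-orbits by restricting to the geodesic hull, so you never need to locate a point with small $N$-orbit) and the apex construction (replaced by the quotient). In exchange you rely on two facts you state without proof: quasiconvexity of the geodesic hull of $G.x_0$ (true, and proved in much the same spirit as Lemma~\ref{lem:Essential}, but not trivial) and the fact that the quotient $X'/N$ is a quasi-geodesic metric space so that the definition of ``factors through'' applies to a bona fide hyperbolic space. Both proofs ultimately hinge on the same Morse/convexity input; your version packages it more efficiently, while the paper's version avoids any discussion of quotient pseudometrics by working with explicit graphs throughout.
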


\begin{proof}
Up to replacing $X$ with the graph whose vertex-set is $X$ and whose edges connect two points $x,y$ of $X$ whenever $d(x,y) \leq 1$, we can assume without loss of generality that $X$ is a graph. Fix a constant of hyperbolicity $\delta \geq 0$ of $X$. Let $F$ denote the set of all the vertices whose $N$-orbits have diameters $\leq 2 \delta$. We claim that $F$ is non-empty and $12\delta$-quasiconvex. 

\medskip \noindent
The fact that $F$ is non-empty is given by \cite[Lemma~2.1]{MR1776048}. More precisely, given a bounded subset $S \subset X$ and a vertex $x \in X$, define $r_x:= \min \{ r \geq 0 \mid S \subset B(x,r) \}$. A center of $S$ is a vertex $x \in X$ satisfying $r_x= \min \{ r_z \mid z \in X\}$. According to \cite[Lemma~2.1]{MR1776048}, the set of all the centers of $S$ has diameter $\leq 2\delta$. (It is assumed in \cite[Lemma~2.1]{MR1776048} that the hyperbolic space is proper. But this assumption is only required for the existence of centers, which is clear in our case as $X$ is a graph.) Thus, given an arbitrary $N$-orbit, which is bounded by assumption, it is easy to check that the orbit of a center must have diameter $\leq 2\delta$. Hence $F$ is non-empty, as this center will belong to $F$. 

\medskip \noindent
Next, to see that $F$ is $12\delta$-quasiconvex, fix two vertices $x,y \in F$ and a geodesic $[x,y]$ connecting $x$ to $y$. We know from \cite[Corollary~10.5.3]{MR1075994} that the metric of $X$ is $8\delta$-convex, i.e.\ for any two constant-speed parametrizations $\gamma_1,\gamma_2 : [0,1] \to X$ of two geodesics,
$$d \left( \gamma_1(ta + (1-t)b), \gamma_2(ta+(1-t)b) \right) \leq t d(\gamma_1(a),\gamma_2(a)) + (1-t) d(\gamma_1(b),\gamma_2(b))+ 8\delta$$
for all $a,b \in [0,1]$ and $t \in [0,1]$. As a consequence, for every $z \in [x,y]$, we have 
$$d(z,nz) \leq d(x,nx) + d(y,ny) + 8\delta \leq 12 \delta$$
for every $n \in N$. In other words, the $N$-orbit of $z$ has diameter $\leq 12\delta$. Fixing a center $c$ of $N \cdot z$, we have $c \in F$ and consequently $d(c,z) \leq 12 \delta$. Thus, $[x,y]$ lies in the $12\delta$-neighbourhood of $F$, proving our claim.

\medskip \noindent
Now, let $Y$ denote the graph obtained from $X$ by first adding a new vertex $x_S$, an \emph{apex}, for every subset $S \subset X$ of diameter $\leq 2\delta$, which we connect to all the vertices in $S$, and then by connecting two apices $x_R,x_S$ with an edge whenever $d(R,S) \leq 12\delta$. Clearly, the inclusion map $X \hookrightarrow Y$ is a quasi-isometry that sends $F$ at finite Hausdorff distance from the subgraph $Z \subset Y$ induced by all the apices fixed by $N$. Because $F$ is $G$-invariant (as $N$ is normal), non-empty and quasi-convex, it is quasi-dense in $X$. It follows that $Z$ is quasi-dense in $Y$, and therefore it is a non-empty, connected, hyperbolic graph on which $G$ naturally acts. Moreover, by construction, $N$ lies in the kernel of this action. This proves that $G \curvearrowright X$ factors through $G \twoheadrightarrow G/N$. 
\end{proof}

\begin{corollary}\label{cor:ShortExactSequence}
Let $G$ be a group and $N \lhd G$ a normal subgroup. If $N$ is $\ngt$, then every action of general type of $G$ on some hyperbolic space factors through $G \twoheadrightarrow G/N$.
\end{corollary}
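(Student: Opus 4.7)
The plan is to reduce to Lemma \ref{lem:NormalElliptic}: given a general type action $G \acts X$, I want to show the restricted action $N \acts X$ is elliptic, after which the factorization through $G \twoheadrightarrow G/N$ follows directly.

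Since $N$ has Property $\ngt$, the restriction $N \acts X$ falls into one of the four non-general-type classes of Theorem \ref{thm:ClassHypAct}: elliptic, horocyclic, lineal, or focal. I aim to rule out each non-elliptic case by producing a non-empty finite subset of $\partial X$ preserved by $G$, contradicting the general type hypothesis. In the horocyclic and focal cases, $N$ has a unique fixed point $\xi \in \partial X$. Normality of $N$ ensures that the set of $N$-fixed points in $\partial X$ is $G$-invariant: for any $g \in G$ and $n \in N$ the element $g^{-1}ng$ lies in $N$ and fixes $\xi$, so $n(g\xi) = g(g^{-1}ng)\xi = g\xi$. Hence $\{\xi\}$ is a one-element $G$-invariant subset of $\partial X$. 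In the lineal case, $\Lambda(N) = \{\xi_-, \xi_+\}$, and the identity $g\Lambda(N) = \Lambda(gNg^{-1}) = \Lambda(N)$ shows that this pair is $G$-invariant.

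The final step is to invoke the classical fact that a general type action of $G$ cannot preserve any non-empty finite subset of $\partial X$: the limit set $\Lambda(G)$ is infinite by definition and, for a non-elementary action, is the unique minimal non-empty closed $G$-invariant subset of $\partial X$ (proved by a standard ping-pong argument with two independent loxodromics, giving density of attracting fixed points of loxodromic elements inside $\Lambda(G)$). Any $G$-invariant closed subset of $\partial X$ must therefore contain the infinite set $\Lambda(G)$, contradicting the finite $G$-invariant subset produced above. Consequently $N \acts X$ must be elliptic, and Lemma \ref{lem:NormalElliptic} delivers the desired factorization.

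The only genuine input from outside the immediate setup is the minimality of $\Lambda(G)$ for non-elementary actions; everything else reduces to the normality of $N$ together with the bookkeeping of canonical finite subsets of $\partial X$ in each case of Theorem \ref{thm:ClassHypAct}. There is no real obstacle here, so the proof should be short.
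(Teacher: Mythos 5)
Your proof is correct and takes essentially the same approach as the paper: rule out the non-elliptic cases for $N \curvearrowright X$ via the $G$-invariant finite subset of $\partial X$ produced by normality, then invoke Lemma \ref{lem:NormalElliptic}. The paper simply states that a general type action cannot preserve such a finite set without spelling out the minimality-of-$\Lambda(G)$ argument you give, but the content is the same.
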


\begin{proof}
If the restricted action $N \curvearrowright X$ is horocyclic, lineal, or focal, then $N$ fixes one or two points in $\partial X$, which are preserved by $G$ since $N$ is normal. In particular, in this case $G \curvearrowright X$ cannot be of general type. Consequently, $N \curvearrowright X$ must be elliptic and the desired conclusion follows from Lemma~\ref{lem:NormalElliptic}.
\end{proof}

\begin{proof}[Proof of Proposition~\ref{prop:extension}.]
Let $G$ act on some hyperbolic space $X$. Up to replacing $X$ with the quasiconvex hull of the limit set of $G$, we assume that the action is essential. Because $N$ has $\nl$, the induced action $N \curvearrowright X$ is either elliptic or horocyclic. In the former case, we know from Lemma~\ref{lem:NormalElliptic} that $G \curvearrowright X$ factors through $Q$, and we can conclude that $G \curvearrowright X$ has no loxodromic elements since $Q$ has $\nl$. So we may assume that $N \curvearrowright X$ is horocyclic.

\medskip 
The unique point at infinity fixed by $N$ has to be fixed by $G$ as well, so $G \curvearrowright X$ is horocyclic, lineal, or focal. In the first case, $G$ has no loxodromic. The second case is impossible because there is no horocyclic action on a quasi-line. In the third case, the Busemann quasimorphism (Proposition \ref{prop:busemann}) is an unbounded quasimorphism $G \to \mathbb{R}$. The restriction to $N$ is bounded, because $N$ has $\nl$, and therefore it has no unbounded quasimorphisms (Corollary \ref{cor:nl:noqm}). So the Busemann quasimorphism descends to an unbounded quasimorphism $Q \to \mathbb{R}$ \cite[Remark 2.90]{scl}, which is impossible by Corollary \ref{cor:nl:noqm}, since $Q$ has $\nl$. Thus, we have proved that $G \curvearrowright X$ cannot contain a loxodromic element, as desired.

\medskip 
Next, if $N$ and $Q$ have hereditary $\nl$ and if $H \leq G$ is a finite-index subgroup, then there exist finite-index subgroups $\dot{N} \leq N$ and $\dot{Q} \leq Q$ such that $H$ fits into a short exact sequence
$$1 \to \dot{N} \to H \to \dot{Q} \to 1.$$
Because $\dot{N}$ and $\dot{Q}$ have $\nl$, it follows from what we have just proved that $H$ has $\nl$. Thus, $G$ is hereditary $\nl$.
\end{proof}

\subsection{Central extensions} In addition to the situation considered above, some of the properties are also preserved under central extensions, which we prove here. 

\begin{lemma}
\label{lem:factorsthroughcenter}

Let $G$ be a group. Every non-elementary action of $G$ on a hyperbolic space factors through $G \twoheadrightarrow G/Z(G)$.  
\end{lemma}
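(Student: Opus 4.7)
The plan is to show that the normal subgroup $Z(G)$ has bounded orbits on $X$; once this is established, Lemma \ref{lem:NormalElliptic} immediately yields that $G \curvearrowright X$ factors through $G \twoheadrightarrow G/Z(G)$.

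\textbf{Step 1: central elements fix many boundary points.} First I would establish a commutation principle: if $z \in Z(G)$ and $g \in G$ is loxodromic, then $z$ fixes each of $g^{\pm \infty}$ individually, not merely as a pair. The reason is that $zgz^{-1} = g$ implies $z \cdot g^{+\infty} = (zgz^{-1})^{+\infty} = g^{+\infty}$, and similarly for $g^{-\infty}$. Since $G \curvearrowright X$ is non-elementary, I can then produce three distinct points $\xi_1, \xi_2, \xi_3 \in \partial X$ that are fixed by every $z \in Z(G)$. In the general type case, take two loxodromics $g_1, g_2 \in G$ with disjoint fixed point sets, giving four such points. In the focal case, take the unique $G$-fixed point $\xi \in \partial X$ together with the ``second'' endpoints of two loxodromics $g_1, g_2 \in G$ chosen so that $g_1^{+\infty} \neq g_2^{+\infty}$ (possible because focal actions have infinite limit set).

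\textbf{Step 2: a uniform bound via the center of an ideal triangle.} The main step is to produce a single point of $X$ whose $Z(G)$-orbit is bounded. I would use the standard fact that in a $\delta$-hyperbolic space, an ideal triangle on three distinct points $\xi_1, \xi_2, \xi_3 \in \partial X$ has a ``center'' region of diameter $O(\delta)$: choosing $(A,B)$-quasigeodesic lines $\gamma_{ij}$ between $\xi_i$ and $\xi_j$, there exists a point $c \in X$ lying within $O(\delta)$ of each of the three sides. Any isometry fixing $\xi_1, \xi_2, \xi_3$ preserves each $\gamma_{ij}$ up to Hausdorff distance $O(\delta)$ by the Morse property, hence preserves this center region up to the same distance. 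Applied to each $z \in Z(G)$ via Step 1, this yields $d(c, zc) = O(\delta)$, so $Z(G) \cdot c$ is bounded.

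\textbf{Step 3: conclusion.} Since $Z(G)$ admits one bounded orbit, all of its orbits on $X$ are bounded (any two orbits differ by the triangle inequality), so $Z(G) \curvearrowright X$ is elliptic in the sense of Theorem \ref{thm:ClassHypAct}. As $Z(G)$ is normal in $G$, Lemma \ref{lem:NormalElliptic} delivers the desired factorisation of $G \curvearrowright X$ through $G \twoheadrightarrow G/Z(G)$. The main obstacle is Step 2: while each individual $z \in Z(G)$ is already elliptic, since it fixes three points at infinity, this by itself does not exhibit one single orbit bounded uniformly over all of $Z(G)$. The center-of-ideal-triangle construction is precisely what provides the uniformity needed to trigger Lemma \ref{lem:NormalElliptic}.
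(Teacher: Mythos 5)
Your proof is correct and follows the same overall strategy as the paper's: establish that the normal subgroup $Z(G)$ is elliptic, then invoke Lemma~\ref{lem:NormalElliptic}. The details differ in two respects, both worth noting. First, to show that $Z(G)$ fixes boundary points, the paper computes directly that for any $\xi \in \Lambda(G)$ and $z \in Z(G)$, writing $\xi = \lim g_n x$ gives $z\xi = \lim z g_n x = \lim g_n (zx) = \xi$ since $d(zx, x)$ is bounded; thus $Z(G)$ fixes \emph{all} of $\Lambda(G)$ pointwise in one stroke, whereas you obtain fixed boundary points from commutation with individual loxodromics. (A small notational slip in your focal case: all loxodromics share the $G$-fixed endpoint $\xi$, so you want to choose $g_1, g_2$ with distinct \emph{free} endpoints; what you actually need to write is $g_1^{-\infty} \neq g_2^{-\infty}$ or the like after fixing orientations, but the intended argument is clear and valid.) Second, once three distinct fixed boundary points are in hand, the paper first reduces to an essential action via Lemma~\ref{lem:Essential} and then asserts a uniform displacement bound on all of $X$; your argument instead spells out the ideal-triangle center construction to exhibit a single point $c$ with $d(c, zc) = O(\delta)$ for all $z \in Z(G)$, which is all that is needed for ellipticity. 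This is arguably cleaner: you do not need the reduction to an essential sub-action at all, and the uniformity that triggers Lemma~\ref{lem:NormalElliptic} is made explicit rather than folded into an appeal to essentiality.
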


\begin{proof}
By Lemma~\ref{lem:Essential}, up to replacing $X$ with a $G$-invariant quasiconvex subspace we can assume that $G \curvearrowright X$ is essential. Let $\xi \in \Lambda(G)$ be a point in the limit set of $G$. We can write $\xi$ as the limit of $(g_n . x)_{n \in \mathbb{N}}$ for some basepoint $x \in X$ and some sequence $(g_n)_{n \in \mathbb{N}}$ of elements of $G$. For every $z \in Z(G)$, we have
$$z . \xi = z . \lim\limits_{n \to \infty} g_n . x = \lim\limits_{n \to \infty} g_n . zx = \xi.$$
Thus, $Z(G)$ fixes pointwise $\Lambda(G)$. Since $\Lambda(G)$ is infinite and the action of $G$ essential, it follows that there exists some $K \geq 0$ such that $Z(G)$ moves every point of $X$ at distance at most $K$. In particular, $Z(G)$ is elliptic, so the desired conclusion follows from Lemma~\ref{lem:NormalElliptic}. 
\end{proof}

\begin{corollary}\label{cor:ModCenter}
A group $G$ is $\ngt$ (resp. $\nne$) if and only if $G/Z(G)$ is $\ngt$ (resp. $\nne$).
\end{corollary}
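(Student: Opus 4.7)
The proof splits naturally into two directions, with very different levels of difficulty.

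For the forward direction, I would invoke Proposition~\ref{prop:DirectSums}(i), which states that both $\ngt$ and $\nne$ are preserved under quotients. Since $G/Z(G)$ is a quotient of $G$, this direction is immediate and requires no further work.

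For the reverse direction, I would argue by contrapositive. Suppose $G$ fails to be $\ngt$ (resp.\ $\nne$), so that $G$ admits a general type (resp.\ non-elementary) action on some hyperbolic space $X$. In particular the action is non-elementary, so Lemma~\ref{lem:factorsthroughcenter} applies and the action factors through the quotient map $\varphi : G \twoheadrightarrow G/Z(G)$. Unpacking Definition~\ref{def:factoringaction}, this yields a $G$-invariant quasiconvex subspace $X' \subset X$, an action $G/Z(G) \curvearrowright Y$, and a $\varphi$-equivariant quasi-isometry $X' \to Y$. The plan is then to show that the action $G/Z(G) \curvearrowright Y$ has the same type as the original action of $G$ on $X$, contradicting the assumption on $G/Z(G)$.

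The key step to justify is that the type of the action is preserved when passing from $G \curvearrowright X$ to $G/Z(G) \curvearrowright Y$. To handle this cleanly, I would first invoke Lemma~\ref{lem:Essential} to replace $X$ by a $G$-invariant quasiconvex hyperbolic subspace on which the action is essential, noting that this preserves the property of being non-elementary or of general type (as both are determined by the limit set, which is unchanged by passing to the quasiconvex hull). Now the $G$-invariant quasiconvex subspace $X' \subset X$ produced by the factoring must be quasi-dense in $X$, so the action $G \curvearrowright X'$ has the same type as $G \curvearrowright X$. Finally, since the quasi-isometry $X' \to Y$ is $\varphi$-equivariant, an element $g \in G$ acts loxodromically (resp.\ has a given pair of fixed points at infinity, etc.) on $X'$ if and only if $\varphi(g)$ does so on $Y$. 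Hence $G/Z(G) \curvearrowright Y$ is of general type (resp.\ non-elementary), contradicting the hypothesis.

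The main obstacle I anticipate is the verification that the type of the action really does transfer cleanly along the two reductions (passing to the essential subspace, then pushing through the $\varphi$-equivariant quasi-isometry). This is essentially a matter of tracking limit sets and loxodromic dynamics under quasi-isometric conjugation of group actions, which is standard but deserves a careful sentence or two rather than being hidden behind a wave of the hand.
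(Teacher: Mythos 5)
Your proposal is correct and matches the paper's intended (unwritten) argument: the forward direction is quotient preservation (Proposition~\ref{prop:DirectSums}(i)), and the reverse direction is a contrapositive application of Lemma~\ref{lem:factorsthroughcenter}, with the bookkeeping about essential subspaces and $\varphi$-equivariant quasi-isometries done exactly as you describe. The paper states the corollary with no explicit proof, leaving it as an immediate consequence of the preceding lemma, and you have supplied precisely the routine verification that is being suppressed.
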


\subsection{Permutational wreath products}
\label{s:wreath}

In this section, we study the stability of Property $\nl$ under taking permutational wreath products. This can be precisely characterized by the following theorem.

\begin{theorem}\label{thm:WreathNL}
Let $B$ be a group acting on a set $S$ and $\{ A_s \}_{s \in S}$ a $B$-invariant family of groups. Let $G$ be the semidirect product $\bigoplus_{s \in S} A_s \rtimes B$ where $B$ acts on the direct sum by permuting the factors according to its action on $S$. Then $G$ is $\nl$ if and only if all of the following conditions are satisfied:
\begin{itemize}[noitemsep]
	\item[(1)] $B$ is $\nl$;
	\item[(2)] for every $s \in S$ fixed by $B$, $A_s$ is $\nl$;
	\item[(3)] for every $s \in S$ with a finite $B$-orbit, $A_s$ has no unbounded quasimorphism;
	\item[(4)] for every $s \in S$ with an infinite $B$-orbit, $A_s$ has no unbounded homomorphism to $\mathbb{R}$.
\end{itemize}
\end{theorem}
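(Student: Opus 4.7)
My plan is to prove both directions of the equivalence. Necessity proceeds by constructing, from obstructions to $\nl$ on the factors $A_s$, explicit obstructions on $G$. Sufficiency is the interesting direction and will proceed by case analysis on the type of hyperbolic action $G$ admits; I expect the infinite-orbit sub-case of condition (4) to be the main obstacle, since only there does one need to upgrade a homogeneous quasimorphism on a factor to a genuine homomorphism via a defect estimate.

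For \textbf{necessity}, conditions (1) and (2) follow immediately: $B = G/N$ is always a quotient of $G$, and for $s$ fixed by $B$ the subgroup $\bigoplus_{t \neq s} A_t$ is $B$-invariant, so $A_s \cong G/(\bigoplus_{t \neq s} A_t \rtimes B)$ is also a quotient; Property $\nl$ descends along quotients by Proposition~\ref{prop:DirectSums}. For (3) and (4), given an unbounded homogeneous quasimorphism (or homomorphism) $\phi : A_s \to \mathbb{R}$, the plan is to transport $\phi$ along the orbit $Bs$ via the $B$-action --- choosing coset representatives $b_t$ with $b_t s = t$, setting $\phi_t(a) := \phi(b_t^{-1} a b_t)$, and (after replacing $\phi$ by a suitably $B_s$-invariant representative modulo bounded quasimorphisms) defining $\Phi(a, b) := \sum_{t \in Bs} \phi_t(a_t)$. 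A direct calculation shows that $\Phi$ is an unbounded quasimorphism (or $\varepsilon$-quasicocycle, if a sign character appears) on $G$, with defect at most $|Bs| \cdot D(\phi)$ when the orbit is finite, and defect exactly zero when $\phi$ is a genuine homomorphism, which is what allows the infinite-orbit version of the construction in (4). Corollaries~\ref{cor:nl:noqm} and~\ref{cor:nl:noqz} then force $G$ not to be $\nl$.

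For \textbf{sufficiency}, assume $(1)$--$(4)$ and suppose $G$ acts on a hyperbolic space $X$ with a loxodromic element, then case-split on the action type. If $G \curvearrowright X$ is of general type, then by normality $N$ can fix no point or pair of points at infinity, so $N$ is elliptic or general type. The elliptic case factors through $B$ by Lemma~\ref{lem:NormalElliptic}, contradicting (1). In the general-type case, Lemma~\ref{lem:ProductActingHyp} applied iteratively to the finite sub-sums of $N = \bigoplus_s A_s$ (together with Lemma~\ref{lem:CommutingElliptic}) produces some $A_s$ whose action is of general type; by $B$-conjugation all $A_t$ for $t \in Bs$ are of general type as well. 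If $|Bs| \geq 2$, the pairwise commutativity $[A_s, A_t] = 1$ forces any two commuting loxodromics from different factors to share quasi-axes, collapsing the axis structure of $A_s$ and contradicting general type; and if $|Bs| = 1$, condition (2) provides the contradiction directly.

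If $G \curvearrowright X$ is focal or lineal, extract an unbounded Busemann quasimorphism or quasicocycle $\Phi$ on $G$ (Propositions~\ref{prop:busemann} and~\ref{prop:quasicocycle}) and study $\Phi|_N$. Since homogeneous quasimorphisms are additive on commuting elements, $\Phi|_N$ decomposes as $\sum_s \phi_s(a_s)$ with $\phi_s := \Phi|_{A_s}$ homogeneous; the $G$-conjugation invariance of homogeneous quasimorphisms on the normal subgroup $N$ makes the family $\{\phi_t\}$ $B$-equivariant, so that in particular $\phi_s$ is $B_s$-invariant and $\phi_t$ vanishes for one $t \in Bs$ iff it vanishes for all of them. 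If every $\phi_s$ vanishes, then $\Phi$ descends to an unbounded quasimorphism on $G/N = B$, contradicting (1). Otherwise some $\phi_s$ is unbounded: $|Bs| = 1$ contradicts (2), $|Bs|$ finite $\geq 2$ contradicts (3), and for $|Bs|$ infinite the finite defect of $\Phi|_N$ forces $D(\phi_s) = 0$, since otherwise choosing elements $(a), (a')$ supported on $k$ well-chosen coordinates of $Bs$ would yield a discrepancy of size at least $k \cdot D(\phi_s)/2$, exceeding $D(\Phi|_N)$ for $k$ large; thus $\phi_s$ is a homomorphism, contradicting (4). The non-orientable lineal case is handled analogously, by restricting the Busemann quasicocycle to the orientable index-$2$ subgroup and running the same decomposition argument with no new conceptual ideas required.
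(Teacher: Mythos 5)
Your proposal captures the key ideas of the paper's proof, and the necessity direction, the general type case, and the focal and orientable lineal cases of sufficiency all check out (modulo the minor observation that the ``$B_s$-invariant representative'' step is unnecessary here, since in a permutational wreath product the conjugation action of $B_s$ on $A_s$ is trivial). The paper structures sufficiency slightly differently --- it first proves $\nne$ via Proposition~\ref{prop:WreathNNE} and only then attacks the lineal case --- but your one-pass argument via the Busemann quasimorphism is equivalent in substance, and your defect estimate $D(\Phi|_N) \geq k\,D(\phi_s)/2$ matches the one in the paper.

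The genuine gap is the non-orientable lineal case, which you dismiss as requiring ``no new conceptual ideas.'' This is not accurate, and the step you omit is in fact the main technical point of the lineal case in the paper. The issue is twofold. First, the Busemann quasicocycle restricted to $N = \bigoplus_s A_s$ (or to $N \cap K$ where $K$ is the orientation-preserving index-2 subgroup) does \emph{not} decompose as $\sum_s \phi_s$ the way a homogeneous quasimorphism would: $N \cap K$ is the kernel of $\varepsilon|_N$, which is not a direct sum of the $A_s \cap K$, and additivity on commuting elements is a property of homogeneous quasimorphisms that has no direct analogue for quasicocycles that change sign. Second, even where one can extract an unbounded quasimorphism on $A_s \cap K$, conditions (3) and (4) constrain quasimorphisms and homomorphisms on $A_s$ itself, not on an index-2 subgroup of $A_s$ --- and these are not equivalent (compare $\mathbb{D}_\infty$ with its index-2 subgroup $\mathbb{Z}$). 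The paper resolves both problems at once by proving geometrically that the distinguished factor $A_r$ (the one acting lineally on $X$) lies entirely in the orientation-preserving subgroup: if some $R \in A_r$ swapped the two endpoints, the set $\{x \in X : d(x, Rx) \leq D\}$ would be non-empty and bounded, hence stabilized by every $A_s$ for $s \in B.r \setminus \{r\}$, forcing those $A_s$ to be elliptic and contradicting that $A_r$ (conjugate to them) acts lineally. One then restricts to the clean sub-wreath-product $\bigoplus_{s \in B'.r} A_s \rtimes B'$ inside $H$, where the decomposition and defect arguments work as in the orientable case. Your proof needs this reflection argument (or a substitute for it) before the case split on $|Bs|$ is legitimate.
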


Before turning to the proof of the theorem, we record the particular case of permutational wreath products.

\begin{corollary}\label{cor:WreathNL}
Let $A,B$ be two groups and $S$ a set on which $B$ acts. The wreath product $A \wr_S B$ is $\nl$ if and only if $B$ is $\nl$ and one of the following conditions hold:
\begin{itemize}[noitemsep]
	\item[1.] $S$ contains a point fixed by $B$, and $A$ is $\nl$;
	\item[2.] No point in $S$ is fixed by $B$ but $S$ contains a finite $B$-orbit, and $A$ has no unbounded quasimorphism;
	\item[3.] All the $B$-orbits in $S$ are infinite, and $A$ has no unbounded homomorphism to $\mathbb{R}$.
\end{itemize}
\end{corollary}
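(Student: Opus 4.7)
The plan is to deduce Corollary \ref{cor:WreathNL} as a direct specialization of Theorem \ref{thm:WreathNL}, applied to the situation where all vertex groups $A_s$ are equal to a common group $A$. First I would note that condition (1) of the theorem is exactly the requirement ``$B$ is $\nl$'' that appears in every case of the corollary. The real content therefore lies in translating conditions (2)--(4) of the theorem into a clean statement about the single group $A$ depending on the orbit structure of the action $B \curvearrowright S$.

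The key observation is that the three constraints appearing in (2)--(4) --- being $\nl$, having no unbounded quasimorphism, and having no unbounded homomorphism to $\mathbb{R}$ --- form a descending hierarchy of strength. Indeed, every homomorphism to $\mathbb{R}$ is a zero-defect quasimorphism, so the second implies the third; and by Corollary \ref{cor:nl:noqm}, the first implies the second. On the other hand, the orbit structure of $S$ produces a parallel hierarchy: a fixed point is a finite orbit, and a finite orbit is an orbit. Thus the strongest constraint imposed on $A$ comes from the ``smallest'' orbit type that actually occurs in $S$, and the weaker conditions are then automatic.

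From here the argument reduces to case-by-case bookkeeping. If $S$ contains a $B$-fixed point $s_0$, then condition (2) applied at $s_0$ gives that $A_{s_0} = A$ is $\nl$, and via Corollary \ref{cor:nl:noqm} this absorbs conditions (3) and (4) at every other $s \in S$, yielding case 1. If no point of $S$ is fixed but some $B$-orbit is finite, then condition (3) applied at a point of such an orbit says $A$ has no unbounded quasimorphism, which in turn absorbs condition (4), yielding case 2. If every $B$-orbit on $S$ is infinite, only condition (4) remains active, giving case 3.

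I do not anticipate any substantive obstacle: once the rigidity hierarchy above is identified, the corollary is a formal consequence of the theorem. The only thing to verify carefully is that each of the three cases captures both the necessity and the sufficiency asserted by the biconditional of Theorem \ref{thm:WreathNL}, which is immediate from the biconditional nature of the hierarchy just described and the fact that the three cases of the corollary are mutually exclusive and exhaustive with respect to the possible orbit structures of $B \curvearrowright S$.
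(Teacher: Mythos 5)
Your proposal is correct and takes exactly the route the paper intends: the paper records Corollary \ref{cor:WreathNL} immediately after Theorem \ref{thm:WreathNL} with no proof, treating it as the obvious specialization to constant $A_s = A$. Your identification of the descending implication chain $\nl \Rightarrow$ no unbounded quasimorphism $\Rightarrow$ no unbounded homomorphism to $\mathbb{R}$ (the first via Corollary \ref{cor:nl:noqm}, the second since homomorphisms are zero-defect quasimorphisms) is precisely the observation needed to collapse conditions (2)--(4) of the theorem into the three mutually exclusive cases of the corollary, and your case-by-case verification of both directions of the biconditional is complete.
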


Notice that the third item covers standard wreath products, when $B$ is infinite. As a first step towards the proof of Theorem~\ref{thm:WreathNL}, we focus on the weaker property $\nne$. 

\begin{proposition}\label{prop:WreathNNE}
Let $B$ be a group acting on a set $S$ and $\{ A_s\}_{s \in S}$ a $B$-invariant family of groups. Let $G$ be the semidirect product $\bigoplus_{s \in S} A_s \rtimes B$ where $B$ acts on the direct sum by permuting the factors according to its action on $S$. If $B$ is $\nl$ and $A_s$ is $\nne$ for every $s \in S$ fixed by $B$, then $G$ is $\nne$.
\end{proposition}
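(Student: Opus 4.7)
My plan is to assume for contradiction that $G$ admits a non-elementary action on a hyperbolic space $X$ and derive a contradiction by analyzing the induced action of the normal subgroup $N := \bigoplus_{s \in S} A_s$, using that $G/N \cong B$. The guiding idea is first to restrict the type of $N \acts X$ to one of elliptic, horocyclic, or lineal, and then to rule out each of these three subcases using Property $\nl$ of $B$ together with either the factorization lemma or the Busemann quasimorphism.

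The first step is to constrain the possible types of $N \acts X$. If every $A_s$ acts elliptically, then iterating Lemma \ref{lem:CommutingElliptic} over the pairwise commuting factors shows that every element of $N$ (being finitely supported) is elliptic, so $N$ has no loxodromic element and its action is elliptic or horocyclic. Otherwise there exists $s_0 \in S$ such that $A_{s_0}$ is non-elliptic, and by Lemma \ref{lem:ProductActingHyp} the type of $N \acts X$ coincides with that of $A_{s_0} \acts X$; so it suffices to show that $A_{s_0}$ cannot act non-elementarily. If $s_0$ is fixed by $B$, this is immediate from the assumption that $A_{s_0}$ has Property $\nne$. If not, pick $b \in B$ with $b \cdot s_0 \neq s_0$; then $A_{s_0}$ and $b A_{s_0} b^{-1} = A_{b \cdot s_0}$ are distinct commuting factors of the direct sum. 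For any loxodromic $a \in A_{s_0}$, the conjugate $b a b^{-1}$ is a loxodromic element of $A_{b \cdot s_0}$ commuting with all of $A_{s_0}$; since commuting loxodromic isometries share their pair of fixed points at infinity, every loxodromic element of $A_{s_0}$ is forced to have the same pair of endpoints, giving $|\Lambda(A_{s_0})| \leq 2$ and hence $A_{s_0}$ elementary. In every subcase we conclude that $N$ is elliptic, horocyclic, or lineal.

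It then remains to derive contradictions in each of these three cases. If $N$ is elliptic, Lemma \ref{lem:NormalElliptic} makes $G \acts X$ factor through $B$, which is $\nl$, so the action has no loxodromic element, contradicting non-elementarity. If $N$ is horocyclic, it fixes a unique $\xi \in \partial X$, which is $G$-invariant by normality, so $G \acts X$ is focal; the Busemann quasimorphism $\beta_\xi : G \to \R$ is unbounded and vanishes on $N$ (by Proposition \ref{prop:busemann}(3), since $N$ has no loxodromic), hence descends to an unbounded homogeneous quasimorphism on $B \cong G/N$, contradicting Corollary \ref{cor:nl:noqm}. If $N$ is lineal, then $\Lambda(N) = \{\eta_-, \eta_+\}$ is $G$-invariant; the pointwise stabilizer $G_0 \leq G$ of this pair has index at most $2$, so $\Lambda(G_0) = \Lambda(G)$ remains infinite, and in particular $G_0$ is non-elementary. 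But any loxodromic $g \in G_0$ fixes $\eta_-$ and $\eta_+$, and since a loxodromic has exactly two fixed points on $\partial X$, its endpoints must be $\{\eta_\pm\}$; as $\Lambda(G_0)$ equals the closure of the loxodromic fixed-point set (for non-elementary actions), we get $\Lambda(G_0) \subseteq \{\eta_\pm\}$, contradicting its infinitude. The main subtlety is the commuting-conjugate argument in the second paragraph, which is what lets the $\nne$ hypothesis for $B$-fixed $s$ suffice, and the lineal case, which is the only one not handled by descending a quasimorphism to $B$ but rather by forcing the limit set to collapse onto $\{\eta_\pm\}$.
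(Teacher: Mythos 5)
Your proof is correct and takes essentially the same route as the paper: both case-split on the type of the induced action of $N = \bigoplus_s A_s$ on $X$, factor through $B$ via Lemma~\ref{lem:NormalElliptic} when $N$ is elliptic, use the Busemann quasimorphism to contradict $\nl$ of $B$ when $N$ is horocyclic and $G$ is focal, conclude $G$ is lineal when $N$ is, and reduce the $N$-non-elementary case to a single $A_s$ via Lemma~\ref{lem:ProductActingHyp} together with the commuting-conjugate trick and the $\nne$ hypothesis for $B$-fixed $s$. Your reorganization (first excluding $N$ non-elementary, then disposing of the remaining three types of $N$) and your lineal-case argument via density of loxodromic fixed points in $\Lambda(G_0)$ are cosmetic variations; for the latter it is slightly more economical to note directly that a non-elementary subgroup fixes at most one boundary point by Theorem~\ref{thm:ClassHypAct}, whereas $G_0$ fixes both $\eta_\pm$.
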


\begin{proof}
 Assume that $B$ has $\nl$ and that $A_s$ for $s \in S$ fixed by $B$ has $\nne$. Let $G$ act on some hyperbolic space $X$. Without loss of generality, we assume that $G$ acts essentially on $X$, by Lemma \ref{lem:Essential}. Indeed, if the action of $G$ is elliptic or horocyclic, then we are done. In all other cases, $G$ admits at least 2 limit points on $\partial X$, and so we may apply the lemma to construct an essential action. 

\medskip 
We consider the restriction of this action to the normal subgroup $\bigoplus_s A_s$. If $\bigoplus_s A_s \curvearrowright X$ is elliptic, then we know from Lemma~\ref{lem:NormalElliptic} that $G \curvearrowright X$ factors through $B$. Consequently, $G \curvearrowright X$ must be elementary. If $\bigoplus_s A_s \curvearrowright X$ is horocyclic or lineal, then the finite set stabilized at infinity must be stabilized by $G$ as well. In the lineal case, $G \curvearrowright X$ must also act lineally, as it fixes 2 distinct points on the boundary. In the horocyclic case, the action of $G$ can be horocyclic or focal, as $G$ fixes a unique point on the boundary. In the former case, we are done. In the latter case, $B$ must admit a loxodromic element for the action action on $X$, which contradicts $\nl$.

\medskip 
It remains to consider the case when $\bigoplus_s A_s \curvearrowright X$ is non-elementary. We deduce from Lemma~\ref{lem:ProductActingHyp} that there exists some $s \in S$ such that $A_s \curvearrowright X$ is non-elementary. Necessarily, $A_s$ contains two loxodromic isometries, say $g$ and $h$, with an infinite Hausdorff distance between their quasi-axes. If $s$ is not fixed by $B$, say $b . s \neq s$ for some $b \in B$, then $bhb^{-1}$ is a loxodromic isometry commuting with both $g$ and $h$, which implies that $g$ and $h$ must share the same quasi-axis, namely the quasi-axis of $bhb^{-1}$. This contradicts our assumption about $g$ and $h$, so $s$ has to be fixed by $B$. But this contradicts the assumption that for such an $s \in S$, $A_s \curvearrowright X$ must be elementary.
\end{proof}

\begin{remark}[Partial converse] Note that if $s \in S$ is fixed by $B$, then $B$ and $A_s$ are both quotients of $G$. By Proposition \ref{prop:DirectSums} they must have $\nne$ if $G$ does. We may consider the following question to consider the converse of Proposition \ref{prop:WreathNNE}:  if $A$ is non-trivial and if $B$ admits a lineal action, does $A \wr B$ admits a focal action? While a positive answer is reasonable to expect, providing a complete proof is beyond the scope of this paper. 
\end{remark}

We record the following particular case of Proposition~\ref{prop:WreathNNE}, before proceeding to the proof of Theorem \ref{thm:WreathNL}.

\begin{corollary}
Let $A,B$ be two groups and $S$ a set on which $B$ acts. If no point of $S$ is fixed by $B$, then $A \wr_S B$ is $\nne$ if $B$ is $\nl$. 
\end{corollary}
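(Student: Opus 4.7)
The plan is that this corollary is an immediate specialization of Proposition \ref{prop:WreathNNE} to the constant family $A_s = A$ for all $s \in S$. By definition of the permutational wreath product, $A \wr_S B = \bigoplus_{s \in S} A \rtimes B$, where $B$ acts on the direct sum by permuting the factors via its action on $S$; this fits exactly into the general framework considered by that proposition.

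To apply Proposition \ref{prop:WreathNNE}, we need to verify its two hypotheses. The first, that $B$ has Property $\nl$, is given. The second requires that $A_s$ have Property $\nne$ for every $s \in S$ fixed by $B$. Since, by assumption, no point of $S$ is fixed by $B$, this condition is vacuously satisfied. The conclusion of Proposition \ref{prop:WreathNNE} then yields that $A \wr_S B$ has Property $\nne$, as desired.

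There is no real obstacle here, since the work is already carried out in the proof of Proposition \ref{prop:WreathNNE}. It is nonetheless worth highlighting that the corollary imposes \emph{no} assumption whatsoever on the group $A$: it may well admit highly non-elementary hyperbolic actions, and the wreath product will still be $\nne$. Morally, the reason (visible in the non-elementary case of the proof of Proposition \ref{prop:WreathNNE}) is that whenever $s$ is not fixed by $B$, any potential loxodromic element $g$ of the $s$-coordinate copy of $A$ commutes with the conjugate $b g b^{-1}$ sitting in a distinct coordinate, forcing shared quasi-axes and thus preventing $A_s \curvearrowright X$ from being non-elementary.
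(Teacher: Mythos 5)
Your proof is correct and follows exactly the paper's intent: the paper states this corollary as a "particular case of Proposition~\ref{prop:WreathNNE}" without further argument, and your specialization to the constant family $A_s = A$ with the vacuous hypothesis check is precisely that. Your closing remark explaining the moral role of the no-fixed-point assumption is accurate and a nice addition.
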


\begin{proof}[Proof of Theorem~\ref{thm:WreathNL}.]
Suppose $G$ is $\nl$. If $s \in S$ is fixed by $B$, then $B$ and $A_s$ are quotients of $G$. Proposition \ref{prop:DirectSums} implies that they are $\nl$, proving that conditions (1) and (2) hold. If there exists an index $r \in S$ and a map $\varphi : A_r \to \mathbb{R}$ which is either an unbounded quasimorphism in case $r$ has a finite $B$-orbit or an unbounded homomorphism in case $r$ has an infinite $B$-orbit, then 
$$((a_s)_{s \in S}, b) \mapsto \sum\limits_{s \in B. r} \varphi(a_s)$$
defines an unbounded quasimorphism or homomorphism $G \to \mathbb{R}$ respectively. Therefore, by Corollary \ref{cor:nl:noqm}, conditions (3) and (4) must hold as well.

\medskip 
Conversely, assume that the four conditions in the statement of Theorem \ref{thm:WreathNL} are satisfied. Let $X$ be a hyperbolic space on which $G$ acts. We know from Proposition~\ref{prop:WreathNNE} that $G \curvearrowright X$ must be elementary. To prove that $G$ has $\nl$, we only need to rule out the lineal case. Assume by contradiction that $G \curvearrowright X$ is lineal. The induced action of the normal subgroup $\bigoplus_s A_s \curvearrowright X$ is either elliptic or lineal. (Indeed, it cannot be horocyclic as that would imply $G$ fixes a unique point of $\partial X$). In the former case, the action factors through $B$ according to Lemma~\ref{lem:NormalElliptic} and we conclude that $G \curvearrowright X$ has no loxodromic element as $B$ has $\nl$. So we may assume that $\bigoplus_s A_s \curvearrowright X$ is lineal for the remainder of the proof. By Lemma~\ref{lem:ProductActingHyp}, there must exist some $r \in S$ such that $A_r \curvearrowright X$ is lineal. Observe that, as a consequence of our assumption, $B . r$ must be infinite.

\medskip 
Because $G \curvearrowright X$ is lineal and the restriction to $A_r$ is non-elliptic, there exists a quasimorphism $\varphi : H \to \mathbb{R}$ for some subgroup $H \leq G$ of index at most $2$ that is unbounded on $H \cap A_r$. The only case where $H$ needs to be a proper subgroup is when $G$ stabilizes a quasi-line and inverts its endpoints at infinity. In this case, we claim that $A_r$ lies in $H$. i.e.\ it does not invert the two points at infinity. Indeed, if $R \in A_r$ is such a reflection, then there exists a constant $D \geq 0$ (controlled by the hyperbolicity constant of $X$) such that $\{x \in X \mid d(x,Rx) \leq D\}$ is non-empty and bounded. This set has to be stabilized by $A_s$ for every $s \in B . r \backslash \{r\}$, so $A_s$ must be elliptic, contradicting the fact that $A_r \curvearrowright X$ is lineal. Thus, we know that $A_r$ always lies in $H$. 

\medskip 
Up to replacing $H$ with one of its subgroups (namely, $\langle B \cap H, A_s \cap H \ (s \in S) \rangle$), we can write $H$ as $\bigoplus_{s \in S} A_s' \rtimes B'$ for some subgroups $A_s' \leq A_s$ and $B' \leq B$ of indices at most $2$. From now on, we think of $\varphi$ as defined on the subgroup $\langle A_r,B' \rangle = \bigoplus_{s \in B' . r} A_s \rtimes B'$. (Note that since $A_r \subset H$ as proved in the previous paragraph, we have $A_r=A'_r$.) Up to replacing $\varphi$ with a quasimorphism at finite distance, we assume that $\varphi$ is homogeneous and zero on $B'$. As a consequence of \cite[Section 2.2]{scl},
\begin{itemize}
	\item[1.] $\varphi$ is conjugacy-invariant;
	\item[2.] $\varphi(ab)=\varphi(a)+\varphi(b)$ whenever $a$ and $b$ commute;
	\item[3.] its defect is $D(\varphi)= \mathrm{sup}_{a,b} | \varphi ( [a,b] ) |$.
\end{itemize}
It follows from the first two properties that there exists a homogeneous quasimorphism $\psi : A_r \to \mathbb{R}$ such that the restriction $\rho$ of $\varphi$ on $\bigoplus_s A_s$ is given by
$$\rho : (a_s)_{s \in B' . r} \mapsto \sum\limits_{s \in B' . r} \psi(a_s).$$
According to the third property, there exists $a,b \in A_r$ such that $\psi([a,b]) \geq D(\psi)/2$. Fix a finite subset $O \subset B'.r$ of the orbit. As the collection of the $A_s$ is $B$-invariant, the $A_s$ with $s \in O$ are all copies of the same group. By an abuse of notation,  we identify all these groups and define: 
$$\alpha : s \mapsto \left\{ \begin{array}{cl} a & \text{if } s \in O \\ 1 & \text{otherwise} \end{array} \right. \text{ and } \beta : s \mapsto \left\{ \begin{array}{cl} b & \text{if } s \in O \\ 1 & \text{otherwise} \end{array} \right.;$$
which we see as tuples in $\bigoplus_s A_s$. Then 
$$D(\rho) \geq \rho([\alpha,\beta]) = \sum\limits_{s \in O} \psi([a,b]) \geq |O| D(\psi)/2.$$
Because $O$ can be chosen arbitrarily large, as $B' . r$ is infinite, we must have $D(\psi)=0$. In other words, $A_r$ admits an unbounded homomorphism to $\mathbb{R}$, which is a contradiction. 
\end{proof}

Theorem \ref{thm:WreathNL} allows us to prove a weaker version of Corollary \ref{cor:QI-embed}. One can think of this result as evidence of existence of many diverse groups with Property $\nl$.

\begin{corollary}
\label{cor:qi}

Every finitely generated group quasi-isometrically embeds into a finitely generated hereditary $\nl$ group.
\end{corollary}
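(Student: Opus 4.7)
The plan is to apply Theorem \ref{thm:WreathNL} to a suitably constructed wreath product. Given a finitely generated group $G$, I would aim to find the ambient group as $H := A \wr B$, where $A$ is a finitely generated hereditary $\nl$ group that quasi-isometrically contains $G$, and $B$ is any finitely generated infinite hereditary $\nl$ group (for instance Thompson's group $V$, which is hereditary $\nl$ by Theorem \ref{intro:thm:thompson}). The canonical inclusion $G \hookrightarrow A \hookrightarrow H$, placing $A$ as the copy of the base at the identity element of $B$, is then a quasi-isometric embedding.

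To verify that $H$ is hereditary $\nl$, I would combine the stability results of Section \ref{s:stability}. Since $A$ is hereditary $\nl$, every finite direct sum $\bigoplus_{b \in F} A$ over a finite $F \subset B$ is hereditary $\nl$ by Proposition \ref{prop:DirectSums}(iii), and hence the directed union $\bigoplus_B A = \bigcup_F \bigoplus_{b \in F} A$ is hereditary $\nl$ by Proposition \ref{prop:DirectSums}(ii). Then $H$ fits into the short exact sequence $1 \to \bigoplus_B A \to H \to B \to 1$ with both kernel and quotient hereditary $\nl$, so Proposition \ref{prop:extension} yields that $H$ is itself hereditary $\nl$. Note that Theorem \ref{thm:WreathNL} already gives that $H$ is $\nl$ (using that $B$ is $\nl$, all $B$-orbits on itself are infinite, and $A$ has no unbounded homomorphism to $\mathbb{R}$ by Corollary \ref{cor:nl:noqm}), but the extension argument is what upgrades this to the hereditary version.

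The main obstacle is the construction of the auxiliary group $A$. A pure wreath product approach cannot suffice here: the abelianization of any wreath product $A \wr_S B$ surjects onto $A^{\mathrm{ab}}$, so there is no hope of using iterated wreath products to kill a nontrivial $G^{\mathrm{ab}}$ starting from $A = G$. To resolve this, I would invoke the twisted Brin--Thompson construction developed in Section \ref{thompson}: taking $A := SV_G$ (with $G$ acting on itself by left translation), the group $A$ is finitely generated, simple, hereditary $\nl$ by Theorem \ref{intro:thm:thompson}(3), and quasi-isometrically contains $G$. In this light, the present corollary should be read as a structural addendum to the stronger Corollary \ref{cor:QI-embed}: it illustrates that once one finitely generated hereditary $\nl$ group QI-containing $G$ is at hand, the wreath product machinery of this section produces many more examples, not necessarily simple, with the same property.
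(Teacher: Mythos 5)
Your argument is logically correct, but it routes through a strictly stronger result than what the paper's proof needs, and in that sense misses the point of stating Corollary \ref{cor:qi} separately in Section \ref{s:stability}. The paper's proof needs only that $G$ quasi-isometrically embeds into a finitely generated \emph{perfect} group $G^+$ -- a fact from the external references \cite{twistedV, twistedV2}, independent of anything proved in Section \ref{thompson} -- and then verifies that $G^+ \wr H$ is hereditary $\nl$ using a theorem of Gruenberg: since $G^+$ is perfect and $H$ is infinite, every finite quotient of $G^+ \wr H$ factors through $H$, so every finite-index subgroup $K$ contains the base $\bigoplus G^+$ and is itself a permutational wreath product $G^+ \wr_H H_1$, to which Corollary \ref{cor:WreathNL}(3) applies. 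You instead require the auxiliary group $A$ to be hereditary $\nl$, not merely perfect, and to obtain such an $A$ you appeal to Proposition \ref{cor:twistedV} (equivalently Corollary \ref{cor:QI-embed}), the main new result of Section \ref{thompson}. Once that is granted the wreath product step is superfluous, as you acknowledge: $A$ itself already witnesses the corollary. Your extension-based verification that $A$ and $B$ hereditary $\nl$ implies $A \wr B$ hereditary $\nl$ is nevertheless correct and cleaner than the Gruenberg route -- $\bigoplus_B A$ is hereditary $\nl$ by Proposition \ref{prop:DirectSums}, and the short exact sequence $1 \to \bigoplus_B A \to A \wr B \to B \to 1$ plus Proposition \ref{prop:extension} gives the conclusion -- but the trade-off is precisely the hypothesis: the paper's argument works with $A$ only perfect (buying self-containment within Section \ref{s:stability}), while yours requires $A$ hereditary $\nl$ (buying simplicity at the cost of a forward reference). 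Your remark about abelianizations is true but slightly off-target: the genuine constraint on the paper's $G^+$ is perfectness, because Gruenberg's theorem requires it, not just finiteness of the abelianization.
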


\begin{proof} 
Let $G$ be a finitely generated group. First, quasi-isometrically embed $G$ into a finitely generated perfect group $G^+$, for instance one can choose $G^+$ to be a twisted Brin--Thompson group \cite{twistedV, twistedV2}. Next, fix a finitely generated infinite group $H$ which is hereditary $\nl$, for instance a finitely generated infinite torsion group. Then $G$ quasi-isometrically embeds into the standard wreath product $G^+ \wr H$.

We claim that $G^+ \wr H$ is hereditary $\nl$. Since $G^+$ is perfect and $H$ is infinite, it follows from \cite{gruenberg} that every finite quotient of $G^+ \wr H$ factors through the quotient $G^+ \wr H \to H$. Since every finite-index subgroup contains a finite-index normal subgroup, it follows that every finite-index subgroup of $G^+ \wr H$ contains the base group $\oplus G^+$. Therefore, for every finite-index subgroup $K < G^+ \wr H$ there exists a finite-index subgroup $H_1 < H$ such that $K$ is the preimage of $H_1$ under the quotient $G^+ \wr H \to H$. Thus we can explicitly describe $K$ as the permutational wreath product $G^+ \wr_H H_1$, where $H_1$ acts on $H$ by left multiplication. Since $H$ is infinite, and $H_1 < H$ has finite index, all orbits for this action are infinite. Finally, since $G^+$ is perfect, it has no unbounded homomorphism to $\mathbb{R}$. Therefore Corollary \ref{cor:WreathNL}(3) applies, and $K$ is $\nl$.
\end{proof}

\begin{remark}
Note that the first step of the proof above consists in embedding $G$ into a twisted Brin--Thompson group $G^+$. We will see later that this is already enough to prove the above result, as twisted Brin--Thompson groups are simple and we will show that they have $\nl$ in Corollary \ref{cor:twistedV}.
\end{remark}

Along the same lines as Theorem~\ref{thm:WreathNL}, one can show that various groups acting on rooted trees satisfy $\nne$ because they naturally decompose as permutational wreath products. In order to make this assertion precise, we need to introduce some vocabulary.

Let $G$ be a group acting on a rooted tree $(T,o)$. For every $n \geq 1$, the \emph{$n$th level} of $T$ refers to the vertices at distance $n$ from $o$. Notice that $G$ stabilizes each level of $T$ since it fixes the root $o$. The $G$-stabilizer of the $n$th level is denoted by $\mathrm{st}_G(n)$. Given a vertex $v \in T$, the \emph{rigid stabilizer} $\mathrm{rig}_G(v)$ of $v$ is the subgroup of $G$ which permutes the vertices \emph{below} $v$ (i.e.\ separated from $o$ by $v$) and fixes all the other vertices of $T$. For every $n \geq 1$, we denote by $\mathrm{rig}_G(n)$ the subgroup generated by the rigid stabilizers of all the vertices in the $n$th level; notice that $\mathrm{rig}_G(n)$ decomposes as the product of these rigid stabilizers. If, for every $n \geq 1$, $G$ acts transitively on the $n$th level of $T$ and $\mathrm{rig}_G(n)$ has finite index in $G$, then $G$ is called a \emph{branch group}. 

We will prove the following result, which will yield implications for branch groups.

\begin{proposition}\label{prop:Branch}
Let $G$ be a group acting on a rooted tree $(T,o)$. If there exists some $n \geq 1$ such that $G$ acts transitively on the $n$th level of $T$ and $\mathrm{rig}_G(n)$ has finite index in $G$, then $G$ satisfies $\nne$. 
\end{proposition}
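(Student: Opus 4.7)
The plan is to mimic the proof of Proposition~\ref{prop:WreathNNE} by exploiting the semi-wreath structure that arises at the $n$-th level. Denote by $L_n$ the set of vertices at level $n$ and set $N := \mathrm{rig}_G(n)$.

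The first step is to assemble the structural facts about $N$. Because the subtrees hanging from distinct vertices of $L_n$ are disjoint, the rigid stabilizers $\mathrm{rig}_G(v)$ for $v \in L_n$ commute pairwise, so $N = \bigoplus_{v \in L_n} \mathrm{rig}_G(v)$ is an internal direct sum. Moreover, for every $\gamma \in G$ one has $\gamma \mathrm{rig}_G(v) \gamma^{-1} = \mathrm{rig}_G(\gamma \cdot v)$, so $N$ is normal in $G$ and $G$ permutes the factors according to its (transitive) action on $L_n$; in particular all the factors are pairwise conjugate.

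Suppose for contradiction that $G$ admits a non-elementary action on a hyperbolic space $X$. Since raising a pair of loxodromics with distinct quasi-axes to the power $[G : N]$ produces such a pair inside $N$, the restriction $N \curvearrowright X$ is still non-elementary. Applying Lemma~\ref{lem:ProductActingHyp} to the direct sum decomposition (and Lemma~\ref{lem:CommutingElliptic} to rule out the case where every factor is elliptic), I obtain some vertex $v_0 \in L_n$ such that $\mathrm{rig}_G(v_0) \curvearrowright X$ is non-elementary; in particular $\mathrm{rig}_G(v_0)$ contains two loxodromics $g, h$ whose quasi-axes are at infinite Hausdorff distance. Provided $|L_n| \geq 2$, pick any $w \in L_n \setminus \{v_0\}$ and, by transitivity of $G$ on $L_n$, some $\gamma \in G$ with $\gamma \cdot v_0 = w$. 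Then $\gamma g \gamma^{-1} \in \mathrm{rig}_G(w)$ is loxodromic and, because $\mathrm{rig}_G(v_0)$ and $\mathrm{rig}_G(w)$ commute, it commutes with both $g$ and $h$. Since two commuting loxodromic isometries of a hyperbolic space share a quasi-axis, $g$ and $h$ must both share the quasi-axis of $\gamma g \gamma^{-1}$, contradicting their independence.

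The main obstacle is the degenerate case $|L_n| = 1$, where the ping-pong-style conjugation argument has nothing to act on. In that case $G$ fixes the unique vertex $v_0$ of $L_n$ and the finite-index normal subgroup $N = \mathrm{rig}_G(v_0)$ acts on the subtree $T_{v_0}$; I would handle this either by descending to a deeper level of $T_{v_0}$ where genuine branching appears (so that the structural hypothesis transfers), or by observing that if no branching ever occurs then $G$ must be finite and NNE is automatic. Apart from this subtlety, the only inputs needed are the disjointness of subtrees below distinct level-$n$ vertices and the standard hyperbolic geometry fact that commuting loxodromics share a quasi-axis.
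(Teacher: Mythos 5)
Your proof follows essentially the same route as the paper's. The paper's proof is one line: it applies Lemma~\ref{lem:CommutingSubgroups} (pairwise commuting, pairwise conjugate subgroups $A_1,\dots,A_r$, $r>1$, generating a finite-index subgroup, implies $\nne$) to the rigid stabilizers of the level-$n$ vertices. Your argument is precisely the content of that lemma's proof unpacked in the specific setting: rigid stabilizers of distinct level-$n$ vertices commute (disjoint supports), are conjugate (transitivity of $G$ on $L_n$), and generate the finite-index subgroup $\mathrm{rig}_G(n)$; the contradiction then comes from a conjugate loxodromic commuting with two independent ones, forcing them to share a quasi-axis. You also correctly handle the passage from ``some factor not elliptic'' to ``some factor non-elementary'' via Lemmas~\ref{lem:ProductActingHyp} and~\ref{lem:CommutingElliptic}. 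So the main body of the argument is correct and matches the paper's.

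Where your proof becomes shaky is exactly where you flag it: the case $|L_n|=1$. You are right to worry, since the conjugation argument (and equally the paper's Lemma~\ref{lem:CommutingSubgroups}, which requires $r>1$) has nothing to act on when there is only one level-$n$ vertex. But your two proposed fixes do not actually work. Descending to a deeper level is not justified: the hypothesis only gives transitivity and a finite-index rigid level stabilizer at the single level $n$, and nothing in the statement propagates these conditions to deeper levels of $T_{v_0}$. The claim ``if no branching ever occurs then $G$ must be finite'' is also false as stated: if $T$ is a ray (or, more generally, has a level-$n$ vertex $v_0$ with no branching above it), then $G$ acts (trivially, or only below $v_0$) in such a way that $\mathrm{rig}_G(n)=G$ and the hypotheses are vacuously satisfied, yet $G$ can be any group whatsoever, in particular non-$\nne$ -- this is an issue of faithfulness and of the hypothesis being vacuous, not of $G$ being finite. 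In fairness, this degenerate case is also untreated by the paper's cited lemma, and is presumably ruled out by the (implicit, standard in the branch-group setting) assumption that the level in question genuinely branches; but since you raised it, you should either add that hypothesis explicitly or give a correct argument for it, rather than the heuristic you sketched.
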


We defer the proof of the proposition for a bit as it will be an immediate consequence of our next result, which is proved following the lines of Proposition~\ref{prop:WreathNNE}.

\begin{lemma}\label{lem:CommutingSubgroups}
Let $G$ be a group. Assume that $G$ contains pairwise commuting and pairwise conjugate subgroups $A_1, \ldots, A_r, r > 1$ generating a finite-index subgroup of $G$, then $G$ satisfies $\nne$.
\end{lemma}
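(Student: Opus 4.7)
The plan is to assume for contradiction that $G$ admits a non-elementary action on a hyperbolic space $X$, and to produce an impossible configuration of loxodromic isometries using the commuting-conjugate structure of the $A_i$. First, since $H := \langle A_1, \ldots, A_r \rangle$ has finite index in $G$, the restriction $H \curvearrowright X$ remains non-elementary: any two independent loxodromics in $G$, raised to the power $[G:H]$, stay loxodromic with the same axes and lie in $H$.

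Next, because the $A_i$ pairwise commute and generate $H$, the multiplication map $\bigoplus_{i=1}^{r} A_i \twoheadrightarrow H$ is a surjective group homomorphism. I would pull back the non-elementary $H$-action along this surjection using Lemma \ref{lem:quotaction} to obtain a non-elementary action of the direct sum on $X$. Lemma \ref{lem:CommutingElliptic} excludes the possibility that every factor acts elliptically, and Lemma \ref{lem:ProductActingHyp} then forces the direct sum to act with the same type as some factor $A_i$. Hence some $A_i \curvearrowright X$ is itself non-elementary, so $A_i$ contains two loxodromic isometries $g, h$ with disjoint boundary fixed-point sets $\{g^{+\infty}, g^{-\infty}\} \cap \{h^{+\infty}, h^{-\infty}\} = \varnothing$.

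To reach the contradiction, I would pick $j \neq i$ and an element $t \in G$ with $t A_i t^{-1} = A_j$, which exists by the conjugacy hypothesis. The element $t g t^{-1} \in A_j$ is loxodromic (conjugation by the isometry $t$ preserves dynamical type), and since $A_i$ and $A_j$ commute, $t g t^{-1}$ commutes with both $g$ and $h$. It is a standard fact of hyperbolic geometry that two commuting loxodromic isometries share the same pair of boundary fixed points: indeed, $t g t^{-1}$ must preserve $\{g^{+\infty}, g^{-\infty}\}$ setwise, so $(t g t^{-1})^2$ fixes both points, which must then be the fixed-point set of $t g t^{-1}$ itself. The same argument applied to $h$ gives $\{(t g t^{-1})^{\pm\infty}\} = \{h^{+\infty}, h^{-\infty}\}$, contradicting the disjointness.

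I expect the most delicate step to be the reduction from the non-elementary action of $H$ to a non-elementary action of some individual factor $A_i$, which chains together the surjection from the direct sum with Lemmas \ref{lem:CommutingElliptic} and \ref{lem:ProductActingHyp} while tracking the action type. Once this is in place, the creative move, conjugating a loxodromic by a $G$-element to land in a commuting subgroup and invoking the shared-fixed-point rigidity of commuting loxodromics, closes the argument cleanly.
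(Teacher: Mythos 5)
Your proof is correct and rests on the same core mechanism as the paper's: conjugate a loxodromic of $A_i$ into a commuting subgroup $A_j$, note that commuting loxodromics must share an axis, and derive a contradiction. The route differs slightly. The paper argues directly (not by contradiction) that every $A_i$ acts elementarily -- because any two loxodromics $a,b \in A_1$ have coinciding quasi-axes -- and then runs a case analysis on the common type (elliptic, lineal, horocyclic) to conclude the $G$-action is elementary. You instead assume the $G$-action is non-elementary, pass to $H$ by finite index, pull back along the surjection $\bigoplus A_i \twoheadrightarrow H$, and use Lemmas \ref{lem:CommutingElliptic} and \ref{lem:ProductActingHyp} to deduce that some $A_i$ is non-elementary. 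That pullback is valid and a clean way to make the reduction precise, though it is more machinery than the paper needs, since the direct observation on $A_1$ already forces all $A_i$ to be elementary. One small but worth-fixing imprecision: you claim a non-elementary $A_i$ contains two loxodromics $g,h$ with \emph{disjoint} limit sets $\{g^{\pm\infty}\} \cap \{h^{\pm\infty}\} = \varnothing$. This holds in general type but fails if $A_i$ acts focally, since then every loxodromic shares the common fixed point. What you actually need, and do have, is that the limit sets are merely \emph{distinct}, i.e.\ $\{g^{\pm\infty}\} \neq \{h^{\pm\infty}\}$; the chain $\{g^{\pm\infty}\} = \{(tgt^{-1})^{\pm\infty}\} = \{h^{\pm\infty}\}$ still contradicts distinctness. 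With that wording fixed, the argument is complete.
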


\begin{proof}
Let $G$ act on a hyperbolic space $X$. Because the $A_i$ are pairwise conjugate, their induced action on $X$ have the same type.

If $A_1$ contains two loxodromic elements $a$ and $b$, then $A_2$ contains a conjugate of $a$ commuting with both $a$ and $b$. Because two commuting loxodromic isometries must have the same quasi-axes up to finite Hausdorff distance, it follows that the quasi-axes of this conjugate, $a$, and $b$ all coarsely coincide. Thus, the $A_i$ must all act elementarily on $X$.

If all the $A_i$ are elliptic, it follows from Lemma~\ref{lem:CommutingElliptic} that $G$ is elliptic. If all the $A_i$ are lineal, then they must stabilize the same quasi-line because they pairwise commute, which implies that the action of $G$ is also lineal. Finally, if all the $A_i$ are horocyclic, then their elements are all elliptic or parabolic and they all fix a common point at infinity. 

This implies that the action of $G$ must be horocyclic as well. Indeed, $G$ must fix the same unique point on $\partial X$ that is fixed by all the $A_i$. Further, if $G$ contains a loxodromic element, then so does the group generated by $A_1, \ldots, A_r$, as it has finite index in $G$. However, as the $A_i$ commute, $\langle A_1, \ldots, A_r \rangle = A_1\ldots A_r$. Thus at least one of the $A_i$'s must contain a loxodromic element (see the argument at the end of the proof of Lemma \ref{lem:ProductActingHyp}), which is a contradiction. So $G$ has no loxodromic elements. \end{proof}

\begin{proof}[Proof of Proposition~\ref{prop:Branch}.]
The desired conclusion follows by applying Lemma~\ref{lem:CommutingSubgroups} to the rigid stabilizers of the vertices from the $n$th level of $T$. 
\end{proof}

We now prove the following corollary that additionally deals with branch groups with the \emph{congruence subgroup property}. A group acting on a rooted tree satisfies the congruence subgroup property if every finite-index subgroup contains the stabilizer of some level. 

\begin{corollary}
\label{cor:Branch}
Branch groups are $\nne$. Branch groups satisfying the congruence subgroup property are hereditary $\nne$.
\end{corollary}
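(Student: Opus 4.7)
The first statement is immediate: every branch group $G$ satisfies the hypothesis of Proposition \ref{prop:Branch} at every level $n \geq 1$, so it has $\nne$.

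For the hereditary version, the plan is a three-step reduction. Given an arbitrary finite-index subgroup $H \leq G$, the CSP yields some $n \geq 1$ with $\mathrm{st}_G(n) \leq H$; since $\mathrm{st}_G(n)$ has finite index in $H$, Lemma \ref{lem:fi:subgroup} reduces the problem to showing that $\mathrm{st}_G(n)$ is $\nne$. The level-$n$ rigid stabilizer $\mathrm{rig}_G(n)$ decomposes as the direct sum $\bigoplus_v \mathrm{rig}_G(v)$ over vertices $v$ at the $n$th level, and has finite index in $\mathrm{st}_G(n)$ (since it has finite index in $G$), so another application of Lemma \ref{lem:fi:subgroup} reduces the problem to proving that $\mathrm{rig}_G(n)$ is $\nne$. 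Finally, stability of $\nne$ under direct sums (Proposition \ref{prop:DirectSums}(iii)) further reduces the task to showing that each individual rigid stabilizer $\mathrm{rig}_G(v)$ is $\nne$.

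For this last step, the plan is to apply Proposition \ref{prop:Branch} once more, this time to the natural faithful action of $\mathrm{rig}_G(v)$ on the subtree $T_v$ rooted at $v$. The finite-index rigid stabilizer condition on level $k$ of $T_v$ is immediate, since the relevant subgroup is $\mathrm{rig}_G(v) \cap \mathrm{rig}_G(n+k)$, an intersection of finite-index subgroups of $G$. The hard part will be verifying transitivity of $\mathrm{rig}_G(v)$ on some level of $T_v$: although $G$ is level-transitive on $T$, the conjugating elements it provides typically lie outside $\mathrm{rig}_G(v)$ because they permute level-$n$ vertices other than $v$. The strategy is to compose such a conjugating element with a suitable complementary element drawn from the direct sum $\bigoplus_{v' \neq v} \mathrm{rig}_G(v')$ so as to cancel its action outside $T_v$, producing a genuine element of $\mathrm{rig}_G(v)$ that realizes the required transitivity on $T_v$. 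With this in hand, Proposition \ref{prop:Branch} applies to $\mathrm{rig}_G(v) \curvearrowright T_v$ and completes the proof.
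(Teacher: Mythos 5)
Your first paragraph (the non-hereditary case) and your first three reduction steps are fine, and your route — passing via $\mathrm{st}_G(n)$, then $\mathrm{rig}_G(n)=\bigoplus_v\mathrm{rig}_G(v)$, then invoking stability of $\nne$ under direct sums — is genuinely different from the paper's, which applies Proposition~\ref{prop:Branch} directly to the finite-index subgroup $K$ itself.

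However, step~5 has a real gap. You need $\mathrm{rig}_G(v)$ to act transitively on some level of $T_v$, and your proposed fix cannot work as stated: a conjugating element $g\in G$ sending one level-$(n+k)$ vertex below $v$ to another fixes $v$, but it may \emph{permute} the other level-$n$ vertices $v'\ne v$ among themselves, whereas every element of $\bigoplus_{v'\ne v}\mathrm{rig}_G(v')$ fixes those vertices pointwise. So no product of $g$ with such an element can be supported inside $T_v$. More fundamentally, it is not true in general that $\mathrm{rig}_G(v)$ acts level-transitively on $T_v$ for an arbitrary branch group; this holds for self-replicating/fractal branch groups, but it is not part of the definition used here. (For what it's worth, the paper's own proof has a parallel problem at the corresponding step: it asserts that $K$ acts transitively on level $n$ because $G$ does, which is simply not a valid inference — for instance, in the Grigorchuk group the first level stabilizer has index~$2$ but is not transitive on any level~$\ge 1$, since it preserves the left/right decomposition of the tree.) A way to close the gap that works for both approaches: the vertices fixed by $K$ form a subtree, and if it met every level then K\"onig's lemma would produce a $K$-fixed infinite ray, forcing $K$ into a ray stabilizer, which has infinite index in a branch group — a contradiction. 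Hence there is a level $m\ge n$ with no $K$-fixed vertex, so every rigid stabilizer $\mathrm{rig}_G(w)$ at level $m$ is conjugate \emph{inside $K$} to some $\mathrm{rig}_G(w')$ with $w'\ne w$; that weaker property (each conjugate to some other, rather than all pairwise conjugate) is enough to run the argument of Lemma~\ref{lem:CommutingSubgroups}.
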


\begin{proof} The result is an immediate consequence Proposition~\ref{prop:Branch}. Indeed, if $K$ is finite index in $G$, then it is contains $\mathrm{st}_G(n)$ for some level $n$ of the tree. For this $n$, $\mathrm{rig}_G(n)$ has finite index in $G$ and hence in $K$, since it is a subgroup of $K$. As $K$ also acts transitively on the $n$-th level (as $G$ does so and $K$ setwise stabilizes the $n$-th level), we can apply Proposition \ref{prop:Branch} to $K$ and conclude that $K$ has $\nne$.
\end{proof}

It is worth noticing that, even though many branch groups are famously torsion groups (e.g.\ the Grigorchuk groups and the Gupta--Sidki groups) and consequently obviously satisfy $\nne$ (and even hereditary $\nl$), there exist branch groups containing non-abelian free subgroups \cite{MR1995624}, for which Property $\nne$ is not so clear at the first glance.

\section{A dynamical criterion for groups of homeomorphisms}\label{thompson}

The goal of this section is to provide a dynamical criterion for a group acting on a compact Hausdorff space by homeomorphisms to have Property $\ngt$. This will then be applied to a variety of Thompson-like groups to show that they are hereditary $\nl$. We warn the reader that we consider compact Hausdorff spaces that are not endowed with a metric, so the actions are merely by homeomorphisms, while actions on hyperbolic spaces are still assumed to be isometric.

Before stating the main result of this section, we recall some basic terminology that will be used in this section.
Let $G \acts X$. The action is \emph{faithful} if $gx = x$ for all $x \in X$ implies that $g$ is the identity. Equivalently, for each non-identity element $g$, there is an $x \in X$ such that $gx \neq x$.
A group $G$ is said to be \emph{boundedly generated by $A \subset G$} if there exists an integer $n \in \N$ such that every $g \in G$ can be expressed as a product of at most $n$ elements in $A$.

Suppose a group $G$ acts faithfully on a compact Hausdorff space $X$. Given $g \in G$, the \emph{support} of $g$ is the set of elements not fixed by $g$, i.e. $\op{supp}(g) = \{x \in X \mid gx \neq x\}$. Note that two elements $g, h \in G$ commute whenever $\op{supp}(g) \cap \op{supp}(h) = \emptyset$, although this is not necessary (take for instance $g = h$).  Throughout this section, by $g^h$, we denote the conjugate $hgh^{-1}$ of $g$, and by $[g, h]$ the commutator $ghg^{-1}h^{-1}$.

Given a collection $\mathcal{I}$ of subsets of $X$, we denote by $\mathcal{I}^{(n)}$ the set of ordered $n$-tuples of elements of $\mathcal{I}$ with pairwise disjoint closures whose union is not dense in $X$. Our criterion will involve transitivity on $\mathcal{I}^{(n)}$; therefore the added condition that the union is not dense will be necessary for this to ever be satisfied, as $G$ sends dense sets to dense sets. Our main result in this section is the following. 

\begin{theorem}
\label{thm:cH:criterion}

Let $G$ be a discrete group acting faithfully on a compact Hausdorff space $X$. Suppose that there is a basis $\mathcal{I}$ of non-dense and non-empty open sets in $X$, such that the following holds:
\begin{itemize}
	\item[1.] \emph{($\mathbf{C}$: Complements)} For every $I \in \mathcal{I}$ there exists $J \in \mathcal{I}$ such that $I^c \subset J$.
    \item[2.] \emph{($2 \mathbf{T}$: Double transitivity)} $G$ preserves $\mathcal{I}$ and acts \emph{doubly transitively} on it: for every two pairs $(I, J), (I', J') \in \mathcal{I}^{(2)}$, there exists $g \in G$ such that $(gI, gJ) = (I', J')$.
    \item[3.] \emph{($3 \mathbf{T}$: Weak triple transitivity)} For every $g, h \in G$ there exist $(M, N, P) \in \mathcal{I}^{(3)}$ such that $(M, N, P)$ and $(g M, h N, P)$ are in the same $G$-orbit in $\mathcal{I}^{(3)}$.
    \item[4.] \emph{($\mathbf{L}$: Local action)} Let $(I, J, K) \in \mathcal{I}^{(3)}$, and let $g, h \in G$ be such that $g I = I$ and $h J = J$. Then there exists $b \in G$ such that $b|_I = g|_I$ and $b|_J = h|_J$ and $b|_K = id|_K$.
\end{itemize}
Then $G$ has Property $\ngt$.
\end{theorem}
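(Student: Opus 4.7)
The plan is to argue by contradiction: assume $G \curvearrowright Y$ is an action of general type on a hyperbolic space $Y$, and then use the four dynamical conditions to rule this out. By Lemma \ref{lem:Essential} I can replace $Y$ by a $G$-invariant quasiconvex subspace and assume the action is essential. The general philosophy is that conditions (C) and (2T) produce a rich family of pairwise commuting conjugate subgroups (the ``rigid stabilizers'' of elements of $\mathcal{I}$), while (3T) and (L) allow me to simulate arbitrary pairs of elements by a single element whose local data can be freely reassigned. The combination should force any two loxodromic isometries to share their fixed pair on $\partial Y$, contradicting the general-type hypothesis.

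Step 1 (commutation structure from (C) and (2T)): For each $I \in \mathcal{I}$, let $G_I := \{g \in G : \mathrm{supp}(g) \subseteq \overline{I}\}$. Given $I \in \mathcal{I}$, condition (C) yields $J \in \mathcal{I}$ with $I^c \subseteq J$, so $J^c \subseteq I$, and choosing $I_0, J_0 \in \mathcal{I}$ inside $J^c$ and $I^c$ respectively gives a disjoint pair with $[G_{I_0}, G_{J_0}] = 1$. By (2T), this pair can be moved to any pair in $\mathcal{I}^{(2)}$, producing for any $I \in \mathcal{I}$ and any element $a \in G_I$ arbitrarily many pairwise commuting conjugates of $a$ supported in pairwise disjoint sets.

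Step 2 (local elements are elliptic): Suppose $a \in G_I$ is loxodromic in $Y$, with axis $\ell \subset Y$. Step 1 furnishes conjugates $a_1, a_2, \dots$ of $a$ that pairwise commute and are also loxodromic with axes $\ell_1, \ell_2, \dots$. Commuting loxodromics on a hyperbolic space must share their pair of fixed points at infinity, so all axes $\ell_i$ stay at bounded Hausdorff distance from $\ell$. Transitivity (2T) then provides elements of $G$ translating $\ell$ off itself (since $\mathcal{I}$ is a basis of non-dense open sets, there are infinitely many disjoint basic sets available), forcing $G$ to preserve the pair $\{a^{\pm\infty}\} \subset \partial Y$; but combined with the commuting family this in turn forces the whole orbit of $a^{\pm\infty}$ to collapse to $\{a^{\pm\infty}\}$, which contradicts general type (the two limit points of $a$ would be fixed by $G$). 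Hence every $a \in G_I$ is elliptic.

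Step 3 (propagating ellipticity via (3T) and (L), and the main obstacle): For arbitrary $g, h \in G$, condition (3T) yields $(M, N, P) \in \mathcal{I}^{(3)}$ and $k \in G$ with $kM = gM$, $kN = hN$, $kP = P$; thus $g^{-1}k$ fixes $M$ setwise, $h^{-1}k$ fixes $N$ setwise, and $k$ fixes $P$ setwise. Applying (L) to these three elements I can construct an element $b \in G$ of $G_{M \cup N}$-like local behaviour that coordinates the boundary dynamics of $g$ and $h$. I expect this to imply that if both $g$ and $h$ are loxodromic with independent fixed-point pairs on $\partial Y$, then $b$ must simultaneously satisfy two incompatible geometric constraints on $Y$: it must fix the point $P$ locally, and yet its commutators with $g$ and $h$ must produce loxodromic behaviour tied to both axes. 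This contradiction rules out general type. The hard part of the proof is precisely this final step, where the purely dynamical content of (3T)+(L) must be converted into an incompatibility on the boundary $\partial Y$; I anticipate this requires carefully combining the fragmentation and commutation arguments above with the classical fact that two independent loxodromic isometries generate a virtually free subgroup whose boundary orbits are rigid.
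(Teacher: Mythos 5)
Your proposal attempts a direct proof by contradiction, whereas the paper proves Theorem~\ref{thm:cH:criterion} by \emph{reducing} to the algebraic criterion of Theorem~\ref{thm:criterion} (from \cite{anthony}): one defines $B$ to be the set of elements of $G$ fixing pointwise some $I \in \mathcal{I}$, builds a finite subcover $\mathcal{A} \subset \mathcal{I}$ with a well-chosen disjointness property, sets $A$ to be the elements fixing pointwise some $I \in \mathcal{A}$, and then checks conditions (1), (2), (3) of that theorem: bounded generation by $A$ (indeed by three elements), a commutation condition on $B$, and a technical uniform conjugation condition involving $A$ and $B$. None of this appears in your write-up, so you are not taking the paper's route.

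More importantly, your proposal has a genuine gap at its heart and you acknowledge it yourself. Step~3 is where the contradiction is supposed to be derived, but you write only that you ``expect'' the element $b$ produced by ($3\mathbf{T}$) and ($\mathbf{L}$) to satisfy two incompatible geometric constraints, and you anticipate that ``carefully combining'' the earlier fragmentation arguments with standard facts about independent loxodromics will finish the job. This is a hope, not a proof. You do not explain what the two incompatible constraints are, why $b$ must satisfy both, or why they clash. The heavy lifting in the paper is exactly the passage from the dynamical data of ($3\mathbf{T}$) and ($\mathbf{L}$) to the algebraic statement (Property (3) of Theorem~\ref{thm:criterion}) which the quoted machinery then converts into a fixed-point statement for hyperbolic actions; your proposal gestures at the same passage but does not carry it out. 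Step~2 is also shakier than you suggest: to conclude that $G$ preserves $\{a^{\pm\infty}\}$ you only argue it for those $g$ that move the support of $a$ off itself, and you do not show that these $g$ generate $G$ (or a finite-index subgroup); without this, one cannot conclude the action is elementary on the pair $\{a^{\pm\infty}\}$. I would recommend reading the paper's reduction to Theorem~\ref{thm:criterion}: the algebraic packaging is precisely what makes the dynamical hypotheses tractable, and re-deriving that packaging from scratch would essentially reprove Theorem~\ref{thm:criterion}, which is quite involved.
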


Condition ($3 \mathbf{T}$) is strictly weaker than $3$-transitivity: we will see in Corollary \ref{cor:circle:criterion:firststep} that it is also satisfied by some groups acting on the circle. On the other hand it is easy to see that having actual $3$-transitivity on the circle prevents the action from being orientation-preserving. Some condition stronger than double transitivity is needed, as the following example shows (we will go through similar examples in Section \ref{s:cantor}).

\begin{example}
Let $G$ be the group of isometries of a locally finite regular tree $T$, and let $X := \partial T$ be the boundary at infinity of $T$. Then $G$ acts faithfully by homeomorphisms on $X$, which is a Cantor set, in particular it is compact and Hausdorff. Given a finite subtree of $F \subset T$, the complement $T \setminus F$ determines a partition $B_1 \sqcup \cdots \sqcup B_k$ of $X$. Let $\mathcal{I}$ denote the set of subsets of $X$ obtained as finite disjoint unions of such $B_i$'s. Then one can easily check that $\mathcal{I}$ satisfies ($\mathbf{C}$), that the action of $G$ satisfies ($2 \mathbf{T}$) and ($\mathbf{L}$). However, the action of $G$ on $T$ is of general type, so $G$ does not have Property $\ngt$. This does not contradict Theorem \ref{thm:cH:criterion}, because the action of $G$ on $X$ does not satisfy $(3 \mathbf{T})$.
\end{example}

\subsection{Proof of Theorem \ref{thm:cH:criterion}}

In order to prove the theorem, we will use the following algebraic criterion due to the third author \cite{anthony}. Our strategy is to show that the dynamical hypotheses from Theorem \ref{thm:cH:criterion} imply the algebraic conditions of this theorem. 

\begin{theorem}[{\cite[Theorem 1.1]{anthony}}]
\label{thm:criterion}

Let $G$ be a group. Suppose that there exist subsets $A, B \subset G$ and an integer $r \geq 1$ such that:
\begin{itemize}
    \item[(1)] $G$ is boundedly generated by $A$;
    \item[(2)] For every $b, b' \in B$ there exist $g, h \in G$ such that $b$ commutes with $b^g$, which in turn commutes with $b^h$, which in turn commutes with $b'$.
    \item[(3)] For every $g, h \in G$ there exist $b, b_1, \ldots, b_r \in B$ such that for every $a \in A$ there exists $f \in \langle b_1 \rangle \cdots \langle b_r \rangle$ such that each of $a^f b, a^f bg, a^f bh$ belongs to $B$.
\end{itemize}
Then $G$ has Property $\ngt$.
\end{theorem}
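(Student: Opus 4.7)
The plan is a proof by contradiction. Suppose that $G$ admits a general type action on a hyperbolic space $X$. Recall from Theorem \ref{thm:ClassHypAct} and standard hyperbolic dynamics that in such an action any two commuting loxodromic isometries share the same unordered pair of fixed points in $\partial X$, and more generally any element commuting with a loxodromic must setwise preserve that pair. The strategy is to use condition (2) to extract a common boundary structure $F \subset \partial X$ preserved by the loxodromic elements of $B$, and then use conditions (3) and (1) to propagate this structure to all of $G$, contradicting the existence of two independent loxodromics in a general type action.

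I would first analyze condition (2). Given $b, b' \in B$, the chain $b \leftrightarrow b^g \leftrightarrow b^h \leftrightarrow b'$ of pairwise commuting adjacent terms, combined with the observation that $b^g$ and $b^h$ are loxodromic exactly when $b$ is (they are conjugate, hence have the same translation length), yields the following: if $b$ is loxodromic with fixed-point pair $F_b$, then $g$ and $h$ setwise stabilize $F_b$, and the element $b'$ (commuting with the loxodromic $b^h$ of fixed-point pair $F_b$) must also preserve $F_b$. Running this over all $b' \in B$ exhibits a common $B$-invariant pair $F := F_b \subset \partial X$, and in particular all loxodromic elements of $B$ share this same pair. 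The complementary scenario, in which $B$ contains no loxodromic elements, should yield a $B$-invariant point at infinity (from a parabolic) or a bounded $B$-invariant subset (from ellipticity) by analogous commuting-chain arguments.

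The next step is to use condition (3) to promote this to a $G$-invariant structure. Since the action is of general type, one can choose loxodromic $g_0, h_0 \in G$ whose fixed-point pairs are pairwise independent from each other and from $F$. Apply condition (3) to $g_0, h_0$ to obtain $b, b_1, \ldots, b_r \in B$ and, for each $a \in A$, an $f \in \langle b_1 \rangle \cdots \langle b_r \rangle$ with $a^f b, a^f b g_0, a^f b h_0 \in B$. Applying the analysis of (2) to the pairs $(a^f b, a^f b g_0)$ and $(a^f b, a^f b h_0)$ should force each of these three elements to preserve $F$, so $g_0 = (a^f b)^{-1}(a^f b g_0)$ and $h_0 = (a^f b)^{-1}(a^f b h_0)$ preserve $F$, contradicting the choice of $g_0$ and $h_0$ as independent from $F$. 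Bounded generation (condition (1)) is the fallback mechanism for extending any constraint obtained for the distinguished family $A$ to all of $G$, in case the invariant structure one produces is not directly preserved by $g_0, h_0$ but only by sufficiently many conjugates thereof.

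The main obstacle will be that condition (2) does not force the elements of $B$ to be loxodromic, so the argument must work uniformly over mixed dynamical types; in particular the elements $a^f b$ that appear in Step~2 need not themselves be loxodromic, and one may have to replace the fixed-point pair $F$ by a more flexible invariant (a single fixed point at infinity, an invariant horoball, or a bounded subset of $X$) before applying the commuting-chain pin-down. A secondary subtle point is explaining why the very particular bounded-length product structure $f \in \langle b_1 \rangle \cdots \langle b_r \rangle$ in (3) is the correct tool to conjugate an arbitrary $a \in A$ into a form compatible with the $B$-invariant data; this interplay between the bounded-length product in (3) and the bounded generation hypothesis (1) appears to be the technical engine driving the argument, and is where the most careful bookkeeping will be required.
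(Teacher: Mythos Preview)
The paper does not actually prove this theorem: it is quoted verbatim as \cite[Theorem~1.1]{anthony} and used as a black box in Section~\ref{thompson}. There is therefore no proof in the present paper to compare your proposal against, so let me instead assess the proposal on its own merits.

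Your outline correctly handles the case where $B$ contains a loxodromic (or parabolic) element. In that case condition~(2) indeed forces every element of $B$ to preserve the corresponding fixed set $F\subset\partial X$ of cardinality $1$ or $2$, and then condition~(3) exhibits arbitrary $g,h\in G$ as ratios of elements of $B$, so $G$ preserves $F$, contradicting general type. Note that this part of the argument uses neither condition~(1) nor the product structure $f\in\langle b_1\rangle\cdots\langle b_r\rangle$, as you yourself observe.

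There is a genuine gap in the case where every element of $B$ is elliptic. Your claim that ``analogous commuting-chain arguments'' produce a bounded $B$-invariant subset does not go through: a commuting chain of elliptic isometries carries no boundary data, and the coarse fixed sets of commuting elliptics need not propagate along a chain of length greater than two. So no $B$-invariant finite set in $\partial X$ (nor a bounded invariant set in $X$) emerges from condition~(2) alone. This is exactly where conditions~(1) and the bounded product structure in~(3) must enter: one needs a quantitative argument (for instance, choosing $g,h$ to be high powers of independent loxodromics and showing that for such $g,h$ no elliptic $c$ can have $cg$ and $ch$ both non-loxodromic, or a displacement estimate exploiting that $f$ ranges over a product of finitely many cyclic groups generated by fixed elliptic elements) rather than a purely qualitative fixed-point-set argument. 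You flag this as ``the main obstacle'' and the role of bounded generation as ``a secondary subtle point,'' but neither is resolved in the proposal; until the all-elliptic case is handled, the argument is incomplete.
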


Let $G, X, \mathcal{I}$ be as in the statement of Theorem \ref{thm:cH:criterion}. If $X$ is finite, then so is $G$ as the action is faithful, in which case every action on a hyperbolic space has bounded orbits. So we may assume that $X$ is infinite. We will show that there exist subsets $A, B \subset G$ and $r \geq 1$ satisfying the three properties in Theorem \ref{thm:criterion}. We start with the following remark.

\begin{remark}
\label{rem:transitivity}

Double transitivity $(2 \mathbf{T})$ implies transitivity, namely $G$ acts transitively on $\mathcal{I}$. Indeed, if $I, I' \in \mathcal{I}$, then using that $\mathcal{I}$ is a basis and that $I, I'$ are not dense, we can choose $J, J' \in \mathcal{I}$ such that $(I, J), (I', J') \in \mathcal{I}^{(2)}$. Then $(2 \mathbf{T})$ gives an element $g \in G$ such that $(gI, gJ) = (I', J')$, in particular $gI = I'$.

Similarly, $(\mathbf{L})$ implies an analogous statement for two sets: Let $(I, J) \in \mathcal{I}^{(2)}$ and let $g, h \in G$ be such that $gI = I$ and $hJ = J$. Then there exists $b \in G$ such that $b|_I = g|_I$ and $b|_J = h|_J$. Once again, this follows by simply fixing $K$ to be any set in $\mathcal{I}$ such that $(I, J, K) \in \mathcal{I}^{(3)}$.
\end{remark}

We can immediately define $B$ and show that Property $(2)$ of Theorem \ref{thm:criterion} holds for this choice of~$B$. 

\begin{definition}[Choice of $B$]
We define $B$ to be the set of elements of $G$ that fix pointwise some $I \in \mathcal{I}$.
\end{definition}

\begin{lemma}[Property $(2)$]\label{lem:property2}
For every $b, b' \in B$ there exist $g, h \in G$ such that $b$ commutes with $b^g$, which in turn commutes with $b^h$, which in turn commutes with $b'$.
\end{lemma}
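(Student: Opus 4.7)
The strategy rests on the observation that two homeomorphisms of $X$ with disjoint supports automatically commute, so it suffices to find $g, h \in G$ such that the supports of $b, b^g, b^h, b'$ form a chain in which consecutive terms are disjoint.

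First, condition $(\mathbf{C})$ lets us enclose supports of elements of $B$ inside sets of $\mathcal{I}$: picking $I_b \in \mathcal{I}$ fixed pointwise by $b$, $(\mathbf{C})$ provides $J_b \in \mathcal{I}$ with $\operatorname{supp}(b) \subseteq I_b^c \subseteq J_b$, and similarly we get $J_{b'} \in \mathcal{I}$ containing $\operatorname{supp}(b')$. The problem then reduces to finding $L_1, L_3 \in \mathcal{I}$ with
\[L_1 \cap J_b = \emptyset, \qquad L_3 \cap J_{b'} = \emptyset, \qquad L_1 \cap L_3 = \emptyset,\]
because then the transitivity of $G \acts \mathcal{I}$ recorded in Remark \ref{rem:transitivity} (a direct consequence of $(2 \mathbf{T})$, using $\mathcal{I}^{(3)} \neq \emptyset$ from $(3 \mathbf{T})$ to supply the required pairs) yields $g, h \in G$ with $g J_b = L_1$ and $h J_b = L_3$. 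With these choices $\operatorname{supp}(b^g) = g \cdot \operatorname{supp}(b) \subseteq L_1$ and $\operatorname{supp}(b^h) \subseteq L_3$, so consecutive terms in the chain $b, b^g, b^h, b'$ have pairwise disjoint supports and hence commute.

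To produce $L_1$ and $L_3$, I would split according to whether $\overline{J_b} \cup \overline{J_{b'}} = X$. If equality holds, then $X \setminus \overline{J_b}$ and $X \setminus \overline{J_{b'}}$ are already disjoint non-empty open sets (by non-denseness of $J_b, J_{b'}$), and since $\mathcal{I}$ is a basis we pick $L_1, L_3 \in \mathcal{I}$ inside them respectively. Otherwise $U := X \setminus (\overline{J_b} \cup \overline{J_{b'}})$ is a non-empty open set disjoint from both $J_b$ and $J_{b'}$; using that $X$ is Hausdorff and $\mathcal{I}$ is a basis, we separate two points of $U$ by disjoint basis elements $L_1, L_3 \subseteq U$.

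The main technical obstacle lies in the last step, in the degenerate sub-case where $U$ reduces to a single isolated point $\{x\}$. Then $\{x\}$ is forced to lie in $\mathcal{I}$, and the transitivity of $G \acts \mathcal{I}$ together with the fact that homeomorphisms send isolated points to isolated points forces every element of $\mathcal{I}$ to be a singleton. This makes $X$ discrete hence finite by compactness, in which case $G$ itself is finite; the conclusion $\ngt$ of Theorem \ref{thm:cH:criterion} is automatic for such $G$, so this corner case does not affect the main line of argument.
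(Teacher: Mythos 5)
Your proof is correct and follows essentially the same strategy as the paper: use $(\mathbf{C})$ to enclose supports in elements of $\mathcal{I}$, then use the transitivity from $(2\mathbf{T})$ to move conjugates of $b$ into small basis sets arranged so that consecutive terms in $b, b^g, b^h, b'$ have pairwise disjoint supports, hence commute. The minor differences are cosmetic: the paper applies $(\mathbf{C})$ only to $b$, lands $\operatorname{supp}(b^g)$ \emph{inside} the fixed set $I$ of $b$ rather than outside an enlarged support, and avoids your explicit case split by an informal appeal to the basis property; your corner case is already ruled out by the standing reduction to infinite $X$ made at the start of the proof of Theorem \ref{thm:cH:criterion}.
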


\begin{proof}
Let $I \in \mathcal{I}$ be fixed by $b$ and let $J \in \mathcal{I}$ be fixed by $b'$. By ($\mathbf{C}$), there exists $I' \in \mathcal{I}$ such that $I^c \subset I'$. Let $K \in \mathcal{I}$ be a subset of $I$, and let $g$ be an element such that $g I' = K$, which exists by $(2 \mathbf{T})$ (Remark \ref{rem:transitivity}). Using that $\mathcal{I}$ is a basis, we may choose $K$ in such a way that $K \cup J^c$ is not dense. Since $b$ is supported on $I'$, the conjugate $b^g$ is supported on $g I' \subset I$, which is fixed pointwise by $b$, so $b$ and $b^g$ commute.

Next, let $L \in \mathcal{I}$ be disjoint from both $K$ and $J^c$, which is possible since their union is not dense. Using $(2\mathbf{T})$ again, we find $h \in G$ such that $hI' = L$. Then by the same argument $b^h$ commutes with both $b^g$ and $b'$, since each of these pairs have disjoint support.
\end{proof}

For properties $(1)$ and $(3)$ we need to define the set $A$. For this, we first specify a particular subcollection $\mathcal{A}$ of the basis $\mathcal{I}$, as in the following lemma.

\begin{lemma}[Choice of $\mathcal{A}$]\label{choiceofma}
There exists a \emph{finite} subcover $\mathcal{A} \subset \mathcal{I}$ such that for every $I, J \in \mathcal{A}$ there exists $K \in \mathcal{A}$ such that $\overline{K}$ disjoint from $\overline{I} \cup \overline{J}$.
\end{lemma}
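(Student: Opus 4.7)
The plan is to use the infiniteness of $X$ and the normality of the compact Hausdorff space to pick three ``anchor'' sets $V_1, V_2, V_3 \in \mathcal{I}$ with pairwise disjoint closures, and then to build a finite subcover of $X$ by elements of $\mathcal{I}$ each of which has closure disjoint from at least two of the three anchors. A pigeonhole argument will then furnish, for any pair $I, J \in \mathcal{A}$, an anchor $V_i \in \mathcal{A}$ that works as the required $K$.

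First I would pick three distinct points $x_1, x_2, x_3 \in X$, which exist since the proof has already reduced to $X$ infinite. By normality of the compact Hausdorff space $X$, I can find open sets $W_1, W_2, W_3$ with $x_i \in W_i$ and pairwise disjoint closures $\overline{W_1}, \overline{W_2}, \overline{W_3}$. Since $\mathcal{I}$ is a basis, for each $i$ there exists $V_i \in \mathcal{I}$ with $x_i \in V_i \subset W_i$, and the corresponding closures $\overline{V_i} \subset \overline{W_i}$ remain pairwise disjoint.

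Next, for every $y \in X$, the point $y$ belongs to at most one of $\overline{V_1}, \overline{V_2}, \overline{V_3}$; choose indices $i_1 \neq i_2$ with $y \notin \overline{V_{i_1}} \cup \overline{V_{i_2}}$. Since $X \setminus (\overline{V_{i_1}} \cup \overline{V_{i_2}})$ is an open neighbourhood of $y$, regularity of $X$ combined with the basis property yields $I_y \in \mathcal{I}$ with $y \in I_y$ and $\overline{I_y}$ disjoint from $\overline{V_{i_1}} \cup \overline{V_{i_2}}$. By compactness, finitely many of these, say $I_{y_1}, \ldots, I_{y_n}$, already cover $X$, and I would set $\mathcal{A} := \{V_1, V_2, V_3, I_{y_1}, \ldots, I_{y_n}\}$, which is a finite subcover of $X$ consisting of elements of $\mathcal{I}$.

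By construction, every element of $\mathcal{A}$ has closure disjoint from at least two of $\overline{V_1}, \overline{V_2}, \overline{V_3}$: this is immediate for the $V_k$'s since the $\overline{V_i}$'s are pairwise disjoint, and is guaranteed for each $I_{y_k}$ by the choice made above. For any pair $I, J \in \mathcal{A}$, each intersects the closure of at most one of the three anchors, so by pigeonhole there is some $i \in \{1, 2, 3\}$ with $\overline{V_i}$ disjoint from both $\overline{I}$ and $\overline{J}$; taking $K = V_i$ completes the proof. The main technical subtlety is staying inside the basis $\mathcal{I}$ at every step rather than working with arbitrary open sets, but this is handled by repeatedly invoking the basis property together with regularity of compact Hausdorff spaces; I note that none of the dynamical hypotheses $(\mathbf{C})$, $(2\mathbf{T})$, $(3\mathbf{T})$, $(\mathbf{L})$ are needed for this lemma.
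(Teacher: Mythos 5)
Your proof is correct and follows essentially the same approach as the paper's: pick three points, use normality and the basis property to obtain three sets in $\mathcal{I}$ with pairwise disjoint closures, cover $X$ by elements of $\mathcal{I}$ whose closures meet at most one of the three, extract a finite subcover by compactness, adjoin the three anchor sets, and conclude by pigeonhole. The only difference is that you spell out in more detail (via regularity) why the local covering sets with closures missing two anchors exist, which the paper leaves implicit.
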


\begin{proof}
Since $X$ is infinite, there exist three distinct points $x, y, z \in X$. Since $X$ is compact and Hausdorff, it is normal, and since $\mathcal{I}$ is a basis there exist $U, V, W \in \mathcal{I}$ containing $x, y, z$ respectively, whose closures are pairwise disjoint. Now cover $X$ by elements $I \in \mathcal{I}$ whose closures intersect at most one of $\overline{U}, \overline{V}, \overline{W}$. By compactness, let $\mathcal{A}$ be a finite subcover of such elements, to which we add the three sets $U, V, W$. By construction $\mathcal{A} \subset \mathcal{I}$, and $\mathcal{A}$ is a finite cover of $X$. Finally, let $I, J \in \mathcal{A}$. Since each intersects at most one of $\overline{U}, \overline{V}, \overline{W}$, without loss of generality we may assume that they are both disjoint from $\overline{U}$. Then $U \in \mathcal{A}$ is such that $\overline{U}$ is disjoint from both $\overline{I}$ and $\overline{J}$.
\end{proof}

\begin{definition}[Choice of $A$]
We fix a choice of $\mathcal{A} \subset \mathcal{I}$ as in Lemma \ref{choiceofma}, and define $A$ to be the set of elements in $G$ that fix pointwise some $I \in \mathcal{A}$.
\end{definition}

Note that it follows from the definition that $A \subset B$. We now show that Property $(1)$ of Theorem \ref{thm:criterion} holds for this choice of $A$.

\begin{lemma}[Property $(1)$]
$G$ is boundedly generated by $A$. In fact, every element of $G$ may be written as a product of at most $3$ elements of $A$.
\end{lemma}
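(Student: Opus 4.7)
The plan is to show that for every $g \in G$ there exist $a_1, a_2 \in A$ such that $a_1 a_2 g \in A$. Since $A$ is closed under inversion—an element fixes $I \in \mathcal{A}$ pointwise if and only if its inverse does—this immediately yields $g = a_2^{-1} a_1^{-1} (a_1 a_2 g)$ as a product of three elements of $A$, as desired.

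To construct such $a_1, a_2$, I would fix a set $K \in \mathcal{A}$ and aim for $a_1 a_2 g$ to pointwise fix $K$. This is equivalent to requiring that the composition $a_1 a_2$ realize the transport map $g^{-1}|_{g(K)} \colon g(K) \to K$. The plan is to factor this transport through an intermediate set $L \in \mathcal{I}$, chosen so that $\overline{L}$ is disjoint from $\overline{K}$, $\overline{g(K)}$, and from suitably chosen $I, J \in \mathcal{A}$ with pairwise disjoint closures; such a configuration can be arranged by iterating the defining property of $\mathcal{A}$ from Lemma~\ref{choiceofma} and applying property $(\mathbf{C})$ to enlarge certain complements as needed. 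Then $a_2$ is built to pointwise fix $J$ while routing $g(K)$ onto $L$, and $a_1$ to pointwise fix $I$ while completing the transport from $L$ back onto $K$.

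The main obstacle is that property $(\mathbf{L})$ only constructs elements of $A$ that \emph{setwise}-stabilize the sets on which their action is prescribed, so it cannot by itself produce an element moving one $\mathcal{I}$-set onto a genuinely different one. To bridge this, I would combine $(\mathbf{L})$ with double transitivity $(2\mathbf{T})$: first use $(2\mathbf{T})$ to find an auxiliary element of $G$ that moves $g(K)$ onto $L$, and then apply $(\mathbf{L})$ to refine this element so that it also fixes $J$ pointwise, producing $a_2 \in A$; the construction of $a_1$ is symmetric. The most delicate technical point will be verifying throughout that all sets in play can be chosen compatibly inside $\mathcal{I}$ and $\mathcal{A}$ with the correct disjointness and non-density conditions required to invoke $(2\mathbf{T})$ and $(\mathbf{L})$—here the specific form of the cover $\mathcal{A}$, which contains three distinguished sets $U, V, W$ with pairwise disjoint closures providing ``free'' landing room for any pair in $\mathcal{A}$, is precisely what makes the construction go through.
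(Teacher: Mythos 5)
There is a genuine gap in your construction. The crux of your plan is the claim that $a_1 a_2$ ``realizes the transport map $g^{-1}|_{g(K)}$,'' but neither ($2\mathbf{T}$) nor ($\mathbf{L}$) gives you this. Property ($2\mathbf{T}$) only produces \emph{some} element of $G$ sending one pair of sets to another pair; it says nothing about which homeomorphism the moving set undergoes. And, as you yourself observe, ($\mathbf{L}$) only lets you prescribe the restriction of a \emph{setwise-stabilizing} element. So after your two-step construction, $a_2$ carries $g(K)$ to $L$ by \emph{some} homeomorphism you cannot control, and $a_1$ carries $L$ to $K$ by some other uncontrollable homeomorphism. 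The composite $a_1 a_2 g$ therefore stabilizes $K$ \emph{setwise}, but there is no reason for it to fix $K$ pointwise, and hence no reason for $a_1 a_2 g$ to lie in $A$. ``Combining ($\mathbf{L}$) with ($2\mathbf{T}$)'' does not bridge this: it allows you to move $g(K)$ to $L$ while fixing $J$ pointwise, but the issue was never where $g(K)$ lands -- it is \emph{how} it lands there.

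To close the gap you would need one further application of ($\mathbf{L}$): once you know $a_1 a_2 g$ setwise-stabilizes $K$, use ($\mathbf{L}$) to produce $d \in G$ with $d|_K = (a_1 a_2 g)|_K$ and $d$ fixing some $\mathcal{A}$-set pointwise; then $d^{-1} a_1 a_2 g$ genuinely fixes $K$ pointwise. Naively this writes $g$ as a product of four elements of $A$, and to get down to three you must ensure two of the factors fix a \emph{common} $\mathcal{A}$-set so they merge into a single element of $A$. This is precisely the trick the paper's proof uses: the two auxiliary elements $c$ and $d$ there both fix $K$ pointwise, so $e := d^{-1}c$ is a single element of $A$, and the decomposition $g = e^{-1}\,(ega)\,a^{-1}$ has exactly three $A$-factors. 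Your setup can probably be repaired along the same lines (choose $d$ to fix the same $I \in \mathcal{A}$ that $a_1$ fixes, so $a_1^{-1}d \in A$), but this is the step your proposal is missing.

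A secondary concern: you fix $K \in \mathcal{A}$ in advance and then must arrange $L, I, J$ to be disjoint from both $K$ and $g(K)$, with the whole union non-dense. Since $g$ is arbitrary and $\mathcal{A}$ is a fixed finite cover, $g(K)$ is not controllable -- it is merely some element of $\mathcal{I}$ -- and $\overline{K} \cup \overline{g(K)}$ could well be dense (or even all of $X$). The paper sidesteps this by going in the opposite direction: fix $x$ and $y = gx$ lying in $I, J \in \mathcal{A}$ respectively, then shrink to $I' \subset I$ with $x \in I'$ and $gI' = J' \subset J$, so that the only sets it needs to control are small subsets of two members of $\mathcal{A}$ plus a third member of $\mathcal{A}$ disjoint from them -- all of which exists by the defining property of $\mathcal{A}$. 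Your version, which applies $g$ to a fixed $\mathcal{A}$-set rather than shrinking inside an $\mathcal{A}$-set, does not have this safety net.
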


\begin{proof}
Let $g \in G$ and let $x, y \in X$ be such that $gx = y$. Suppose that $x \in I \in \mathcal{A}$ and $y \in J \in \mathcal{A}$ (where $I$ could be equal to $J$). Let $I' \in \mathcal{I}$ be small enough so that $x \in I' \subset I$ and $g I' = J' \subset J$; notice that $J' \in \mathcal{I}$ since $G$ preserves $\mathcal{I}$. Let $K \in \mathcal{A}$ be such that $\overline{K}$ is disjoint from $\overline{I} \cup \overline{J}$: this exists by the choice of $\mathcal{A}$ in Lemma \ref{choiceofma}. In particular neither $J \cup K$ nor $I' \cup K$ is dense, so we can apply ($2 \mathbf{T}$) to obtain $a_0 \in G$ such that $(a_0J, a_0K) = (I', K)$. By ($\mathbf{L}$) applied to $a_0$ and the identity acting on $J$ and $K$ respectively (see Remark \ref{rem:transitivity}), there exists an element $b \in G$ such that $b|_K = a_0|_K$ and $b|_J = id|_J$. Then the element $a := a_0b^{-1}$ satisfies $a|_K = id|_K$ and $aJ = I'$, still. In particular, this implies $a \in A$.

Now we have $ga J = J'$. Applying ($2 \mathbf{T}$) to the pair $(J', K), (J, K)$, there exists an element $c_0 \in G$ such that $(c_0J',c_0K) = (J,K)$. Using ($\mathbf{L}$) as before, we may modify $c_0$ to obtain an element $c$ such that $c J' = J$ and $c|_K = id|_K$, in particular $c \in A$. Now we have $cga J = J$, therefore using ($\mathbf{L}$) one more time, we obtain $d \in G$ such that $d|_K = id|_K$ and $d|_J = cga|_J$. In particular, $d \in A$. Thus $e := d^{-1}c \in A$, and moreover $ega|_J = id|_J$ so $ega \in A$. We conclude by observing that $g = e^{-1} . ega . a^{-1} \in A^3$.
\end{proof}

Lastly, we prove that Property $(3)$ of Theorem \ref{thm:criterion} holds for our choice of $A, B$, which will conclude the proof of Theorem \ref{thm:cH:criterion}.

\begin{lemma}[Property $(3)$]
There exists $r \geq 1$ with the following property: For every $g, h \in G$ there exist $b, b_1, \ldots, b_r \in B$ such that for every $a \in A$ there exists $f \in \langle b_1 \rangle \cdots \langle b_r \rangle$ such that each of $a^f b, a^f bg, a^f bh$ belongs to $B$.
\end{lemma}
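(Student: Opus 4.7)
The plan is to construct specific elements $b, b_1, \ldots, b_r \in B$ from the dynamical hypotheses on $(g,h)$, and then, for each $a \in A$, to exhibit a product of powers of the $b_i$'s that conjugates $a$ into a favourable position. To construct $b$, I would first apply $(3\mathbf{T})$ to the pair $(g,h)$ to obtain $(M, N, P) \in \mathcal{I}^{(3)}$ and $k \in G$ with $kM = gM$, $kN = hN$, $kP = P$. Observing that $k^{-1}g$, $k^{-1}h$, $k^{-1}$ preserve $M$, $N$, $P$ setwise respectively, $(\mathbf{L})$ produces $m \in G$ with $m|_M = k^{-1}g|_M$, $m|_N = k^{-1}h|_N$, $m|_P = k^{-1}|_P$. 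Then $k' := km$ satisfies $k'|_M = g|_M$, $k'|_N = h|_N$, $k'|_P = \mathrm{id}|_P$, and $b := (k')^{-1}$ fixes $P$ pointwise (so $b \in B$), $bg$ fixes $M$ pointwise (since $(k')^{-1}|_{gM} = g^{-1}|_{gM}$), and $bh$ fixes $N$ pointwise.

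To construct the $b_i$'s, set $T := M \cup N \cup P$, which is non-dense by the definition of $\mathcal{I}^{(3)}$. Choose $I \in \mathcal{I}$ with $I \subset T^c$ (using that $\mathcal{I}$ is a basis of the non-empty open set $T^c$), then apply $(\mathbf{C})$ to obtain $J \in \mathcal{I}$ with $I^c \subset J$, in particular $T \subset J$. Enumerate $\mathcal{A} = \{A_1, \ldots, A_r\}$ and, for each $i$, pick $L_i \in \mathcal{I}$ with $(A_i, L_i), (J, L_i) \in \mathcal{I}^{(2)}$, apply $(2\mathbf{T})$ to produce $c_i \in G$ with $c_i(A_i, L_i) = (J, L_i)$, and then use $(\mathbf{L})$ applied to $(J, L_i, K_i)$ (for any $K_i \in \mathcal{I}$ disjoint from $J \cup L_i$) with local data $(\mathrm{id}|_J, c_i^{-1}|_{L_i}, \mathrm{id}|_{K_i})$ to produce $d_i \in G$. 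Setting $b_i := d_i c_i$, a direct check shows $b_i|_{A_i} = c_i|_{A_i}$ (so $b_i A_i = J \supset T$) and $b_i|_{L_i} = \mathrm{id}|_{L_i}$ (so $b_i \in B$).

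To conclude, set $r := |\mathcal{A}|$. For every $a \in A$, pick $A_i \in \mathcal{A}$ with $a|_{A_i} = \mathrm{id}|_{A_i}$ and let $f := b_i \in \langle b_i \rangle \subset \langle b_1 \rangle \cdots \langle b_r \rangle$. Then $a^f = f a f^{-1}$ fixes $fA_i = J \supset T$ pointwise, and in particular fixes each of $P$, $M$, $N$ pointwise. Combining with the fixed sets for $b$, $bg$, $bh$ constructed in the first step, $a^f b$ fixes $P$ pointwise, $a^f bg$ fixes $M$ pointwise, and $a^f bh$ fixes $N$ pointwise, so all three elements lie in $B$.

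The main obstacle will be the second paragraph, specifically arranging that $L_i \in \mathcal{I}$ can be chosen disjoint from both $A_i$ and $J$ so that the pairs $(A_i, L_i), (J, L_i)$ lie in $\mathcal{I}^{(2)}$ and $(2\mathbf{T})$ applies. This requires taking $J$ small enough around $T$, exploiting the non-density of $T$, and invoking the structure of $\mathcal{A}$ from Lemma~\ref{choiceofma} to control the combinatorics of overlaps across the finitely many $A_i$.
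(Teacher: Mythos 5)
Your first paragraph is essentially the same idea as the paper's construction of $b$, just with the roles of "sending $(gM, hN, P)$ to $(M, N, P)$" vs.\ "sending $(M, N, P)$ to $(gM, hN, P)$" swapped. One small inaccuracy: you apply $(\mathbf{L})$ to prescribe $m$ simultaneously on all three of $M, N, P$, but $(\mathbf{L})$ as stated only allows two non-trivial prescriptions plus the identity on the third. This is easily repaired by composing two applications of $(\mathbf{L})$ (as the paper does), so it is a sloppiness rather than a genuine error.

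The second paragraph, however, departs from the paper in a way that creates a real gap. The paper does not try to make the conjugator $f$ itself lie in $B$: it takes an arbitrary $f \in G$ with $fI \supset Y^c \supset M \cup N \cup P$, and then invokes the already-proved Property $(1)$ to write each of the (finitely many) $f = f(g,h,I)$ as a product of at most $3$ elements of $A \subset B$. That is why $r = 3|\mathcal{A}|$ works and no extra disjointness constraint is needed. You instead insist on producing $b_i \in B$ with $b_i A_i = J$, which forces you to find $L_i \in \mathcal{I}$ with both $(A_i, L_i)$ and $(J, L_i)$ in $\mathcal{I}^{(2)}$. That requires $\overline{L_i}$ to avoid $\overline{A_i} \cup \overline{J}$, which is impossible whenever, say, $A_i \cup J$ is dense. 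Nothing in the hypotheses rules this out: $\mathcal{A}$ is fixed once and for all at the start of the proof (before $g, h$, hence before $T$ and $J$), so some $A_i$ may well, together with $J$, cover $X$ up to closure. You flag this as the ``main obstacle'' but the ingredients you invoke (shrinking $J$, the properties of $\mathcal{A}$ from Lemma~\ref{choiceofma}) do not obviously close it, since $J$ must contain $T$ and $T$ is dictated by $(g,h)$ via $(3\mathbf{T})$. Unless you can show such $L_i$ always exist, the cleanest fix is to revert to the paper's mechanism: keep $f$ an arbitrary element of $G$ with $f A_i \supset T$, and use Property $(1)$ to express it as a bounded product of elements of $A \subset B$.
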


\begin{proof}
Let $g,h \in G$. By ($3 \mathbf{T}$), there exist $(M, N, P) \in \mathcal{I}^{(3)}$ and $b_0 \in G$ such that $b_0 g M = M, b_0 h N = N$ and $b_0P =P$. Now we use ($\mathbf{L}$) twice to strengthen this. Applying ($\mathbf{L}$) to the actions of $b_0$ and $b_0 g$ on $P$ and $M$ respectively, gives an element $c_0 \in G$ such that $c_0|_P = b_0|_P, c_0|_M = b_0g|_M, c_0|_N = id|_N$. Again ($\mathbf{L}$) applied to the actions $b_0 h$ and the identity on $N$ and $M$ respectively, gives an element $c_1 \in G$ such that $c_1|_N = b_0 h|_N, c_1|_M = id|_M$, and $c_1|_P = id|_P$. Setting $b := c_1^{-1}c_0^{-1}b_0$, we obtain that $b|_P = id|_P, bg|_M = id|_M$ and $bh|_N = id|_N$.

Let $I \in \mathcal{A}$ be fixed. Let $Y \in \mathcal{I}$ be such that $\overline{Y}$ is disjoint from $M, N, P$: since their union is not dense in $X$ by definition of $\mathcal{I}^{(3)}$, such a $Y$ exists by the fact that $\mathcal{I}$ is a basis and that $X$ is a normal space. By ($\mathbf{C}$) there exists $J \in \mathcal{I}$ such that $Y^c \subset J$, which is equivalent to $J^c \subset Y$. By ($2\mathbf{T}$), there exists $f \in G$ such that $f I = J$ (see Remark \ref{rem:transitivity}), which yields $f(I^c) = (fI)^c = J^c \subset Y$. Notice that $f$ is chosen in terms of $I$ and $J$, $J$ is chosen in terms of $Y$, and that $Y$ is chosen in terms of $(M, N, P)$, which in turn are determined by $g$ and $h$. We denote the dependence as $f = f(g, h, I)$. Then for every element $a \in G$ fixing $I$ pointwise, the element $a^f$ fixes pointwise $M, N, P \subset Y^c \subset f I$. It follows that $a^fb$ is the identity on $P$, $a^f bg$ is the identity on $M$, and $a^f bh$ is the identity on $N$; in particular they are all in $B$.

Now the set of $f = f(g, h, I)$ that realize Property $(3)$ for the pair $(g, h)$ may be chosen to be finite, since $I$ varies in the finite set $\mathcal{A}$. Moreover, each element of $G$ may be written as a product of at most $3$ elements in $\mathcal{A}$, by Property $1$. It follows that there exist $3 |\mathcal{A}|$ elements in $A$ such that each $f = f(g, h, I)$ belongs to the product of the corresponding cyclic groups. Since $A \subset B$, and this bound is uniform, we obtain the desired conclusion.
\end{proof}

\subsection{Groups acting on Cantor sets}
\label{s:cantor}

The first application of the criterion from the last section is for group actions on Cantor sets. We start with the following definition. 

\begin{definition}
Let $G$ be a group acting on a Cantor set $X$. The \emph{topological full group of $G$}, denoted by $[[G]]$, is the group of all homeomorphisms $f$ with the following property: for every $x \in X$ there exists an open neighbourhood $U$ of $x$ and an element $g \in G$ such that $f|_U = g|_U$. Note that $G \leq [[G]]$.

We say that $G$ is a \emph{topological full group} if $G = [[G]]$. It follows from the definitions that for every $G$, the group $[[G]]$ is a topological full group, i.e. $[[\,[[ G ]]\,]] = [[G]]$.
\end{definition}

\begin{remark}
\label{rem:full}

The above definition can be equivalently given in terms of a basis. More precisely, suppose that $\mathcal{I}$ is a basis of $X$. Then $[[G]]$ is equal to the group of all homeomorphisms $f$ with the following property: for every $x \in X$, there exists $x \in U \in \mathcal{I}$ and an element $g \in G$ such that $f|_U = g|_U$.
\end{remark}

\begin{corollary}
\label{cor:cantor:criterion:firststep}
Let $G$ be a group acting faithfully on the Cantor set $X$. Suppose that there exists a basis $\mathcal{I}$ of proper non-empty clopen subsets of $X$ that is closed under taking complements, and is such that $G$ acts transitively on $\mathcal{I}^{(3)}$. Suppose moreover that $G$ is a topological full group. Then $G$ has Property $\ngt$.
\end{corollary}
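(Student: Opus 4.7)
The plan is to deduce the corollary from Theorem~\ref{thm:cH:criterion} by verifying, for the given basis $\mathcal{I}$, the four hypotheses $(\mathbf{C})$, $(2\mathbf{T})$, $(3\mathbf{T})$, and $(\mathbf{L})$. Hypothesis $(\mathbf{C})$ is immediate since $\mathcal{I}$ is closed under complements: for any $I \in \mathcal{I}$ take $J = I^c$.

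For $(2\mathbf{T})$, I will upgrade the assumed transitivity on $\mathcal{I}^{(3)}$ to the pair-level statement by completing pairs to triples. Given $(I, J) \in \mathcal{I}^{(2)}$, the set $(I \cup J)^c$ is a non-empty clopen subset of the Cantor set $X$; because $\mathcal{I}$ is a basis of clopen sets, one can choose $K \in \mathcal{I}$ that is a proper subset of $(I \cup J)^c$, yielding $(I,J,K) \in \mathcal{I}^{(3)}$. Completing a second pair $(I', J')$ the same way and applying triple transitivity produces the required element. This simultaneously confirms that $G$ preserves $\mathcal{I}$.

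To establish $(3\mathbf{T})$, I observe that by transitivity on $\mathcal{I}^{(3)}$ it suffices to exhibit a single triple $(M,N,P) \in \mathcal{I}^{(3)}$ with $(gM, hN, P) \in \mathcal{I}^{(3)}$ as well. My strategy is to first pick six distinct points $x_M, x_N, x_P, g x_M, h x_N, y$ in $X$ — possible since a Cantor set is infinite and only finitely many equalities must be avoided — and then shrink $\mathcal{I}$-neighbourhoods of $x_M, x_N, x_P$ using Hausdorffness and the continuity of $g$ and $h$ so that the five clopen sets $M, N, P, gM, hN$ become pairwise disjoint while all missing $y$. The extra point $y$ guarantees that neither $M \cup N \cup P$ nor $gM \cup hN \cup P$ is dense.

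The main verification is $(\mathbf{L})$, and this is exactly where the topological full group hypothesis is indispensable. Given $(I, J, K) \in \mathcal{I}^{(3)}$ with $g I = I$ and $hJ = J$, I will define $b$ piecewise on the clopen partition $X = I \sqcup J \sqcup K \sqcup L$, where $L := X \setminus (I \cup J \cup K)$, by setting $b|_I := g|_I$, $b|_J := h|_J$, and $b|_K := b|_L := \op{id}$. The invariance conditions $gI = I$ and $hJ = J$ ensure that $b$ is a self-homeomorphism of $X$, and by construction $b$ agrees on a clopen neighbourhood of each point of $X$ with an element of $G$ (one of $g, h, \op{id}$), so $b \in [[G]] = G$. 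With all four hypotheses verified, Theorem~\ref{thm:cH:criterion} applies and $G$ has Property $\ngt$.
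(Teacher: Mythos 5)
Your proposal follows the same structure as the paper's proof: apply Theorem~\ref{thm:cH:criterion} by verifying the four dynamical hypotheses, with the topological full group assumption doing the work for $(\mathbf{L})$. Your arguments for $(\mathbf{C})$, $(2\mathbf{T})$, and $(\mathbf{L})$ are correct, and the $(\mathbf{L})$ construction matches the paper's exactly.

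Your $(3\mathbf{T})$ verification, however, contains a flaw. You propose to ``pick six distinct points $x_M, x_N, x_P, g x_M, h x_N, y$,'' but this is impossible whenever $g$ or $h$ fixes the corresponding point; most obviously, if $g = \mathrm{id}$ then $g x_M = x_M$ regardless of the choice of $x_M$, so at most five of your six points can be distinct. The underlying issue is that you are over-requiring: condition $(3\mathbf{T})$ only asks that the two tuples $(M, N, P)$ and $(gM, hN, P)$ \emph{separately} lie in $\mathcal{I}^{(3)}$. It never requires $M$ disjoint from $gM$, nor $N$ disjoint from $hN$ (which is indeed unattainable when $g = \mathrm{id}$ or $h = \mathrm{id}$). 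The repair is short: choose $x_M, x_N, x_P, y$ pairwise distinct and also $g x_M, h x_N, x_P, y$ pairwise distinct. Each condition rules out only finitely many choices, so both are achievable in the infinite Cantor set, and then shrinking $\mathcal{I}$-neighbourhoods of $x_M, x_N, x_P$ produces the two required triples in $\mathcal{I}^{(3)}$. With that correction your argument is complete. (A minor side remark: the claim that the $(2\mathbf{T})$ argument ``simultaneously confirms that $G$ preserves $\mathcal{I}$'' is not quite right, since preservation of $\mathcal{I}$ is already implicit in the hypothesis that $G$ acts on $\mathcal{I}^{(3)}$ and is not a by-product of completing pairs to triples. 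This does not affect the proof.)
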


\begin{proof}
We apply Theorem \ref{thm:cH:criterion} for the action of $G$ on $X$ with the basis $\mathcal{I}$. By assumption, $\mathcal{I}$ satisfies ($\mathbf{C}$), and the action of $G$ satisfies (2$\mathbf{T}$) and ($3 \mathbf{T}$). We are left to prove ($\mathbf{L}$). Let $(I, J, K) \in \mathcal{I}^{(3)}$ and let $g, h \in G$ be such that $g I = I$ and $h J = J$. Define $b : X \to X$ by $b|_I = g|_I, b|_J = h|_J$ and $b|_{X \setminus I \cup J} = id|_{X \setminus I \cup J}$. Since $G$ is a topological full group and $I, J$ are clopen, we obtain $b \in G$.
\end{proof}

\begin{corollary}
\label{cor:cantor:criterion}

Keep the assumptions from Corollary \ref{cor:cantor:criterion:firststep}. Suppose moreover that $G$ has finite abelianization, and that it acts transitively on $\mathcal{I}^{(4)}$. Then $G$ is hereditary $\nl$.
\end{corollary}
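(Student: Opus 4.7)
The plan is to upgrade the property $\ngt$ already provided by Corollary \ref{cor:cantor:criterion:firststep} to hereditary $\nl$ by first showing that the commutator subgroup $[G,G]$ is itself hereditary $\nl$, and then propagating to $G$ via the finite abelianization hypothesis. Since $G/[G,G]$ is finite, $[G,G]$ is a finite-index normal subgroup of $G$, so control over $[G,G]$ will be enough to control every finite-index subgroup of $G$.

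The first substep is to prove that $[G,G]$ is uniformly simple. Fragmentation via the topological full group structure, combined with $(2\mathbf{T})$, expresses any element of $G$ as a uniformly bounded product of elements each supported on a single set of $\mathcal{I}$. Transitivity on $\mathcal{I}^{(4)}$ then provides, for each $I \in \mathcal{I}$, a $J \in \mathcal{I}$ disjoint from $I$ together with a conjugator $f \in G$ whose successive images $f^n I$ are pairwise disjoint inside $J$, enabling the standard Anderson-style telescoping trick $g = [h,f]$ with $h = \prod_{n \geq 0} f^n g f^{-n}$ (well-defined by disjointness of supports) to write every element $g$ supported on $I$ as a single commutator, and analogously as a uniformly bounded product of conjugates of any fixed non-trivial element. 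Combining these uniform bounds gives uniform simplicity of $[G,G]$; in particular, $[G,G]$ is infinite and simple.

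The second substep is to verify that the restricted action of $[G,G]$ on $X$ still satisfies the hypotheses of Corollary \ref{cor:cantor:criterion:firststep}, so that $[G,G]$ inherits $\ngt$. Faithfulness, $(\mathbf{C})$, and the topological full group property restrict cleanly. For $(3\mathbf{T})$, given two triples in $\mathcal{I}^{(3)}$, adjoin a common disjoint fourth set $Q$ and use transitivity on $\mathcal{I}^{(4)}$ to obtain a transporter $g \in G$; then multiply $g$ by an element supported in the complement of $M \cup N \cup P \cup Q$ so that the product lies in $[G,G]$. This is possible because the subgroup of elements of $G$ supported in that complement surjects onto the finite group $G/[G,G]$: every element of $G$ fragments into small-support pieces, each of which is conjugate in $G$ (via $(2\mathbf{T})$) to one supported in the desired complement, and conjugate elements have the same image in the abelian group $G/[G,G]$. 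A parallel correction yields $(\mathbf{L})$ for $[G,G]$, so Corollary \ref{cor:cantor:criterion:firststep} applies to $[G,G]$ and delivers $\ngt$.

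With uniform simplicity and $\ngt$ in hand, Corollary \ref{cor:nl:uperfect} gives that $[G,G]$ is hereditary $\nl$, and Lemma \ref{lem:fi:subgroup} then implies $\nl$ for $G$. For the hereditary conclusion, any finite-index $H \leq G$ intersects the infinite simple group $[G,G]$ in a finite-index subgroup, hence in all of $[G,G]$, so $[G,G] \leq H$ with finite index, and a further application of Lemma \ref{lem:fi:subgroup} gives $\nl$ for $H$. The main obstacle is the quantitative commutator argument for uniform simplicity of $[G,G]$: the fragmentation length, the construction of the Anderson conjugator $f$, and the conjugation length need to be bounded by a uniform constant. This is precisely where transitivity on $\mathcal{I}^{(4)}$, rather than just on $\mathcal{I}^{(3)}$, is indispensable, as the disjoint fourth slot provides the room required for the telescoping to close up as a single commutator.
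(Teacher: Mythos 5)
Your high-level strategy — work with the commutator subgroup $G'=[G,G]$, show it inherits the hypotheses of the dynamical criterion and is uniformly simple, then pass back to $G$ via the finite abelianization — is exactly the structure of the paper's proof. Two technical steps differ, and one of them contains a real gap.

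The gap is in your uniform simplicity argument via Anderson's telescoping trick. You construct $f$ with $f^nI$ pairwise disjoint and set $h=\prod_{n\geq0}f^ngf^{-n}$. Even granting that such an $f$ can be found in $G$ (which already needs more than transitivity on $\mathcal{I}^{(4)}$; it requires an element contracting some $J\supset I$ into itself), the element $h$ is only obviously a homeomorphism of $X$ — it is not obviously an element of $G$. At the accumulation point $p$ of the shrinking sets $f^nI$, every neighborhood of $p$ meets infinitely many of the $f^nI$, so there is no open set around $p$ on which $h$ a priori agrees with a single element of $G$; the topological-full-group definition does not close $G$ under such infinite, only-locally-finite compositions. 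This is precisely why the paper does not run the telescoping argument itself but instead verifies that the action is extremely proximal (for every pair of proper nonempty clopen $U,V$ there is $f\in G$ with $fU\subset V$, which follows easily from $(\mathbf{C})$ plus transitivity) and then invokes the black-box criterion \cite[Theorem 5.1]{usimple} to conclude that $G'$ is uniformly simple. If you want to keep the Anderson-style argument you need to either restrict to settings where the infinite product is visibly in $G$ (e.g.\ explicit tree-pair models), or add a hypothesis that makes it so.

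The other deviation is in verifying that $G'$ still satisfies $(3\mathbf{T})$ and $(\mathbf{L})$. You propose correcting the transporter $g\in G$ by an element supported off $M\cup N\cup P\cup Q$ so that the product falls into $G'$, using that such elements surject onto $G/G'$. This is plausible but needs careful justification (it relies on the fragmentation and conjugation-into-small-support steps you sketch). The paper's route is cleaner and purely local: it picks $(M,N)\in\mathcal{I}^{(2)}$ with $I,J,K\subset M$, uses $\mathcal{I}^{(4)}$-transitivity to find a transporter $g$ also fixing $N$ (after a full-group modification it can be taken to fix $N$ pointwise), then takes $h$ with $hN=M$, and observes that $[g,h]=g\,(hg^{-1}h^{-1})$ is a commutator sending $(I,J,K)$ to $(I',J',K')$ because $hg^{-1}h^{-1}$ fixes $M$ pointwise. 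This commutator trick manufactures an element of $G'$ directly, without appealing to a surjection onto $G/G'$, and makes transparent why $\mathcal{I}^{(4)}$ (not just $\mathcal{I}^{(3)}$) is needed: the extra slot $N$ is precisely what lets you kill the transporter back to a commutator.

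The final passage — $G'\leq H$ for every finite-index $H\leq G$ since $G'$ is infinite simple, hence $H$ is $\nl$ by Lemma \ref{lem:fi:subgroup} — is correct and is a mild variant of the paper's appeal to Proposition \ref{prop:extension}.
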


\begin{remark}
It will be apparent from the proof below that if $G$ is perfect, then the proof can be streamlined further. Also, in that case the assumption of transitivity on $\mathcal{I}^{(4)}$ will be not necessary; instead transitivity on $\mathcal{I}^{(3)}$ will be sufficient.
\end{remark}

\begin{proof}
We will show that the commutator subgroup $G'$ is simple and $\nl$, in particular it is hereditary $\nl$. This then exhibits $G$ as an extension of a hereditary $\nl$ group and a finite group, which is of course hereditary $\nl$ by Proposition \ref{prop:extension}.

We start by showing that $G'$ also satisfies the criterion of Theorem \ref{thm:cH:criterion}. Observe that $G'$ also acts faithfully on $X$. We will show that $G'$ acts transitively on $\mathcal{I}^{(3)}$, and that its action on $\mathcal{I}$ satisfies ($\mathbf{L}$), which is sufficient for our goal (note that we are not claiming that $G'$ is a topological full group). 

First, we prove that $G'$ acts transitively on $\mathcal{I}^{(3)}$. As a first step, we focus on the special case in which $(I, J, K), (I', J', K') \in \mathcal{I}^{(3)}$ are such that the union of the six sets is not dense in $X$. Since $\mathcal{I}$ is a basis and it is closed under complements, there exist $(M, N) \in \mathcal{I}^{(2)}$ such that $I, J, K \subset M$. Since $G$ is transitive on $\mathcal{I}^{(4)}$ by assumption, there exists $g \in G$ such that $g(I, J, K, N) = (I', J', K', N)$. Since $G$ is a topological full group, we may assume that $g$ fixes $N$ pointwise. Let $h \in G$ be such that $hN = M$. Then $h g^{-1} h^{-1}$ fixes $M$ pointwise, and therefore $[g, h] = g (h g^{-1} h^{-1})$ is an element of $G'$ that sends $(I, J, K)$ to $(I', J', K')$. Now let $(I, J, K), (I', J', K') \in \mathcal{I}^{(3)}$ be arbitrary. Then there exist $(I_1, J_1, K_1), (I_2, J_2, K_2) \in \mathcal{I}^{(3)}$ such that each of the pairs $\{ (I, J, K), (I_1, J_1, K_1) \}; \{ (I_1, J_1, K_1), (I_2, J_2, K_2) \}; \{ (I_2, J_2, K_2), (I', J', K') \}$ falls into the special case treated above: this follows from the fact that $\mathcal{I}$ is a basis, and that by definition each triple has non-dense union (see the proof of Lemma \ref{lem:property2}). Then we can apply the special case three times to obtain an element of $G'$ sending $(I, J, K)$ to $(I', J', K')$, which concludes the proof that the action of $G'$ on $\mathcal{I}^{(3)}$ is transitive. 

We are left to prove ($\mathbf{L}$). This follows from an argument similar to one given before. Namely let $(I, J, K) \in \mathcal{I}^{(3)}$ and let $g, h \in G'$ be such that $g I = I$ and $h J = J$. Since the union of $I, J, K$ is not dense, once again we find $(M, N) \in \mathcal{I}^{(2)}$ such that $I, J, K \subset M$. Define $b : X \to X$ by $b|_I = g|_I, b|_J = h|_J$ and $b|_{X \setminus I \cup J} = id|_{X \setminus I \cup J}$. Let $c \in G$ be such that $cN = M$. Then $[b, c] = b(cb^{-1}c^{-1})$ is an element of $G'$ such that $[c, b]|_I = g|_I, [c, b]|_J = h|_J$ and $[c, b]|_K = id|_K$.

It follows from Theorem \ref{thm:cH:criterion} that $G'$ has Property $\ngt$. We complete the proof by showing that $G'$ is uniformly simple, which shows that $G'$ is hereditary $\nl$ by Corollary \ref{cor:nl:uperfect}. For this, we can apply the criterion from \cite[Theorem 5.1]{usimple}, which states that if the action of $G$ on $X$ is \emph{extremely proximal}, then $G'$ is uniformly simple. By extremely proximal we mean: for every pair $U, V$ of nonempty proper clopen subsets, there exists $f \in G$ such that $f U \subset V$. So let $U, V$ be nonempty proper clopen subsets. Since $\mathcal{I}$ is a basis and it is closed under taking complements, there exist $I, J \in \mathcal{I}$ such that $U \subset I$ and $J \subset V$. By transitivity, there exists $g \in G$ such that $g I = J$, and thus $g U \subset g I = J \subset V$. This shows that $G$ is extremely proximal and concludes the proof.
\end{proof}

We now present a few notable examples of groups to which the above criteria apply. We remain a little superficial for now as details are provided in the next subsection on twisted Brin--Thompson groups; the arguments contained therein can be easily adapted for each of these examples as well.

\begin{example}[Higman--Thompson groups $V_n(r)$ \cite{higman, stein}]
\label{ex:V}

Let $n \geq 2, r \geq 1$, let $\mathcal{T}_n$ be a rooted $n$-ary tree, and $\mathcal{T}_n(r)$ the forest given as the disjoint union of $r$ copies of $\mathcal{T}_n$. Let $X$ be the boundary of $\mathcal{T}_n(r)$, so that $X$ is a Cantor set. For each finite subtree $F$ in one of the copies of $\mathcal{T}_n$, the complement $\mathcal{T}_n \setminus F$ induces a partition of $\partial \mathcal{T}_n \subset X$ into finitely many clopen sets $C_1, \ldots, C_k$, one for each connected component. We set $\mathcal{I}_0$ to be the set of $C_i$ that can be obtained this way, and we set $\mathcal{I}$ to be the set of proper disjoint unions of finitely many elements in $\mathcal{I}_0$.
Identifying $\mathcal{T}_n$ with $\{1, \ldots, n\}^\mathbb{N}$ induces a natural identification of each $C \in \mathcal{I}_0$ with $\partial \mathcal{T}_n$. Composing these identifications yields canonical homeomorphisms between any two elements of $\mathcal{I}_0$. The \emph{Higman--Thompson group} $V_n(r)$ is the group of homeomorphisms of $X$ obtained by fixing two finite partitions (of the same size) of $X$ into elements of $\mathcal{I}_0$ and permuting them according to those canonical homeomorphisms. When $r = 1$, we denote simply $V_n$.

It follows from the definitions that $V_n(r)$ is a topological full group, that $\mathcal{I}$ is preserved by $V_n(r)$, and that $\mathcal{I}$ is stable under taking complements (the latter property is why we use $\mathcal{I}$ and not $\mathcal{I}_0$). Moreover, given two elements in $\mathcal{I}^{(m)}$ it is easy to construct by hand an element of $V_n(r)$ sending one to the other.
Therefore the conditions of Corollary \ref{cor:cantor:criterion:firststep} are satisfied. Moreover, $V_n(r)$ has abelianization of order at most $2$ \cite[Theorem 5.4]{higman}, and therefore Corollary \ref{cor:cantor:criterion} applies.

As a special case, we recover \emph{Thompson's group} $V = V_2(1)$. However, we note that it was already shown in \cite{anthony} that $V$ is $\ngt$, and it is well-known that $V$ is uniformly simple (see e.g. \cite{usimple}).
\end{example}

\begin{example}[Coloured Neretin groups \cite{colouredneretin}]

Let $T$ be a $(d+1)$-regular (unrooted) tree, where $d \geq 2$, and let $F$ be a subgroup of the symmetric group of $D := \{ 0, \ldots, d \}$. We fix a proper colouring $c : E(T) \to D$, meaning that for all $v \in V(T)$, the restriction to the edges adjacent to $v$ is a bijection $c_v : E(v) \to D$. Given an element $g \in \mathrm{Aut}(T)$, and a vertex $v \in V(T)$, we denote by $\sigma(g, v)$ the \emph{local action of $g$ at $v$}, namely the permutation:
$$D \xrightarrow{c_v^{-1}} E(v) \xrightarrow{g} E(gv) \xrightarrow{c_{gv}} D.$$
The \emph{Burger--Mozes universal group} associated to $F$, denote by $U(F)$, is the subgroup of $\mathrm{Aut}(T)$ consisting of those elements $g$ such that $\sigma(g, v) \in F$ for all $v \in V(T)$ \cite{burgermozes}. The \emph{coloured Neretin group} associated to $F$, denoted by $\mathcal{N}_F$, is the topological full group of the action of $U(F)$ on $\partial T$. By definition it is a topological full group of homeomorphisms of the Cantor set $\partial T$, and moreover it has finite abelianization \cite[Theorem 1.2]{colouredneretin}. Now $\partial T$ is naturally isomorphic to $\partial \mathcal{T}_d(d+1)$, with the notation of the previous example, and the action of $\mathcal{N}_F$ preserves the same basis $\mathcal{I}$. Finally, if $F$ is transitive, then $\mathcal{N}_F$ contains a copy of $V_{d, d+1}$ whose action on $\partial T$ is conjugate to its action on $\partial \mathcal{T}_d(d+1)$ described in the previous example \cite[Remark 3.18]{colouredneretin} (see also \cite{colouredneretin0, colouredneretin1}). Therefore the transitivity properties are also satisfied, and Corollary \ref{cor:cantor:criterion} applies: $\mathcal{N}_F$ is hereditary $\nl$.
\end{example}

\begin{example}[R\"{o}ver--Nekrashevych groups \cite{MR1714140, MR2119267}]
\label{ex:RN}

Let $\mathcal{T}_n$ be again a rooted $n$-ary tree, and let $G \leq \mathrm{Aut}(\mathcal{T}_n)$ be \emph{self-similar} (i.e., for every $g \in G$, if $(g_1, \ldots, g_n, \sigma)$ denotes the image of $g$ under the canonical isomorphism $\mathrm{Aut}(\mathcal{T}_n) \to \mathrm{Aut}(\mathcal{T}_n) \wr_{\{1, \ldots,n \}} S_n$, then $g_1, \ldots, g_n$ also belong to $G$). Then we can modify the definition of $V_n$ as given above, by including not only canonical homeomorphisms between elements of $\mathcal{I}_0$, but also canonical homeomorphisms twisted by the action of $G$ on $\mathcal{T}_n$. Namely, while the canonical homeomorphisms are given by the canonical identifications $C_1 \to \partial \mathcal{T}_n \to C_2$, homeomorphisms twisted by $G$ are given by
$$C_1 \to \partial \mathcal{T}_n \xrightarrow{g \in G} \partial \mathcal{T}_n \to C_2.$$
This defines the \emph{R\"{o}ver--Nekrashevych group} $V_n(G)$. Such groups interpolate between Thompson groups $V_n$ and \emph{Neretin's groups} $\mathrm{AAut}(\mathcal{T}_n):=V_n(\mathrm{Aut}(\mathcal{T}_n))$; see \cite{neretin} or \cite{waldi} for an introduction to the topic.

Since $V_n$ is a subgroup of $V_n(G)$ and both preserve $\mathcal{I}$, the transitivity properties are carried over from $V_n$ to $V_n(G)$. Moreover it again follows from the definitions that $V_n(G)$ is a topological full group, so Corollary \ref{cor:cantor:criterion:firststep} applies and $V_n(G)$ is $\ngt$ for all $n$ and all $G$.

Finally, many R\"{o}ver--Nekrashevych groups have finite abelianisations, or even are virtually simple \cite{MR2119267, MR3822291, MR3910073}. Concrete examples include Neretin's groups \cite{kapoudjian}, Thompson's groups $V_n$ (as above), and the groups $V_n(\mathfrak{G})$ constructed from Grigorchuk groups $\mathfrak{G}$ \cite{MR1714140}. It follows from Corollary \ref{cor:cantor:criterion} that all these examples are hereditary $\nl$.
\end{example}

\begin{example}[Symmetrizations $QV_n(r)$ \cite{QThompson}]\label{ex:SymmetrizationV}
Let $n \geq 2, r \geq 1$ and let $\mathcal{T}_n(r)$ denote the forest given by the disjoint union of $r$ copies of rooted $n$-ary trees. We endow each vertex with a fixed total order on its children. Let $QV_n(r)$ denote the group of bijections of the vertex set $\mathcal{T}_n(r)$ preserving adjacency and orders with only finitely many exceptions. The group of finitely supported permutations of the vertices of $\mathcal{T}_n(r)$, which we denote by $S_\infty$, yields a normal subgroup in $QV_n(r)$. The corresponding quotient is isomorphic to Thompson's group $V_n(r)$. Thus, $QV_n(r)$ fits in the short exact sequence
$$1 \to S_\infty \to QV_n(r) \to V_n(r) \to 1.$$
By combining the Example \ref{ex:V} (and the fact that torsion groups are hereditary $\nl$) with Proposition~\ref{prop:extension}, it follows that $QV_n(r)$ is hereditary $\nl$. 
\end{example}

\begin{remark}[Other Thompson--like groups]

Many other variations of $V$ are present in the literature, in particular the \emph{Stein--Thompson groups} $V_{n_1, \ldots, n_k}$, or more generally the \emph{Stein--Higman--Thompson groups} $V_{n_1, \ldots, n_k}(r)$ \cite{stein}; and the \emph{golden ratio Thompson group} $V_\tau$. However, these groups are defined in terms of tree pair diagrams that do not fit into a natural infinite rooted tree, and so they have no natural interpretation as groups of homeomorphisms of Cantor sets. This is discussed in \cite[Section 1]{ttau} for $V_\tau$, and similar discussions apply to the other groups.

Nevertheless, we believe that it should be possible to verify the algebraic criterion from Theorem \ref{thm:criterion} for such groups, in terms of tree pair diagrams with rich enough dynamics. Together with the fact that they have finite abelianization \cite{stein, ttau}, this would lead to a proof that they are hereditary $\nl$. However one main goal for this paper was to highlight the \emph{dynamical} approach to Property $\nl$, that works beyond Cantor sets as we will later see. A \emph{combinatorial} criterion in terms of tree pair diagrams would be of great interest, but it falls out of the scope of this paper.
\end{remark}

We end with one last example that serves as the basic building block for twisted Brin--Thompson groups.

\begin{example}[Brin--Thompson groups $sV$ \cite{brin}]

Let $s \geq 1$ be a natural number. Let $C \subset [0, 1]$ be the middle-third Cantor set, which can be identified with $\{0, 1\}^{\mathbb{N}}$ via ternary expansion. Then $X := C^s$ is also a Cantor set, seen as a subset of $[0, 1]^s$ with the product topology. A subdivision of $X$ into clopen subsets obtained by consecutively dividing $X$ in half along one of the $s$ coordinates is called a \emph{pattern}, and the corresponding clopen subsets are called \emph{bricks}. Elements of $sV$ are homeomorphisms of $X$ obtained by fixing patterns with the same number of bricks on domain and codomain, and sending each brick of the domain pattern to a brick of the codomain pattern affinely and preserving the orientation. For $s = 1$, we get the classical Thompson group $V$. We refer the reader to \cite{brin} for more details.

Again, it follows easily from the definitions that $sV$ is a topological full group. For each pattern with bricks $B_1, \ldots, B_n$, we declare a proper disjoint union of finitely many $B_i$ to be in $\mathcal{I}$, and let $\mathcal{I}$ be the set of all clopen subsets of $X$ obtained this way. Then $\mathcal{I}$ is closed under taking complements, and again transitivity on $\mathcal{I}^{(4)}$ may be easily checked. Finally, it is known that $sV$ is simple \cite[Theorem 1]{brin}, in particular it is perfect. Therefore it follows from Corollary \ref{cor:cantor:criterion} that $sV$ is hereditary $\nl$.
\end{example}

\subsection{Twisted Brin--Thompson groups}

In this section, we aim to prove the following result, which is related to Corollary \ref{cor:qi}. 

\begin{theorem}
\label{thm:qi}

Every finitely generated group quasi-isometrically embeds into a finitely generated simple $\nl$ group.
\end{theorem}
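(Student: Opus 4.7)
The plan is to combine two ingredients: the existence of quasi-isometric embeddings of arbitrary finitely generated groups into appropriately chosen twisted Brin--Thompson groups (due to \cite{twistedV2}), and the verification that such twisted Brin--Thompson groups satisfy Property $\nl$ via the dynamical criterion of Corollary \ref{cor:cantor:criterion}. The remark following Corollary \ref{cor:qi} essentially advertises exactly this route: the weaker statement proved there constructed a wreath product for simplicity reasons, but twisted Brin--Thompson groups are themselves simple and this makes the extra wreath step unnecessary.

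First I would recall the twisted Brin--Thompson group $SV_\Gamma$ associated to a faithful action of a group $\Gamma$ on a countable set $S$: it acts faithfully by homeomorphisms on the Cantor set $X = C^S$, where $C = \{0,1\}^{\mathbb N}$ carries the product topology. Elements are defined by prescribing a pattern of dyadic bricks in the domain, a pattern of the same cardinality in the codomain, a bijection between them, and an element of $\Gamma$ labelling each brick which twists the canonical affine identifications via the permutation action of $\Gamma$ on $S$. When $S$ is a countable faithful $\Gamma$-set and $\Gamma$ is finitely generated, it is shown in \cite{twistedV, twistedV2} that $SV_\Gamma$ is finitely generated and simple, and moreover that any finitely generated group $G$ can be realized as a quasi-isometrically embedded subgroup of some such $SV_\Gamma$ (typically by taking $\Gamma$ built from $G$ and choosing $S$ appropriately).

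Next I would verify Corollary \ref{cor:cantor:criterion} for $SV_\Gamma$. Let $\mathcal{I}$ consist of the proper, non-empty clopen subsets of $X$ that can be written as a disjoint union of finitely many bricks from some pattern, as in the untwisted case $sV$ treated earlier in this section. Then $\mathcal{I}$ is a basis of clopen sets of $X$ closed under complements, and $SV_\Gamma$ preserves $\mathcal{I}$ and is by construction a topological full group. The crucial transitivity hypothesis (on $\mathcal{I}^{(4)}$) is verified exactly as for $sV$: one can realize any $4$-tuple in $\mathcal{I}^{(4)}$ as bricks in a common refinement, and then produce an element of $SV_\Gamma$ (in fact one living in an untwisted subgroup isomorphic to $sV$ for large enough $s$) that carries one such $4$-tuple to another. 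Since $SV_\Gamma$ is simple it has trivial abelianization, so Corollary \ref{cor:cantor:criterion} applies and yields that $SV_\Gamma$ is hereditary $\nl$; in particular, it is $\nl$. This argument packaged as a lemma is what I would phrase as Corollary \ref{cor:twistedV}.

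Combining, for a given finitely generated group $G$, choose $\Gamma$ and $S$ as in \cite{twistedV2} so that $G$ quasi-isometrically embeds into the finitely generated simple group $SV_\Gamma$; by the previous paragraph $SV_\Gamma$ has Property $\nl$, which completes the proof of Theorem \ref{thm:qi}. The main obstacle is the verification of the transitivity conditions in the twisted setting: the twisting by $\Gamma$ affects how bricks are glued, so one must be slightly careful to arrange the elements realizing transitivity to act with trivial (or at least controllable) twists on the relevant bricks. However, because $SV_\Gamma$ contains the untwisted groups $sV$ (for every finite subset of $S$ of size $s$), and these already act transitively on $\mathcal{I}^{(4)}$, no genuinely new dynamics need to be produced; the transitivity inside $SV_\Gamma$ is inherited from its untwisted subgroups.
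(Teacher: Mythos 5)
Your proposal follows essentially the same route as the paper: verify the dynamical criterion (Corollary \ref{cor:cantor:criterion}) for twisted Brin--Thompson groups $SV_\Gamma$ using the brick basis and the observation that transitivity is inherited from the untwisted subgroups $sV$, then invoke simplicity, finite generation, and the quasi-isometric embedding $\Gamma \hookrightarrow SV_\Gamma$ from \cite{twistedV}. The only small gap in specificity is that the paper pins down the choice as $S = \Gamma$ acting on itself by left translation (so the action is transitive, ensuring finite generation), but you clearly have the right idea.
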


This will be a direct consequence of properties of the twisted Brin--Thompson groups, and the following application of Corollary \ref{cor:cantor:criterion}, which we prove below. 

\begin{proposition}

\label{cor:twistedV}

Let $\Gamma$ be a group, $S$ a countable set on which $\Gamma$ acts faithfully. Then the twisted Brin--Thompson group $SV_\Gamma$ is hereditary $\nl$.
\end{proposition}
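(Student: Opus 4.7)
The plan is to apply Corollary \ref{cor:cantor:criterion} to the natural faithful action of $SV_\Gamma$ on its standard Cantor set $X$. Following the Belk--Zaremsky construction, elements of $SV_\Gamma$ are specified by pairs of \emph{dyadic patterns}, i.e.\ finite partitions of $X$ into \emph{bricks} obtained by iteratively halving $X$ along coordinates indexed by $S$, together with a pattern bijection acting on each brick by a dyadic affine map possibly composed with a coordinate permutation from $\Gamma$. Since $S$ is countable, the space $X$ is a genuine Cantor set, and the action is faithful by construction (the untwisted dyadic operations already act faithfully, and the $\Gamma$-action is faithful on $S$ by hypothesis).

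I would take $\mathcal{I}$ to be the collection of proper non-empty clopen subsets of $X$ that can be written as a union of bricks of a common dyadic pattern. This $\mathcal{I}$ is a basis of clopens in $X$ (any dyadic cylinder can be refined to a pattern), and it is closed under complements, since the complement of a union of bricks in a pattern is the union of the remaining bricks of that pattern. The group $SV_\Gamma$ is a topological full group with respect to $\mathcal{I}$, in the sense of Remark \ref{rem:full}: if a homeomorphism $f$ coincides on each element of an $\mathcal{I}$-cover with some element of $SV_\Gamma$, one refines all relevant patterns to a common pattern and reads off $f$ as a single pattern bijection, hence as an element of $SV_\Gamma$.

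For transitivity on $\mathcal{I}^{(k)}$ with $k \in \{3,4\}$, given $(I_1, \ldots, I_k), (I'_1, \ldots, I'_k) \in \mathcal{I}^{(k)}$, one refines so that each $I_j$ (resp.\ $I'_j$) is a union of bricks of a common pattern $\mathcal{P}$ (resp.\ $\mathcal{P}'$); the non-density of $I_1 \cup \cdots \cup I_k$ allows one to further refine until the numbers of bricks used by $I_j$ and $I'_j$ match for each $j$. Any resulting bricks-to-bricks bijection respecting this matching extends to an element of $SV_\Gamma$ sending one tuple to the other, using only dyadic affine maps on the bricks (no $\Gamma$-twist is needed here). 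Finally, $SV_\Gamma$ is simple by the Belk--Zaremsky simplicity theorem, so its abelianization is trivial, and all hypotheses of Corollary \ref{cor:cantor:criterion} hold, yielding that $SV_\Gamma$ is hereditary $\nl$. The main technical point is the careful bookkeeping required to simultaneously refine patterns so that the combinatorics of bricks match in the transitivity argument; no obstruction arises from the twist by $\Gamma$, since the ordinary Brin--Thompson subgroup of $SV_\Gamma$ already realizes all of the required transitivity.
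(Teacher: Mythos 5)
Your proof is correct and follows essentially the same route as the paper: it verifies the hypotheses of Corollary \ref{cor:cantor:criterion} for the standard action of $SV_\Gamma$ on $X = C^S$, taking $\mathcal{I}$ to be proper non-empty finite disjoint unions of bricks, observing that $SV_\Gamma$ is a topological full group, establishing transitivity on $\mathcal{I}^{(k)}$ by refining brick collections to patterns of matching cardinality (noting that the untwisted Brin--Thompson subgroup already provides the needed transitivity), and invoking simplicity of $SV_\Gamma$ for perfection. This matches the paper's argument step for step.
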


We start with the definitions, while referring the reader to \cite{twistedV, twistedV2} for more details. Let $C := \{ 0, 1 \}^{\mathbb{N}}$ be the standard binary Cantor set, and let $X := C^S$, which is a Cantor set when equipped with the product topology. We denote its elements as functions $\kappa : S \to C$. Let $\{ 0, 1 \}^*$ be the set of all \emph{finite} binary sequences, including the empty word $\varnothing$. The \emph{support} of a map $\psi : S \to \{0, 1\}^*$ is the set of $s \in S$ such that $\psi(s) \neq \varnothing$.
Given a function $\psi : S \to \{0, 1\}^*$ with finite support, the corresponding \emph{(dyadic) brick} is defined as
$$B(\psi) :=  \{ \kappa \in C^S : \psi(s) \text{ is a prefix of } \kappa(s) \text{ for all } s \in S \}.$$
Since $\psi$ has finite support, this imposes clopen conditions on only finitely many coordinates, and therefore each $B(\psi)$ is a clopen subset of $X$. Every brick is canonically homeomorphic to $C^S$, via the map $h_\psi : C^S \to B(\psi)$ defined by $h_\psi(\kappa)(s) = \psi(s)\kappa(s)$ (concatenation). This defines, for each pair of functions with finite support $\psi, \varphi$, a canonical homeomorphism $h_\varphi \circ h_\psi^{-1} : B(\psi) \to B(\varphi)$.

A \emph{pattern} is a partition of $X$ into finitely many bricks. We define $SV$ to be the group of homeomorphisms of $X$ obtained by fixing patterns with the same numbers of bricks on domain and codomain, and sending each brick of the domain pattern to a brick of the codomain pattern via a canonical homeomorphism $h_\varphi \circ h_\psi^{-1}$. This is a direct generalization of the Brin--Thompson group $sV$ to an infinite-dimensional setting.

Now let $\Gamma$ be a group acting faithfully on $S$. For each $\gamma \in \Gamma$, let $\tau_\gamma$ be the homeomorphism of $X$ defined by $\tau_\gamma(\kappa)(s) = \kappa(\gamma^{-1} . s)$. This defines, for each pair of functions of finite support $\psi, \varphi$ and each $\gamma \in \Gamma$, a \emph{twist homeomorphism} $h_\varphi \circ \tau_\gamma \circ h_\psi^{-1} : B(\psi) \to B(\varphi)$. The \emph{twisted Brin--Thompson group} $SV_\Gamma$ is defined like $SV$ above, but bricks are sent to each other via twist homeomorphisms instead of only canonical homeomorphisms.

\begin{remark}
In \cite{twistedV} the group is defined in terms of dyadic partitions, which are more restrictive than patterns. However, the above description yields the same group \cite[Remark 1.1]{twistedV}
\end{remark}

\begin{example}
When $\Gamma = \{ 1 \}$, we have $SV_\Gamma = SV$. Therefore Brin--Thompson groups $sV$ and the classical Thompson group $V$ are special cases of twisted Brin--Thompson groups.
\end{example}

\begin{proof} [Proof of Theorem \ref{thm:qi}] Assuming we have proven Proposition \ref{cor:twistedV}, Theorem \ref{thm:qi} follows by using the following facts:
\begin{itemize}
    \item[1.] $SV_\Gamma$ is simple \cite[Theorem 3.4]{twistedV};
    \item[2.] If $\Gamma$ is finitely generated and the action of $\Gamma$ on $S$ has finitely many orbits, then $SV_\Gamma$ is finitely generated \cite[Theorem A]{twistedV};
    \item[3.] In the above case, the embedding $\Gamma \to SV_\Gamma : \gamma \to \tau_\gamma$ is quasi-isometric \cite[Theorem B]{twistedV}.
\end{itemize}

Therefore in order to obtain a quasi-isometric embedding of an arbitrary finitely generated group $\Gamma$ into a twisted Brin--Thompson group, one may take $S = \Gamma$ acting on itself by left translation. \end{proof}

It therefore remains to prove Proposition \ref{cor:twistedV}. 
\begin{proof}[Proof of Proposition \ref{cor:twistedV}]
We will show that the action of $G := SV_\Gamma$ on $X = C^S$ satisfies the hypotheses of Corollary \ref{cor:cantor:criterion}. We have already mentioned that $SV_\Gamma$ is simple, and in particular it is perfect (in fact, the proof \cite[Theorem 3.4]{twistedV} works by verifying conditions similar to those that we are going to check now).

Let
$$\mathcal{I} := \{ B = B_1 \sqcup \cdots \sqcup B_k : B_i \text{ is a dyadic brick, } \varnothing \neq B \neq X \}.$$
Then $\mathcal{I}$ is a basis, and it is closed under taking complements, since every set of disjoint bricks can be completed into a set of disjoint bricks forming a partition of $X$. Every $g \in G$ is defined in terms of a pattern $B_1 \sqcup \cdots \sqcup B_k$ being sent to another pattern, where each brick is sent to a brick. Any other brick $B$ can be partitioned in terms of its intersection with each $B_i$, which shows that $B$ is sent to a finite disjoint union of bricks. Therefore $G$ preserves $\mathcal{I}$. Since $\mathcal{I}$ is a basis, the same argument implies that $G$ is a topological full group (see Remark \ref{rem:full}).

Finally, consider sets of disjoint bricks $B_1, \ldots, B_k$ and $B_1', \ldots, B_k'$ whose union is not all of $X$. Each of them can be completed into a partition of $X$ of the form $B_1, \ldots, B_l$ and $B_1', \ldots, B_l'$ with $l > k$: the fact that the cardinality is the same may be achieved by taking refinements. Then there exists an elment of $SV \leq SV_\Gamma = G$ sending $B_i$ to $B_i'$. This shows that the action is highly transitive on proper disjoint unions of bricks, and it follows that the action on $\mathcal{I}^{(n)}$ is transitive for all $n \geq 1$. This concludes the verification of the hypotheses of Corollary \ref{cor:cantor:criterion}, and thus the proof.
\end{proof}

\subsection{Groups acting on the circle}

In this subsection, we apply Theorem \ref{thm:cH:criterion} to groups acting on the circle. Here we also need to take into account the orientation, which is why the statement of Theorem \ref{thm:cH:criterion} only requires a weak version of triple transitivity. The proof is essentially the same as in the case of Cantor sets, but some extra care has to be taken at various steps, in particular the proof of property ($3 \mathbf{T}$) from Theorem \ref{thm:cH:criterion}, and in the proof of virtual uniform simplicity.

Throughout this section, we will denote the circle by $X$, which will be endowed with a fixed orientation. Given $a \neq b \in X$, we denote by $(a, b)$ the open arc oriented from $a$ to $b$, that is the set of all $x \in X$ such that $(a, x, b)$ is positively oriented. Similarly, we define $[a, b] = \overline{(a, b)}$ the closed arc oriented from $a$ to $b$.

\begin{definition}
Let $G$ be a group acting faithfully on a circle $X$ preserving the orientation. Let $\mathcal{O} \subset X$ be a subset preserved by $G$. We say that $G$ is an \emph{$\mathcal{O}$-piecewise full group} if it contains all homeomorphisms $f$ with the following property: there exists a positively oriented tuple $(o_1, \ldots, o_n) \in \mathcal{O}^n$, such that for every $i = 1, \ldots, n$ there exists $g_i \in G$ such that $f|_{[o_i, o_{i+1}]} = g_i|_{[o_i, o_{i+1}]}$ (where of course we read $n+1 = 1$). 
\end{definition}

\begin{remark}
The above definition is not standard. Topological full groups can be defined for actions on the circle as well, but for our purposes we also need to be able to glue together local actions on closed arcs that agree at their endpoints.
\end{remark}

\begin{corollary}
\label{cor:circle:criterion:firststep}

Let $G$ be a group acting faithfully on the circle $X$ preserving the orientation. Suppose that there exists a dense orbit $\mathcal{O} \subset X$ such that $G$ acts transitively on positively oriented $n$-tuples in $\mathcal{O}$, for $n \leq 6$. Suppose moreover that $G$ is an $\mathcal{O}$-piecewise full group. Then $G$ has Property $\ngt$.
\end{corollary}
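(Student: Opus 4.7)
The plan is to apply Theorem~\ref{thm:cH:criterion} to the action $G \curvearrowright X$, taking $\mathcal{I}$ to be the collection of proper non-empty open arcs of $X$ whose two endpoints both belong to $\mathcal{O}$. Density of $\mathcal{O}$ ensures that $\mathcal{I}$ is a basis of $X$ consisting of non-dense open sets. The complement condition $(\mathbf{C})$ is immediate: given $I = (a, b) \in \mathcal{I}$, pick $c, d \in \mathcal{O} \cap (a, b)$ in the positive cyclic order $(a, d, c, b)$; then $J := (c, d)$ (the arc through $b, a$) lies in $\mathcal{I}$ and contains $I^c$. Double transitivity $(2\mathbf{T})$ follows directly from the 4-fold transitivity hypothesis: the endpoints of $(I, J) \in \mathcal{I}^{(2)}$, read as those of $I$ followed by those of $J$ in positive cyclic order, form a positively ordered 4-tuple in $\mathcal{O}$, and since $G$ preserves orientation, the corresponding element of $G$ carries the arcs as required.

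For the local action condition $(\mathbf{L})$, given $(I, J, K) \in \mathcal{I}^{(3)}$ and $g, h \in G$ with $gI = I$ and $hJ = J$, orientation preservation forces $g$ to fix the two endpoints of $I$ (otherwise $g$ would reverse orientation on $I$), and likewise $h$ fixes the endpoints of $J$. I partition $X$ into the six closed arcs determined by the endpoints of $I, J, K$ in positive cyclic order, and define $b$ to be $g$ on $\overline{I}$, $h$ on $\overline{J}$, and the identity on each of the four remaining arcs (which include $\overline{K}$). The fixed-point conditions at the shared endpoints make $b$ a well-defined orientation-preserving homeomorphism of $X$, and the $\mathcal{O}$-piecewise fullness assumption places $b$ in $G$.

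The main difficulty lies in verifying $(3\mathbf{T})$. Since 6-fold transitivity on positively ordered tuples in $\mathcal{O}$ implies that the $G$-orbit of a triple in $\mathcal{I}^{(3)}$ is completely determined by the labeled cyclic order of its six endpoints, the task reduces to producing $(M, N, P) \in \mathcal{I}^{(3)}$ such that the labeled cyclic order of the endpoints of $(M, N, P)$ (with labels $M, N, P$ recording arc membership) agrees with that of $(gM, hN, P)$ after the relabeling $gM \mapsto M$ and $hN \mapsto N$. After handling the trivial cases $g = \mathrm{id}$ or $h = \mathrm{id}$, I choose $x, y \in \mathcal{O}$ generically so that $x, g(x), y, h(y)$ are four distinct points of $X$. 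Taking $M$ and $N$ to be sufficiently small arcs with endpoints in $\mathcal{O}$ around $x$ and $y$, the labeled cyclic order of the endpoints of $(M, N, P)$ depends only on the cyclic arrangement of $\{p_1, p_2, x, y\}$ on $X$, and that of $(gM, hN, P)$ only on $\{p_1, p_2, g(x), h(y)\}$. A case analysis across the six possible cyclic arrangements of $\{x, g(x), y, h(y)\}$ shows that in every one at least one of the four complementary gaps admits a placement of $P$ (with endpoints in $\mathcal{O}$, which is dense) for which the two labeled cyclic orders coincide as cyclic sequences. For instance, in the arrangement $(x, y, g(x), h(y))$, placing $P$ in the gap between $y$ and $g(x)$ yields the labeled cyclic sequence $(M, M, N, N, P, P)$ on the endpoints of both $(M, N, P)$ and $(gM, hN, P)$ after relabeling, up to cyclic rotation. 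With all four conditions of Theorem~\ref{thm:cH:criterion} verified, $G$ has Property $\ngt$.
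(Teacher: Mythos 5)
Your proof follows the same overall strategy as the paper's: you apply Theorem~\ref{thm:cH:criterion} with the same basis $\mathcal{I}$ of open arcs with endpoints in $\mathcal{O}$, and your treatments of $(\mathbf{C})$, $(2\mathbf{T})$ and $(\mathbf{L})$ match the paper essentially line for line. Where you diverge is the verification of $(3\mathbf{T})$. The paper fixes a metric on the circle and uses a quantitative argument: it reduces to $g \neq h$, picks $x$ with $gx \neq hx$, then chooses $y$ close to $x$ so that both $(x,y)$ and $(gx,hy)$ are short arcs, ensuring the existence of $z$ with $(x,y,z)$ and $(gx,hy,z)$ both positively oriented. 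You instead take a purely combinatorial approach, choosing $x,y$ with $x, gx, y, hy$ distinct and running a case analysis over the six cyclic arrangements. This is a valid alternative and it has the advantage of bypassing any metric on the circle; it also more transparently handles the case $g=h$ (which the paper dismisses with a somewhat opaque ``without loss of generality $g\neq h$''). The main shortcomings are that you only exhibit one of the six cases explicitly, and your claim that $g=\mathrm{id}$ or $h=\mathrm{id}$ is ``trivial'' is imprecise: if, say, $g=\mathrm{id}$ and $h\neq\mathrm{id}$, you only have three distinct points $x, y, hy$ and need a separate (simpler, but still nontrivial) check that a suitable gap for $P$ exists. These gaps are fillable, so the argument is sound, but they should be spelled out before this could count as a complete proof.
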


\begin{proof}
Let $\mathcal{I}$ be the set open arcs in $X$ with (distinct) endpoints in $\mathcal{O}$; in symbols $\mathcal{I} = \{ (a, b) : a, b, \in \mathcal{O}, a \neq b \}$. $\mathcal{I}$ is a basis because $\mathcal{O}$ is dense an all of its elements are neither dense nor empty. Further it clearly satisfies ($\mathbf{C}$). As $G$ preserves $\mathcal{O}$, so it also preserves $\mathcal{I}$. Let $(I, J) \in \mathcal{I}^{(2)}$, say $I = (a, b)$ and $J = (c, d)$. Since their closures are disjoint, the tuples $(a, b, c, d)$ and $(c, d, a, b)$ are positively oriented, so they are in the same $G$-orbit by the assumption on high transitivity. Choosing $g \in G$ such that $g(a, b, c, d) = (c, d, a, b)$ gives $g I = J$ and proves ($2 \mathbf{T}$).

For ($3 \mathbf{T}$), let $g, h \in G$. Without loss of generality we may assume that $g \neq h$. Let $x \in \mathcal{O}$ be such that $gx \neq hx$: this exists because $\mathcal{O}$ is dense. Then $(gx, hx)$ is an open arc between two distinct points, thus it is not the whole circle $X$: let $\delta$ be the diameter of the complement, for some fixed metric on $X$. Then, for $y$ close enough to $x$ in the positive direction, the arc $(x, y)$ has diameter less than $\delta/3$, and the arc $(gx, hy)$ still has complement of diameter more than $2 \delta / 3$. It follows that there exists $z \in X$ such that $(x, y, z)$ and $(gx, hy, z)$ are both positively oriented, by choosing $z$ in the complement of $[x, y] \cup [gx, hy] \neq X$. Now we let $M, N, P$ be small neighbourhoods of $x, y, z$ in $\mathcal{I}$ that are pairwise disjoint, have non-dense union, and such that $g M, h N$ and $P$ are also pairwise disjoint. The same proof as in the previous paragraph, using transitivity on sextuples, allows to find $b \in G$ such that $bg M = M, bh N = N$ and $b P = P$, proving ($3 \mathbf{T}$).

We are left to prove ($\mathbf{L}$). Let $(I, J, K) \in \mathcal{I}^{(3)}$ and let $g, h \in G$ be such that $g I = I$ and $h J = J$. Let $a,b,c,d \in \mathcal{O}$ be such that $I = (a,b)$ and $J = (c,d)$. The tuple $(a,b,c,d)$ is positively oriented, and since $G$ acts preserving the orientation, both endpoints of $I$ are fixed by $g$, and both endpoints of $J$ are fixed by $h$. We define $t : X \to X$ by $t|_I = g|_I, t|_J = h|_J$ and $t|_{X \setminus I \cup J} = id|_{X \setminus I \cup J}$, from which it follows that $t|_K = id|_K$. This is well-defined and a homeomorphism, since $g$ and $h$ fix the endpoints of $I$ and $J$. Since $G$ is an $\mathcal{O}$-piecewise full group, $t \in G$. This concludes the proof.
\end{proof}

As in the case of Cantor sets, our next corollary will add some hypotheses to ensure that the group $G$ is virtually uniformly simple, and thus hereditary $\nl$. For the sake of completeness, we prove a general criterion based on \emph{rigid stabilizers}: which are the subgroups $G(x)$ for some point $x \in X$ consisting of elements in $G$ that fix pointwise some open neighbourhood of $x$. However, let us point out that in some of the specific cases that we will consider, uniform simplicity results are already available in the literature (see e.g. \cite{liousse, spectrum}).

\begin{corollary}
\label{cor:circle:criterion}
Keep the assumptions from Corollary \ref{cor:circle:criterion:firststep}. Suppose moreover that there exists $x \in \mathcal{O}$ such that $G(x)$ has finite abelianization, and that $G$ acts transitively on positively ordered $n$-tuples in $\mathcal{O}$, for $n \leq 8$. Then $G$ is hereditary $\nl$
\end{corollary}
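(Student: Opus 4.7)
The plan is to adapt the proof of Corollary \ref{cor:cantor:criterion} to the circle, paying attention to orientation. First, by Corollary \ref{cor:circle:criterion:firststep}, $G$ has $\ngt$. I will show that the commutator subgroup $G'$ satisfies the hypotheses of Theorem \ref{thm:cH:criterion} (so it is $\ngt$), that $G$ acts extremely proximally on $X$ (so \cite[Theorem 5.1]{usimple} yields $G'$ uniformly simple, hence uniformly perfect), and that $G^{ab}$ is finite. Combining the first two, Corollary \ref{cor:nl:uperfect} gives that $G'$ is hereditary $\nl$; the third then lets Proposition \ref{prop:extension} applied to $1 \to G' \to G \to G^{ab} \to 1$ conclude that $G$ is itself hereditary $\nl$.

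Verifying the criterion for $G'$ is a direct circle-transposition of the Cantor argument. With the basis $\mathcal{I}$ of open arcs with endpoints in $\mathcal{O}$ (which satisfies $(\mathbf{C})$ and is preserved by $G' \subset G$), Property $(\mathbf{L})$ for $G'$ is checked by first producing a candidate piecewise homeomorphism from the $\mathcal{O}$-piecewise full hypothesis (agreeing with $g$ on $I$, with $h$ on $J$, and with the identity on $K$), then rewriting it as a commutator against an auxiliary $G$-element supported off $K$. The transitivity of $G'$ on $\mathcal{I}^{(2)}$ and $\mathcal{I}^{(3)}$, i.e.\ $(2\mathbf{T})$ and $(3\mathbf{T})$, is obtained by realizing the transporting $G$-elements as commutators: given source and target tuples, I introduce three auxiliary arcs disjoint from everything in sight and invoke transitivity of $G$ on positively ordered $8$-tuples twice to produce $g, h \in G$ whose commutator $[g, h] \in G'$ has the desired transporting effect. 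This is exactly why the hypothesis strengthens the $6$-tuple transitivity needed in Corollary \ref{cor:circle:criterion:firststep} to $8$-tuples.

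Extreme proximality of $G$ on $X$ is straightforward: given nonempty proper open $U, V \subset X$, density of $\mathcal{O}$ produces arcs $I = (a, b), J = (c, d) \in \mathcal{I}$ with $\overline{U} \subset I$ and $\overline{J} \subset V$, and double transitivity on positively ordered pairs in $\mathcal{O}$ sends $(a, b)$ to $(c, d)$, hence $\overline{U}$ into $V$.

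Finally, to see $G^{ab}$ is finite: by transitivity of $G$ on $\mathcal{O}$, all rigid stabilizers $G(y)$, $y \in \mathcal{O}$, are conjugate, so each has the same finite abelianization as $G(x)$. A fragmentation argument using the $\mathcal{O}$-piecewise full property then writes every element of $G$ as a product of a bounded number of elements, each supported on a proper arc and hence lying in some $G(y)$. The images of these $G(y)$ in $G^{ab}$ all coincide with a fixed finite group, so $G^{ab}$ is finite. The step I anticipate to be the most technical is making the fragmentation precise for elements with full support on $X$: one must iteratively relocate supports into proper arcs via transitivity and $\mathcal{O}$-piecewise fullness, with uniform bounds independent of the element being decomposed.
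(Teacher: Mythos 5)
Your overall strategy mirrors the Cantor-set case, but the transposition to the circle fails at a crucial point, and this is exactly where the paper's proof diverges from yours.

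The decisive gap is the invocation of \cite[Theorem 5.1]{usimple}. That theorem is stated for groups of homeomorphisms of a \emph{Cantor set}; the notion of extreme proximality used there is formulated in terms of nonempty proper \emph{clopen} subsets. On a circle the only clopen subsets are $\varnothing$ and $X$, so the condition is vacuous and the theorem simply does not apply. Moreover your verification of extreme proximality is itself incorrect: for a proper open $U \subset X$ it is not generally possible to find a proper arc $I$ with $\overline{U} \subset I$ (take $U$ dense, or $U = X \setminus \{p\}$). The paper circumvents the whole issue by a different reduction: it fixes a single $x \in \mathcal{O}$, views the rigid stabilizer $G(x)$ as a group of order-preserving, \emph{boundedly supported} homeomorphisms of the open interval $X \setminus \{x\}$, checks that this action on a linearly ordered set is proximal, and applies \cite[Theorem 3.1]{usimple}. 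This gives $6$-uniform simplicity of each $G(x)'$, and uniform simplicity of the paper's distinguished subgroup $H := \langle G(x)' : x \in \mathcal{O}\rangle$ is then assembled from these local pieces together with bounded generation of $H$ by the family $\{G(x)'\}$. This is not a cosmetic restatement of your argument; it replaces a theorem you cannot use with one you can.

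Two further points. First, you work with $G'$ throughout, while the paper works with $H = \langle G(x)' : x \in \mathcal{O} \rangle$, which is a priori a smaller normal subgroup; the paper's argument (finite index of $H$, $H$ uniformly simple) goes through for $H$ and does not need to identify $H$ with $G'$. Second, your claim that $G^{\mathrm{ab}}$ is finite, via a ``fragmentation argument,'' is precisely the nontrivial step and you leave it as a gesture. The paper instead proves a concrete decomposition: for distinct $x, y \in \mathcal{O}$ and $g$ with $g(y) \notin \{x,y\}$ one has $g \in G(x)G(y)$, hence $G = G(x)G(y) \cup s^{-1}G(x)G(y)$ for a suitable $s$, and then finite abelianization of $G(x)$ gives $G \subset \{1, s^{-1}\} F_x F_y H$ for finite sets $F_x, F_y$. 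Without an explicit argument of this kind, the ``bounded fragmentation'' for elements with full support on the circle (e.g.\ rotations) is not established.
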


\begin{remark}
As in the case of Corollary \ref{cor:cantor:criterion}, it will be apparent from the proof below that if $G(x)$ is perfect, then the proof can be streamlined. Also, in that case the assumption on transitivity on $8$-tuples is not needed as transitivity on $6$-tuples is sufficient.
\end{remark}

\begin{proof}
Let $H := \langle G(x)' : x \in \mathcal{O} \rangle \leq G$. Notice that $g G(x)' g^{-1} = G(g . x)'$, therefore $H$ is normal in $G$. We will show that $H$ is simple and $\nl$ and has finite-index in $G$. Once again, this exhibits $G$ as an extension of hereditary $\nl$ groups, and so Proposition \ref{prop:extension} will give us the required conclusion.

We start by showing that $H$ has $\ngt$ by applying Theorem \ref{thm:cH:criterion}. We cannot directly apply Corollary \ref{cor:circle:criterion:firststep}, since $H$ need not be an $\mathcal{O}$-piecewise full group. So to prove that it has $\ngt$ we will show that $H$ acts transitively on positively ordered $6$-tuples in $\mathcal{O}$, and that it satisfies ($\mathbf{L}$). This is enough, since ($\mathbf{C}$) is given, and ($2 \mathbf{T}$) and ($3 \mathbf{T}$) just follow from $6$-transitivity (see the proof of Corollary \ref{cor:circle:criterion:firststep}).

As a first step to show transitivity on  $6$-tuples, let $(x_1, \ldots, x_6), (y_1, \ldots, y_6)$ be positively ordered $6$-tuples such that there exists $z \in \mathcal{O}$ such that $(z, x_1, \ldots, x_6)$ and $(z, y_1, \ldots, y_6)$ are still positively ordered. Choose elements $z_-, z_+, w_-, w_+ \in \mathcal{O}$ such that the $11$-tuple
$$(z_-, z, w_-, w_+, z_+, x_1, \ldots, x_6)$$
is positively oriented, and the same holds for the $y_i$. By $8$-transitivity, there exists $g \in G$ such that $g(z_-, z_+, x_1, \ldots, x_6) = (z_-, z_+, y_1, \ldots, y_6)$, and since $G$ is an $\mathcal{O}$-piecewise full group, we may assume that $g$ is the identity on $[z_-, z_+]$, in particular $g \in G(z)$. Now by $4$-transitivity there exists $h \in G$ such that $h(z_-, w_-, w_+, z_+) = (z_-, w_-, x_1, x_6)$. Again, we may assume that $h$ is the identity on $(z_-, w_-)$ and so $h \in G(z)$. Then $h g^{-1} h^{-1}$ is the identity on $[x_1, x_6]$, in particular it fixes all of the $x_i$, and so $[g, h] = g (hg^{-1}h^{-1})$ is an element of $G(z)'$ that sends $(x_1, \ldots, x_6)$ to $(y_1, \ldots, y_6)$. For the general case, it suffices to notice that any two triples may be sent to each other by a sequence of instances of the above special case, just as in the proof of Lemma \ref{lem:property2}. The proof of ($\mathbf{L}$) is similar to the one provided in the presvious proof (see also the proof of Corollary \ref{cor:cantor:criterion}).

It follows that $H$ has Property $\ngt$. We now show that $H$ has finite index in $G$. First, we claim that if $g \in G$ and $x, y \in \mathcal{O}$ are distinct points such that $g(y) \notin \{ x, y \}$, then $g \in G(x) G(y)$. Indeed, let $I \in \mathcal{I}$ be such that $y \in I, x \notin \overline{I}$ and $x, y \notin \overline{g I}$. Since $G(x)$ is transitive on ordered pairs in $\mathcal{O} \setminus \{ x \}$ by the previous paragraph, we find an element $f \in G(x)$ such that $f I = g I$. Let $h$ be an element supported on $I$ such that $h|_I = f^{-1} g|_I$, which exists because $G$ is an $\mathcal{O}$-piecewise full group: this way $h \in G(x)$, since $x \notin \overline{I}$. We then have $h^{-1} f^{-1} g|_I = id|_I$, in particular $h^{-1} f^{-1} g \in G(y)$, and since $f h \in G(x)$ we obtain $g = (f h) (h^{-1} f^{-1} g) \in G(x) G(y)$.

Now, fix distinct points $x, y$ and let $s \in G$ be such that $sx, sy \notin \{ x, y \}$. If $g \in G$ is such that $gy \in \{x, y \}$, then $(sg)y \notin \{x, y\}$. It follows from the above claim that $G = G(x)G(y) \cup s^{-1} G(x) G(y)$. Moreover, by hypothesis there exist finite sets $F_x \subset G(x), F_y \subset G(y)$ such that $G(x) = F_x G(x)'$ and $G(y) = F_y G(y)'$. Thus:
\begin{align*}
    G &= G(x) G(y) \cup s^{-1} G(x) G(y) \\
    &= \{1, s^{-1}\} . \left( \bigcup\limits_{f_x \in F_x} f_x G(x)' \right) . \left( \bigcup\limits_{f_y \in F_y} f_y G(y)' \right) \\
    &= \{1, s^{-1}\} . \bigcup\limits_{f_x \in F_x, f_y \in F_y} f_x G(x)' f_y G(y)' \\
    &= \{1, s^{-1}\} . \bigcup\limits_{f_x \in F_x, f_y \in F_y} f_x f_y G(f_y^{-1} . x)' G(y)' \\
    &\subset \{ 1, s^{-1} \} . F_x . F_y . H.
\end{align*}
It follows that $H$ has finite index in $G$. More precisely, the above equations show that there exists a finite set $F$ and $x_f, y_f \in \mathcal{O}$ for each $f \in F$ such that $G = \bigcup_{f \in F} f G(x_f)' G(y_f)'$. But if $g = f G(x_f)' G(y_f)' \in H$, then it follows that $f \in H$ too. Therefore $H = \bigcup_{f \in F \cap H} f G(x_f)' G(y_f)'$. Since $F \cap H$ is finite, from the definition of $H$ we obtain that $H$ is boundedly generated by the subgroups $\{ G(x)' : x \in \mathcal{O} \}$.

We end by showing that $H$ is uniformly simple, which implies that $H$ is hereditary $\nl$ by Corollary \ref{cor:nl:uperfect}, and allows to conclude the proof. This will follow the same outline as the proof of uniform simplicity of $T_{\tau}'$ from \cite{spectrum:preprint}, and implies the conclusion by an argument similar to the one given in the proof of Corollary \ref{cor:cantor:criterion}.

We can see $G(x)$ as a subgroup of the homeomorphism group of the open interval $Y := X \setminus \{ x \}$, which is moreover \emph{boundedly supported}, in the sense that every element has support contained in a compact subset of $Y$. Moreover, the action of $G(x)$ on $Y$ is transitive on ordered quadruples in $\mathcal{O} \cap Y$: this follows from the fact that $G$ is transitive on positively ordered sextuples in $X$, and it is an $\mathcal{O}$-piecewise full group. We can now apply \cite[Theorem 3.1]{usimple}, which states that if a group with a boundedly supported action on a linearly ordered set is \emph{proximal}, then its commutator subgroup is $6$-uniformly simple. By proximal we mean: for all $a < b$ and all $c < d$ there exists a group element $g$ such that $ga < c < d < gb$. Proximality is easily implied by high transitivity on a dense subset, similarly to the proof of Corollary \ref{cor:cantor:criterion}, so we conclude that $G(x)'$ is $6$-uniformly simple for every $x \in \mathcal{O}$.

Finally, to show uniformly simple, let $g \in H$. We will show that for every $x \in \mathcal{O}$ and every $f \in G(x)'$, we can write $f$ as a product of at most $24$ conjugates of $g, g^{-1}$. Since $H$ is boundedly generated by subgroups of the form $G(x)'$, it will then follow that $H$ is uniformly simple, which concludes the proof. For this, let $I \in \mathcal{I}$ be such that $I \cap g I = \varnothing$, and $x \notin \overline{I \cup g I}$. Let $h \in G$ be a non-trivial element supported on $I$. Since $I \cap g I = \varnothing$, it follows that $k = [g, h] = h^g . h^{-1}$ is non-trivial, and supported on $I \cup g I$, in particular $k \in G(x)$. Notice that $k = g . (g^{-1})^h$ is a product of two conjugates of $g, g^{-1}$. Choosing an element $s \in G(x)$ that does not commute with $k$ we have that $l := [s, k] \in G(x)'$ and $l$ can be written as a product of $4$ conjugates of $g, g^{-1}$ as follows: $l = [s, k] = k^s k^{-1} = g^s . (g^{-1})^{hs} . g^h . g^{-1}$.
By $6$-uniform simplicity of $G(x)'$, we can write $f$ as a product of $6$ conjugates of $l, l^{-1}$. Therefore $f$ is a product of $24$ conjugates of $g, g^{-1}$, which concludes the proof.
\end{proof}

\begin{example}[Stein--Thompson groups \cite{stein}]
\label{ex:T}

Let $n_1, n_2, \ldots, n_k \geq 1$ and let $\lambda := \prod n_i$. The \emph{Stein--Thompson group} $T_{n_1, \ldots, n_k}$ is the group of orientation-preserving homeomorphisms of the circle that preserve $\mathcal{O} := \mathbb{Z}[\lambda]/\mathbb{Z} \subset \mathbb{R}/\mathbb{Z} = X$, are piecewise linear functions with breakpoints in $\mathcal{O}$, and slopes in $P := \langle n_1, \ldots, n_k \rangle$.

Suppose that $n_1 = 2$ and $n_2 = 3$. Then $T_{2, 3, \ldots, n_k}$ acts highly transitively on $\mathcal{O}$ and is an $\mathcal{O}$-piecewise full group: the high transitivity can be proven as in \cite[Example 3.7]{monsters}, and relies on the fact that one of the slope generators is $2$. Moreover, $T_{2, 3, n_3, \ldots, n_k}(x)$ is perfect for all $x \in \mathcal{O}$: this follows from the perfection criterion in \cite[Theorem 2.14]{PL}, as is verified in \cite[Lemma 7.2]{liousse}. Therefore Corollary \ref{cor:circle:criterion} implies that $T_{2, 3, n_3, \ldots, n_k}$ is hereditary $\nl$.

In a simpler way, the above arguments holds for Thompson's group $T = T_2$, where all of the necessary results are well-known and can also be found in \cite{monsters, PL, liousse}. Therefore $T$ is hereditary $\nl$, which gives a strong positive answer to \cite[Question 1.5]{anthony}.
\end{example}

Note that for the above examples, uniform simplicity was already proven in \cite{liousse}.

\begin{example}[The symmetrization $QT$]
We saw in Example~\ref{ex:SymmetrizationV} how to define symmetrizations $QV = QV_2(1)$. A similar construction is possible for $QT$. The shortest definition is to take the pre-image of $T \leq V$ in $QV$ under the projection $QV \twoheadrightarrow V$. Equivalently, this amounts to fixing an embedding of the rooted binary tree $\mathcal{T}$ into $\mathbb{R}^2$ and to considering the group of bijections $\mathcal{T}^{(0)} \to \mathcal{T}^{(0)}$ induced by isotopies of the plane and preserving adjacency and left-right orders on children with only finitely many exceptions. Because of the short exact sequence
$$1 \to S_\infty \to QT \to T \to 1,$$
the combination of the previous example with Proposition~\ref{prop:extension} shows that $QT$ is hereditary $\nl$. 
\end{example}

\begin{remark}[A note on other Thompson-like groups]
    For slopes other than $2$, our arguments do not apply because of the lack of high transitivity. For instance, consider the group $F_3$ acting on the interval $(0, 1)$. The action preserves $\mathbb{Z}[1/3] \cap (0, 1)$, but there are two orbits: $\{ a/3^k : a \text{ even} \}$ and $\{ a/3^k : a \text{ odd}\}$. The corresponding $T$-like group is $T_3$, which acts on the circle preserving $\mathbb{Z}[1/3]/\mathbb{Z}$. This time the presence of rotations implies that the action is transitive, however it is not \emph{doubly transitive}, because the action of the stabilizer of $0$ on $(\mathbb{R}/\mathbb{Z} \setminus \{ 0 \})$ is conjugate to the action of $F_3$ on the open interval described above, which has two orbits in $\mathbb{Z}[1/3] \cap (0, 1)$

    Similar arguments apply for other slopes. For the same reason we cannot adapt our results to groups such as $QT_n$. Therefore in this case one needs a more careful argument in order to prove property $\nl$, and even $\ngt$.
\end{remark}

\begin{example}[Golden ratio Thompson group $T_\tau$ \cite{ttau}]

Let $\tau$ be the small golden ratio: $\tau = \frac{\sqrt{5}-1}{2}$. Let $\mathcal{O} := \mathbb{Z}[\tau]/\mathbb{Z} \subset \mathbb{R}/\mathbb{Z}$, and let $T_\tau$ be the group of orientation-preserving homeomorphisms of the circle that preserve $\mathcal{O}$, are piecewise linear with breakpoints in $\mathcal{O}$ and slopes in $\tau^{\mathbb{Z}}$. By results from \cite{ftau, ftau1, spectrum}, $T_{\tau}$ acts highly transitively on $\mathcal{O}$, is an $\mathcal{O}$-piecewise full group, and has abelianization of order $2$. Therefore Corollary \ref{cor:circle:criterion} implies that $T_{\tau}$ is hereditary $\nl$.
\end{example}

We end with an example that goes beyond the piecewise linear setting, to showcase the flexibility of our criterion. Since the definitions and structural properties of the groups involved go beyond the scope of this paper, we limit ourselves to giving precise references for each statement.

\begin{example}
Let $S$ be the finitely presented simple group of piecewise projective homeomorphisms of the circle constructed by Lodha in \cite{yashS}. By definition, Thompson's group $T$ is a subgroup of $S$, seen in its piecewise projective realization preserving the set $\mathcal{O}$ of rational points on the projective line, and $S$ preserves $\mathcal{O}$ as well. Therefore $S$ acts highly transitively on $\mathcal{O}$, since $T$ does.

Given $x \in \mathcal{O}$, the group $S(x)$ is a subgroup of the finitely presented piecewise projective group $G_0$ constructed by Lodha and Moore in \cite{yashG0}. In fact, it follows by comparing the standard forms for $S$ described in \cite[Section 3.2]{yashS} to the standard forms for $G_0$ described in \cite[Section 2.5]{yashS}, that $S(x)$ coincides with the subgroup of $G_0$ consisting of elements with compact support, such that the total sum of exponents of the $y$-generators equals $0$. By \cite[Proposition 2.2]{yashG0'} this is precisely the group $G_0'$, which is simple; in particular it is perfect \cite[Theorem 2(1)]{yashG0'}. Thus Corollary \ref{cor:circle:criterion} applies, and $S$ is hereditary $\nl$.
\end{example}

\section{Connections with other properties}\label{sec:otherprops}

In this last section, we summarize the relations between Property $\nl$ and other properties of groups. Some of these connections are fairly easy to deduce from the contents of this paper, but we record them here for possible future work. 

\subsection{Fixed point properties} Since simplicial (resp. real) trees are hyperbolic spaces, Property $\nl$ is related to Property $(\mathrm{FA})$ (resp. $(\mathrm{F}\mathbb{R})$), i.e.\ every action of the group under consideration on a simplicial (resp.\ real) tree has a global fixed point. This is recorded in the next lemma.

\begin{lemma} 
Suppose $G$ has Property $\nl$. Then $G$ does not split non-trivially as an HNN extension or an amalgamated free product. Moreover, if $G$ is not the directed union of countably many proper subgroups (in particular, if $G$ is finitely generated) then it satisfies the properties $(\mathrm{FA})$ and $(\mathrm{F}\mathbb{R})$.
\end{lemma}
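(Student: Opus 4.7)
The plan is to deduce both assertions from Bass--Serre theory together with the classification of isometries on (simplicial or real) trees.

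For the first assertion, suppose $G$ splits non-trivially as an amalgamated free product $G = A *_C B$ (with $C$ a proper subgroup of both $A$ and $B$) or as an HNN extension $G = A *_C$. In either case, Bass--Serre theory provides a natural action of $G$ on its Bass--Serre tree $T$, which is a simplicial tree and hence a $0$-hyperbolic space. In the amalgamated case, any cyclically reduced word of the form $g = a_1 b_1 \cdots a_n b_n$ with $a_i \in A \setminus C$ and $b_i \in B \setminus C$ acts loxodromically on $T$; in the HNN case, the stable letter acts loxodromically. Non-triviality of the splitting guarantees the existence of such elements, producing a hyperbolic action of $G$ with a loxodromic element and contradicting Property $\nl$.

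For the second assertion, assume $G$ is not a directed union of countably many proper subgroups, and let $G$ act on a simplicial or real tree $T$. The crucial input is that every isometry of such a tree is either elliptic or hyperbolic; no parabolic type can occur. This is classical for simplicial trees, and for real trees it is a theorem of Culler--Morgan (see also Chiswell's book on $\Lambda$-trees). Since $\nl$ forbids loxodromic (equivalently hyperbolic) elements on any hyperbolic space, every element of $G$ fixes a point of $T$. At this stage I would invoke Serre's fixed-point theorem together with its $\mathbb{R}$-tree counterpart: a group acting on a (simplicial or real) tree with every element elliptic has a global fixed point, provided the group is not an ascending union of countably many proper subgroups. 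Applying this to our action yields $(\mathrm{FA})$ and $(\mathrm{F}\mathbb{R})$ respectively.

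The step I expect to require most care is the $\mathbb{R}$-tree analogue of Serre's theorem: its proof combines the classification of isometries on real trees (cited above), a Helly-type property showing that pairwise intersecting closed subtrees share a point, and finally a transfinite / directed-union argument promoting finite intersection into an actual global fixed point under the countable-union hypothesis. For simplicial trees this is Serre's original argument (Proposition 26 of \emph{Trees}); in the real tree setting the same outline works but the references need to be chosen carefully.
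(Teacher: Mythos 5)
Your proposal is correct and follows essentially the same strategy as the paper. The first assertion (Bass--Serre) is identical. For the second assertion, you deduce that every element is elliptic (no loxodromics by $\nl$, no parabolics on trees) and then invoke a Serre-type fixed-point theorem under the countable-cofinality hypothesis; the paper instead avoids citing a general theorem and proves the needed special case directly, by observing that a non-elliptic action must be horocyclic, that in that case every element pointwise fixes a terminal ray toward the fixed end $\xi$, and hence that $G = \bigcup_n \mathrm{Fix}(\rho_n)$ for a ray $\rho$ to $\xi$ -- a countable increasing chain which, by hypothesis, must stabilize. This makes the paper's argument cleaner than the Helly-plus-transfinite route you outline (which works, but the Helly property for infinitely many subtrees is exactly where the end-at-infinity subtlety and the countability hypothesis must be handled with care), so if you wish to make your sketch self-contained you should spell out that pairwise-intersecting subtrees may share only an end, and that ruling this out is precisely where the ``not a countable directed union of proper subgroups'' hypothesis enters.
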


\begin{proof} The first assertion follows from the fact that (non-trivial) amalgamated free products and HNN extensions act on simplicial trees with loxodromics; see for instance \cite{Serre}. Next, let $G$ act on a real tree $T$. Because $G$ has $\nl$, the action must be either elliptic or horocyclic. In the former case, $G$ has a global fixed point. In the latter case, $G$ globally fixes a point at infinity $\xi \in \partial T$ and all its elements are elliptic (as there are no parabolic isometries for an action on a tree). As a consequence, every element of $G$ fixes pointwise some infinite ray pointing to $\xi$. Fixing an infinite ray $\rho$ pointing to $\xi$, it follows that $G$ can be written as the union of the fixators $\mathrm{Fix}(\rho_n)$, where $\rho_n$ denotes the infinite ray obtained from $\rho$ by removing an initial segment of length $n$. Thus, if $G$ is not the directed union of countably many proper subgroups, then there exists some $n \geq 0$ such that $\mathrm{Fix}(\rho_n) = G$, which therefore has a global fixed point. 
\end{proof}

A fixed point property can also be deduced from $\nl$ for higher dimensional analgues of trees, namely finite dimensional CAT(0) cube complexes. As an immediate consequence of \cite[Theorem~5.1]{anthony}, we have the following.

\begin{proposition}
If a group is hereditary $\nl$, then every action on a finite-dimensional CAT(0) cube complex has a global fixed point.
\end{proposition}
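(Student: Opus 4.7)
The plan is to invoke \cite[Theorem~5.1]{anthony} as a black box; the introductory sentence of the proposition already indicates that this is where all the work happens. The content of that theorem is a translation from fixed-point-free actions on finite-dimensional CAT(0) cube complexes to hyperbolic actions with loxodromic elements at the level of finite-index subgroups, which is exactly what the hereditary version of Property $\nl$ is designed to forbid.

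Concretely, I would argue by contradiction. Suppose that a hereditary $\nl$ group $G$ admits an action on a finite-dimensional CAT(0) cube complex $Y$ without a global fixed point. Applying \cite[Theorem~5.1]{anthony} would then produce a finite-index subgroup $H \leq G$ together with an isometric action of $H$ on a hyperbolic space containing a loxodromic element. Since $G$ is hereditary $\nl$, $H$ itself has Property $\nl$, which prohibits any such hyperbolic action; this is the desired contradiction, and hence every action of $G$ on a finite-dimensional CAT(0) cube complex must fix a point.

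The main (and essentially only) point to verify is that \cite[Theorem~5.1]{anthony} really outputs a loxodromic isometry for a hyperbolic action of a finite-index subgroup, so that the hereditary assumption is exactly what one needs to feed in. If it does, the proof is a pure quotation. If that theorem is stated in a more indirect form, a standard fallback is the Caprace--Sageev machinery: one replaces $Y$ by its essential core, which is preserved by a finite-index subgroup $H \leq G$, applies the double skewering lemma to produce strongly separated hyperplanes skewered by a group element, and observes that this element acts loxodromically on a suitable hyperbolic graph associated with the cube complex (for instance, the contact graph, or a Bass--Serre-type tree coming from a minimal invariant subcomplex), which again contradicts Property $\nl$ for $H$.
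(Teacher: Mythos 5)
Your primary argument is exactly what the paper does: the paper also gives no independent proof and simply declares the proposition an immediate consequence of \cite[Theorem~5.1]{anthony}, noting that finite-dimensional CAT(0) cube complexes are ``built from hyperbolic spaces,'' so that hereditary $\nl$ (applied to the finite-index subgroup that the cited theorem produces, stabilizing the relevant factor/structure) rules out the fixed-point-free action. Your Caprace--Sageev fallback via the essential core, double skewering, and the contact graph is a reasonable sanity check of the same mechanism, but the paper does not spell out any such alternative and relies purely on the citation.
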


As shown in \cite{anthony}, this proposition is a consequence of the fact that finite-dimensional CAT(0) cube complexes are built from hyperbolic spaces. In the same spirit, one can reasonably expect that every action of an $\nl$ group on a hierarchically hyperbolic space has no loxodromic. This assertion does not follow immediately from the existing literature, and a proof would go beyond the scope of this article, so we only record the following observation. 

\begin{proposition}
A hierarchically hyperbolic group is hereditary $\nl$ if and only if it is finite.
\end{proposition}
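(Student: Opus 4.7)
The ``if'' direction is immediate, since finite groups are torsion and hence hereditary $\nl$. For the converse, suppose $G$ is an infinite hierarchically hyperbolic group with HHG structure $(G, \mathfrak{S})$; my plan is to show $G$ is not $\nl$, by induction on the complexity of $\mathfrak{S}$, i.e.\ the length of the longest $\sqsubseteq$-chain.

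For the base case (complexity one), the HHG distance formula collapses to the statement that $G$ is quasi-isometric to the top-level hyperbolic space $\mathcal{C}S$, so $G$ is word-hyperbolic. An infinite hyperbolic group always contains a loxodromic element for its action on its own Cayley graph, and in particular fails Property $\nl$.

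For the inductive step, I would first invoke the theorem of Behrstock--Hagen--Sisto that $G$ acts acylindrically on $\mathcal{C}S$. Osin's classification of acylindrical actions on hyperbolic spaces then says this action must be elliptic, lineal, or non-elementary of general type; the last two cases immediately produce a loxodromic element of $G$, contradicting $\nl$. I am therefore reduced to the case where $G$ acts on $\mathcal{C}S$ with bounded orbits. In this regime, the product-region machinery of HHG theory shows that, after passing to a finite-index subgroup $G_0 \leq G$, we can write $G_0 \cong H_1 \times \cdots \times H_k$ where each $H_i$ is an HHG of strictly smaller complexity (arising from an orthogonal decomposition of $S$ once the top level has been ``turned off''). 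Since hereditary $\nl$ is stable under passage to finite-index subgroups by definition and under taking direct factors (via stability under quotients, Proposition~\ref{prop:DirectSums}), each $H_i$ is an HHG which is hereditary $\nl$ and of smaller complexity. The inductive hypothesis then forces each $H_i$ to be finite, whence $G_0$, and therefore $G$, is finite --- contradicting the assumption that $G$ is infinite.

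The main technical obstacle is the product decomposition invoked in the inductive step: the precise statement that a bounded top-level action forces a finite-index subgroup to virtually split as a direct product of HHGs of strictly smaller complexity is implicit in the foundational HHG literature and has been refined in subsequent work on standard product regions and factor systems, but it does not appear cleanly as a single cited theorem. Every other ingredient --- the acylindricity of the top-level action, Osin's trichotomy, and the stability properties of hereditary $\nl$ from Section~\ref{s:stability} --- is by now standard.
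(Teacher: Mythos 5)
Your route is genuinely different from the paper's, and it does not go through. The paper does not induct on complexity at all. It invokes the result of Behrstock--Hagen--Sisto [cited in the paper as Theorem~3.2] that furnishes a finite $G$-invariant collection $\mathcal{E}(G) = \{W_1, \ldots, W_n\}$ of pairwise orthogonal domains with unbounded associated spaces, such that every unbounded domain of $\mathfrak{S}$ is nested in one of them. A finite-index subgroup $H$ of $G$ stabilizes each $W_i$, and since HHG projections $\pi_{W_i}\colon G \to \mathcal{C}W_i$ are uniformly coarsely surjective, $H$ acts coboundedly on each $\mathcal{C}W_i$. A cobounded action is never horocyclic, so $H$ being $\nl$ forces the action to be elliptic, i.e.\ $\mathcal{C}W_i$ bounded --- a contradiction. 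Hence all domains are bounded and the distance formula bounds $G$. This reaches into the middle of the HHG hierarchy directly, with no appeal to acylindricity, Osin's trichotomy, or any product splitting, and no need for induction.

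The gap in your proposal is the product decomposition you invoke in the inductive step, and you were right to flag it as the weak point. It is simply not a theorem that a bounded top-level $\mathcal{C}S$ forces a finite-index subgroup of $G$ to decompose as a group-theoretic direct product $H_1 \times \cdots \times H_k$ of HHGs of strictly smaller complexity. When $\mathcal{C}S$ is bounded, the HHG machinery does give you a \emph{coarse geometric} product structure (standard product regions, the orthogonal family $\mathcal{E}(G)$), but this need not descend to any virtual direct-product splitting of the group itself; the relevant cautionary examples are groups acting geometrically on genuine products of unbounded hyperbolic spaces that are nonetheless not virtually direct products (irreducible cocompact lattices in products of trees are the archetype, and simplicity of some of these rules out any such splitting). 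Even granting the splitting, you would separately need to argue that each factor has strictly smaller complexity, which is not automatic. The fix is to drop the induction entirely and argue as the paper does: find a single domain $W$ with $\mathcal{C}W$ unbounded that is stabilized by a finite-index subgroup, and use coboundedness of $\pi_W$ to exclude the horocyclic case.
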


\begin{proof}
Finite groups are obviously hereditary $\nl$. So let $(G,\mathfrak{S})$ be a hierarchically hyperbolic group, and suppose that $\mathfrak{S}$ contain an unbounded domain. According to \cite[Theorem~3.2]{HHG}, there exists a finite and $G$-invariant collection of pairwise orthogonal domains $\mathcal{E}(G):= \{W_1, \ldots, W_n\} \subset \mathfrak{S}$ such that every unbounded domain in $\mathfrak{S}$ is nested in some element of $\mathcal{E}(G)$. Let $H \leq G$ be a finite-index subgroup. We may assume without loss of generality that $H$ stabilizes each domain in $\mathcal{E}(G)$. So $H$ acts on each hyperbolic space $\mathcal{C}W_i$. Because the projection $G \to \mathcal{C}W_i$ is coarsely surjective by definition, this action must be cobounded. Since $H$ is $\nl$ and cobounded actions are never horocyclic, it follows that each $\mathcal{C}W_i$ is bounded, which is a contradiction. Thus $\mathfrak{G}$ does not contain an unbounded domain. We conclude (for instance from the distance formula) that $G$ is bounded, which amounts to saying that $G$ is finite.
\end{proof}

\subsection{Actions on products} Given an integer $k \geq 1$, say that a group $G$ satisfies Property $(\mathrm{NL}_k)$ if every isometric action on a product of $\leq k$ hyperbolic graphs has no loxodromic element. Here, we equip the product space $\prod X_i$ with the $\ell^1$ metric and by a loxodromic element, we mean an element $g \in G$ such that the orbit map $n \to g^n.x$ is a quasi-isometric embedding for some (equivalently any) basepoint $x \in \prod X_i$.

\begin{proposition}
A group $G$ has $(\mathrm{NL}_k)$ if and only if every subgroup of index $\leq k$ in $G$ has $\nl$. 
\end{proposition}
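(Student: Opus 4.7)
The strategy is to establish a direct correspondence between loxodromic isometric $G$-actions on products of $\leq k$ hyperbolic graphs and loxodromic isometric actions of subgroups of index $\leq k$ on single hyperbolic graphs. The key technical point is that the $\ell^1$ distance on a product is additive, so loxodromicity on the whole product is equivalent, after passing to a suitable power, to loxodromicity on at least one factor.

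For the ``if'' direction, suppose every index-$\leq k$ subgroup of $G$ is $\nl$, and let $G$ act isometrically on $Y = X_1 \times \cdots \times X_\ell$ with $\ell \leq k$ and each $X_i$ a hyperbolic graph, under the natural convention that the action permutes the factors. The resulting homomorphism $G \to \mathrm{Sym}(\{1, \ldots, \ell\})$ has the property that the stabilizer $H_i$ of any factor satisfies $[G : H_i] \leq \ell \leq k$, and $H_i$ inherits an isometric action on $X_i$ by projection. Suppose for contradiction that $g \in G$ is loxodromic on $Y$. A suitable power $g^n$ lies in $\bigcap_i H_i$ and is still loxodromic on $Y$, and since it acts diagonally, additivity of the $\ell^1$ metric gives
\[
d_Y(g^{nj} \cdot y, y) = \sum_{i=1}^\ell d_{X_i}(g^{nj} \cdot x_i, x_i),
\]
which grows linearly in $j$. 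Hence at least one summand grows linearly, exhibiting $g^n$ as a loxodromic element of $H_{i_0}$ acting on $X_{i_0}$, contradicting that $H_{i_0}$ has $\nl$.

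For the ``only if'' direction, assume $G$ has $\nlk$, let $H \leq G$ with $m := [G : H] \leq k$, and suppose for contradiction that $H$ is not $\nl$, admitting a loxodromic isometric action on a hyperbolic graph $X$ via some element $h_0 \in H$. I would invoke the classical induced-action construction: fix coset representatives $g_1 = 1, g_2, \ldots, g_m$ for $G/H$ and decompose each $g \in G$ uniquely as $g g_i = g_{\sigma_g(i)} h_i^{(g)}$ with $\sigma_g \in S_m$ and $h_i^{(g)} \in H$. Then $G$ acts isometrically on $Y := X^m$ (with $\ell^1$ metric) by permuting coordinates according to $\sigma_g$ and applying $h_i^{(g)}$ on the $i$-th factor, yielding an action on a product of $m \leq k$ hyperbolic graphs. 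Replacing $h_0$ by a power $h := h_0^N$ that annihilates the coset permutation, $h$ acts on $Y$ diagonally as $(g_1^{-1} h g_1, \ldots, g_m^{-1} h g_m)$; on the first factor this is $h$ itself, still loxodromic on $X$. Thus $d_Y(h^j \cdot y, y) \geq d_X(h^j \cdot x_1, x_1)$ grows linearly, while all factors together contribute at most linearly, so $h$ is loxodromic on $Y$, contradicting $\nlk$.

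The only subtle ingredient is the standing convention that an isometric action on a product permutes the factors and acts factor-wise by isometries; this is the natural interpretation of the definition and what gives access to the additive $\ell^1$-distance formula driving both directions. With this convention granted, the argument is essentially a formal translation between loxodromicity on a product and loxodromicity on a single factor, mediated by factor-stabilizers of index $\leq k$ on one side and by the induced-action construction on the other.
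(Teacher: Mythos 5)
Your overall strategy matches the paper's: in one direction pass to the stabilizer of a factor, and in the other use the induced action on the product of coset copies of the space. The computations with $\ell^1$-additivity and the passage to powers that trivialize the coset permutation are correct.

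However, there is a genuine gap, which you flag but do not resolve. Property $\nlk$ as defined in the paper quantifies over \emph{all} isometric actions on $\prod X_i$ with the $\ell^1$ metric, i.e.\ over all homomorphisms $G \to \mathrm{Isom}(\prod X_i)$. You declare as a ``standing convention'' that such an action permutes the factors and acts factor-wise by isometries. But that is not part of the definition and is not automatic from the metric: it is a nontrivial structural theorem about the isometry group of an $\ell^1$-product of hyperbolic graphs. The paper explicitly invokes \cite[Theorem~6.1]{Button} precisely to deduce that $G$ preserves the product structure, possibly permuting isomorphic factors. Without this input, your ``if'' direction only treats the subclass of actions for which the factor-permuting property is assumed, so it does not establish $\nlk$ from the hypothesis on index-$\leq k$ subgroups. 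To close the gap, replace the appeal to a ``convention'' with the citation of Button's rigidity theorem (or some equivalent de Rham--type decomposition result for $\ell^1$-products of hyperbolic spaces); after that, the remainder of your argument goes through essentially as the paper's does.
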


\begin{proof}
Assume that $G$ does not have $(\mathrm{NL}_k)$, i.e.\ $G$ admits an action on a product $X= X_1 \times \cdots \times X_r$ of $r \leq k$ hyperbolic graphs with some loxodromic element $g \in G$. According to \cite[Theorem~6.1]{Button}, $G$ preserves the product structure of $X$, possibly permuting the isomorphic factors. Consequently, the stabilizer $H$ of a factor, which has index $\leq r$ in $G$, acts on a hyperbolic graph with a loxodromic element (namely a power of $g$ belonging to $H$). So $G$ contains a subgroup of index $\leq k$ which does not satisfy $\nl$.

\medskip 
Conversely, assume that $G$ contains a subgroup $H$ of index $r \leq k$ admitting an action on a hyperbolic space $X$ with a loxodromic element $h \in H$. Up to replacing $X$ by the graph whose vertices are the points in $X$ and whose edges connect any two points at distance $\leq 1$, we can assume without loss of generality that $X$ is a graph. Then, the action of $H$ on $X$ classically extends to an action of $G$ on $X^r$, and $h$ remains loxodromic. So $G$ fails to have $(\mathrm{NL}_k)$.
\end{proof}

\begin{corollary}
A group is hereditary $\nl$ if and only if every isometric action on a product of finitely many hyperbolic graphs has no loxodromic element.
\end{corollary}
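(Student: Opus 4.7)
The plan is to derive this corollary directly from the preceding proposition by quantifying over all $k \geq 1$. By definition, $G$ is hereditary $\nl$ if and only if every finite-index subgroup of $G$ has $\nl$, which is the same as saying that for every $k \geq 1$, every subgroup of index $\leq k$ has $\nl$. By the preceding proposition, this is in turn equivalent to $G$ satisfying $(\mathrm{NL}_k)$ for every $k \geq 1$. So I only need to check that the hypothesis ``every isometric action on a product of finitely many hyperbolic graphs has no loxodromic element'' is exactly the assertion that $(\mathrm{NL}_k)$ holds for every $k \geq 1$.

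In one direction this is immediate from the definition of $(\mathrm{NL}_k)$: if $(\mathrm{NL}_k)$ holds for all $k$, then any action on a product of finitely many hyperbolic graphs falls under $(\mathrm{NL}_k)$ for $k$ equal to the number of factors, and so admits no loxodromic elements. Conversely, if $G$ admits an action on a product $X_1 \times \cdots \times X_r$ of $r$ hyperbolic graphs with a loxodromic element, then it fails $(\mathrm{NL}_r)$.

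The one mild point to address is that $(\mathrm{NL}_1)$ should coincide with $\nl$ as defined in the introduction, since the latter is stated for general hyperbolic \emph{spaces} rather than hyperbolic \emph{graphs}. This is handled by the standard graph-replacement trick already used in the proof of Lemma~\ref{lem:NormalElliptic} and in the preceding proposition: given any hyperbolic space $X$ on which $G$ acts, replace it by the graph whose vertex set is $X$ and whose edges connect pairs of points at distance $\leq 1$. The natural inclusion is a $G$-equivariant quasi-isometry, so it preserves loxodromics. Thus the corollary follows as a one-line consequence of the proposition. I do not expect any substantive obstacle beyond making these bookkeeping points explicit.
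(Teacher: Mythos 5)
Your argument is correct and is precisely the intended one: the corollary follows from the preceding proposition by quantifying over all $k \geq 1$, since hereditary $\nl$ is exactly the conjunction of $(\mathrm{NL}_k)$ over all $k$. The remark about replacing a hyperbolic space with the associated graph to reconcile $(\mathrm{NL}_1)$ with $\nl$ is a sensible bookkeeping point that the paper elides.
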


\subsection{Hyperbolic structures} In \cite{ABO}, the authors study the poset of hyperbolic structures on a group $G$, denoted $\HG$. The poset consists of equivalence classes of \emph{cobounded} isometric actions on hyperbolic spaces, with a partial order that roughly corresponds to collapsing equivariant families of subspaces to get the smaller action from the larger. Equivalently, one can also think of hyperbolic structures as equivalence classes of (not necessarily finite) generating sets of $G$. As the precise definition of the poset is not necessary for this paper, we refer the reader to \cite[Section 3]{ABO} 
 for definitions and state only the following relevant result here.  

\begin{theorem}[{\cite[Theorem 4.6]{ABO}}]
For any group $G$, $$\HG = \Hl_e(G) \sqcup \Hl_\ell(G) \sqcup \Hl_{qp}(G) \sqcup \Hl_{gt}(G),$$ where $\Hl_e(G), \Hl_\ell(G), \Hl_{qp}(G), \Hl_{gt}(G)$ denote the subposets of elliptic, lineal, focal and general type actions. Moreover, $\Hl_e(G) = \{ [G]\}$ for any group $G$, and this is referred to as the trivial structure. 
\end{theorem}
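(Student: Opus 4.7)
The plan is to prove the two assertions separately: first, the four-way decomposition of $\HG$ based on action type; and second, the identification $\Hl_e(G) = \{[G]\}$.

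For the decomposition, my approach is to verify that the type of a cobounded hyperbolic action, in the sense of Theorem \ref{thm:ClassHypAct}, is an invariant of the hyperbolic structure it represents. Recall that two cobounded actions $G \curvearrowright X$ and $G \curvearrowright Y$ are equivalent as hyperbolic structures precisely when there is a $G$-coarsely equivariant quasi-isometry $X \to Y$. Such a quasi-isometry between hyperbolic spaces extends to an equivariant homeomorphism of Gromov boundaries, and hence sends limit sets to limit sets bijectively while preserving the existence (or not) of a global $G$-fixed point at infinity. Since the classification of Theorem \ref{thm:ClassHypAct} is phrased entirely in terms of the cardinality $|\Lambda(G)|$, together with, in the infinite case, whether $G$ fixes a point of $\partial X$, the type is preserved under the equivalence. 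This partitions $\HG$ into the five strata of Theorem \ref{thm:ClassHypAct}; the coboundedness assumption then excludes the horocyclic case by Theorem \ref{thm:ClassHypAct}(2), yielding the claimed four-term decomposition. The subposet structure on each stratum is just the restriction of the partial order on $\HG$.

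For the second assertion, the trivial action of $G$ on a single point is manifestly cobounded and elliptic, and its equivalence class is by definition the trivial structure $[G] \in \Hl_e(G)$. Conversely, suppose $G \curvearrowright X$ is any cobounded elliptic action. By definition of elliptic, all $G$-orbits in $X$ are bounded, and coboundedness then forces $X$ itself to have bounded diameter. A bounded hyperbolic space admits a $G$-coarsely equivariant quasi-isometry to a point (any map to a point is automatically equivariant and a quasi-isometry onto its image from a bounded space), so $G \curvearrowright X$ is equivalent to the trivial action. Hence every cobounded elliptic action represents the same hyperbolic structure.

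The one subtle point on which the argument hinges is the invariance of the action type under equivalence of hyperbolic structures, which itself reduces to the boundary extension of quasi-isometries between Gromov hyperbolic spaces; this is standard but worth spelling out carefully, particularly to distinguish focal from general type via the $G$-fixed point on $\partial X$. Once this invariance is in hand, both the partition and the identification of the elliptic stratum become routine.
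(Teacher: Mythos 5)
Your proof is correct, and since this theorem is quoted in the paper directly from \cite[Theorem~4.6]{ABO} without a reproduction of the argument, the relevant comparison is with the source; your reasoning follows the same lines as the proof there. The two essential ingredients you isolate—that a coarsely $G$-equivariant quasi-isometry extends to a $G$-equivariant boundary homeomorphism (hence preserves $|\Lambda(G)|$ and the existence of a global fixed point on $\partial X$, and therefore the type), and that coboundedness rules out the horocyclic stratum while forcing a cobounded elliptic action to live on a bounded space—are exactly the two lemmas used in \cite{ABO}.
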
 

As the discussion of hyperbolic structures involves considering only cobounded actions, we could define cobounded versions of the properties considered in this paper. For instance, Property $\ngt_{cb}$ is the property that no \emph{cobounded} action of a group on a hyperbolic space is of general type, and $\nne_{cb}$ and $\nl_{cb}$ defined analogously. However, it turns out that these cobounded versions of the properties are equivalent to the properties themselves -- this will be proved in the appendix (Corollary \ref{cor:coboundedgt}), since the result and proof are of independent interest. The result also has strong implications for the relation between the properties studied in this paper and the structure of $\HG$, summerized by the following corollary.

\begin{corollary}\label{cbdabtHG}
\begin{enumerate}
    \item G has Property $\ngt$ if and only if $\mathcal{H}_{gt}(G) = \emptyset$;
    \item G has Property $\nne$ if and only if $\HG = \Hl_e(G) \sqcup \Hl_\ell(G)$;
    \item G has Property $\nl$ if and only if $\HG$ is trivial. 
    \end{enumerate}
\end{corollary}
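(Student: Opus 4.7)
The plan is to handle each of the three equivalences by separating the forward and backward directions. In all three cases, the forward implication is essentially tautological from the observation that $\HG$ only records cobounded actions on hyperbolic spaces: if $G$ has $\ngt$ (respectively $\nne$, $\nl$), then \emph{a fortiori} no cobounded general type (respectively, non-elementary, loxodromic-containing) action exists, so the corresponding strata of $\HG$ are empty. The only subtlety arises in the forward direction of (3): Property $\nl$ permits only elliptic and horocyclic actions, but by Theorem \ref{thm:ClassHypAct}(2) horocyclic actions are never cobounded, so the only cobounded actions that $G$ can admit are elliptic. Since $\Hl_e(G) = \{[G]\}$ holds for every group, this yields $\HG = \{[G]\}$ trivial.

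The backward directions all require bridging the gap between cobounded actions (which define $\HG$) and arbitrary actions (which appear in the definitions of $\ngt$, $\nne$, $\nl$). This is exactly the content of the appendix's Corollary \ref{cor:coboundedgt}, which produces, from an arbitrary hyperbolic action of a given type (general type / focal / lineal), a cobounded action of the same type. Granting this, each backward direction is routine. For (1), if $\Hl_{gt}(G) = \emptyset$ and $G$ admitted a general type action, the appendix would produce a cobounded one, contradicting $\Hl_{gt}(G) = \emptyset$. For (2), the same argument applied to focal and general type actions rules out every non-elementary action, giving $\nne$. For (3), if $\HG$ is trivial then no cobounded lineal, focal, or general type action exists, so by the appendix no such action exists at all; by Theorem \ref{thm:ClassHypAct} these are the only types of actions that can contain loxodromic elements, so $G$ has $\nl$.

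The main (and essentially only) substantive content of the argument is the appendix's Corollary \ref{cor:coboundedgt}; once it is in hand, Corollary \ref{cbdabtHG} is a formal consequence of the classification in Theorem \ref{thm:ClassHypAct} and the definitions. The minor bookkeeping issue to watch out for is the asymmetry in (3) noted above, where one must invoke the non-coboundedness of horocyclic actions to conclude triviality of $\HG$ from $\nl$; in the other two statements there is no such mismatch, as horocyclic and elliptic actions are already excluded from the relevant strata of $\HG$ under consideration.
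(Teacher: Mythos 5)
Your proposal is correct and follows the same approach as the paper: the paper dismisses one direction of each equivalence as immediate and credits the other to Corollary \ref{cor:coboundedgt}. You usefully flesh out the one non-obvious detail the paper elides, namely that the forward direction of (3) additionally needs the fact from Theorem \ref{thm:ClassHypAct}(2) that horocyclic actions are never cobounded, so that $\nl$ forces all cobounded actions to be elliptic.
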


\begin{proof}
    For each implication, one direction is immediate; the other implication is a direct application of Corollary \ref{cor:coboundedgt}.
\end{proof}

It is also worth noting that the structure of the poset $\HG$ is not necessarily preserved by finite-index subgroups. 

\begin{example}\label{ex:finindexposet}
The group $\mathbb{D}_\infty \times \mathbb{D}_\infty$ has 3 hyperbolic structures. However, it contains a index 4 subgroup isomorphic to $\Z \times \Z$, which has uncountably many hyperbolic structures; see \cite[Theorem 2.3 and Example 4.23]{ABO}.  \end{example}

An open question from \cite{ABO} consequently seeks to explore under what conditions the structure of $\HG$ might be preserved by finite-index subgroups. A partial answer is provided by the following easy observation. 

\begin{corollary} Let $G$ be a hereditary $\nl$ group. Then the structure of $\HG$ is preserved by finite-index subgroups. \end{corollary}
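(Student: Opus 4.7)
The plan is essentially immediate from the definitions and Corollary \ref{cbdabtHG}(3). Since $G$ is hereditary $\nl$, by definition every finite-index subgroup $H \leq G$ has Property $\nl$. Applying Corollary \ref{cbdabtHG}(3) to both $G$ and $H$, we conclude that $\Hl(G)$ and $\Hl(H)$ are both trivial, i.e.\ each poset consists of a single element (the class of the elliptic action on a point). In particular, any order-preserving bijection between singletons witnesses that the two posets have the same structure.

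Concretely, I would write the argument as follows. First, invoke the definition of hereditary $\nl$ to observe that for any $H \leq G$ of finite index, $H$ itself satisfies $\nl$. Then cite Corollary \ref{cbdabtHG}(3), which characterizes $\nl$ as triviality of the poset $\HG$: applied to $G$ gives $\Hl(G) = \{[G]\}$, and applied to $H$ gives $\Hl(H) = \{[H]\}$. Hence both posets have exactly one element, and the structure is trivially preserved under the passage from $G$ to $H$.

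There is no real obstacle here: the entire content is already packaged inside Corollary \ref{cbdabtHG}, which in turn relies on the appendix's result (Corollary \ref{cor:coboundedgt}) converting arbitrary hyperbolic actions into cobounded ones while preserving the type. The only thing to be careful about is conceptual: one should emphasize that ``preservation of structure'' here is vacuous precisely because the posets are as simple as possible, so the statement is really an observation illustrating that hereditary $\nl$ is the strongest form of rigidity for the poset of hyperbolic structures under commensurability, complementing Example \ref{ex:finindexposet} which shows that without hereditary $\nl$ the structure of $\HG$ can vary dramatically under passage to finite-index subgroups.
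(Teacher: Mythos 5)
Your proof is correct and is exactly the intended argument: the paper states this corollary without proof, calling it an ``easy observation,'' and the observation is precisely the one you make---hereditary $\nl$ forces every finite-index subgroup (including $G$ itself) to have $\nl$, and Corollary \ref{cbdabtHG}(3) then collapses each of their posets of hyperbolic structures to a single point.
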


\appendix
\section{Appendix: Passing to a cobounded action (by Alessandro Sisto)}

The goal of this appendix is to describe a construction of cobounded actions on hyperbolic spaces starting from possibly non-cobounded ones. This construction preserves many properties of the original action, such as being of general type. The key result is the following, whose proof roughly speaking says that, starting with an action on a hyperbolic space and a quasi-convex subset of an orbit, we can cone-off all geodesics far away from the orbit to obtain a new hyperbolic space that still contains a copy of the quasiconvex set, and furthemore geodesics are ``preserved'' by this coning-off.

\begin{proposition}\label{prop:cobounded}
 Let the group $G$ act on the hyperbolic space $X$, let $o\in X$, and let $Q\geq 0$. Then there exists $K>0$, some hyperbolic space $Y$ with a cobounded $G$-action and a $G$-equivariant coarsely Lipschitz map $\pi:X\to Y$ such that $\pi|_Z$ is a $(K,K)$-quasi-isometric embedding for any $Q$-quasiconvex subspace $Z$ of $G\cdot o$. Moreover, there exists $D$ such that given any geodesic $[x,y]$ in $X$, we have that $\pi([x,y])$ lies at Hausdorff distance at most $D$ from any geodesic with endpoints $\pi(x)$, $\pi(y)$.
\end{proposition}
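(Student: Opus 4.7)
The plan is to construct $Y$ by $G$-equivariantly coning off the parts of $X$ lying far from the orbit $G\cdot o$, so that the result has cobounded $G$-action while the quasiconvex subsets of the orbit remain quasi-isometrically embedded. Fix $R$ much larger than the hyperbolicity constant $\delta$ of $X$ and the quasiconvexity constant $Q$. The guiding observation is that a $Q$-quasiconvex subset $Z \subset G \cdot o$ has all its $X$-geodesics staying $Q$-close to $Z$, hence well inside the $R$-neighbourhood $N_R(G\cdot o)$. So any modification of $X$ that only affects the complement of $N_R(G\cdot o)$ leaves the metric between points of $Z$ undisturbed, as long as it does not introduce shortcuts through the far region.

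Concretely, I would take a $G$-invariant family $\mathcal{U}$ of subspaces covering $X \setminus N_R(G\cdot o)$ (for example the connected components of this complement, or bi-infinite quasigeodesics lying outside it), and form $Y$ by adjoining to $X$ a cone vertex $c_U$ for each $U \in \mathcal{U}$, joined to every point of $U$ by an edge of length $1/2$. Let $\pi : X \to Y$ be the inclusion. Then $\pi$ is $1$-Lipschitz, the $G$-action on $Y$ is cobounded (every point lies within $R+1$ of $G \cdot o$), and for $z_1, z_2 \in Z$ the $X$-geodesic $[z_1,z_2]$ stays inside $N_R(G\cdot o)$ and avoids every cone region. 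The upper bound $d_Y \leq d_X$ is immediate, while any $Y$-path using a cone vertex $c_U$ must enter and exit $U$, incurring an $X$-cost of at least $2R$; combined, these yield the $(K,K)$-quasi-isometric embedding constant. The geodesic-preservation claim follows by approximating a $Y$-geodesic between $\pi(x)$ and $\pi(y)$ by the original $X$-geodesic together with short detours through cone vertices at its excursions into deep regions, and then applying stability of quasigeodesics.

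The hardest part, and the main obstacle, is to prove that $Y$ is hyperbolic and to choose the family $\mathcal{U}$ so that the quasi-isometric embedding constant is uniform. Coning off an entire connected component of $X \setminus N_R(G\cdot o)$ can inadvertently collapse distances along an unbounded quasiconvex subset of the orbit---as already occurs for $\mathbb{Z}$ acting loxodromically on $\mathbb{H}^2$, where a single cone above the axis would shortcut the whole orbit to bounded diameter. Thus one must cone off only suitably ``small'' deep pieces, perhaps via a scale-dependent, hierarchical, or combinatorial-horoball-like procedure that compresses deep regions only at the corresponding scale. One then invokes a Bowditch- or Kapovich--Rafi-style electrification criterion to ensure the resulting $Y$ remains hyperbolic: this requires controlling the shadows of the coned-off pieces on $X$-geodesics, for which the thin-triangle geometry of $X$ and the definition of $\mathcal{U}$ must be tuned in tandem. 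Calibrating $\mathcal{U}$ to give both coboundedness and a uniform constant $K$, and verifying the projection/divergence hypotheses of the electrification criterion, is where the bulk of the technical work lies.
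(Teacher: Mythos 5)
The guiding intuition in your proposal is correct, and you correctly diagnose why the naive implementation (coning off whole connected components of $X \setminus N_R(G\cdot o)$) fails, with the $\mathbb{Z} \curvearrowright \mathbb{H}^2$ example: a single cone vertex over a deep component provides a bounded-length shortcut that collapses the whole orbit. You also correctly sense that a Kapovich--Rafi--type electrification result is the right tool. But your proposal stops at the decisive point: you explicitly leave the choice of the family $\mathcal{U}$ unresolved, calling it ``the main obstacle'' and ``where the bulk of the technical work lies.'' That unresolved step is precisely the content of the proposition, so as written this is a genuine gap, not a complete proof.

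What the paper actually does is pick a specific and simple $\mathcal{U}$ that dissolves the obstacle: after replacing $X$ by the graph whose edges join points at distance $\le 1$, one forms $Y_R$ from $X$ by joining by an edge any two vertices lying on a common geodesic that is entirely disjoint from $N_R(G\cdot o)$. So the coned-off objects are geodesic segments avoiding the $R$-neighbourhood of the orbit, not components of the complement. Each such geodesic is uniformly quasiconvex, so \cite[Corollary 2.4]{KR} applies directly and gives at once that $Y_R$ is hyperbolic and that geodesics in $Y_R$ remain at Hausdorff distance $D$ (independent of $R$) from images of $X$-geodesics --- the ``moreover'' clause, which you were planning to argue by hand via stability of quasigeodesics. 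The quasi-isometric embedding of $\pi_R|_Z$ for $Q$-quasiconvex $Z \subset G \cdot o$ then comes for free once $R > Q + D$: by the ``moreover'' part and quasiconvexity of $Z$, any $Y_R$-geodesic between points of $\pi_R(Z)$ stays inside the $R$-neighbourhood of $G\cdot\pi_R(o)$, hence never crosses a new edge, so $d_{Y_R}$ agrees with $d_X$ on $Z$ up to the graph-approximation constants. In short, the choice of $\mathcal{U}$ is not something that must be hierarchically calibrated; choosing the family of geodesics avoiding $N_R(G\cdot o)$ makes both the hyperbolicity and the embedding statements one-line consequences of the electrification theorem, and that is the key idea your proposal is missing.
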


\begin{proof}
 Up to replacing $X$ with the graph whose vertex-set is $X$ and whose edges connect any two points at distance $\leq 1$, we can and shall assume that $X$ is a graph for convenience.

For any given $R>0$, let $Y_R$ denote the graph obtained from $X$ by adding an edge between any two vertices lying on some geodesic disjoint from $N_R(G\cdot o)$. Note that there is a natural 1-Lipschitz map $\pi_R:X\to Y_R$. In view of \cite[Corollary 2.4]{KR} (which roughly speaking says that coning-off quasiconvex subspaces in a hyperbolic space yields another hyperbolic space with the ``same'' geodesics) we have that $Y_R$ is hyperbolic and that the ``moreover'' part holds for any $R$. In fact, the same corollary yields that the ``moreover'' part holds with constant $D$ independent of $R$. Note also that $G$ acts on $Y_R$, that $\pi_R$ is $G$-equivariant, and that the action on $Y_R$ is cobounded.

Therefore, fixing $Q>0$, what is left to show is that given a $Q$-quasiconvex subspace $Z$ of $G\cdot o \subset X$, for any sufficiently large $R$ we have that $\pi_R$ restricts to an isometric embedding on $Z$ (recall that we modified $X$ to be a graph at the beginning of the proof, and this is why the conclusion of the theorem only gives a quasi-isometric embedding). We can in fact take any $R>Q+D$, since geodesics in $Y=Y_R$ with endpoints on $\pi_R(Q)$ cannot then cross any edge of $Y_R$ which is not an edge of $X$. Indeed, it is readily seen that the $\pi_R$-image of the $R$-neighborhood of $G\cdot o$ is the $R$-neighborhood of $G\cdot \pi_R(o)$ in $Y$, and the ``moreover'' part and quasiconvexity imply that any geodesic in $Y$ connecting points of $\pi_R(Z)$ is contained in the $R$-neighborhood of $G\cdot \pi_R(o)$.
\end{proof}

We now point out that the proposition actually allows us to conclude that the action on $Y$ satisfy analogous properties to those of the action of $X$.

\begin{corollary}
\label{cor:preserve}
 Let the group $G$ act on the hyperbolic space $X$, and let $g_1,\dots,g_n\in G$. Consider the following properties:
 \begin{enumerate}[noitemsep]
  \item the elements $g_i$ are loxodromic,
  \item the elements $g_i$ are loxodromic WPD,
  \item the elements $g_i$ are independent loxodromic elements.
 \end{enumerate}

 There exists some hyperbolic space $Y$ with a cobounded $G$-action, and such that each property (1)-(3) holds in $Y$ if it holds in $X$.
\end{corollary}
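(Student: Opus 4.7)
The plan is to apply Proposition~\ref{prop:cobounded} to a quasi-convex subset of an orbit that simultaneously encodes the geometric data of $g_1, \ldots, g_n$. Fix a basepoint $o \in X$ and assume each $g_i$ is loxodromic in $X$. Each orbit $\langle g_i \rangle \cdot o$ is a quasi-geodesic, hence quasi-convex in $X$ with some constant $Q_i$. By the standard thin-triangle argument in a $\delta$-hyperbolic space, a finite union of quasi-convex subsets sharing a common point is itself quasi-convex, so $Z := \bigcup_i \langle g_i \rangle \cdot o$ is $Q$-quasi-convex for some $Q$ depending only on $\delta$ and on the $Q_i$. I would apply Proposition~\ref{prop:cobounded} with this value of $Q$ and with a parameter $R$ to be chosen sufficiently large. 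This yields a hyperbolic space $Y = Y_R$ with a cobounded $G$-action, together with a $G$-equivariant coarsely Lipschitz map $\pi \colon X \to Y$ and constants $K$, $D$, such that $\pi|_Z$ is a $(K,K)$-quasi-isometric embedding and $\pi$ sends $X$-geodesics into Hausdorff-$D$-neighbourhoods of $Y$-geodesics.

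Property~(1) follows immediately: the map $n \mapsto g_i^n \cdot \pi(o)$ is a quasi-isometric embedding $\mathbb{Z} \to Y$ because $\pi|_{\langle g_i \rangle \cdot o}$ is a QI-embedding, so $g_i$ acts loxodromically on $Y$. For property~(3), assume the $g_i$ are pairwise independent in $X$, so their fixed boundary pairs in $\partial X$ are pairwise disjoint. The subset $Z$, equipped with the induced metric from $X$, is a hyperbolic space (being quasi-convex in $X$), and the QI-embedding $\pi|_Z$ extends to a continuous injection of Gromov boundaries $\partial Z \to \partial Y$. The points $g_i^{\pm\infty}$ (computed in $X$) all lie in $\partial Z$ and their images in $\partial Y$ are exactly the boundary fixed points of the $g_i$ acting on $Y$; injectivity of the boundary map ensures these remain pairwise distinct, yielding independence in $Y$.

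Property~(2) is the main obstacle. Recall from the proof of Proposition~\ref{prop:cobounded} that any added edge of $Y = Y_R$ joins two vertices lying on a geodesic disjoint from the $R$-neighbourhood of $G \cdot o$; in particular both endpoints have $X$-distance strictly greater than $R$ from $G \cdot o$. Hence a $Y$-path between points of $G \cdot o$ that uses any added edge must climb from depth $0$ to depth exceeding $R$ and later descend back via $X$-edges, costing at least $2(R+1)$ steps. Therefore, for every $\epsilon < 2(R+1)$, the inequality $d_Y(x,y) \leq \epsilon$ with $x, y \in G \cdot o$ forces $d_X(x,y) \leq \epsilon$, so the $Y$-version of the defining set for WPD at parameter $\epsilon$ is contained in its $X$-version. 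Choosing $R$ large enough that $2(R+1)$ exceeds some fixed threshold depending on the hyperbolicity constant of $X$, WPD of $g_i$ in $X$ at this sufficiently large parameter transfers directly to $Y$. The hard point is then to conclude full WPD of $g_i$ in $Y$, which I would do using the standard equivalence between WPD and virtual cyclicity of the elementary closure $E(g_i)$: once this virtually cyclic condition is witnessed by a single large enough $\epsilon$, WPD holds in the full sense.
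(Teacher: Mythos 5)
Your treatment of items (1) and (3) is essentially the same as the paper's: take $Q$ large enough that the union of the orbits $\langle g_i\rangle \cdot o$ is $Q$-quasiconvex and apply Proposition~\ref{prop:cobounded}; the quasi-isometric embedding of this union into $Y$ gives loxodromicity and distinctness of limit points at once.

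For item (2), however, your argument diverges from the paper and has a genuine gap at the final step. Your opening observation --- that for $x,y \in G\cdot o$ and $\epsilon < 2(R+1)$, $d_Y(x,y) \le \epsilon$ forces $d_X(x,y) = d_Y(x,y) \le \epsilon$, because any $Y$-path using a coned edge must spend at least $R$ $X$-edges climbing away from $G\cdot o$ and at least $R$ coming back --- is correct and does establish WPD in $Y$ at every parameter $\epsilon < 2(R+1)$. The problem is the closing sentence. There is no ``standard equivalence between WPD and virtual cyclicity of $E(g)$'': the implication ``$E(g)$ virtually cyclic $\Rightarrow g$ is WPD'' is false. For instance, in the lamplighter group $(\mathbb{Z}/2)\wr\mathbb{Z}$ acting on $\mathbb{Z}$ via the projection, the generator $g$ of the $\mathbb{Z}$-factor is loxodromic with $E(g)=\langle g\rangle$ cyclic, yet $g$ is not WPD because the base group moves every point by $0$. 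What you actually need is the different claim that WPD at a single parameter exceeding a threshold implies WPD at all parameters. This is plausible (by translating along the quasi-axis via powers of $g$), but it requires proof and the relevant threshold depends on the hyperbolicity constant of $Y$ and on the quasi-geodesic constant of the axis of $g$ in $Y$, not on the hyperbolicity constant of $X$ as you write. Since you first fix $R$ and then want $2(R+1)$ to exceed that threshold, you would additionally need to know that the hyperbolicity constant of $Y_R$ is bounded independently of $R$ --- true by the reference used inside Proposition~\ref{prop:cobounded}, but not addressed in your argument.

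The paper's proof of (2) sidesteps all of this. It uses the quantitative geometric consequence of WPD in $X$ coming from \cite[Corollary 4.4]{S:contr}: there is a virtually cyclic $E(g)$ and a constant $C$ such that for any $h\notin E(g)$, any two geodesics between $\langle g\rangle o$ and $h\langle g\rangle o$ contain points at $X$-distance at most $C$. Combining this with the ``moreover'' clause of Proposition~\ref{prop:cobounded} (geodesics in $X$ project $D$-close to geodesics in $Y$) yields, for every WPD parameter $R$ directly, that any $h$ satisfying the $Y$-WPD bounds at a sufficiently high power $n$ must lie in $E(g)$, whence finiteness. This works uniformly in the parameter $R$, with no need to pass through a ``one big $\epsilon$ suffices'' reduction.
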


We do not know whether the construction in Proposition \ref{prop:cobounded} preserves acylindricity, meaning that if $G \acts X$ is acylindrical, we do not known if $G \acts Y_R$ is also acylindrical, for $Y_R$ as in the proof of the proposition. However, it follows from \cite[Theorem 1.2]{Osinah} that if $G$ has an acylindrical, general type action on a hyperbolic space, then it also admits a cobounded, acylindrical, general type action on a (different) hyperbolic space (or even a quasi-tree; see \cite{Balqt,BBFS}). Related to this, we note that the fact that a group admits an action with loxodromic WPD elements on a hyperbolic space if and only if it admits one such action which is furthermore cobounded (a direct consequence of Corollary \ref{cor:preserve}(2)) is an important part of the proof of \cite[Theorem 1.2]{Osinah}, but our argument is more elementary.

\begin{proof}[Proof of Corollary \ref{cor:preserve}]
Item (1) follows from the proposition taking $Q$ large enough that the orbits of all $\langle g_i\rangle$ are $Q$-quasiconvex. For item (3), the union of the orbits of all $\langle g_i\rangle$ is $Q$-quasiconvex for some $Q$, and the fact that this quasi-isometrically embeds in $Y$ implies that the limit points of the $g_i$ in $Y$ are distinct.

Item (2) is slightly more complicated (and not needed later). For ease of notation, let us consider a single loxodromic WPD element $g$, and consider a space $Y$ from the proposition where $g$ still acts loxodromically, with the associated constant $D$, and suppose that the equivariant map $\pi$ is $D$-coarsely Lipschitz. It is a consequence of e.g. \cite[Corollary 4.4]{S:contr} that there exists a virtually cyclic subgroup $E(g)$ containing $g$ and with the property that there exists $C>0$ such that for any $h\notin E(g)$ any two geodesics $\gamma_1,\gamma_2$ from $h\langle g\rangle o$ to $\langle g\rangle o$ contain points $p_i\in\gamma_i$ with $d_X(p_1,p_2)\leq C$.

Let us now verify the WPD property for $g$ acting on $Y$. Fix any $R>0$, and consider any integer $n>0$ such that $d_Y(o',g^no')> DC+3D+2R$, where $o'=\pi(o)$. We want to show that there are only finitely many $h\in G$ such that $d_Y(o',ho'),d_Y(g^no',hg^no')\leq R$. It suffices to show that any such $h$ must belong to $E(g)$, as within each right $\langle g\rangle$-coset in $E(g)$ there are only finitely many elements $h$ satisfying $d_Y(o',ho')\leq R$. 

Suppose by contradiction that $h\notin E(g)$, and consider geodesics $\gamma_1$ from $o$ to $ho$ and $\gamma_2$ from $g^no$ to $hg^no$ (both geodesics in $X$). On one hand the ``moreover'' part of Proposition \ref{prop:cobounded} implies that $d_Y(\pi(\gamma_1),\pi(\gamma_2))> DC+D$. On the other hand, the discussion above and the fact that $\pi$ is $D$-coarsely Lipschitz yield points $p_i\in\gamma_i$ with $d_Y(\pi(p_1),\pi(p_2))\leq DC+D$, a contradiction. Hence, $h\in E(g)$, as required.
\end{proof}

\begin{corollary}
\label{cor:coboundedgt}
Let $G$ be a group acting on some hyperbolic space $X$, and assume that the action is not horocyclic. Then there exists a hyperbolic space $Y$ on which $G$ acts coboundedly such that $G \curvearrowright X$ and $G\curvearrowright Y$ have the same type.
\end{corollary}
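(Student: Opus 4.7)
The plan is to argue by cases on the type of $G \curvearrowright X$, applying Proposition~\ref{prop:cobounded} or its Corollary~\ref{cor:preserve} in each case. The elliptic case is trivial (take $Y$ a point). After passing to an essential action (Lemma~\ref{lem:Essential}), the lineal case reduces to observing that $G \cdot o$ is quasi-dense and lies at finite Hausdorff distance from the axis of any loxodromic, hence is quasiconvex; by Proposition~\ref{prop:cobounded}, $\pi|_{G \cdot o}$ is a quasi-isometric embedding, and by coboundedness of $G \curvearrowright Y$ the space $Y$ is then quasi-isometric to the quasi-line $G \cdot o$, giving a lineal action. For the general type case, I would pick three pairwise independent loxodromic elements $g_1, g_2, g_3 \in G$ (which exist by standard ping-pong arguments) and apply Corollary~\ref{cor:preserve}(3), so that the $g_i$ remain independent loxodromics in $Y$. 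The resulting action is then non-elementary, and it cannot fix any $\eta \in \partial Y$ since such an $\eta$ would have to lie in each fixed-point pair $\{g_i^{+\infty}|_Y, g_i^{-\infty}|_Y\}$, yet these are pairwise disjoint by independence; hence the action on $Y$ is of general type.

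The focal case is the main obstacle. Here I would pick three loxodromic elements $g_1, g_2, g_3 \in G$ all sharing the common $G$-fixed endpoint $\xi \in \partial X$ but with three pairwise distinct ``other endpoints'' in $\partial X$; such a choice exists since in a focal action every loxodromic element of $G$ has $\xi$ as one of its fixed points and the $G$-orbit of any non-fixed limit point is infinite. Applying Proposition~\ref{prop:cobounded} with $Q$ large enough to cover each quasi-axis $\langle g_i \rangle \cdot o$ then yields a cobounded action on $Y$ in which the $g_i$ remain loxodromic. The two key claims to verify are: (a) the endpoints $g_i^{+\infty}|_Y$ all coincide in a single point $\eta \in \partial Y$, which is moreover fixed by $G$; and (b) the endpoints $g_i^{-\infty}|_Y$ are pairwise distinct. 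Together these would imply that $G \curvearrowright Y$ is non-elementary (with at least $4$ limit points) and has a global fixed point in $\partial Y$, hence is focal.

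Both (a) and (b) rely on the ``moreover'' part of Proposition~\ref{prop:cobounded}, which says that $\pi$ sends $X$-geodesics close to $Y$-geodesics. For (b), since $g_i^{-\infty} \neq g_j^{-\infty}$ in $\partial X$ the Gromov products $(g_i^{-n} o \mid g_j^{-n} o)_o$ stay bounded, so the $X$-geodesics $[g_i^{-n} o, g_j^{-n} o]$ pass within a uniform neighborhood of $o$; by the $1$-Lipschitz property of $\pi$ and the ``moreover'' part, $\pi(o)$ stays within bounded distance of the corresponding $Y$-geodesics, so the Gromov products in $Y$ remain bounded, proving distinctness. For (a), the harder direction, I would exploit that two quasi-geodesic rays in $X$ sharing an endpoint at infinity fellow-travel arbitrarily closely near that endpoint; consequently the midpoint $m_n$ of the $X$-geodesic $[g_i^n o, g_j^n o]$ lies within uniform distance of some point $g_i^{k(n)} o$ on the axis of $g_i$, with $k(n) \to \infty$. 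The quasi-isometric embedding of the axis then forces $d_Y(\pi(o), \pi(g_i^{k(n)} o))$ to diverge, hence so does $d_Y(\pi(o), \pi(m_n))$; by the ``moreover'' part this in turn forces $d_Y(\pi(o), [\pi(g_i^n o), \pi(g_j^n o)])$ to diverge, so the Gromov products $(\pi(g_i^n o) \mid \pi(g_j^n o))_{\pi(o)}$ diverge as well, which is precisely claim (a). Finally, $\eta$ is fixed by $G$: claim (a) shows that any two loxodromic elements of $G$ sharing $\xi$ have the same $+\infty$-endpoint $\eta$ in $Y$; for each $h \in G$, the conjugates $h g_i h^{-1}$ are loxodromic and share $h \xi = \xi$, so their common $+\infty$-endpoint in $Y$ is again $\eta$, but by $G$-equivariance of the boundary action this equals $h \eta$, forcing $h \eta = \eta$.
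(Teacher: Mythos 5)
Your proof takes a genuinely different route from the paper's, which handles the elliptic, lineal, and focal cases in a single stroke by observing that in each of these cases the orbit $G\cdot o$ is quasiconvex, and then invoking a \v{S}varc--Milnor-type argument to obtain a Cayley graph of $G$ quasi-isometric to that orbit (the general type case is dispatched via Corollary~\ref{cor:preserve}(3), as you also do). Your elliptic, lineal, and general-type arguments are fine; for the focal case you instead run a hands-on boundary analysis through Proposition~\ref{prop:cobounded}, which is a legitimate alternative and, if completed, would have the advantage of not relying on the (asserted, not justified) quasiconvexity of orbits of focal actions.

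There is, however, a real gap in your verification of claim (a). You show that the midpoint $m_n$ of the $X$-geodesic $[g_i^n o, g_j^n o]$ has $\pi(m_n)$ far from $\pi(o)$, and then conclude that ``by the moreover part this in turn forces $d_Y(\pi(o), [\pi(g_i^n o), \pi(g_j^n o)])$ to diverge.'' This step does not follow: the distance from $\pi(o)$ to a $Y$-geodesic is an infimum over all points of that geodesic, and knowing one point on (or near) it is far from $\pi(o)$ gives no lower bound on that infimum. What is needed is that \emph{every} point $q\in[g_i^n o, g_j^n o]$ satisfies $d_Y(\pi(o),\pi(q))\to\infty$ uniformly. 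This is true and provable with the ingredients you have at hand: since $(g_i^n o \mid g_j^n o)_o\to\infty$, the whole geodesic $[g_i^n o,g_j^n o]$ coarsely decomposes into two pieces fellow-travelling the tails of the quasi-axes $\langle g_i\rangle\cdot o$ and $\langle g_j\rangle\cdot o$ beyond indices $k(n)\to\infty$, and quasiconvexity of those axes together with the quasi-isometric embedding of Proposition~\ref{prop:cobounded} yields the required uniform lower bound. Only then does the ``moreover'' part transfer this to a divergent lower bound on $d_Y(\pi(o),[\pi(g_i^n o),\pi(g_j^n o)]_Y)$. A minor further imprecision: what you really want to invoke is the centre of the triangle $\{o,g_i^n o,g_j^n o\}$ (the point of $[g_i^n o,g_j^n o]$ nearest $o$), not the midpoint, which may lie well inside one of the two legs when $g_i$ and $g_j$ have different translation lengths; but the fellow-travelling picture above makes that distinction moot once you control all points of the geodesic.
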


\begin{proof}
If $G \curvearrowright X$ is elliptic, lineal, or focal, then its orbits are quasiconvex and there exists a Cayley graph of $G$ quasi-isometric to the orbit -- this follows from a slightly altered version of the Svarc-Milnor Lemma applied to quasi-geodesics of a fixed quality. This gives us the required cobounded action. If the action is of general type, then this follows from item (3) of the previous corollary.
\end{proof}

\footnotesize

\addcontentsline{toc}{section}{References}
\bibliographystyle{alpha}
\bibliography{references}

\normalsize

\vspace{0.5cm}

\textsc{Department of Mathematics, University at Buffalo (SUNY)}

\textit{E-mail address:} \texttt{sahanaha@buffalo.edu} \\

\textsc{Department of Mathematics, ETH Z\"urich, Switzerland}

\textit{E-mail address:} \texttt{francesco.fournier@math.ethz.ch} \\

\textsc{Institut Montpellierain Alexander Grothendieck, Montpellier, France}

\textit{E-mail address:} \texttt{anthony.genevois@umontpellier.fr} \\

\textsc{Maxwell Institute and Department of Mathematics, Heriot-Watt University, Edinburgh, UK}

\textit{E-mail address:} \texttt{a.sisto@hw.ac.uk}

\end{document}